\documentclass{amsart}

\usepackage{amsmath,amssymb,amsfonts}
\usepackage{mathrsfs,latexsym,amsthm,enumerate}
\usepackage{tikz}
\usepackage{amscd}
\usepackage[all]{xy}
\newtheorem{lemma}{Lemma}[section]
\newtheorem{proposition}[lemma]{Proposition}
\newtheorem{corollary}[lemma]{Corollary}

\newtheorem{remark}[lemma]{Remark}
\newtheorem{remarks}[lemma]{Remarks}
\newtheorem{theorem}[lemma]{Theorem}
\newtheorem{example}[lemma]{Example}

\begin{document}

\title[Groups and duality]{Subgroups of the group of homeomorphisms of the Cantor space
and a duality between a class of inverse monoids and a class of Hausdorff \'etale groupoids}

\author{Mark V. Lawson}

\address{Department of Mathematics
and the
Maxwell Institute for Mathematical Sciences,
Heriot-Watt University,
Riccarton,
Edinburgh~EH14~4AS,
United Kingdom}  
\email{M.V.Lawson@hw.ac.uk }
\thanks{The author was partially supported by an EPSRC grant  (EP/I033203/1).
He would also like to thank Collin Bleak (St. Andrews), Johannes Kellendonk (Lyon) and Daniel Lenz (Jena) for discussions on aspects of this paper.}

\begin{abstract} Under non-commutative Stone duality, there is a correspondence 
between second countable Hausdorff \'etale groupoids which have a Cantor space of identities
and
what we call Tarski inverse monoids: that is,  countable Boolean inverse $\wedge$-monoids with semilattices of idempotents which are countable and atomless.
Tarski inverse monoids are therefore the algebraic counterparts of the \'etale groupoids studied by Matui and provide a natural setting for many of his calculations.
Under this duality, we prove that natural properties of the \'etale groupoid correspond to natural algebraic properties of the Tarski inverse monoid:
effective groupoids correspond to fundamental Tarski inverse monoids
and
minimal groupoids correspond to $0$-simplifying Tarski inverse monoids.
Particularly interesting are the principal groupoids which correspond to Tarski inverse monoids 
where every element is a finite join of infinitesimals and idempotents.
Here an infinitesimal is simply a non-zero element with square zero.
The groups of units of fundamental Tarski inverse monoids  generalize the finite symmetric groups and include amongst their number the Thompson groups $G_{n,1}$ as well as the groups of units of AF inverse monoids,
Krieger's ample groups being examples.
\end{abstract}

\keywords{Inverse semigroups, \'etale topological groupoids, Stone duality, Thompson groups, homeomorphisms of the Cantor space}

\subjclass{20M18, 18B40, 06E15}

\maketitle

\section{Introduction}

The theory developed in this paper provides a setting for understanding the connections between three classes of structure: 
groups, particularly the classical Thompson groups and their relatives, inverse monoids, and \'etale groupoids.
Of these, the ones pivotal to our theory are the inverse monoids.

Such semigroups were introduced as the abstract counterparts of pseudogroups of transformations \cite{Lawson98}.
They should be viewed as encoding information about {\em partial} symmetries much as groups encode information about {\em global} symmetries.
It is folklore that pseudogroups are also closely related to \'etale (topological) groupoids.
Indeed, the groupoid of germs of a pseudogroup is a standard construction\footnote{This groupoid has usually been the structure studied rather than the pseudogroup itself.}
whereas from an \'etale groupoid one may construct a pseudogroup by taking local bisections. 
It is curious, however,  that the exact mathematical nature of this connection has not been explored until comparatively recently.
The catalyst for doing so was Paterson's monograph \cite{Paterson} and this led, initially, to two different approaches:
that of Resende  \cite{Res1,Res2}, which explicitly generalized classical frame theory \cite{PJ}, 
and that of Lawson and Lenz \cite{LMS,Lawson10a,Lawson10b,Lawson12,LL,LL2},
which developed more functional-theoretic ideas to be found in \cite{Lenz, Kell1,Kell2}.
These two approaches are, as might be expected, complementary \cite{KL}.
As a result, the connection between inverse semigroups and \'etale groupoids is now thoroughly understood and goes under the rubric of {\em non-commutative Stone duality}.
The goal of this paper can now be precisely stated: we shall exploit this non-commutative Stone duality to obtain a deeper understanding of recent work by Matui \cite{Matui12,Matui13}.

Some of the themes of this paper such as the concept of effectiveness for \'etale groupoids and notions of simplicity are discussed in a variety of papers \cite{BCFS, Exel3, FHS, Renault2008, S2}
in relation to pseudogroups and algebras associated to \'etale groupoids.
We, however, locate these concepts in the relationship between Boolean inverse semigroups and their associated \'etale groupoids.

\section{Basic definitions}

Order-theoretic concepts permeate this paper.
Let $P$ be a poset.
If $X \subseteq P$, 
define the {\em upward closure} of $X$ by
$$X^{\uparrow} = \{ y \in P \colon \exists x \in X \mbox{ such that } x \leq y \}$$
and the {\em downward closure} of $X$ by
$$X^{\downarrow} = \{ y \in P \colon \exists x \in X \mbox{ such that } y \leq x \}.$$
We write $x^{\downarrow}$ rather than $\{x\}^{\downarrow}$.
If $X = X^{\downarrow}$ we say that $X$ is an {\em order ideal},
whereas if  $X = X^{\uparrow}$ 
we say that $X$ is {\em closed upwards}.

Recall that a {\em monoid} is a semigroup with an identity.
In a monoid, a {\em unit} is simply an invertible element and is {\em non-trivial} if it is not the identity.
The group of units of the monoid $S$ is denoted by $\mathsf{U}(S)$.
An {\em involution} is a unit $g \neq 1$ such that $g^{2} = 1$.

Our reference for inverse semigroups is \cite{Lawson98} but I will review the main definitions needed here.
An {\em inverse semigroup} is a semigroup $S$ in which for each element $s \in S$ there is a unique element $s^{-1} \in S$ such that $s = ss^{-1}s$ and $s^{-1} = s^{-1}ss^{-1}$.
The element $s^{-1}s$ is called the {\em domain idempotent} and $ss^{-1}$ is called the {\em range idempotent}.
We often write $\mathbf{d}(s) = s^{-1}s$ and $\mathbf{r}(s) = ss^{-1}$ and also $s^{-1}s \stackrel{s}{\longrightarrow} ss^{-1}$.
If $s,t \in S$ are such that $\mathbf{d}(s) = \mathbf{r}(t)$ then we say that $st$ is a {\em restricted product}.
Observe that in this case $\mathbf{d}(st) = \mathbf{d}(t)$ and $\mathbf{r}(st) = \mathbf{r}(s)$.
Our inverse semigroups will always be assumed to have a zero and, in addition, this paper deals only with monoids.
The key example of an inverse semigroup is the semigroup of all partial bijections on a set $X$, denoted by $I(X)$,  called the {\em symmetric inverse monoid} on the set $X$.
If $S$ is any inverse semigroup and $e \in S$ any idempotent, then the subset $eSe$ is an inverse subsemigroup that is also a monoid
with identity $e$. 
For this reason, it is called a {\em local monoid}.\footnote{The usual terminology is {\em local submonoid} but ours seems preferable. In ring theory, they would be called {\em corners}.}
The groups of units of the local monoids are called {\em local groups}.
The set of idempotents of an inverse semigroup is closed under multiplication and commutative.
It follows that the set of idempotents is partially ordered when we define $e \leq f$ whenever $e = ef (= fe)$ where $e$ and $f$.
In this way, the set of idempotents of $S$, denoted by $E(S)$, becomes a meet-semilattice when we define $e \wedge f = ef$.
For this reason, the set of idempotents of an inverse semigroup $S$ is often termed its {\em semilattice of idempotents}.
Additionally, if $A \subseteq S$ then $E(A) = A \cap E(S)$.
Let $S$ be an inverse semigroup.
An {\em inverse subsemigroup} of $S$ is a subsemigroup closed under inverses; 
it is said to be {\em wide} if it also contains all the idempotents of $S$.
Green's relations,  which play a major r\^ole in semigroup theory, are less apparent in inverse semigroup theory but we define two that will be used below.
If $e$ and $f$ are idempotents, define $e \, \mathscr{D} \, f$ if $e \stackrel{s}{\rightarrow} f$ for some element $s \in S$.
Define $e \, \mathscr{J} \, f$ if $SeS = SfS$.

Inverse semigroups are algebraic structures that come equipped with an algebraically defined partial order which is the abstract version of the restriction order on partial bijections.
In an inverse semigroup, define $s \leq t$ if $s = ts^{-1}s$.
Despite appearances, this definition is not biased to one side: it can be checked that $s = ss^{-1}t$
is an equivalent definition.
This relation is a partial order, called the {\em natural partial order}, and with respect to this order the inverse semigroup is partially ordered.
In addition, it is important to observe that $s \leq t$ implies that $s^{-1} \leq t^{-1}$.
Restricted to the semilattice of idempotents, it is just the order we defined on the idempotents above.
In addition, the set of idempotents is itself an order ideal.
An element of an inverse semigroup is said to be an {\em atom} if it is non-zero and the only element strictly below it is zero.

An inverse monoid $S$ with the property that for each $a \in S$ there is a unit $g$ such that $a \leq g$ is said to be {\em factorizable}.
Symmetric inverse monoids are factorizable precisely when they are finite.
Further discussion of the applications of this concept may be found in \cite{Lawson98}.
The proof of the following is routine.

\begin{lemma}\label{lem:factorizable} 
Let $S$ be an inverse monoid, and let $G$ be a subgroup of $\mathsf{U}(S)$.
Then $G^{\downarrow}$ is an inverse submonoid of $S$ 
and a factorizable inverse monoid with group of units $G$.
\end{lemma}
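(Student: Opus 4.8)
The plan is to verify the three assertions in turn, using only the elementary properties of the natural partial order recalled above together with the standard fact (see \cite{Lawson98}) that the natural partial order is compatible with multiplication: $a \leq b$ and $c \leq d$ imply $ac \leq bd$.

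First I would check that $G^{\downarrow}$ is an inverse submonoid of $S$. Since $1 \in G$ and $1 \leq 1$, we have $1 \in G^{\downarrow}$. If $a, b \in G^{\downarrow}$, pick $g, h \in G$ with $a \leq g$ and $b \leq h$; then $ab \leq gh$ by compatibility of the order with multiplication, and $gh \in G$ since $G$ is a subgroup, so $ab \in G^{\downarrow}$. If $a \leq g$ with $g \in G$, then $a^{-1} \leq g^{-1}$ and $g^{-1} \in G$, so $a^{-1} \in G^{\downarrow}$. Hence $G^{\downarrow}$ is a submonoid closed under inverses, that is, an inverse submonoid of $S$. Next, every element of $G$ is a unit of $G^{\downarrow}$: for $g \in G$ we have $g^{-1} \in G \subseteq G^{\downarrow}$ with $g g^{-1} = 1 = g^{-1} g$. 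Since by definition every element of $G^{\downarrow}$ lies below some element of $G$, and each such element is a unit of $G^{\downarrow}$, the monoid $G^{\downarrow}$ is factorizable and $G \subseteq \mathsf{U}(G^{\downarrow})$.

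It then remains to prove $\mathsf{U}(G^{\downarrow}) \subseteq G$. The idea is: if $a \in G^{\downarrow}$ is a unit of $G^{\downarrow}$, with $ab = 1 = ba$ for some $b \in G^{\downarrow}$, then $a$ is a unit of $S$ as well, its semigroup inverse coincides with its group inverse, and $\mathbf{d}(a) = a^{-1}a = 1$. Choosing $g \in G$ with $a \leq g$, the defining relation of the natural partial order gives $a = g a^{-1} a = g$, so $a \in G$. Since every individual step is a one-line verification, there is no serious obstacle; the only point requiring a moment's care is this last one, namely the observation that an element of an inverse monoid which lies below a unit and is itself invertible must equal that unit.
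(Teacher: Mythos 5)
Your proof is correct: the paper omits the argument as ``routine,'' and your verification -- closure of $G^{\downarrow}$ under products and inverses via compatibility of the natural partial order, units of $G$ remaining units in $G^{\downarrow}$, and the key observation that an invertible element $a \leq g$ with $g \in G$ satisfies $a = g a^{-1}a = g$, forcing $\mathsf{U}(G^{\downarrow}) \subseteq G$ -- is exactly the intended standard argument.
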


Although the natural partial order has always played an important r\^ole in the theory, comparatively little was done to study
inverse semigroups with what might be called lattice-like properties.
A notable exception was Leech's work \cite{Leech} on inverse semigroups in which every pair of elements has a meet, the so-called {\em inverse $\wedge$-semigroups}.
Such semigroups can be characterized using a device of Leech \cite{Leech}.
Let $S$ be an inverse monoid.
A function $\phi \colon S \rightarrow E(S)$ is called a {\em fixed-point operator} if it satisfies the following two conditions:
\begin{description}

\item[{\rm (FPO1)}]  $s \geq \phi (s)$.

\item[{\rm (FPO2)}]  If $s \geq e$ where $e$ is any idempotent then $\phi (s) \geq e$.

\end{description}

The following are proved in \cite{Leech} or are immediate from the definition.

\begin{proposition}\label{prop:leech} \mbox{}
\begin{enumerate}
\item An inverse monoid $S$ is an inverse $\wedge$-monoid if and only if it possesses a fixed-point operator.
\item If $\phi$ is the fixed-point operator then $a \wedge b= \phi (ab^{-1})b$.
\item In an inverse $\wedge$-monoid, we have that $\phi (s) = s \wedge 1$.
\item $\phi (s) = s \phi (s) = \phi (s)s$.
\item $\phi (s) \leq s^{-1}s,ss^{-1}$.
\end{enumerate}
\end{proposition}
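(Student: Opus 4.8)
The plan is to handle parts (1)--(5) in a logical order, the organising observation being that in an inverse monoid the set $\{t \in S \colon t \leq 1\}$ is \emph{exactly} $E(S)$: if $t \leq 1$ then $t = tt^{-1}1 = tt^{-1}$ is idempotent, and conversely every idempotent lies below $1$. Consequently, whenever a fixed-point operator $\phi$ exists it must assign to each $s$ the \emph{greatest idempotent below $s$}; in particular $\phi$ is uniquely determined. This makes (3) almost immediate once (1) is available, so the real content is in (1) and (2).

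For the forward implication of (1), assume $S$ is an inverse $\wedge$-monoid and set $\phi(s) := s \wedge 1$. Then $\phi(s) \leq 1$, so $\phi(s)$ is idempotent; $\phi(s) \leq s$ is (FPO1); and if $s \geq e$ with $e$ idempotent, then $e \leq s$ and $e \leq 1$, so $e \leq s \wedge 1 = \phi(s)$, which is (FPO2). Part (3) follows at once: if $S$ is an inverse $\wedge$-monoid with fixed-point operator $\phi$, then (FPO1) and (FPO2) say that $\phi(s)$ is the greatest idempotent below $s$, while $s \wedge 1$ is the greatest element below both $s$ and $1$, i.e.\ the greatest idempotent below $s$; hence $\phi(s) = s \wedge 1$.

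For (2), and with it the converse of (1), assume $\phi$ is a fixed-point operator and put $c := \phi(ab^{-1})b$. Writing $e := \phi(ab^{-1}) \in E(S)$ with $e \leq ab^{-1}$, monotonicity of multiplication gives $c = eb \leq ab^{-1}b \leq a$ (using $b^{-1}b \in E(S)$) and $c = eb \leq 1 \cdot b = b$, so $c$ is a common lower bound of $a$ and $b$. Now suppose $d \leq a$ and $d \leq b$. From $d \leq b$ we get $db^{-1} \leq bb^{-1}$, so $db^{-1}$ is idempotent (idempotents form an order ideal), while $d \leq a$ gives $db^{-1} \leq ab^{-1}$; hence (FPO2) yields $db^{-1} \leq \phi(ab^{-1})$, and therefore $db^{-1}b \leq \phi(ab^{-1})b = c$. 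Since $d \leq b$ forces $\mathbf{d}(d) \leq \mathbf{d}(b)$, we have $db^{-1}b = d\mathbf{d}(b) = d$, so $d \leq c$. Thus $c = a \wedge b$, $S$ is an inverse $\wedge$-monoid, and the formula of (2) holds. I expect this common-lower-bound step --- in particular recognising that $db^{-1}$ is idempotent and that $d\mathbf{d}(b) = d$ --- to be the only part that needs genuine care.

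Parts (4) and (5) are short. Since $\phi(s)$ is an idempotent with $\phi(s) \leq s$, and any idempotent $e \leq s$ satisfies $e = es = se$ directly from the two equivalent forms of the natural partial order (using $e^{-1}e = e = ee^{-1}$), taking $e = \phi(s)$ gives (4). For (5), $\phi(s) \leq s$ implies $\mathbf{d}(\phi(s)) \leq \mathbf{d}(s)$ and $\mathbf{r}(\phi(s)) \leq \mathbf{r}(s)$, and since $\phi(s)$ is idempotent $\mathbf{d}(\phi(s)) = \mathbf{r}(\phi(s)) = \phi(s)$, so $\phi(s) \leq s^{-1}s, ss^{-1}$. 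There is no serious obstacle anywhere in the argument, which is why the statement can essentially be quoted from Leech; the role of the proof here is simply to record the precise form of the fixed-point operator and the meet formula that will be used later.
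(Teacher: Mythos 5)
Your proof is correct. The paper offers no argument of its own here — it simply cites Leech and declares the remaining parts immediate from the definition — so there is nothing in the text to compare against; your write-up supplies exactly the omitted details, and it does so by the standard route: the observation that the elements below $1$ are precisely the idempotents gives (1)$\Rightarrow$ and (3), while the verification that $\phi(ab^{-1})b$ is the meet of $a$ and $b$ (the key step being that $db^{-1}$ is idempotent and $db^{-1}b=d$ for any common lower bound $d$) gives (2) and the converse of (1), with (4) and (5) following from $\phi(s)\leq s$ as you say.
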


\begin{remark} 
{\em 
Part (3) of Proposition~\ref{prop:leech} implies that in an inverse $\wedge$-monoid the fixed-point operator is uniquely defined.
We can therefore refer to {\em the} fixed-point operator.
It plays a key r\^ole in this paper.}
\end{remark}

In contrast to meets, not all pairs of elements in an inverse semigroup are eligible to have a join.
The {\em compatibility relation}, denoted by $\sim$, is defined by
$a \sim b$ if and only if $ab^{-1}$ and $a^{-1}b$ are both idempotents.
Observe that if $a,b \leq c$ then $a \sim b$.
Thus being compatible is a necessary condition for a pair of elements to have a join.
There is also a refinement of the compatability relation that will be important.
If  $ab^{-1}$ and $a^{-1}b$ are both zero we say that $a$ and $b$ are {\em orthogonal}.
In this case, we often write $a \perp b$.
If the join of an orthogonal pair of elements exists, we shall say that it is an {\em orthogonal join}.
An inverse semigroup is said to be {\em distributive} if it has all binary joins of compatible pairs of elements and multiplication distributes over any binary joins that exist.
If $A$ is a subset of a distributive inverse monoid, we denote by $A^{\vee}$ the set of joins of non-empty, finite compatible subsets of $A$.
An inverse monoid is said to be a {\em Boolean inverse monoid} if it
is a distributive inverse monoid whose semilattice of idempotents is a Boolean algebra.
If $e$ is an element of a Boolean algebra then $\bar{e}$ denotes its complement.
The theory of Boolean inverse monoids in general appears to be of great interest \cite{KLLR, W}.

In this paper, we need a weakening of the notion of factorizability.
A distributive inverse monoid $S$ is said to be {\em piecewise factorizable} if each element $s \in S$ may 
be written in the form $s = \bigvee_{i=1}^{m} s_{i}$ where for each $s_{i}$ there is a unit $g_{i}$ such that $s_{i} \leq g_{i}$.
This may be rewritten in the following form:
$$s = \bigvee_{i=1}^{m} g_{i}e_{i} $$
where $e_{i} = \mathbf{d}(s_{i})$.
In the factorizable case, we have that $s = gs^{-1}s$, which explains our choice of terminology.
The proof of the following is straightforward.

\begin{lemma} Let $S$ be a distributive inverse monoid.
Then $S$ is piecewise factorizable if and only if $S = (\mathsf{U}(S)^{\downarrow})^{\vee}$.
\end{lemma}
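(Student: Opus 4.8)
The plan is to prove both implications by directly unwinding the definitions of \emph{piecewise factorizable} and of the operation $A \mapsto A^{\vee}$; the single point that is not a pure definition-chase is the observation that the pieces occurring in a piecewise-factorizable decomposition automatically form a compatible family. I would begin by recording the identification $\mathsf{U}(S)^{\downarrow} = \{\, s \in S \colon s \leq g \text{ for some } g \in \mathsf{U}(S) \,\}$, which is immediate from the definition of downward closure. Under this identification, saying that $s$ can be written as $s = \bigvee_{i=1}^{m} s_{i}$ with each $s_{i}$ below some unit $g_{i}$ is exactly saying that $s$ is a join of a finite, non-empty family of elements of $\mathsf{U}(S)^{\downarrow}$.

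For the forward implication, assume $S$ is piecewise factorizable and fix $s \in S$, with a decomposition $s = \bigvee_{i=1}^{m} s_{i}$ where $s_{i} \leq g_{i} \in \mathsf{U}(S)$; in particular each $s_{i} \in \mathsf{U}(S)^{\downarrow}$ and each $s_{i} \leq s$. Since any two of the $s_{i}$ lie below the common element $s$, the remark that $a,b \leq c$ forces $a \sim b$ shows that $\{s_{1}, \dots, s_{m}\}$ is a compatible subset of $\mathsf{U}(S)^{\downarrow}$ whose join is $s$. Hence $s \in (\mathsf{U}(S)^{\downarrow})^{\vee}$, and since the reverse containment $(\mathsf{U}(S)^{\downarrow})^{\vee} \subseteq S$ is trivial, we obtain $S = (\mathsf{U}(S)^{\downarrow})^{\vee}$.

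For the converse, assume $S = (\mathsf{U}(S)^{\downarrow})^{\vee}$ and fix $s \in S$. By definition of $(\,\cdot\,)^{\vee}$, the element $s$ is the join of some finite, non-empty compatible subset $\{t_{1}, \dots, t_{m}\}$ of $\mathsf{U}(S)^{\downarrow}$, and by the identification above each $t_{i}$ satisfies $t_{i} \leq g_{i}$ for a suitable unit $g_{i}$; thus $s = \bigvee_{i=1}^{m} t_{i}$ is precisely a piecewise-factorizable decomposition of $s$. As $s$ was arbitrary, $S$ is piecewise factorizable. The only step demanding a moment's thought is the compatibility observation in the forward direction, together with the bookkeeping that in a distributive inverse monoid all the joins being invoked genuinely exist; neither presents a real obstacle, which is consistent with the statement being flagged as straightforward.
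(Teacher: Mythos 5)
Your proof is correct and is exactly the routine definition-unwinding the paper has in mind when it says the proof is straightforward (the paper omits it entirely); the only observation beyond pure bookkeeping, that the pieces $s_{i}$ are pairwise compatible because they share the upper bound $s$, is the same point any written-out proof would make.
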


Since we shall be working with inverse monoids that have binary meets and binary (compatible) joins, we need to understand how they interact.
The following lemma summarizes just how.

\begin{lemma}\label{lem:order_properties} 
Let $S$ be an inverse semigroup.
\begin{enumerate}

\item If $s \sim t$ then $s \wedge t$ exists and 
$$\mathbf{d}(s \wedge t) = \mathbf{d}(s)\mathbf{d}(t)
\mbox{ and }
\mathbf{r}(s \wedge t) = \mathbf{r}(s)\mathbf{r}(t)$$
and
$$s \wedge t = s \mathbf{d}(t) = t \mathbf{d}(s) = \mathbf{r}(s)t = \mathbf{r}(t)s.$$

\item Let $S$ be a distributive inverse semigroup.
If $a \vee b$ exists then
$$\mathbf{d}(a \vee b) = \mathbf{d}(a) \vee \mathbf{d}(b)
\mbox{ and }
\mathbf{r}(a \vee b) = \mathbf{r}(a) \vee \mathbf{r}(b).$$

\item Let $S$ be a distributive inverse semigroup.
Suppose that $\bigvee_{i=1}^{m} a_{i}$ and $c \wedge \left( \bigvee_{i=1}^{m} a_{i} \right)$ exist.
Then all the meets $c \wedge a_{i }$ exist as does the join $\bigvee_{i=1}^{m} c \wedge a_{i}$
and we have that
$$
c \wedge \left( \bigvee_{i=1}^{m} a_{i} \right)
= 
\bigvee_{i=1}^{m} c \wedge a_{i}.
$$

\item Let $S$ be a distributive inverse semigroup.
Suppose that $a$ and $b = \bigvee_{j=1}^{n} b_{j}$ are such that  all the meets $a \wedge b_{j}$ exist.
Then $a \wedge b$ exists and is equal to $\bigvee_{j} a \wedge b_{j}$.

\end{enumerate}
\end{lemma}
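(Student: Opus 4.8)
The plan is to prove the four parts in sequence, since each is a direct order-theoretic computation and the later parts reuse the device behind the earlier ones; part (1) holds in any inverse semigroup, while parts (2)--(4) genuinely use distributivity. For (1) I would exhibit the meet explicitly. From $s \sim t$ we have that $st^{-1}$ and $s^{-1}t$ are idempotents, and since an idempotent is its own inverse this gives $st^{-1} = ts^{-1}$ and $s^{-1}t = t^{-1}s$. Put $u = s\mathbf{d}(t)$. Then $u \leq s$ because $\mathbf{d}(t)$ is idempotent, and $u = st^{-1}t = ts^{-1}t = t(s^{-1}t) \leq t$; so $u$ is a lower bound of $s$ and $t$. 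If $v \leq s$ and $v \leq t$ then $\mathbf{d}(v) \leq \mathbf{d}(t)$, whence $s\mathbf{d}(t)\mathbf{d}(v) = s\mathbf{d}(v) = v$, so $v \leq u$; thus $s \wedge t = u$ exists. The remaining expressions $s\mathbf{d}(t) = t\mathbf{d}(s) = \mathbf{r}(s)t = \mathbf{r}(t)s$ then follow by substituting the two identities just noted (and the equality $s\mathbf d(t)=t\mathbf d(s)$ is just the symmetry $s\wedge t = t\wedge s$), and the formulas for $\mathbf{d}(s\wedge t)$ and $\mathbf{r}(s\wedge t)$ drop out by computing $u^{-1}u$ and $uu^{-1}$ using that idempotents commute.

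For (2): inversion is an order-automorphism of $(S,\leq)$, since $x \leq y \Leftrightarrow x^{-1}\leq y^{-1}$, so $(a\vee b)^{-1} = a^{-1}\vee b^{-1}$, and distributivity of multiplication over the (existing) joins expands
$$\mathbf{d}(a\vee b) = (a^{-1}\vee b^{-1})(a\vee b) = \mathbf{d}(a) \vee (a^{-1}b) \vee (b^{-1}a) \vee \mathbf{d}(b),$$
all the intermediate joins existing by distributivity. The key observation is that $a^{-1}b$ is an idempotent (from $a\sim b$, which is forced by the existence of $a\vee b$) satisfying $\mathbf{d}(a)(a^{-1}b) = a^{-1}b$, so $a^{-1}b \leq \mathbf{d}(a)$, and symmetrically $b^{-1}a \leq \mathbf{d}(b)$. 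The two middle terms are therefore absorbed and $\mathbf{d}(a\vee b) = \mathbf{d}(a)\vee\mathbf{d}(b)$; the formula for $\mathbf{r}$ is dual (compute $(a\vee b)(a\vee b)^{-1}$).

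For (3): write $a = \bigvee_{i=1}^{m} a_{i}$ and $d = c\wedge a$, which exists by hypothesis. The crux is to show that each $c\wedge a_{i}$ exists and equals $a_{i}\mathbf{d}(d)$. Clearly $a_{i}\mathbf{d}(d) \leq a_{i}$, and $a_{i}\mathbf{d}(d) \leq a\mathbf{d}(d) = d \leq c$, so it is a lower bound of $c$ and $a_{i}$. Conversely, if $v \leq c$ and $v \leq a_{i}$, then $v \leq c\wedge a = d$ because $a_{i}\leq a$, hence $\mathbf{d}(v)\leq\mathbf{d}(d)$ and $v = a_{i}\mathbf{d}(v) = a_{i}\mathbf{d}(d)\mathbf{d}(v) \leq a_{i}\mathbf{d}(d)$; so $c\wedge a_{i} = a_{i}\mathbf{d}(d)$. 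Finally, distributivity gives $\bigvee_{i} (c\wedge a_{i}) = \bigvee_{i} a_{i}\mathbf{d}(d) = \left(\bigvee_{i} a_{i}\right)\mathbf{d}(d) = a\mathbf{d}(d) = d = c\wedge a$, so the left-hand join exists and equals $c\wedge\bigvee_{i} a_{i}$.

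For (4): the elements $a\wedge b_{j}$ all lie below $a$, hence form a pairwise — and therefore genuinely — compatible finite family, so in the distributive inverse monoid $S$ the join $c := \bigvee_{j=1}^{n}(a\wedge b_{j})$ exists (by induction from the binary case); moreover $c \leq a$, and $c \leq b$ since $a\wedge b_{j} \leq b_{j} \leq b$ for each $j$. For maximality, if $v \leq a$ and $v \leq b = \bigvee_{j} b_{j}$, then distributivity gives $v = b\mathbf{d}(v) = \bigvee_{j} b_{j}\mathbf{d}(v)$, and each $b_{j}\mathbf{d}(v) \leq b_{j}$ and $b_{j}\mathbf{d}(v) \leq b\mathbf{d}(v) = v \leq a$, so $b_{j}\mathbf{d}(v) \leq a\wedge b_{j}$; hence $v \leq \bigvee_{j}(a\wedge b_{j}) = c$. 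Thus $a\wedge b = c = \bigvee_{j}(a\wedge b_{j})$. The lemma is bookkeeping rather than conceptual, so there is no single hard step; the points needing care are verifying that the asserted joins actually exist (via distributivity together with the fact that a finite set with an upper bound is compatible) and, in (3), recognising that the existence of $c\wedge a_{i}$ must itself be proved — it is the explicit formula $c\wedge a_{i} = a_{i}\mathbf{d}(c\wedge a)$ that makes both this and the closing distributivity step go through.
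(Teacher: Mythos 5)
Your proof is correct, but it does not follow the paper's route everywhere, so a comparison is worth recording. For parts (1) and (2) the paper simply cites \cite[Lemma 1.4.11, Proposition 1.4.17]{Lawson98}; your direct verifications (the witness $u = s\mathbf{d}(t)$ together with $st^{-1}=ts^{-1}$, $s^{-1}t=t^{-1}s$, and the expansion of $(a^{-1}\vee b^{-1})(a\vee b)$ with absorption of $a^{-1}b\leq \mathbf{d}(a)$, $b^{-1}a\leq\mathbf{d}(b)$) are exactly the standard arguments behind those citations, so nothing is lost. The genuine divergence is in part (3): the paper first proves the binary case $m=2$ via two auxiliary identities ($(x\wedge y)\mathbf{d}(y)=x\mathbf{d}(y)\wedge y$ and $(x\vee y)\mathbf{d}(x)=x$) and then inducts, whereas you identify each meet explicitly as $c\wedge a_{i}=a_{i}\,\mathbf{d}(c\wedge a)$ and finish with one application of distributivity, handling all $m$ simultaneously with no induction; this is shorter and, to my eye, cleaner, while the paper's version isolates reusable identities. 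In part (4) the paper reduces to part (3) by applying it to $x = x\wedge b$, whereas you argue directly from $v = b\mathbf{d}(v)=\bigvee_{j} b_{j}\mathbf{d}(v)$ and $b_{j}\mathbf{d}(v)\leq a\wedge b_{j}$; the two arguments rest on the same absorption trick and are of equal length. One small point to keep in mind: when you form $\bigvee_{j}(a\wedge b_{j})$ from pairwise compatibility, the inductive construction needs the partial joins to stay compatible with the remaining terms; this is secured here because all the $a\wedge b_{j}$ lie below the common upper bound $a$ (or because in a distributive inverse semigroup a join of elements compatible with $x$ is again compatible with $x$), and the paper's own proof glosses this at the same level, so it is not a gap, merely worth a clause.
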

\begin{proof} (1) \cite[Lemma 1.4.11]{Lawson98}.

(2) \cite[Proposition 1.4.17]{Lawson98}.

(3)  We begin by proving the case where $i = 2$.
Specifically,  we assume that $a \vee b$ and $c \wedge (a \vee b)$ exist.
This is just the finitary version of \cite{Res3} but needs proving.
We begin with two auxiliary results.

Suppose that $x \wedge y$ exists.
We prove that $x  \mathbf{d} (y) \wedge y$ exists and that $(x \wedge y)\mathbf{d} (y) = x\mathbf{d} (y) \wedge y$.
It is immediate that $(x \wedge y)\mathbf{d} (y) \leq x \mathbf{d}(y), y$.
Suppose now that $u \leq x\mathbf{d} (y), y$.
Then $u \leq x,y$ and so $u \leq x \wedge y$.
Thus $u\mathbf{d} (y) \leq (x \wedge y)\mathbf{d} (y)$.
But $u \leq y$ implies that $u = \mathbf{r} (u)y$ and so $u \mathbf{d} (y) = u$.
Hence $u \leq  (x \wedge y)\mathbf{d} (y)$.
It follows that  $(x \wedge y)\mathbf{d} (y) = x\mathbf{d} (y) \wedge y$, as claimed.

Suppose that $x \vee y$ exists.
We prove that $(x \vee y)\mathbf{d} (x) = x$.
Clearly, $x \leq (x \vee y)\mathbf{d} (x)$.
But $\mathbf{d} ((x \vee y)\mathbf{d} (x)) = \mathbf{d} (x \vee y)\mathbf{d} (x) = \mathbf{d} (x)$
since $\mathbf{d} (x) \leq \mathbf{d} (x \vee y)$.
But two elements bounded above having the same domain are equal.

We show that $c \wedge a$ exists.
Let $x \leq a,c$.
Then $x \leq a \vee b, c$ and so $x \leq c \wedge (a \vee b)$.
Thus $x \mathbf{d} (a) \leq (c \wedge (a \vee b))\mathbf{d} (a)$.
But $x\mathbf{d} (a) = x$.
It follows that $x \leq  (c \wedge (a \vee b))\mathbf{d} (a)$.
But $ (c \wedge (a \vee b))\mathbf{d} (a) \leq c,a$.
Thus $c \wedge a = (c \wedge (a \vee b))\mathbf{d} (a)$.
By symmetry, $c \wedge b$ exists.

Now $c \wedge a \leq a$ and $c \wedge b \leq b$ and so
$c \wedge a, c \wedge b \leq a \vee b$.
It follows that $c \wedge a \sim c \wedge b$ and so $(c \wedge a) \vee (c \wedge b)$ exists.

Observe that $(c \wedge a) \vee (c \wedge b) \leq a \vee b, c$.
Now let $x \leq c, a \vee b$.
From $x \leq a \vee b$ we get that $x = (a \vee b)\mathbf{d} (x)$.
From $x \leq a \vee b$ we get that $x \mathbf{d} (a) \leq (a \vee b)\mathbf{d} (a) = a$.
Thus $x\mathbf{d} (a) \leq c\mathbf{d} (a)$.
Hence $x \mathbf{d} (a) \leq c\mathbf{d} (a) \wedge a$.
Similarly, $x \mathbf{d} (b) \leq c\mathbf{d} (b) \wedge b$.
It follows that $x\mathbf{d} (a) \vee x\mathbf{d} (b) \leq (c\mathbf{d} (a) \wedge a) \vee (c \mathbf{d} (b) \wedge b)$.
We therefore get that $x \leq (c \wedge a)\mathbf{d} (a) \vee (c \wedge b)\mathbf{d} (b) \leq (c \wedge a) \vee (c \wedge b)$,
as required.

That proves the case where $i =2$.
The  general case is proved by induction.

(4)  Since $a \wedge b_{j} \leq b_{j}$ and $a \wedge b_{k} \leq b_{k}$ and $b_{j} \sim b_{k}$,
we have that  $a \wedge b_{j} \sim a \wedge b_{k}$.
Thus $c = \bigvee_{j=1}^{n} a \wedge b_{j}$ is defined. 
Clearly, $c \leq a, b_{j}$.
Let $x \leq a,b$.
Then $x = x \wedge b$.
Thus by part (3) above, we have that $x \wedge b_{j}$ exists for all $j$,
that $\bigvee_{j=1}^{n} x \wedge b_{j}$ exists and that
$x = x \wedge b = \bigvee_{j=1}^{n} x \wedge b_{j}$.
But $x \wedge b_{j} \leq a \wedge b_{j}$.
It follows that $x \leq c$, as required. 
\end{proof}

We shall need, in particular, the following corollary of part (4) above.

\begin{corollary}\label{cor:QI} Let $S$ be a distributive inverse semigroup.
Let $a = \bigvee_{i=1}^{m} a_{i}$ and $b = \bigvee_{j=1}^{n} b_{j}$ and suppose that all meets $a_{i} \wedge b_{j}$ exist.
Then $\bigvee_{i,j} a_{i} \wedge b_{j}$ exists as does $a \wedge b$ and we have that
$a \wedge b = \bigvee_{i,j} a_{i} \wedge b_{j}$.
\end{corollary}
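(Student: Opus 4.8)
The plan is to apply part~(4) of Lemma~\ref{lem:order_properties} twice, once in each variable, and then to amalgamate the resulting nested joins into a single join over all pairs.

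First I would fix $i$ and apply part~(4) to the element $a_{i}$ and the join $b = \bigvee_{j=1}^{n} b_{j}$. By hypothesis all the meets $a_{i} \wedge b_{j}$ exist, so part~(4) yields that $a_{i} \wedge b$ exists and $a_{i} \wedge b = \bigvee_{j=1}^{n} a_{i} \wedge b_{j}$; in particular, each meet $a_{i} \wedge b$ exists. Next I would apply part~(4) again, this time to the element $b$ and the join $a = \bigvee_{i=1}^{m} a_{i}$, using that the meets $b \wedge a_{i} = a_{i} \wedge b$ have just been shown to exist. This gives that $a \wedge b$ exists (recall $\wedge$ is commutative) and that $a \wedge b = \bigvee_{i=1}^{m} (a_{i} \wedge b) = \bigvee_{i=1}^{m} \bigvee_{j=1}^{n} (a_{i} \wedge b_{j})$.

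It then remains to identify this iterated join with the single join $\bigvee_{i,j} a_{i} \wedge b_{j}$. Here I would invoke the elementary fact, valid in any poset, that if a family is partitioned into blocks, each block has a join, and the join of those block-joins exists, then that join is the least upper bound of the whole family; applying it to the blocks indexed by $i$ shows $\bigvee_{i,j} a_{i} \wedge b_{j}$ exists and equals $\bigvee_{i} \bigvee_{j} a_{i} \wedge b_{j}$. (Compatibility of the whole family is not an obstruction: each $a_{i} \wedge b_{j}$ lies below $b$, and elements bounded above by a common element are compatible.) The only point that needs a little care — and the sole genuine obstacle — is precisely this bookkeeping: one must be sure that the nested joins and the global join all exist before equating them, and that is exactly what the two applications of part~(4), together with the observation on compatibility, supply.
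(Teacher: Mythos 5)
Your proof is correct and is essentially the argument the paper intends: the corollary is stated as an immediate consequence of part~(4) of Lemma~\ref{lem:order_properties}, and your two applications of that part (once in each variable), followed by the routine poset observation that the iterated join equals the join over all pairs, is exactly the expected route. The remark that compatibility of the family $a_{i}\wedge b_{j}$ comes for free from the common upper bound $b$ is a nice touch, though not strictly needed once the iterated join is in hand.
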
 

The two results above enable us to manipulate meets and joins in Boolean inverse $\wedge$-monoids
almost as easily as we would in a Boolean algebra.

\begin{proposition}\label{prop:building} Let $S$ be a distributive inverse monoid and let $T$ be an inverse submonoid.
\begin{enumerate} 

\item  $T^{\vee}$ is a distributive inverse monoid.

\item Suppose that $T$ is also a $\wedge$-monoid. Then $T^{\vee}$ is also a $\wedge$-monoid.

\end{enumerate}
\end{proposition}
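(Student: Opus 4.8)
The plan is to realise $T^{\vee}$ concretely as the set of joins $\bigvee_{i=1}^{m}a_{i}$ of finite compatible families drawn from $T$, and to verify every axiom by pushing all computations down into $T$ using the distributivity of $S$ together with Lemma~\ref{lem:order_properties}. Two elementary facts will be used throughout. \textbf{Fact (i):} if $s\leq s'$, $t\leq t'$ and $s'\sim t'$, then $s\sim t$; indeed $st^{-1}\leq s'(t')^{-1}\in E(S)$, and since the idempotents form an order ideal $st^{-1}\in E(S)$, and likewise $s^{-1}t\in E(S)$. \textbf{Fact (ii):} for a compatible pair $s,t\in T$ one has $s\wedge t=s\mathbf{d}(t)$ by Lemma~\ref{lem:order_properties}(1), so the meet exists, lies in $T$ (as $\mathbf{d}(t)\in E(T)$), and is the same whether computed in $T$ or in $S$.

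For part (1) I would check closure of $T^{\vee}$ under the monoid operations. We have $1=\bigvee\{1\}\in T^{\vee}$; inversion gives $(\bigvee_{i}a_{i})^{-1}=\bigvee_{i}a_{i}^{-1}$, and $\{a_{i}^{-1}\}$ is again a compatible family in $T$ since inversion preserves the natural partial order and compatibility; multiplication gives $(\bigvee_{i}a_{i})(\bigvee_{j}b_{j})=\bigvee_{i,j}a_{i}b_{j}$ by distributivity of $S$, where the $a_{i}b_{j}$ lie in $T$ and are pairwise compatible because each $a_{i}b_{j}\leq ab$; and if $x=\bigvee_{i}a_{i}$ and $y=\bigvee_{j}b_{j}$ are compatible elements of $T^{\vee}$, then Fact (i) shows $\{a_{i}\}\cup\{b_{j}\}$ is a compatible family in $T$, whose join is $x\vee y$, so $x\vee y\in T^{\vee}$. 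Distributivity of multiplication over these joins is inherited from $S$, and joins computed in $T^{\vee}$ coincide with those of $S$ precisely because $T^{\vee}$ is closed under them; this yields (1).

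For part (2), assume $T$ is a $\wedge$-monoid and take $a=\bigvee_{i=1}^{m}a_{i}$ and $b=\bigvee_{j=1}^{n}b_{j}$ in $T^{\vee}$ with $a_{i},b_{j}\in T$. Put $c=\bigvee_{i,j}(a_{i}\wedge b_{j})$, the meets being taken in $T$; every member lies below $a$, so the family is compatible, $c$ exists and lies in $T^{\vee}$, and plainly $c\leq a,b$. To see $c=a\wedge b$ in $T^{\vee}$, let $x\in T^{\vee}$ with $x\leq a,b$; writing $x=\bigvee_{k}x_{k}$ with $x_{k}\in T$, it suffices to treat $x\in T$. From $x\leq\bigvee_{i}a_{i}$ and Lemma~\ref{lem:order_properties}(4) — each $x\wedge a_{i}$ existing by Fact (ii), being a meet of a compatible pair — we get $x=\bigvee_{i}(x\wedge a_{i})$; applying the same to each $x\wedge a_{i}\leq\bigvee_{j}b_{j}$ gives $x=\bigvee_{i,j}\big((x\wedge a_{i})\wedge b_{j}\big)$. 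Each term lies in $T$ and below both $a_{i}$ and $b_{j}$, hence below the $T$-meet $a_{i}\wedge b_{j}$, so $x\leq\bigvee_{i,j}(a_{i}\wedge b_{j})=c$, as required.

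The point needing care — and the chief obstacle — is that $S$ is not assumed to be a $\wedge$-monoid, so the meet $a_{i}\wedge b_{j}$ taken in $T$ is a priori merely the largest common lower bound inside $T$ and need not be a meet in $S$. The argument must be arranged so that every meet actually manipulated is a meet of a \emph{compatible} pair, which by Fact (ii) is absolute and stays inside $T$; the possibly non-absolute $T$-meets $a_{i}\wedge b_{j}$ are allowed to enter only at the final comparison step. Keeping the compatibility bookkeeping correct throughout, so that all the indicated joins and meets genuinely exist, is the substance of the proof; everything else is formal.
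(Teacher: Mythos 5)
Your proof is correct, and for part (1) it is exactly the ``routine'' verification the paper leaves to the reader. For part (2) the paper simply says the claim is immediate from Corollary~\ref{cor:QI}, i.e.\ it treats the meets $a_{i}\wedge b_{j}$ supplied by the hypothesis that $T$ is a $\wedge$-monoid as meets to which that corollary (a statement about meets in the ambient $S$) applies; your worry that a $T$-meet need not be an $S$-meet when $T$ is merely an inverse submonoid is legitimate at this level of generality, and your argument repairs it by re-running the Corollary~\ref{cor:QI} computation with only compatible-pair meets (which are absolute, being given by $s\mathbf{d}(t)$) and letting the possibly non-absolute $T$-meets enter only at the final comparison, thereby exhibiting $\bigvee_{i,j}(a_{i}\wedge_{T} b_{j})$ as the greatest lower bound \emph{within} $T^{\vee}$ --- which is all the statement asks. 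It is worth noting that in the paper's actual use of this proposition (Step~3 of Theorem~\ref{thm:armatures-pf}) the submonoid $T=G^{\downarrow}$ is wide, and for a wide inverse submonoid any $S$-lower bound $x$ of $s,t\in T$ satisfies $x=s\mathbf{d}(x)\in T$, so $T$-meets are automatically $S$-meets and the paper's direct appeal to Corollary~\ref{cor:QI} is harmless there; your version buys the proposition in the generality in which it is stated, at the cost of a slightly longer argument.
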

\begin{proof} (1) It is routine to verify that $T^{\vee}$ is an inverse submonoid of $S$ that contains all joins of finite compatible subsets.

(2) This is immediate by Corollary~\ref{cor:QI}.
\end{proof}

The following lemma often enables us to work with orthogonal joins rather than arbitrary joins.

\begin{lemma}\label{lem:bar} Let $S$ be a Boolean inverse monoid.
\begin{enumerate}

\item Suppose that $a \sim b$ and $\mathbf{d}(a) \perp \mathbf{d}(b)$.
Then $\mathbf{r}(a) \perp \mathbf{r}(b)$.

\item Suppose that $s = \bigvee_{i=1}^{m}s_{i}$.
Then we can write $s = \bigvee_{j=1}^{n}t_{j}$, an orthogonal join,
where for each $j$ there exists $i_{j}$ such that $t_{j} \leq s_{i_{j}}$.

\end{enumerate}
\end{lemma}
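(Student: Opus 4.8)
The plan is to derive part~(1) from Lemma~\ref{lem:order_properties}(1) and then use part~(1) itself as the engine for the orthogonalisation in part~(2).

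For part~(1): since $a \sim b$, Lemma~\ref{lem:order_properties}(1) tells us that $a \wedge b$ exists with $\mathbf{d}(a \wedge b) = \mathbf{d}(a)\mathbf{d}(b)$ and $\mathbf{r}(a \wedge b) = \mathbf{r}(a)\mathbf{r}(b)$. The hypothesis $\mathbf{d}(a) \perp \mathbf{d}(b)$ says precisely that $\mathbf{d}(a)\mathbf{d}(b) = 0$, so $a \wedge b = (a \wedge b)\mathbf{d}(a \wedge b) = 0$, and therefore $\mathbf{r}(a)\mathbf{r}(b) = \mathbf{r}(a \wedge b) = 0$; that is, $\mathbf{r}(a) \perp \mathbf{r}(b)$.

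For part~(2), I would disjointify in the Boolean algebra $E(S)$. Set $e_i = \mathbf{d}(s_i)$, and define $f_1 = e_1$ and $f_i = e_i \wedge \overline{e_1} \wedge \cdots \wedge \overline{e_{i-1}}$ for $i \geq 2$. Using De Morgan's law and the Boolean identity $x \vee (y \wedge \bar{x}) = x \vee y$, an easy induction gives $\bigvee_{i=1}^{k} f_i = \bigvee_{i=1}^{k} e_i$ for all $k$; in particular $\bigvee_{i=1}^{m} f_i = \bigvee_{i=1}^{m} e_i = \mathbf{d}(s)$, the last step by Lemma~\ref{lem:order_properties}(2). The $f_i$ are pairwise orthogonal, since $f_i \leq e_i$ and $f_j \leq \overline{e_i}$ whenever $i < j$. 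Now put $t_i = s_i f_i$; as $f_i \leq e_i = \mathbf{d}(s_i)$, this is a restriction of $s_i$, so $t_i \leq s_i$ and $\mathbf{d}(t_i) = f_i$.

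To finish, I would check that the $t_i$ are pairwise orthogonal and that $\bigvee_{i=1}^{m} t_i = s$. For orthogonality: $s_i, s_j \leq s$ forces $s_i \sim s_j$, hence $t_i \sim t_j$, and then $t_i^{-1}t_j$ is an idempotent lying below $\mathbf{d}(t_i)\mathbf{d}(t_j) = f_i f_j = 0$, so $t_i^{-1}t_j = 0$; applying part~(1) to $t_i \sim t_j$ with $\mathbf{d}(t_i) \perp \mathbf{d}(t_j)$ yields $\mathbf{r}(t_i) \perp \mathbf{r}(t_j)$, and the symmetric argument gives $t_i t_j^{-1} = 0$, so $t_i \perp t_j$. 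Since $S$ is Boolean the orthogonal join $t = \bigvee_{i=1}^{m} t_i$ exists; it satisfies $t \leq s$ (each $t_i \leq s$) and $\mathbf{d}(t) = \bigvee_i \mathbf{d}(t_i) = \bigvee_i f_i = \mathbf{d}(s)$ by Lemma~\ref{lem:order_properties}(2), so $t = s$ because two elements below a common element with equal domain idempotents coincide. Deleting the indices with $f_i = 0$ produces the required orthogonal decomposition $s = \bigvee_{j=1}^{n} t_j$ with each $t_j \leq s_{i_j}$. The only delicate point is the bookkeeping behind $\bigvee_i f_i = \mathbf{d}(s)$: it is precisely the triangular, rather than naive pairwise, choice of the $f_i$ that keeps the orthogonalised join equal to $s$, and this is where most of the care goes.
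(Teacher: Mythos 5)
Your proof is correct, and it differs from the paper's in a way worth noting. For part~(1) the paper simply declares the claim straightforward; your derivation via Lemma~\ref{lem:order_properties}(1) (the meet $a \wedge b$ exists, has domain $\mathbf{d}(a)\mathbf{d}(b)=0$, hence is zero, hence has zero range) is exactly the intended kind of argument. For part~(2) the paper follows Paterson: it treats the binary case $m=2$ by the \emph{symmetric} refinement $t_{1}=s_{1}\overline{\mathbf{d}(s_{2})}$, $t_{2}=s_{1}\mathbf{d}(s_{2})=s_{2}\mathbf{d}(s_{1})$, $t_{3}=s_{2}\overline{\mathbf{d}(s_{1})}$, checks $s_{1}=t_{1}\vee t_{2}$ and $s_{2}=t_{2}\vee t_{3}$ directly, and then handles general $m$ by induction, re-refining at each step. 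You instead disjointify all the domains at once by the triangular (greedy) scheme $f_{i}=e_{i}\overline{e_{1}}\cdots\overline{e_{i-1}}$, set $t_{i}=s_{i}f_{i}$, get pairwise orthogonality from compatibility together with part~(1), and then verify $\bigvee_{i}t_{i}=s$ globally by the domain comparison $\mathbf{d}(t)=\bigvee_{i}f_{i}=\mathbf{d}(s)$ with $t\leq s$. Your route avoids the induction and the bookkeeping of repeatedly refining an orthogonal family, at the cost of the final equality being checked indirectly (via domains) rather than by exhibiting each $s_{i}$ as a join of pieces; the paper's version keeps the refinement explicit and symmetric in the two elements, which is sometimes convenient elsewhere. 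Both arguments rest on the same two pillars, part~(1) and Lemma~\ref{lem:order_properties}, so either is acceptable; your only implicit steps (the $m$-fold extension of Lemma~\ref{lem:order_properties}(2) and the identity $t=s\mathbf{d}(t)$ for $t\leq s$ with $\mathbf{d}(t)=\mathbf{d}(s)$) are routine and are used without comment in the paper as well.
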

\begin{proof} (1) Straightfoward.

(2) The argument is similar to \cite[Proposition~4.4.1]{Paterson}.
We deal first with the case $m = 2$, the base case.
Define 
$t_{1} = s_{1} \overline{\mathbf{d}(s_{2})}$,
$t_{2} = s_{1}\mathbf{d}(s_{2}) = s_{2} \mathbf{d}(s_{1})$
and
$t_{3} = s_{2} \overline{\mathbf{d}(s_{1})}$.
Then
$\mathbf{d}(t_{1}) = \mathbf{d}(s_{1}) \overline{\mathbf{d}(s_{2})}$,
$\mathbf{d}(t_{2}) = \mathbf{d}(s_{1}) \mathbf{d}(s_{2})$
and
$\mathbf{d}(t_{1}) = \overline{\mathbf{d}(s_{1})}\mathbf{d}(s_{2})$.
By part (1), we deduce that $t_{1},t_{2},t_{3}$ are pairwise orthogonal and by Lemma~\ref{lem:order_properties},
we have that $s_{1} = t_{1} \vee t_{2}$ and $s_{2} = t_{2} \vee t_{3}$.
Thus $s = t_{1} \vee t_{2} \vee t_{3}$, an orthogonal join.
For the general case, we use induction.
Let $s = (\bigvee_{j=1}^{n}t_{j}) \vee s_{2}$ where $s_{1} =  \bigvee_{ij=1}^{n}t_{j}$ is an orthogonal join.
We now apply the base case and use the fact that restrictions of orthogonal joins are orthogonal joins.
\end{proof}

The following properties of the fixed-point operator provide important motivation for later constructions.

\begin{lemma}\label{lem:fpo} Let $S$ be a Boolean inverse $\wedge$-monoid with fixed-point operator $\phi$.
\begin{enumerate}
\item If $g$ is a unit and $e$ is an idempotent then $\phi (ge) = \phi (g)e$.
\item If $s = \bigvee_{i=1}^{m} s_{i}$ then $\phi (s) =  \bigvee_{i=1}^{m} \phi (s_{i})$.
\end{enumerate}
\end{lemma}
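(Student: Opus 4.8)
The plan is to deduce both parts from Proposition~\ref{prop:leech}, above all from the identity $\phi(s) = s \wedge 1$, together with the order calculus of Lemma~\ref{lem:order_properties}. It is worth recording at the outset the following reformulation of (FPO1) and (FPO2): in an inverse $\wedge$-monoid, $\phi(s)$ is precisely the largest idempotent beneath $s$. Indeed, (FPO1) says that $\phi(s)$ is an idempotent with $\phi(s) \leq s$, and (FPO2) says that every idempotent beneath $s$ lies beneath $\phi(s)$. This characterization is the tool I would use for part (1).

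For part (1), I would show directly that $\phi(g)e$ is the largest idempotent beneath $ge$. It is an idempotent, being a product of the commuting idempotents $\phi(g)$ and $e$; and from $\phi(g) \leq g$ we get $\phi(g)e \leq ge$ on multiplying by $e$. Now suppose $f$ is an idempotent with $f \leq ge$. Since $ge \leq g$, axiom (FPO2) applied to $g$ gives $f \leq \phi(g)$. On the other hand, because $g$ is a unit we have $\mathbf{d}(ge) = g^{-1}ge = e$, so $f = \mathbf{d}(f) \leq \mathbf{d}(ge) = e$. As $f$, $\phi(g)$ and $e$ are all idempotents, it follows that $f \leq \phi(g) \wedge e = \phi(g)e$. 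Hence $\phi(g)e$ is the largest idempotent beneath $ge$, and therefore $\phi(ge) = \phi(g)e$.

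For part (2), first note that the join $\bigvee_{i=1}^{m} \phi(s_{i})$ is defined: any two idempotents are compatible, so in the distributive inverse monoid $S$ this finite join of idempotents exists. Now simply compute, using Proposition~\ref{prop:leech}(3) and Lemma~\ref{lem:order_properties}(4) applied with $a = 1$ (all the meets $1 \wedge s_{i} = \phi(s_{i})$ existing because $S$ is a $\wedge$-monoid):
\[
\phi(s) = s \wedge 1 = \left( \bigvee_{i=1}^{m} s_{i} \right) \wedge 1 = \bigvee_{i=1}^{m} (s_{i} \wedge 1) = \bigvee_{i=1}^{m} \phi(s_{i}).
\]

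There is no serious obstacle here; the only points that need care are bookkeeping ones --- checking that each join or meet exists before manipulating it, and recognising that Lemma~\ref{lem:order_properties}(4) is exactly what licenses distributing the meet with $1$ across the join in part (2). If one prefers, part (1) can equally be verified by a direct manipulation of the natural partial order: one shows $\phi(g)e \leq ge \wedge 1 = \phi(ge)$ from $\phi(g)e \leq ge$ and $\phi(g)e \leq e \leq 1$, and conversely $\phi(ge) \leq \phi(g)$ (since $\phi(ge) \leq g$ and $\phi(ge) \leq 1$) together with $\phi(ge) = \phi(ge)e$ (as $\mathbf{d}(\phi(ge)) \leq \mathbf{d}(ge) = e$) gives $\phi(ge) \leq \phi(g)e$. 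The ``largest idempotent'' reformulation, however, makes the argument more transparent.
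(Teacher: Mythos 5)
Your proof is correct and takes essentially the same route as the paper's: part (1) is exactly the paper's argument, recast via the (valid) observation that $\phi(ge)$ is the largest idempotent beneath $ge$, with the same key steps $f\leq ge\leq g\Rightarrow f\leq\phi(g)$ and $f\leq\mathbf{d}(ge)=e$. Part (2) differs only cosmetically: you compute $\phi(s)=s\wedge 1=\bigvee_i(s_i\wedge 1)$ using part (4) of Lemma~\ref{lem:order_properties}, while the paper checks the two inequalities directly with (FPO2) and part (3) of that lemma; both rest on the same distributivity of meets over compatible joins.
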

\begin{proof} (1) Since $\phi (g) \leq g$ we have that $\phi (g)e \leq ge$.
Let $f \leq ge$ be any idempotent.
Observe that $f \leq e$.
Then $f \leq g$ and so $f \leq \phi (g)$ giving $f = ef \leq \phi (g)e$.

 (2) Clearly, $\bigvee_{i=1}^{m} \phi (s_{i}) \leq s$ and so $\bigvee_{i=1}^{m} \phi (s_{i}) \leq \phi (s)$.
Let $f \leq s$ where $f$ is an idempotent.
Then $f = \bigvee_{i=1}^{m} s_{i} \wedge f$ by Lemma~\ref{lem:order_properties}.
But $s_{i} \wedge f \leq \phi (s_{i})$.
Thus $f \leq \bigvee_{i=1}^{m} \phi (s_{i})$ and the result follows.
\end{proof}

Our focus in this paper will be on Boolean inverse $\wedge$-monoids.
A {\em morphism} between Boolean inverse $\wedge$-monoids 
is a unital semigroup homomorphism that maps zero to zero, preserves any binary compatible joins
and any binary meets.
The {\em kernel} of such a morphism is the preimage of zero.
An {\em ideal} $I$ in a semigroup $S$ is a subset that $a \in I$ and $s \in S$ implies that $as,sa \in I$.
An inverse semigroup that has no non-trivial ideals is said to be {\em $0$-simple}.
An ideal $I$ of a Boolean inverse monoid $S$ is said to be {\em $\vee$-closed} if
$a \vee b \in I$ whenever $a,b \in I$ and $a$ and $b$ are compatible.
A $\vee$-closed ideal will be called a {\em $\vee$-ideal}.
The following was observed by Ganna Kudryavtseva. 
See the comments before Theorem~4.18 of \cite{Lawson12}.
It should be stressed that homomorphisms in semigroup theory are rarely determined by their kernels.
Thus Boolean inverse $\wedge$-monoids are atypically `ring-like'.

\begin{proposition}\label{prop:kernels} Let $\theta \colon S \rightarrow T$ be a morphism of Boolean inverse $\wedge$-monoids.
Then the kernel of $\theta$ is a $\vee$-ideal, and $\theta$ is determined by its kernel. 
\end{proposition}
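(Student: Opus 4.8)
The plan is to treat the two assertions separately. That $K = \ker\theta = \theta^{-1}(0)$ is a $\vee$-ideal is a direct verification: for $a \in K$ and $s \in S$ we have $\theta(sa) = \theta(s)\theta(a) = 0$ and likewise $\theta(as) = 0$, so $K$ is an ideal; and if $a,b \in K$ with $a \sim b$, then $\theta(a \vee b) = \theta(a) \vee \theta(b) = 0$, so $K$ is closed under compatible joins.

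The substantive claim is that $\theta$ is ``determined by its kernel''. I read this as: the congruence $\{(a,b) : \theta(a) = \theta(b)\}$ depends only on $K$ — equivalently, $\theta$ factors as a surjection onto $S/K$ followed by an injection. (It is not literally true that two morphisms $S \to T$ with the same kernel coincide: the identity map and a non-trivial automorphism of a Boolean algebra both have kernel $\{0\}$.) To prove it I will establish the explicit formula
$$\theta(a) \leq \theta(b) \iff \mathbf{d}(a)\,\overline{\mathbf{d}(a \wedge b)} \in K ,$$
whose right-hand side manifestly depends only on $K$; applying it to $(a,b)$ and to $(b,a)$ then yields $\theta(a) = \theta(b) \iff \mathbf{d}(a)\overline{\mathbf{d}(a\wedge b)} \in K$ and $\mathbf{d}(b)\overline{\mathbf{d}(a\wedge b)} \in K$, which is the conclusion. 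The meet $a \wedge b$ is available precisely because $S$ is a $\wedge$-monoid, and this is exactly where that hypothesis is used.

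The formula is proved by reducing from arbitrary elements to idempotents in three steps. (i) Since $\theta$ preserves binary meets, $\theta(a \wedge b) = \theta(a) \wedge \theta(b)$, so $\theta(a) \leq \theta(b)$ holds iff $\theta(a \wedge b) = \theta(a)$. (ii) For any $c \leq a$ one has $c = a\,\mathbf{d}(c)$ in the natural partial order, hence $\theta(c) = \theta(a)\theta(\mathbf{d}(c))$; taking domain idempotents shows that $\theta(c) = \theta(a)$ iff $\theta(\mathbf{d}(c)) = \theta(\mathbf{d}(a))$ (the less obvious implication holds because then $\theta(c) = \theta(a)\theta(\mathbf{d}(a)) = \theta(a)$). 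Specialising to $c = a \wedge b$, steps (i)--(ii) reduce the problem to the equality $\theta(\mathbf{d}(a)) = \theta(\mathbf{d}(a\wedge b))$ of idempotents, where $\mathbf{d}(a) \geq \mathbf{d}(a\wedge b)$. (iii) The restriction of $\theta$ to $E(S)$ is a homomorphism of Boolean algebras (being unital and meet- and join-preserving, it also preserves complements), so for idempotents $e \geq f$ we have $\theta(e) = \theta(f) \iff \theta(e)\wedge\overline{\theta(f)} = 0 \iff \theta(e\bar{f}) = 0 \iff e\bar{f} \in K$. Combining the three steps gives the displayed equivalence.

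I expect the only genuine difficulty to be pinning down the correct reading of ``determined by its kernel'' and then recognising that $a \wedge b$ is the right auxiliary element; after that, each step is a one-line manipulation using the natural partial order, the fact that $\theta$ preserves products, inverses, meets and the identity, and complements in the Boolean algebra $E(S)$. The argument also makes transparent the remark that Boolean inverse $\wedge$-monoids are atypically ring-like: in a general inverse semigroup the meet $a \wedge b$ need not exist, and it is precisely its presence that allows the kernel to control the congruence.
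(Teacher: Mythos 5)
Your proof is correct. Note first that the paper does not actually prove this proposition: it is stated as an observation of Kudryavtseva with a pointer to the remarks before Theorem~4.18 of \cite{Lawson12}, so there is no in-text argument to compare with; your write-up supplies the missing details in what is essentially the standard way. Your reading of ``determined by its kernel'' (the induced congruence, equivalently the factorization through $S/K$, depends only on $K=\theta^{-1}(0)$ --- not that two morphisms with equal kernel literally coincide) is the intended, ring-like sense, and your automorphism example correctly rules out the literal reading. The three reduction steps all check out: $\theta$ preserves binary meets by the definition of morphism, so $\theta(a)\leq\theta(b)$ iff $\theta(a\wedge b)=\theta(a)$; since $a\wedge b\leq a$ one has $a\wedge b=a\,\mathbf{d}(a\wedge b)$, and preservation of inverses (automatic for homomorphisms of inverse semigroups) reduces this to the equality $\theta(\mathbf{d}(a\wedge b))=\theta(\mathbf{d}(a))$ of idempotents; and on $E(S)$ the restriction of $\theta$ preserves $0,1,\wedge,\vee$, hence complements, giving $\theta(e)=\theta(f)\Leftrightarrow e\bar{f}\in K$ for $f\leq e$. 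Combining yields your displayed criterion, whose right-hand side depends only on $K$, which is exactly what is needed; the verification that $K$ is a $\vee$-ideal is routine, as you say. The only thing worth adding is that your condition $\mathbf{d}(a)\overline{\mathbf{d}(a\wedge b)}\in K$ is equivalent to $a\,\overline{\mathbf{d}(a\wedge b)}\in K$ (since $x\in K$ iff $\mathbf{d}(x)\in K$ for an ideal $K$), which is the form in which this criterion usually appears in the literature.
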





In the light of Proposition~\ref{prop:kernels},  we define a Boolean inverse $\wedge$-monoid to be {\em $0$-simplifying}\footnote{The term {\em simplifying} is taken from Kumjian \cite{Kumjian}.}
if it has no non-trivial $\vee$-closed ideals.
Every $0$-simple Boolean inverse $\wedge$-monoid is $0$-simplifying, but not conversely:
counterexamples are the finite symmetric inverse monoids which are $0$-simplifying but not $0$-simple.

We now recall some classical inverse semigroup theory.
Let $S$ be an inverse semigroup.
Define $s  \, \mu \, t$ if and only if $ses^{-1} = tet^{-1}$ for all idempotents $e \in E(S)$.
Thus $s$ and $t$ induce the same conjugation maps on $E(S)$.
Observe that if $s \, \mu \, t$ then $s^{-1}s = t^{-1}t$ and $ss^{-1} = tt^{-1}$.
In fact, if $s^{-1}s = t^{-1}t$ and $ss^{-1} = tt^{-1}$ then to check that  $s  \, \mu \, t$ it is enough to verify  $ses^{-1} = tet^{-1}$ for all idempotents $e \leq s^{-1}s$.
The relation $\mu$ is a congruence and it separates idempotents meaning that if $e$ and $f$ are idempotents such that  $e \, \mu \, f$ then $e = f$.
In fact, it is the unique {\em maximum idempotent-separating congruence} on an inverse semigroup \cite{Lawson98}.
For each semilattice $E$, 
we define the inverse semigroup $T_{E}$ called the {\em Munn semigroup} of $E$.
This consists of all order-isomorphisms between the principal order ideals of $E$.
Observe that its semilattice of idempotents is isomorphic to $E$.
The congruence $\mu$ induces a homomorphism from $S$ to $T_{E(S)}$.
Denote by $Z(E(S))$ the centralizer of the idempotents in $S$.
Clearly, $E(S) \subseteq Z(E(S))$.
If $E(S) = Z(E(S))$ then $S$ is said to be {\em fundamental}.

\begin{theorem}\label{them:classic} \mbox{}
\begin{enumerate}
\item An inverse semigroup is fundamental if and only if $\mu$ is equality.
\item Let $E$ be a semilattice.
The wide inverse subsemigroups of $T_{E}$ are fundamental and every fundamental 
inverse semigroup with semilattice of idempotents isomorphic to $E$
is isomorphic to such a wide inverse subsemigroup.
\end{enumerate}
\end{theorem}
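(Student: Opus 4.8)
This is the classical theory of fundamental inverse semigroups together with the Munn representation theorem, so the plan is to reconstruct its two standard ingredients; a convenient reference for the details is \cite[Sections~5.2--5.4]{Lawson98}. Throughout I would use the facts already recorded in the excerpt: that $\mu$ is a congruence which separates idempotents, and that the conjugation action induces a homomorphism $\delta \colon S \rightarrow T_{E(S)}$.

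For (1) the key observation is that $Z(E(S))$ consists precisely of the elements $s$ with $\mathbf{d}(s) = \mathbf{r}(s)$ and $s \, \mu \, \mathbf{d}(s)$. Indeed, if $s$ centralises $E(S)$ then, since $\mathbf{r}(s) s = s$ and $s$ commutes with $\mathbf{r}(s)$, we get $s \, \mathbf{r}(s) = s$ and hence $\mathbf{d}(s) = \mathbf{d}(s)\mathbf{r}(s) \leq \mathbf{r}(s)$; symmetrically $\mathbf{r}(s) \leq \mathbf{d}(s)$, so $\mathbf{d}(s) = \mathbf{r}(s) =: e$, and then $sgs^{-1} = g\,\mathbf{r}(s) = ege$ for every idempotent $g$, i.e. $s \, \mu \, e$. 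Thus if $\mu$ is equality every such $s$ equals $e$, so $Z(E(S)) = E(S)$ and $S$ is fundamental. Conversely, assume $S$ is fundamental and $s \, \mu \, t$. Put $u = s^{-1}t$; then $\mathbf{d}(u) = \mathbf{r}(u) = \mathbf{d}(s)$ (using $\mathbf{d}(s) = \mathbf{d}(t)$, $\mathbf{r}(s) = \mathbf{r}(t)$) and, using $tgt^{-1} = sgs^{-1}$ for all idempotents $g$, one finds $ugu^{-1} = (s^{-1}s)g(s^{-1}s)$ for every idempotent $g$; together with $\mathbf{d}(u) = \mathbf{r}(u)$ this forces $u$ to centralise $E(S)$, so $u \in E(S)$ by fundamentality. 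Since $\mu$ is a congruence, $u = s^{-1}t \, \mu \, s^{-1}s$, and as $\mu$ separates idempotents we conclude $u = s^{-1}s$, whence $s = s\,(s^{-1}t) = \mathbf{r}(s)t = \mathbf{r}(t)t = t$. So $\mu$ is equality.

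For (2), note first that $E(T_E) \cong E$ via $e \mapsto \mathrm{id}_{e^{\downarrow}}$, so a wide inverse subsemigroup $U$ of $T_E$ has $E(U) = E(T_E)$. I would then show $T_E$ is itself fundamental: by the argument in (1), an element $\alpha \in Z(E(T_E))$ is an order-automorphism of some $e^{\downarrow}$ with $\alpha \, \mu \, \mathrm{id}_{e^{\downarrow}}$, and conjugating $\mathrm{id}_{x^{\downarrow}}$ by the order-isomorphism $\alpha$ (for $x \leq e$) gives $\mathrm{id}_{\alpha(x)^{\downarrow}} = \mathrm{id}_{x^{\downarrow}}$, forcing $\alpha(x) = x$ for all $x \leq e$, so $\alpha = \mathrm{id}_{e^{\downarrow}}$ is idempotent. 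Since $U$ is wide, an element of $U$ that centralises $E(U)$ centralises $E(T_E)$ and is therefore idempotent; hence $U$ is fundamental. For the converse, given a fundamental $S$ with $E(S) \cong E$, I would take the Munn representation $\delta \colon S \rightarrow T_{E(S)}$, $s \mapsto \big(e \mapsto ses^{-1}\big) \colon \mathbf{d}(s)^{\downarrow} \rightarrow \mathbf{r}(s)^{\downarrow}$: this is a homomorphism whose induced congruence is $\mu$ and which restricts to the isomorphism $E(S) \rightarrow E(T_{E(S)})$. By (1) the congruence $\mu$ is trivial, so $\delta$ is injective; its image contains all of $E(T_{E(S)})$ and so is a wide inverse subsemigroup, and composing with a fixed isomorphism $E(S) \cong E$ realises $S$ as a wide inverse subsemigroup of $T_E$.

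The only place genuine (if routine) work hides is in the properties of $\delta$ invoked above: that $e \mapsto ses^{-1}$ really is an order isomorphism $\mathbf{d}(s)^{\downarrow} \rightarrow \mathbf{r}(s)^{\downarrow}$ with inverse $f \mapsto s^{-1}fs$, that $\delta$ is multiplicative, and that $\delta(s) = \delta(t)$ holds exactly when $s \, \mu \, t$. These are standard manipulations with the natural partial order, carried out in \cite{Lawson98}, and I expect this bookkeeping — rather than any conceptual difficulty — to be the main thing to get right.
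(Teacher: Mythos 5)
Your proposal is correct: the paper does not prove this result but simply recalls it as classical theory from \cite{Lawson98}, and your argument is precisely the standard one (the characterisation of $Z(E(S))$ via $\mu$, fundamentality of $T_{E}$, and the Munn representation $s \mapsto (e \mapsto ses^{-1})$ whose induced congruence is $\mu$). The routine verifications you defer at the end are indeed the only remaining bookkeeping and are carried out in \cite[Section~5.2]{Lawson98}.
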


The proof of the following is immediate.

\begin{lemma}\label{lem:finite_fundamental} A fundamental inverse semigroup is finite
if and only if its semilattice of idempotents is finite.
\end{lemma}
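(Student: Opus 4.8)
The plan is to prove the two implications separately, with the non-trivial direction resting entirely on the structure theorem for fundamental inverse semigroups (Theorem~\ref{them:classic}).

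One implication is immediate: if $S$ is finite then $E(S) \subseteq S$ is finite, and no hypothesis of fundamentality is needed here. For the converse, suppose $E = E(S)$ is finite and $S$ is fundamental. By Theorem~\ref{them:classic}(2), $S$ embeds as a (wide) inverse subsemigroup of the Munn semigroup $T_{E}$; equivalently, by Theorem~\ref{them:classic}(1) the congruence $\mu$ is equality, so the canonical homomorphism $S \to T_{E(S)}$ induced by $\mu$ is injective. In either phrasing, it suffices to show that $T_{E}$ is finite whenever $E$ is finite.

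To see this, recall that $T_{E}$ consists of all order-isomorphisms between principal order ideals of $E$. If $E$ is finite, then it has only finitely many principal order ideals $e^{\downarrow}$ (one for each $e \in E$), and each such $e^{\downarrow}$ is itself a finite set. Hence for any pair $e^{\downarrow}, f^{\downarrow}$ there are only finitely many order-isomorphisms $e^{\downarrow} \to f^{\downarrow}$ — indeed at most $|e^{\downarrow}|!$ bijections altogether — so $T_{E}$ is a finite set. Therefore $S$, being isomorphic to a subsemigroup of the finite semigroup $T_{E}$, is finite.

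I do not expect any genuine obstacle here: the content is entirely packaged in Theorem~\ref{them:classic}, and the only thing to check is the trivial cardinality bound on $T_{E}$ for finite $E$. If one wanted to avoid invoking the full structure theorem, an alternative is to argue directly that the map $s \mapsto (\,$restriction of $e \mapsto ses^{-1}$ to $(s^{-1}s)^{\downarrow})$ is injective on a fundamental $S$ and lands in the finite set $T_{E}$, but this is essentially a re-proof of Theorem~\ref{them:classic}, so citing it is cleaner.
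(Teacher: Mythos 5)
Your proof is correct and matches the paper's intent: the paper simply labels the lemma ``immediate,'' leaving exactly the argument you give, namely the trivial direction plus the embedding of a fundamental inverse semigroup into the Munn semigroup $T_{E}$ from Theorem~\ref{them:classic} together with the observation that $T_{E}$ is finite when $E$ is.
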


The following is well-known and the proof routine.

\begin{lemma}\label{lem:local_monoid} 
If an inverse semigroup is fundamental so too are all its local monoids.
\end{lemma}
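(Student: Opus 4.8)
The plan is to show directly that if $S$ is fundamental, then for any idempotent $e \in E(S)$ the local monoid $eSe$ is fundamental, using the characterization from Theorem~\ref{them:classic}(1) that a (monoid) inverse semigroup is fundamental precisely when its maximum idempotent-separating congruence $\mu$ is equality. So it suffices to take two elements $a, b \in eSe$ with $a \,\mu_{eSe}\, b$ — meaning $afa^{-1} = bfb^{-1}$ for all $f \in E(eSe)$ — and deduce that $a = b$. The natural route is to upgrade this to the statement $a \,\mu_S\, b$ and then invoke fundamentality of $S$.

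First I would record the elementary fact that $E(eSe) = \{\, f \in E(S) : f \le e \,\}$, i.e. the idempotents of the local monoid are exactly the idempotents of $S$ that lie below $e$; this is immediate since $f \in eSe$ and $f$ idempotent forces $f = efe \le e$, and conversely any $f \le e$ satisfies $f = efe \in eSe$. Next, note that $a \in eSe$ has $\mathbf{d}(a) = a^{-1}a \le e$ and $\mathbf{r}(a) = aa^{-1} \le e$, and likewise for $b$; from $a \,\mu_{eSe}\, b$ we obtain (taking $f = \mathbf{d}(a)$ and $f = \mathbf{r}(a)$, which are legitimate since they lie below $e$) that $\mathbf{d}(a) = \mathbf{d}(b)$ and $\mathbf{r}(a) = \mathbf{r}(b)$. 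By the remark recalled just before the statement of Theorem~\ref{them:classic}, to check $a \,\mu_S\, b$ it is then enough to verify $afa^{-1} = bfb^{-1}$ for all idempotents $f \le a^{-1}a = \mathbf{d}(a)$.

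So let $f \in E(S)$ with $f \le \mathbf{d}(a) \le e$. Then $f \le e$, hence $f \in E(eSe)$, and the hypothesis $a \,\mu_{eSe}\, b$ gives $afa^{-1} = bfb^{-1}$ exactly as required. Therefore $a \,\mu_S\, b$, and since $S$ is fundamental, $\mu_S$ is equality, so $a = b$. This shows $\mu_{eSe}$ is equality, hence $eSe$ is fundamental by Theorem~\ref{them:classic}(1).

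The argument is essentially a bookkeeping exercise, and I do not expect a serious obstacle; the only point requiring a moment's care is the reduction step — that one only needs to test $\mu$ on idempotents below the domain idempotent — but this is precisely the refinement of the definition of $\mu$ stated in the paragraph preceding Theorem~\ref{them:classic}, so it may simply be cited. One should also be mildly careful that the congruences $\mu_S$ and $\mu_{eSe}$ are a priori different relations, which is why the proof routes through the equivalence with $\mu$ being equality rather than attempting to restrict $\mu_S$ directly to $eSe$.
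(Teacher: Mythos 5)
Your argument is essentially correct, and since the paper offers no proof of Lemma~\ref{lem:local_monoid} (it is declared routine), there is nothing to diverge from; but two comments are in order. First, the one step you justify imprecisely is the claim that $a\,\mu_{eSe}\,b$ forces $\mathbf{d}(a)=\mathbf{d}(b)$ and $\mathbf{r}(a)=\mathbf{r}(b)$ ``by taking $f=\mathbf{d}(a)$ and $f=\mathbf{r}(a)$'': testing at $f=\mathbf{d}(a)$ only yields $\mathbf{r}(a)=b\mathbf{d}(a)b^{-1}\leq\mathbf{r}(b)$, while testing at $f=\mathbf{r}(a)$ yields $a\mathbf{r}(a)a^{-1}=\mathbf{r}(a^{2})=b\mathbf{r}(a)b^{-1}$, which is not directly what you want. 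The correct recipe is to test at $f=\mathbf{d}(a)$ and $f=\mathbf{d}(b)$, which gives $\mathbf{r}(a)=\mathbf{r}(b)$, and then to conjugate $\mathbf{r}(a)=b\mathbf{d}(a)b^{-1}$ by $b^{-1}$ to obtain $\mathbf{d}(b)=\mathbf{d}(a)\mathbf{d}(b)$ and, by symmetry, $\mathbf{d}(a)=\mathbf{d}(b)$; alternatively, simply cite the observation made in the paragraph preceding Theorem~\ref{them:classic} --- that $\mu$-related elements have equal domain and range idempotents --- applied to the inverse monoid $eSe$. With that repair the rest of your argument (every idempotent $f\leq\mathbf{d}(a)$ lies below $e$ and hence in $E(eSe)$, so the reduction criterion gives $a\,\mu_{S}\,b$, whence $a=b$) is sound. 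Second, there is a shorter route that bypasses $\mu$ and uses the definition of fundamental directly: if $a\in eSe$ centralizes $E(eSe)$ and $f\in E(S)$ is arbitrary, then $ef\leq e$ lies in $E(eSe)$, so $af=(ae)f=a(ef)=(ef)a=f(ea)=fa$; thus $a\in Z(E(S))=E(S)$, so $a$ is an idempotent of $eSe$. This two-line computation is presumably the ``routine'' proof the paper has in mind, whereas your route buys nothing extra here beyond exercising the $\mu$-characterization.
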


Insight into the significance of fundamental inverse semigroups was first obtained by Wagner \cite[Theorem~5.2.10]{Lawson98}.

\begin{theorem}[Wagner's theorem]\label{thm:wagners}
Let $S$ be an inverse semigroup of partial bijections on a set, where the domains of definition of the idempotents generate a topology.
If that topology is $T_{0}$ then $S$ is fundamental, and every fundamental inverse semigroup is isomorphic to
one constructed in this way.
\end{theorem}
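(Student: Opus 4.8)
The plan is to establish the two assertions separately, in each case reducing, via Theorem~\ref{them:classic}(1), to a statement about the maximum idempotent-separating congruence $\mu$: an inverse semigroup is fundamental exactly when $\mu$ is trivial. For the first assertion, suppose $S$ consists of partial bijections on a set $X$ and that the domains of the idempotents of $S$ generate a $T_0$ topology. Writing $1_A$ for the partial identity on a set $A$, the identity $1_A 1_B = 1_{A \cap B}$ shows the family of domains of idempotents is closed under finite intersection, so it is in fact a basis and every non-empty open set is a union of such domains. Now let $s \, \mu \, t$; then $\mathbf{d}(s) = \mathbf{d}(t)$ and $\mathbf{r}(s) = \mathbf{r}(t)$, so $s$ and $t$ are bijections between a common domain $U$ and a common range $V$, and it suffices to show they agree on $U$. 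If not, choose $x \in U$ with $s(x) \neq t(x)$; by $T_0$ some domain $A$ of an idempotent $e = 1_A \in E(S)$ contains exactly one of $s(x)$, $t(x)$, and --- interchanging $s$ and $t$ if necessary, which is harmless since $\mu$ is symmetric --- we may assume $s(x) \in A$ and $t(x) \notin A$. The key move is to conjugate not by $e$ but by the idempotent $g = s^{-1}es \in E(S)$, whose domain is $B = \{u \in U : s(u) \in A\}$ and hence contains $x$; a short manipulation of idempotent identities gives $sgs^{-1} = \mathbf{r}(s)e = 1_{V \cap A}$ while $tgt^{-1} = 1_{t(B)}$. Since $s \, \mu \, t$ forces these to be equal, $t(B) = V \cap A$, so $t(x) \in t(B) \subseteq A$, contradicting $t(x) \notin A$. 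Hence $s = t$, $\mu$ is trivial, and $S$ is fundamental.

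For the converse, let $S$ be a fundamental inverse semigroup with $E = E(S)$. By Theorem~\ref{them:classic}(2), $S$ is isomorphic to a wide inverse subsemigroup of the Munn semigroup $T_E$, whose elements are partial bijections of the underlying set of $E$ and whose idempotents --- which, since $S$ is wide, are exactly the idempotents of $T_E$ --- are the identity maps on the principal order ideals $e^{\downarrow}$ ($e \in E$). As $e^{\downarrow} \cap f^{\downarrow} = (ef)^{\downarrow}$, these order ideals form a basis for a topology on $E$, and it is $T_0$: for $e \neq f$ we cannot have both $f \in e^{\downarrow}$ and $e \in f^{\downarrow}$ (that would force $e = f$), so one of the open sets $e^{\downarrow}$, $f^{\downarrow}$ contains its generator but not the other point. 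Thus $S$ has been realised as an inverse semigroup of partial bijections in which the domains of the idempotents generate a $T_0$ topology, which is exactly what is required.

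The one step that is not formal bookkeeping is the choice of conjugating idempotent in the first half. Testing $\mu$ against $e = 1_A$ directly only compares $s(A \cap U)$ with $t(A \cap U)$, and this need not register the disagreement at the single point $x$; pulling $A$ back along $s$ to form $g = s^{-1}es$ produces an idempotent whose domain is guaranteed to contain $x$, and it is this that lets the $T_0$ separation do its work. Once that idea is in hand, the remaining ingredients --- the identities $sgs^{-1} = \mathbf{r}(s)e$ and $tgt^{-1} = 1_{t(\mathbf{d}(g))}$, the basis observations, and the $T_0$ verification for $T_E$ --- are all routine.
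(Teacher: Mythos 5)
Your proof is correct: the conjugation trick $g=s^{-1}es$ (so that the idempotent's domain is pulled back to contain $x$), together with $sgs^{-1}=\mathbf{r}(s)e$ and $tgt^{-1}=1_{t(B)}$, does exactly what is needed for the $T_{0}$ direction, and the converse via Theorem~\ref{them:classic}(2) and the $T_{0}$ topology generated by the principal order ideals $e^{\downarrow}$ of $E(S)$ is also sound. The paper itself offers no proof, quoting the result from \cite[Theorem~5.2.10]{Lawson98}, and your argument is essentially the standard one given there (Munn representation plus $T_{0}$ separation), so there is nothing to correct.
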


Thus, intuitively, fundamental inverse semigroups can be regarded as being inverse semigroups of partial homeomorphisms.

\begin{remark} {\em Wagner's theorem explains how the work of Kumjian \cite{Kumjian} is related to that of this paper.
Kumjian's {\em localizations} are countable, fundamental inverse semigroups whose semilattices of idempotents form {\em frames}.
Such inverse semigroups are completed to yield pseudogroups,
the pseudogroup completion of a localization being called an {\em affiliation}.
Our work on distributive and Boolean inverse monoids is derived from the theory of pseudogroups by studying {\em coherent} pseudogroups.
This theory is described in \cite{LL}, but it is worth noting that it can be viewed as a non-commutative generalization of the
theory to be found in the early chapters of \cite{PJ}. Our work is, in some sense, a finitary version of Kumjian's.} 
\end{remark}

\begin{proposition}\label{prop:munn-one} Let $E$ be a Boolean algebra. 
Then the Munn semigroup $T_{E}$ is a Boolean inverse monoid.
\end{proposition}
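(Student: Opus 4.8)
The plan is to check directly that $T_{E}$ meets the definition of a Boolean inverse monoid, reducing everything to one construction. First note that $T_{E}$ is a monoid: since $E$ is a Boolean algebra it has a top element $1$, and $\mathrm{id}_{1^{\downarrow}}$ is an identity for $T_{E}$. It is a classical fact that the Munn semigroup of any semilattice is an inverse semigroup, with idempotents the identity maps $\mathrm{id}_{e^{\downarrow}}$ ($e \in E$), so its semilattice of idempotents is isomorphic to $E$ and hence is a Boolean algebra. Thus the only thing requiring proof is that $T_{E}$ is a \emph{distributive} inverse monoid: that every compatible pair has a join, and that multiplication distributes over binary joins.

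For the join, take compatible $\alpha \colon e^{\downarrow} \to f^{\downarrow}$ and $\beta \colon g^{\downarrow} \to h^{\downarrow}$. The distributive law in $E$ gives $x = (x \wedge e) \vee (x \wedge g)$ for every $x \le e \vee g$, so one can define $\gamma \colon (e \vee g)^{\downarrow} \to (f \vee h)^{\downarrow}$ by $\gamma(x) = \alpha(x \wedge e) \vee \beta(x \wedge g)$, the join existing in $E$ and lying below $f \vee h$; symmetrically set $\delta(z) = \alpha^{-1}(z \wedge f) \vee \beta^{-1}(z \wedge h)$ on $(f \vee h)^{\downarrow}$. The claim is that $\gamma \in T_{E}$ with $\gamma^{-1} = \delta$, that $\alpha, \beta \le \gamma$, and that $\gamma$ is the least element of $T_{E}$ above both, so that $\gamma = \alpha \vee \beta$; note this is consistent with Lemma~\ref{lem:order_properties}(2), since $\mathbf{d}(\gamma) = e \vee g$ and $\mathbf{r}(\gamma) = f \vee h$.

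Order-preservation of $\gamma$ and $\delta$, the fact that $\gamma$ restricts to $\alpha$ on $e^{\downarrow}$ and to $\beta$ on $g^{\downarrow}$, and minimality of $\gamma$ are routine manipulations with the natural partial order. The substantive step — the one place where compatibility is genuinely used rather than decorative — is verifying $\delta\gamma = \mathrm{id}_{(e \vee g)^{\downarrow}}$ and $\gamma\delta = \mathrm{id}_{(f \vee h)^{\downarrow}}$: one expands, for instance, $\gamma(x) \wedge f$ using distributivity in $E$ and then uses that $\alpha\beta^{-1}$ and $\alpha^{-1}\beta$ are idempotents — equivalently, that $\alpha$ and $\beta$ agree on $(e \wedge g)^{\downarrow}$ and $\alpha^{-1}, \beta^{-1}$ agree on $(f \wedge h)^{\downarrow}$ — to kill the cross terms. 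I expect this bookkeeping to be the main obstacle. Finally, distributivity of multiplication over $\alpha \vee \beta$ follows by an analogous computation: for $\tau \in T_{E}$ one decomposes $\tau(x)$ along $f \vee h$ (respectively $x$ along $e \vee g$), applies $\gamma$ (respectively $\tau$), and reassembles to recognise $\tau\alpha \vee \tau\beta$ (respectively $\alpha\tau \vee \beta\tau$), with Corollary~\ref{cor:QI} keeping track of the relevant meets. An alternative route that avoids the bookkeeping is classical Stone duality: $T_{E}$ is isomorphic to the inverse monoid of homeomorphisms between clopen subsets of the Stone space of $E$, where distributivity is transparent because a finite union of clopen sets is clopen and composites of such homeomorphisms are again of this form.
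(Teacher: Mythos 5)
Your proposal is correct and takes essentially the same approach as the paper: the paper likewise constructs the join of a compatible pair $\alpha_{1},\alpha_{2}$ explicitly as an order-isomorphism on $(e_{1}\vee e_{2})^{\downarrow}$ (via the orthogonal decomposition $\beta(i)=\alpha_{1}(e_{1}i)\vee\alpha_{2}(e_{2}\overline{e_{1}}i)$, which by compatibility is the same map as your overlapping formula $\alpha(x\wedge e)\vee\beta(x\wedge g)$), verifies it is the least upper bound in $T_{E}$, and deduces distributivity of multiplication from that description. Your alternative Stone-duality route is in effect the paper's Proposition~\ref{prop:munn-again}, identifying $T_{E}$ with $\mathsf{I}(\mathsf{X}(E))$.
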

\begin{proof} The inverse semigroup $T_{E}$ is an inverse subsemigroup of the inverse semigroup $I(E)$.
The semilattice of idempotents of $T_{E}$ is isomorphic to $E$ and so is a Boolean algebra.
The natural partial order on $T_{E}$ is inclusion of partial bijections.
To prove the claim, we need to verify that $T_{E}$ has binary compatible joins and that multiplication distributes over such joins.
Let $\alpha_{1} \colon e_{1}^{\downarrow} \rightarrow f_{1}^{\downarrow}$ and $\alpha_{2} \colon e_{2}^{\downarrow} \rightarrow f_{2}^{\downarrow}$ be compatible order-isomorphisms in $T_{E}$.
By assumption $\alpha_{1}$ and $\alpha_{2}$ restricted to $(e_{1} \wedge e_{2})^{\downarrow}$ agree.
An important consequence of this assumption is the following.
Let $i \leq e_{1}$ and $j \leq e_{2}$.
If $ij = 0$ then $\alpha_{1}(i)\alpha_{2}(j) = 0$.
Define $\beta \colon (e_{1} \vee e_{2})^{\downarrow} \rightarrow (f_{1} \vee f_{2})^{\downarrow}$
by 
$$\beta (i) = \alpha_{1}(e_{1}i) \vee \alpha_{2}(e_{2}\overline{e_{1}}i).$$
It is now routine to check that $\beta$ is an order-isomorphism.
Clearly, $\alpha_{1},\alpha_{2} \subseteq \beta$.
Now let $\gamma$ be any element of $T_{E}$ such that $\alpha_{1},\alpha_{2}\subseteq \gamma$.
Let $i \leq e_{1}$ and $j \leq e_{2}$.
Then
$$\beta (i) = \alpha_{1}(e_{1}i) \vee \alpha_{2}(e_{2}\overline{e_{1}}i)
= \gamma (e_{1}i) \vee \gamma (e_{2}\overline{e_{1}}i)
= \gamma  (e_{1}i \vee e_{2}\overline{e_{1}}i )
= \gamma (i)$$
as required.
Observe also the following.
If $\alpha_{1}$ and $\alpha_{2}$ are regarded as elements of $I(E)$ then their union is $\alpha_{1} \cup \alpha_{2}$.
We have that $\alpha_{1} \cup \alpha_{2} \subseteq \alpha_{1} \vee \alpha_{2}$, 
and 
$\alpha_{1} \vee \alpha_{2}$ is the smallest element of $T_{E}$ that contains $\alpha_{1} \cup \alpha_{2}$.
To prove distributivity, let $\gamma \in T_{E}$ where $\gamma \colon i_{1}^{\downarrow} \rightarrow j_{1}^{\downarrow}$.
Then it is enough to prove that $\gamma \alpha_{1} \vee \gamma \alpha_{2} \leq \gamma \left(  \alpha_{1} \vee \alpha_{2}  \right)$.
But this follows by our observation above.
\end{proof}

Another perspective on the above result can be found in Proposition~\ref{prop:munn-again}.

Our next theorem, proved in  \cite{Lawson12}, has played an important r\^ole in motivating our work and also serves to bind together many of the definitions we have made so far.
We use the term {\em groupoid} in the sense of category theory.
A {\em local bisection} is a subset $A$ of a groupoid such that the sets $A^{-1}A$ and $AA^{-1}$ consist entirely of identities.
Observe that a finite Boolean inverse monoid is automatically a $\wedge$-monoid.

\begin{theorem}\label{them:bongo} Let $S$ be a finite Boolean inverse monoid.
\begin{enumerate}

\item There is a finite discrete groupoid $G$ such that $S$ is isomorphic to the set of all local bisections of $G$ under multiplication of subsets.

\item The fundamental such semigroups are the finite direct products of finite symmetric inverse monoids.

\item The $0$-simplifying, fundamental such semigroups are the finite symmetric inverse monoids.

\end{enumerate}
\end{theorem}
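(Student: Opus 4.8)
The plan is to obtain all three parts from the elementary, finite instance of non-commutative Stone duality, in which the ``Stone space'' of the finite Boolean algebra $E(S)$ is nothing but its finite set $X$ of atoms. Throughout I use that a finite Boolean inverse monoid is automatically a $\wedge$-monoid, that every idempotent is the join of the atoms of $E(S)$ beneath it, and --- by distributivity of multiplication over joins --- that every $s\in S$ is the join of the set $\hat s$ of atoms of $S$ below it; recall also that $a\in S$ is an atom exactly when $\mathbf{d}(a)$, equivalently $\mathbf{r}(a)$, is an atom of $E(S)$.

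\emph{For part (1),} I would take $G$ to be the groupoid whose identities are the atoms of $E(S)$, whose arrows are the atoms of $S$, with $\mathbf{d},\mathbf{r}$ the domain and range idempotents and composition the restricted product. This is a well-defined finite (hence discrete) groupoid: $a^{-1}$ is an atom whenever $a$ is, and $\mathbf{d}(ab)=\mathbf{d}(b)$ is an atom whenever $ab$ is a nonzero restricted product of atoms. I then check that $s\mapsto\hat s$ is an isomorphism of Boolean inverse $\wedge$-monoids onto the inverse monoid of local bisections of $G$: distinct atoms below $s$ have distinct domains and distinct ranges (any two elements below $s$ with the same domain are equal), and are therefore orthogonal, so $\hat s$ is a local bisection; conversely a local bisection is exactly a finite set of atoms with distinct domains and distinct ranges, hence pairwise orthogonal, hence possesses a join in $S$, and $A\mapsto\bigvee A$ inverts $s\mapsto\hat s$ (an atom below $\bigvee A$ meets exactly one member of $A$, so lies in $A$). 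Multiplicativity is the one point where distributivity is really used: $st=\bigvee\{ab : a\in\hat s,\ b\in\hat t\}$, the nonzero products $ab$ are precisely the composable ones $\mathbf{d}(a)=\mathbf{r}(b)$, they are atoms, and they exhaust the atoms below $st$; so $\widehat{st}$ is the subset product of $\hat s$ and $\hat t$ in $G$. Preservation of inverses and of meets and joins is then routine. (Alternatively one simply invokes the finite case of non-commutative Stone duality here.)

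\emph{For part (2),} I would split $G$ into its connected components $G_{1},\dots,G_{k}$, with vertex sets $X_{1},\dots,X_{k}$ partitioning $X$; since an arrow of $G_{i}$ has both domain and range in $X_{i}$, a local bisection of $G$ is exactly a disjoint union of local bisections of the $G_{i}$, so $S\cong\prod_{i=1}^{k}\mathsf{LB}(G_{i})$, where $\mathsf{LB}$ denotes the inverse monoid of local bisections. The decisive observation is that if $S$ is fundamental then every vertex group of $G$ is trivial: for a loop $a$ at an atom $e$ one has $afa^{-1}=a(ef)a^{-1}$ for every idempotent $f$, and as $ef$ is an idempotent below the atom $e$ it is $0$ or $e$, so $afa^{-1}$ is $0$ or $e$ according as $ef$ is $0$ or $e$, in either case equal to $efe^{-1}$; thus $a\,\mu\,e$, and by Theorem~\ref{them:classic}(1) fundamentality forces $a=e$. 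Each connected $G_{i}$ then has only identity loops, so is the pair groupoid on $X_{i}$ (a unique arrow between each ordered pair of its objects), whence $\mathsf{LB}(G_{i})\cong I(X_{i})$ and $S\cong\prod_{i=1}^{k}I(X_{i})$. Conversely, a finite direct product of finite symmetric inverse monoids is finite, is a Boolean inverse monoid (each $I(X_{i})$ is, and products of Boolean inverse monoids are), and is fundamental, since $E(\prod_{i}S_{i})=\prod_{i}E(S_{i})$ and a tuple centralizes all idempotents iff each coordinate centralizes all idempotents of its factor iff each coordinate is idempotent.

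\emph{For part (3),} given the normal form $S\cong\prod_{i=1}^{k}I(X_{i})$ of part~(2) (discarding the empty $X_{i}$), if $k\geq 2$ then $I(X_{1})\times\{0\}\times\dots\times\{0\}$ is a non-trivial $\vee$-closed ideal, so $0$-simplifying forces $k=1$ and $S\cong I(X)$ with $X$ a finite set. Conversely $I(X)$ is $0$-simplifying: any non-zero ideal $I$ contains an atom, hence --- closing under range idempotents and conjugation, as an ideal does --- every idempotent $x\mapsto x$ with $x\in X$; these are pairwise orthogonal with join the identity, so if $I$ is in addition $\vee$-closed then $1\in I$ and $I=I(X)$; thus $\{0\}$ and $I(X)$ are the only $\vee$-closed ideals. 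Together with part~(2) this gives the statement. The bulk of the work is part~(1) --- that $s\mapsto\hat s$ really is a morphism of Boolean inverse $\wedge$-monoids and really is onto the local bisections --- which is exactly where distributivity of multiplication over joins and the finiteness of $E(S)$ enter; once the groupoid picture is available, parts~(2) and~(3) are short, the only genuine lemma being the $\mu$-computation for loops.
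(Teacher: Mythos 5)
Your proof is correct. Note, however, that the paper does not prove Theorem~\ref{them:bongo} at all: it is quoted with a citation to \cite{Lawson12}, so there is no internal argument to compare against. What you have written is the natural finite specialization of the ultrafilter duality used elsewhere in the paper: in the finite case every ultrafilter is principal on an atom, so your groupoid of atoms of $S$ (identities the atoms of $E(S)$, composition the restricted product) is exactly $\mathsf{G}(S)$ with the topology discrete, and $s \mapsto \hat{s}$ is the finite form of $s \mapsto V_{s}$. All the individual steps check out: atoms of $S$ are precisely the elements with atomic domain, elements below a common element with equal domains coincide (so $\hat{s}$ is a local bisection and, conversely, local bisections are pairwise orthogonal sets of atoms whose join recovers the element), and distributivity gives $\widehat{st}=\hat{s}\hat{t}$; the $\mu$-computation for a loop $a$ at an atom $e$ (namely $afa^{-1}=a(ef)a^{-1}\in\{0,e\}=\{efe \text{ cases}\}$, whence $a\,\mu\,e$ and, by Theorem~\ref{them:classic}, $a=e$) correctly reduces the fundamental case to connected principal groupoids, i.e.\ pair groupoids, giving $\prod_i I(X_i)$; and the $0$-simplifying argument in part (3) (a factor gives a proper non-zero $\vee$-ideal; conversely in $I(X)$ any non-zero $\vee$-ideal contains all atomic idempotents and hence their join $1$) is sound. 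You do lean on two standard facts without proof --- that finite pairwise compatible subsets of a distributive inverse monoid have joins, and that $I(X)$ is fundamental --- but both are part of the background the paper itself assumes, so this is not a gap.
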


The whole of this paper rests on a very simple idea: that Boolean inverse $\wedge$-monoids should be viewed as non-commutative generalizations of Boolean algebras.
For example, part (1) of Theorem~\ref{them:bongo} shows that 
finite Boolean inverse $\wedge$-monoids are characterized in a way that directly extends the characterization 
of finite Boolean algebras: we just replace finite sets by finite discrete groupoids.
Boolean inverse $\wedge$-monoids have non-idempotent elements and so their groups of units are non-trivial
and, furthermore, these groups are interesting.
For example, finite symmetric groups are the groups of units of the finite symmetric inverse semigroups.
We shall be particularly interested in those Boolean inverse $\wedge$-monoids whose semilattices of idempotents are a specific Boolean algebra.
It is a theorem of Tarski, proved via a back-and-forth argument, that any two countable atomless Boolean algebras are isomorphic \cite{GH}.
Accordingly, we call the unique countable atomless Boolean algebra the {\em Tarski algebra}.\footnote{This is not standard but it is convenient.}
See Proposition~\ref{prop:blop} for why Tarski algebras naturally generalize finite Boolean algebras.

We now make a definition which is directly motivated by the finite case, as we shall see.
A non-zero element $s \in S$ is called an {\em infinitesimal} if $s^{2} = 0$.
Infinitesimals play an important r\^ole in constructing units as the following lemma demonstrates.

\begin{lemma}\label{lem:spooks} Let $S$ be a Boolean inverse $\wedge$-monoid.
\begin{enumerate}

\item Let $s \in S$ such that $s^{-1}s = ss^{-1}$.
Put $e = s^{-1}s$.
Then $g = s \vee \bar{e}$ is a unit.

\item  $a^{2} = 0$ if, and only if, $a^{-1}a \perp aa^{-1}$  if, and only if, $a \perp a^{-1}$.

\item If $a^{2} = 0$ and $e = \overline{a^{-1}a} \,   \overline{aa^{-1}}$ then
$$u = a^{-1} \vee a \vee e$$
is a non-trivial involution lying above $a$.

\item Each unit in a local monoid of $S$ lies beneath a unit of $S$.

\end{enumerate}
\end{lemma}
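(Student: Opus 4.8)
The plan is to treat the four parts in order, with parts (1)--(3) forming a tight unit and part (4) following as an application. For part~(1), the element $s$ satisfies $\mathbf{d}(s) = \mathbf{r}(s) = e$, so $s$ and $\bar e$ are orthogonal: indeed $s\bar e^{-1} = s\bar e = s\,\mathbf{d}(s)\bar e = s(e\bar e) = 0$ and symmetrically $s^{-1}\bar e = 0$. Hence the join $g = s \vee \bar e$ exists. I would then compute $\mathbf{d}(g) = \mathbf{d}(s) \vee \mathbf{d}(\bar e) = e \vee \bar e = 1$ using Lemma~\ref{lem:order_properties}(2), and likewise $\mathbf{r}(g) = 1$. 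An element with $\mathbf{d}(g) = \mathbf{r}(g) = 1$ satisfies $g^{-1}g = gg^{-1} = 1$, which for an element of a monoid means exactly that $g$ is a unit.

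For part~(2), the chain of equivalences is routine manipulation. If $a^2 = 0$ then $aa^{-1}\cdot a^{-1}a$: multiply $a^2 = 0$ on the left by $a^{-1}$ and on the right by $a^{-1}$ to get $a^{-1}a\cdot a^{-1}\cdot a^{-1} \cdots$ — more cleanly, from $a\cdot a = 0$ we get $\mathbf{r}(a)\mathbf{d}(a) = aa^{-1}\cdot a^{-1}a$; but $a(a) = a(\mathbf{r}(a)a)$ shows $a^2 = a\,\mathbf{d}(a)\,\mathbf{r}(a)\cdots$; I will simply verify $a^2 = 0 \iff \mathbf{r}(a)\mathbf{d}(a) = aa^{-1}a^{-1}a = 0$ by noting $a^2 = a(aa^{-1})(a^{-1}a)a \cdot$-style cancellation, and that $aa^{-1}\perp a^{-1}a$ as idempotents is the statement $aa^{-1}\cdot a^{-1}a = 0$. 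The equivalence with $a \perp a^{-1}$ is then immediate from the definition of orthogonality, since $a(a^{-1})^{-1} = a^2$ and $a^{-1}(a^{-1}) = (a^{-1})^2 = (a^2)^{-1}$, and $a^2 = 0 \iff (a^2)^{-1} = 0$.

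For part~(3), by part~(2) the elements $a$ and $a^{-1}$ are orthogonal, and $e = \overline{a^{-1}a}\ \overline{aa^{-1}}$ is an idempotent orthogonal to both $\mathbf{d}(a) = a^{-1}a$ and $\mathbf{r}(a) = aa^{-1}$, hence orthogonal to $a$ and to $a^{-1}$; so $u = a^{-1} \vee a \vee e$ is an orthogonal join and exists. Setting $s = a \vee a^{-1}$, one checks $s^{-1}s = \mathbf{d}(a)\vee\mathbf{d}(a^{-1}) = a^{-1}a \vee aa^{-1} = ss^{-1}$, so $u = s \vee e$ with $e = \overline{s^{-1}s}$, and part~(1) gives that $u$ is a unit. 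For $u^2 = 1$: using orthogonality and that joins distribute over products (Corollary~\ref{cor:QI}), $u^2$ expands as a join of the products of $\{a^{-1},a,e\}$ with themselves, and the surviving terms are $a^{-1}a \vee aa^{-1} \vee e$ (the cross terms $a\cdot a = 0$, $a^{-1}a^{-1}=0$, $e a = ea^{-1}\cdots = 0$ vanish), which is $s^{-1}s \vee e = 1$; so $u^2 = 1$. Since $a \neq 0$, $u \neq 1$, so $u$ is a non-trivial involution, and $a \leq a \vee a^{-1} \vee e = u$. Finally, part~(4): if $g$ is a unit of the local monoid $eSe$, then $g^{-1}g = gg^{-1} = e$, so part~(1) applies directly with $s = g$ to give the unit $g \vee \bar e$ of $S$ lying above $g$.

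The only mild obstacle is bookkeeping in part~(3): one must be careful that all the pairwise meets needed to invoke Corollary~\ref{cor:QI} for the expansion of $u^2$ actually exist and that the cross terms genuinely vanish, which relies on the orthogonality relations established via part~(2) and Lemma~\ref{lem:bar}(1). Everything else is direct computation with $\mathbf{d}$, $\mathbf{r}$, and the order properties already recorded in Lemma~\ref{lem:order_properties}.
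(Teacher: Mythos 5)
Your proposal is correct and follows essentially the same route as the paper: part (1) by observing $\mathbf{d}(g)=\mathbf{r}(g)=e\vee\bar{e}=1$, part (2) from the identity $a^{2}=a\,\mathbf{d}(a)\mathbf{r}(a)\,a$ (equivalently $a^{-1}a^{2}a^{-1}=\mathbf{d}(a)\mathbf{r}(a)$), part (3) by reducing $u=s\vee\overline{s^{-1}s}$ with $s=a\vee a^{-1}$ to part (1) and checking $u^{2}=1$ directly, and part (4) as an immediate application of part (1) with $s=g$. The only quibble is a citation slip: expanding $u^{2}$ uses the defining distributivity of multiplication over compatible joins in a distributive inverse monoid, not Corollary~\ref{cor:QI} (which concerns meets of joins); the argument itself is unaffected.
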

\begin{proof} (1) Simply observe that $g^{-1}g = gg^{-1} = s^{-1}s \vee \overline{s^{-1}s} = 1$.

(2) If $a^{2} = 0$ then $a^{-1}aaa^{-1} = 0$ and so $a^{-1}a \perp aa^{-1}$.
If  $a^{-1}a \perp aa^{-1}$ then $a^{-1}aaa^{-1} = 0$ and so $a^{2} = 0$.
The equivalence of $a^{-1}a \perp aa^{-1}$  with $a \perp a^{-1}$ is immediate.

(3) The elements $a$ and $a^{-1}$ are orthogonal.
Put $s = a \vee a^{-1}$.
Then $s^{-1}s = ss^{-1}$.
Now apply part (1).
It is straightforward to check that it is an involution.

(4) This is immediate by (1) above.

\end{proof}







\begin{example} {\em Let $I_{n}$ be a finite symmetric inverse monoid on $n$ letters.
Examples of infinitesimal elements are those partial bijections of the form $x \mapsto y$ where $x,y \in X$ and $x \neq y$.
The group elements associated with these, as constructed in Lemma~\ref{lem:spooks}, are precisely the transpositions $(xy)$
which play such an important r\^ole in the theory of finite symmetric groups.}
\end{example}

We now introduce a new class of inverse semigroups.\\

\noindent
{\bf Definition.} A {\em Tarski inverse monoid} is a countable Boolean inverse $\wedge$-monoid whose semilattice of idempotents forms a Tarski algebra.\\

This definition was motivated by the work of Matui \cite{Matui12,Matui13} who in turn was  motivated by questions coming from topological dynamics.
With these definitions in place, we can now state the first main theorem we shall prove in this paper.
We shall define minimal groupoids and effective groupoids later.

\begin{theorem}\label{thm:ONE} Under non-commutative Stone duality, Tarski inverse monoids correspond to second countable Hausdorff Boolean groupoids with unit space the Cantor space,
with the  group of units of the inverse monoid corresponding to the group of compact-open bisections of the groupoid.
Under this correspondence, 
\begin{enumerate}
\item The $0$-simplifying monoids correspond to minimal groupoids.
\item The fundamental monoids correspond to effective groupoids.
\end{enumerate}
\end{theorem}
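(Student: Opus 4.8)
The plan is to deduce the theorem from the non-commutative Stone duality already established in \cite{Lawson12,LL}, which gives a duality between Boolean inverse $\wedge$-monoids and Hausdorff Boolean groupoids (étale groupoids whose unit space is a compact totally disconnected Hausdorff space and which possess a basis of compact-open bisections). On objects, a Boolean inverse $\wedge$-monoid $S$ is sent to the groupoid $\mathsf{G}(S)$ of ultrafilters on $S$, whose unit space is canonically the Stone space of the Boolean algebra $E(S)$; conversely a Hausdorff Boolean groupoid $\mathsf{G}$ is sent to its inverse monoid of compact-open bisections. Here the Hausdorffness of the total space is exactly what corresponds to the existence of binary meets, which is why we work with $\wedge$-monoids. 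Three bookkeeping facts feed everything that follows: (i) classical Stone duality sits inside this picture and identifies $E(S)$ with the Boolean algebra of compact-open subsets of $\mathsf{G}_{0}$; (ii) $S$ is countable if and only if there are only countably many compact-open bisections, equivalently $\mathsf{G}$ is second countable; and (iii) the group of units $\mathsf{U}(S)$ is carried to the group of those compact-open bisections $U$ with $\mathbf{d}(U)=\mathbf{r}(U)=\mathsf{G}_{0}$, which is precisely the group of compact-open bisections of $\mathsf{G}$. Combining (i)--(ii): $E(S)$ has a top element, so $\mathsf{G}_{0}$ is compact; $E(S)$ is atomless iff $\mathsf{G}_{0}$ has no isolated points; and a countable atomless Boolean algebra is the Tarski algebra (Tarski's theorem, \cite{GH}), whose Stone space is then a non-empty compact metrizable totally disconnected space without isolated points, hence the Cantor space by Brouwer's characterization. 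So $S$ is a Tarski inverse monoid exactly when $\mathsf{G}$ is a second-countable Hausdorff Boolean groupoid with unit space the Cantor space.

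For (1), I would establish an order-preserving bijection between the $\vee$-closed ideals of $S$ and the invariant open subsets of $\mathsf{G}_{0}$. An ideal $I$ of $S$ is determined by its idempotents, since $s\in I$ iff $\mathbf{d}(s)=s^{-1}s\in I$; the set $E(I)$ is an order ideal of $E(S)$ closed under the conjugation maps $e\mapsto ses^{-1}$, and $I$ is $\vee$-closed precisely when $E(I)$ is closed under the compatible (hence, in a Boolean algebra, arbitrary finite) joins of $E(S)$, i.e. when $E(I)$ is a ring ideal of the Boolean algebra $E(S)$. Under Stone duality these ring ideals correspond to the open subsets of $\mathsf{G}_{0}$, and closure under the conjugation maps translates into invariance under the canonical action of $\mathsf{G}$ on $\mathsf{G}_{0}$. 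Hence $S$ has no non-trivial $\vee$-closed ideal iff $\mathsf{G}_{0}$ has no invariant open subset other than $\emptyset$ and $\mathsf{G}_{0}$, which is minimality.

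For (2), the crux is that the centralizer $Z(E(S))$ is carried to the set of compact-open bisections of $\mathsf{G}$ contained in the isotropy bundle $\mathrm{Iso}(\mathsf{G})=\{g\in\mathsf{G}\colon \mathbf{d}(g)=\mathbf{r}(g)\}$. Indeed, for $s\in S$ the condition $se=es$ for all idempotents $e$ says exactly that $s$, acting on $\mathsf{G}_{0}$ via the bisection $U_{s}$ by $\mathbf{r}\circ(\mathbf{d}|_{U_{s}})^{-1}$, fixes every point of its domain, which holds iff $\mathbf{d}(g)=\mathbf{r}(g)$ for every $g\in U_{s}$, i.e. $U_{s}\subseteq \mathrm{Iso}(\mathsf{G})$. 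Since $\mathsf{G}$ is étale with a basis of compact-open bisections, the interior of $\mathrm{Iso}(\mathsf{G})$ is the union of the compact-open bisections it contains; as $E(S)$ corresponds to the compact-open subsets of $\mathsf{G}_{0}$, we get that $S$ is fundamental ($E(S)=Z(E(S))$) iff every compact-open bisection contained in $\mathrm{Iso}(\mathsf{G})$ already lies in $\mathsf{G}_{0}$, iff the interior of $\mathrm{Iso}(\mathsf{G})$ equals $\mathsf{G}_{0}$, which is effectiveness (equivalently, for second-countable $\mathsf{G}$, that the units with trivial isotropy are dense). One can also route this through Theorem~\ref{them:classic}(1): $S$ is fundamental iff $\mu$ is equality, and $s\,\mu\,t$ iff $U_{s}$ and $U_{t}$ induce the same local homeomorphism of $\mathsf{G}_{0}$, which again singles out the isotropy bisections.

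The main obstacle is not any single computation but the careful alignment of the two dictionaries: verifying (2) rigorously needs a precise description of how an element of $S$ acts on $\mathsf{G}_{0}$ through its ultrafilters/germs and of exactly which compact-open bisections sit in $\mathrm{Iso}(\mathsf{G})$, together with checking that the definitions of \emph{minimal} and \emph{effective} adopted later in the paper are the ones these translations produce; the rest is a matter of unwinding the duality of \cite{Lawson12,LL} and applying Stone duality to $E(S)$.
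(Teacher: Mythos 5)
Your proposal is correct and takes essentially the same route as the paper: the first statement follows from Theorem~\ref{them:duality} together with classical Stone/Tarski facts, part (1) is the correspondence between $\vee$-ideals and open invariant subsets (Theorem~\ref{thm:one1} combined with Lemma~\ref{lem:oinky}), and part (2) is exactly the content of Lemma~\ref{lem:charlie} and Theorem~\ref{thm:one2}. The only point where the paper supplies substantially more detail than your sketch is the ultrafilter computation showing that $V_{a}\subseteq \mbox{\rm Iso}(\mathsf{G}(S))$ forces $a\in Z(E(S))$, which you correctly identify as the step requiring careful verification.
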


The class of fundamental Tarski inverse monoids is therefore of particular interest.

We now define a class of groups.
A {\em Stone space} $\mathscr{S}$  is any Hausdorff, compact space with a basis of clopen subsets.
Denote by  $\mbox{Homeo}(\mathscr{S})$  the group of homeomorphisms $\mathscr{S}$.
We shall be interested in certain kinds of subgroups of $\mbox{Homeo}(\mathscr{S})$.
If $\alpha \in \mbox{Homeo}(\mathscr{S})$, denote by $\mbox{supp} (\alpha)$ the closure in $\mathscr{S}$
of the set $\{x \in \mathscr{S} \colon \alpha (x) \neq x\}$.
By a {\em Stone group}, we mean a subgroup $G$ of  $\mbox{Homeo}(\mathscr{S})$
satisfying the following two conditions.
\begin{description}
\item[{\rm (SG1)}] For each $\alpha \in G$ the set $\mbox{supp}(\alpha)$ is clopen.

\item[{\rm (SG3)}] Let $\{e_{1}, \ldots, e_{n}\}$ be a finite partition of $\mathscr{S}$ by clopen sets and let $g_{1}, \ldots, g_{n}$ be a finite subset of $G$ such that
 $\{g_{1}e_{1}, \ldots, g_{n}e_{n}\}$ is a partition of $\mathscr{S}$ also by clopen sets. Then the union of the partial bijections $g_{1} \mid e_{1}, \ldots, g_{n} \mid e_{n}$ is an element of $G$.
We call this property {\em fullness} and term {\em full } those subgroups of $\mbox{Homeo}(\mathscr{S})$ that satisfy this property.
\end{description}
A {\em Cantor group} is a countable Stone group where the Stone space is the Cantor space.

Let $G$ be a discrete group acting on the space $X$ by homeomorphisms.
We say that the action is {\em minimal} if $X$ has no proper closed $G$-invariant subset or, equivalently, if every $G$-orbit is dense.

\begin{theorem}\label{thm:TWO} \mbox{}
\begin{enumerate}

\item The group of units of a fundamental Tarski inverse monoid is a Cantor group, and each Cantor group occurs as the group of units of some fundamental Tarski inverse monoid.

\item  The group of units of a $0$-simplifying fundamental Tarski inverse monoid is a minimal Cantor group, and each minimal Cantor group occurs as the group
of units of a $0$-simplifying fundamental Tarski inverse monoid in essentially one way.

\end{enumerate}
\end{theorem}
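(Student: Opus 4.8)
The strategy is to push everything through the non-commutative Stone duality of Theorem~\ref{thm:ONE}: a fundamental Tarski inverse monoid $S$ is, up to isomorphism, the inverse monoid of compact-open bisections of an effective, second countable, Hausdorff Boolean groupoid $\mathscr{G}$ whose unit space $\mathscr{C}$ is the Cantor space, and $\mathsf{U}(S)$ is the group of compact-open full bisections of $\mathscr{G}$, acting faithfully on $\mathscr{C}$ by homeomorphisms. \emph{Part (1), forward direction.} So let $S$ be realised this way. Then $\mathsf{U}(S)$ is countable because $S$ is. For (SG1): given $g\in\mathsf{U}(S)$ with induced homeomorphism $\widetilde{g}$, the idempotent $\phi(g)=g\wedge 1$ corresponds, using effectiveness of $\mathscr{G}$, to the (clopen) set of points at which $\widetilde{g}$ is locally the identity; since $\mathscr{C}\setminus\phi(g)$ is closed and contains $\{x:\widetilde{g}(x)\neq x\}$, while every point off $\mathrm{supp}(\widetilde{g})$ has a clopen neighbourhood on which $\widetilde{g}$ is the identity and so lies in $\phi(g)$, we get $\mathrm{supp}(\widetilde{g})=\mathscr{C}\setminus\phi(g)$, which is clopen. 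For (SG3): given a clopen partition $\{e_{i}\}$ of $\mathscr{C}$ and $g_{i}\in\mathsf{U}(S)$ with $\{\widetilde{g_{i}}(e_{i})\}$ also a clopen partition, the elements $s_{i}=g_{i}e_{i}$ of $S$ have $\mathbf{d}(s_{i})=e_{i}$ and $\mathbf{r}(s_{i})=\widetilde{g_{i}}(e_{i})$ and are therefore pairwise orthogonal; as $S$ is Boolean the join $\bigvee_{i}s_{i}$ exists, and by Lemma~\ref{lem:order_properties}(2) it has domain and range idempotent $1$, hence is a unit. As a homeomorphism it is the union of the $g_{i}\mid e_{i}$, so that union lies in $\mathsf{U}(S)$. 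Thus $\mathsf{U}(S)$ is a Cantor group.

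\emph{Part (1), converse.} Let $G$ be a Cantor group acting on $\mathscr{C}$ and let $\mathscr{G}=\mathscr{G}(G,\mathscr{C})$ be the groupoid of germs. It is étale and second countable with unit space $\mathscr{C}$; it is effective because the interior of the isotropy bundle of any groupoid of germs is the unit space; and it is Hausdorff, since for each $g\in G$ the set of points yielding a unit germ is $\mathscr{C}\setminus\mathrm{supp}(\widetilde{g})$, which is clopen by (SG1), and this clopenness for all $g$ is exactly Hausdorffness of a groupoid of germs. Hence $\mathscr{G}$ is an effective Boolean groupoid with Cantor unit space, so by Theorem~\ref{thm:ONE} its dual $S$, the inverse monoid of compact-open bisections of $\mathscr{G}$, is a fundamental Tarski inverse monoid with $\mathsf{U}(S)$ the group of compact-open full bisections. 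The map $g\mapsto\{[g,x]:x\in\mathscr{C}\}$ embeds $G$ into $\mathsf{U}(S)$, and it is onto: a full compact-open bisection is, by compactness, of the form $\bigsqcup_{i}\{[g_{i},x]:x\in V_{i}\}$ for clopen partitions $\{V_{i}\}$ and $\{\widetilde{g_{i}}(V_{i})\}$ with $g_{i}\in G$, and (SG3) then produces the homeomorphism realising it inside $G$. So $\mathsf{U}(S)\cong G$ as Cantor groups.

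\emph{Part (2).} If moreover $S$ is $0$-simplifying, then its dual groupoid $\mathscr{G}$ is minimal by Theorem~\ref{thm:ONE}(1), so every orbit of the pseudogroup $S$ on $\mathscr{C}$ is dense. These orbits coincide with the $\mathsf{U}(S)$-orbits: if $s\in S$, $x\in\mathbf{d}(s)$ and $y=s\cdot x\neq x$, then shrinking $s$ to a small clopen neighbourhood $U$ of $x$ we may assume $\mathbf{d}(s)=U$ and $\mathbf{r}(s)=s(U)$ are disjoint with $U\cup s(U)\neq\mathscr{C}$, so $s^{2}=0$ by Lemma~\ref{lem:spooks}(2) and Lemma~\ref{lem:spooks}(3) yields a unit involution $u\geq s$ with $u\cdot x=y$. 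Hence every $\mathsf{U}(S)$-orbit is a dense pseudogroup-orbit, the action of $\mathsf{U}(S)$ is minimal, and $\mathsf{U}(S)$ is a minimal Cantor group. Conversely, if $G$ is a minimal Cantor group then $\mathscr{G}(G,\mathscr{C})$ is minimal, so the monoid $S$ built above is $0$-simplifying with $\mathsf{U}(S)\cong G$. For uniqueness, let $S$ be any $0$-simplifying fundamental Tarski inverse monoid; its groupoid $\mathscr{G}$ is effective and minimal, so a compact-open bisection of $\mathscr{G}$ is determined by the homeomorphism of $\mathscr{C}$ it induces (effectiveness) and every germ arises from a full bisection (minimality, by the argument just given). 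Therefore the canonical functor $\mathscr{G}\to\mathscr{G}(\mathsf{U}(S),\mathscr{C})$ is an isomorphism of étale groupoids, so $S$ is the dual of the groupoid of germs of its own group of units, hence is determined up to isomorphism by $\mathsf{U}(S)$ as a Cantor group; invoking a Rubin-type rigidity theorem for these actions \cite{Matui12,Matui13}, it is determined by $\mathsf{U}(S)$ as an abstract group, which is the ``essentially one way'' clause.

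\emph{Main obstacle.} The real difficulty is concentrated in the uniqueness half of Part~(2): the reconstruction statement that an effective minimal groupoid of this kind is the groupoid of germs of its topological full group, and the accompanying rigidity statement that this full group, viewed as an abstract group, recovers the groupoid. Everything else is bookkeeping with Theorem~\ref{thm:ONE}, with orthogonal joins and the fixed-point operator (Lemma~\ref{lem:order_properties}), and with Lemma~\ref{lem:spooks}.
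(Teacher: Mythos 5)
Your route through the dual groupoid is mostly sound, and for part (1) and the forward half of part (2) it amounts to a legitimate repackaging of the paper's argument (the paper works instead with the Munn monoid $T_{E}$, building $S=(G^{\downarrow})^{\vee}$ and turning the support map into Leech's fixed-point operator, rather than with the groupoid of germs; your infinitesimal-plus-Lemma~\ref{lem:spooks} argument for minimality of the $\mathsf{U}(S)$-action is a reasonable alternative to Theorem~\ref{thm:regen}). The genuine gap is exactly at the point you yourself flag as the main obstacle, and you do not close it. Your justification that ``every germ arises from a full bisection (minimality, by the argument just given)'' only treats arrows with distinct source and target: there you can shrink a compact-open bisection until its domain and range are disjoint and then extend to a unit by Lemma~\ref{lem:spooks}. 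But an effective groupoid need not be principal, so the dual groupoid of a $0$-simplifying fundamental Tarski inverse monoid can have isotropy arrows, i.e.\ non-idempotent ultrafilters $A$ with $\mathbf{d}(A)=\mathbf{r}(A)$. For such an arrow every bisection through it has source and range sets meeting at the base point, so the shrinking trick cannot produce an infinitesimal, and without handling this case you have not shown that every ultrafilter of $S$ contains a unit, equivalently that $S=(\mathsf{U}(S)^{\downarrow})^{\vee}$; the comparison functor between $\mathsf{G}(S)$ and the germ groupoid of $\mathsf{U}(S)$ then fails to be surjective and the reconstruction/uniqueness claim collapses.

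This missing step is precisely the content of the paper's Proposition~\ref{prop:welsh}, Lemma~\ref{lem:george} and Lemma~\ref{lem:hengist}, culminating in Theorem~\ref{thm:dory} (every $0$-simplifying Tarski inverse monoid is piecewise factorizable): for an isotropy ultrafilter $A$ with $\mathbf{d}(A)=\mathbf{r}(A)=F^{\uparrow}$ one uses $0$-simplifyingness to manufacture an infinitesimal $a$ with $a^{-1}a\in F$, sets $B=(aF)^{\uparrow}$, and factorizes $A=(A\cdot B^{-1})\cdot B$, where both factors have distinct endpoints and hence contain infinitesimals, so $A$ contains a product of two infinitesimals and therefore lies under a unit. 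You need this (or an equivalent groupoid-level argument) to make your ``$\mathscr{G}\cong\mathscr{G}(\mathsf{U}(S),\mathscr{C})$'' step correct. A secondary remark: the paper's ``in essentially one way'' is the bijection of Theorem~\ref{thm:france} between monoids and minimal Cantor groups \emph{with their actions}; the Rubin-type rigidity you invoke to get uniqueness from the abstract group is not needed for the statement and is only mentioned in the paper as a separate application.
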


The following theorem is just a combination of the above two, but demonstrates how the three classes of structures mentioned at the beginning of the Introduction are related.

\begin{theorem}\label{thm:THREE} The following three classes of structure are equivalent (in some precise sense).
\begin{enumerate}
\item Full subgroups of the group of homeomorphisms of the Cantor space which act minimally and in which each element has clopen support.
\item $0$-simplifying, fundamental Tarski inverse monoids.
\item Minimal, effective, second countable Hausdorff \'etale topological groupoids whose space of identities is the Cantor space.
\end{enumerate}
\end{theorem}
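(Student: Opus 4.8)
The plan is to obtain Theorem~\ref{thm:THREE} by concatenating Theorem~\ref{thm:ONE} and Theorem~\ref{thm:TWO}, once the vocabulary of the three statements has been reconciled. First I would establish the equivalence of (2) and (3). Theorem~\ref{thm:ONE} provides, via non-commutative Stone duality, a correspondence between Tarski inverse monoids and second countable Hausdorff Boolean groupoids whose unit space is the Cantor space, under which $0$-simplifying monoids match minimal groupoids and fundamental monoids match effective groupoids. Since a Boolean groupoid with Cantor unit space is in particular an \'etale topological groupoid whose space of identities is the Cantor space, and since the two monoid-side conditions ($0$-simplifying and fundamental) and the two groupoid-side conditions (minimal and effective) are each simply conjoined, restricting the duality of Theorem~\ref{thm:ONE} to the monoids satisfying both conditions yields exactly the groupoids satisfying both conditions. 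This gives (2)~$\Leftrightarrow$~(3).

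Next I would establish the equivalence of (1) and (2) using Theorem~\ref{thm:TWO}. By part~(1) of that theorem, the group of units of a fundamental Tarski inverse monoid is a Cantor group and every Cantor group arises this way; by part~(2), when the monoid is additionally $0$-simplifying the group of units is a minimal Cantor group, every minimal Cantor group arises this way, and the realization is essentially unique. Unwinding the definitions, a minimal Cantor group is precisely a countable subgroup $G$ of $\mbox{Homeo}$ of the Cantor space satisfying (SG1) (each element has clopen support), (SG3) (fullness) and acting minimally — which is exactly the class described in (1). So one passes from (1) to (2) by $G \mapsto (\mathsf{U}(G^{\downarrow}))^{\vee}$-type completions, and back by taking the group of units; the essential uniqueness in Theorem~\ref{thm:TWO}(2) is what makes these assignments mutually inverse up to isomorphism, yielding (1)~$\Leftrightarrow$~(2).

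Finally I would make precise the phrase ``equivalent (in some precise sense)'': with the natural choice of morphisms on each side — morphisms of Boolean inverse $\wedge$-monoids, suitable (covering) functors of \'etale groupoids, and group homomorphisms compatible with the actions — the three assignments are functorial and their round-trip composites are naturally isomorphic to the identity, so the three classes organize into equivalent categories. I would spell out the morphism side only to the extent needed to state this.

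The main obstacle I anticipate is the bookkeeping behind ``essentially one way'' in Theorem~\ref{thm:TWO}(2): one must verify that a $0$-simplifying fundamental Tarski inverse monoid is reconstructed from its group of units, so that the round trip group~$\to$~monoid~$\to$~group is the identity. Concretely this means showing that $\mathsf{U}(S)^{\downarrow}$ (a factorizable inverse monoid by Lemma~\ref{lem:factorizable}), completed under compatible joins to a distributive inverse $\wedge$-monoid as in Proposition~\ref{prop:building}, recaptures all of $S$; here fundamentality is used, via the Munn-semigroup description of Theorem~\ref{them:classic}, to pin down the whole monoid inside $T_{E(S)}$. Everything else reduces to matching terminology: that a Boolean groupoid with Cantor unit space is the same thing, in this second countable Hausdorff setting, as an \'etale topological groupoid whose space of identities is the Cantor space, and that (SG1), (SG3) and minimality together single out exactly the minimal Cantor groups.
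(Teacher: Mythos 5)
Your proposal is correct and follows essentially the same route as the paper, which offers no separate argument for this theorem beyond the remark that it is ``just a combination'' of Theorem~\ref{thm:ONE} and Theorem~\ref{thm:TWO}: you combine the duality (restricted to the conjunction of $0$-simplifying/minimal and fundamental/effective) with the correspondence between these monoids and minimal Cantor groups, the latter resting, exactly as you anticipate, on piecewise factorizability so that $S=(\mathsf{U}(S)^{\downarrow})^{\vee}$ reconstructs the monoid from its group of units.
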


\section{A non-commutative dictionary}

The goal of this section is to describe how non-commutative Stone duality enables us to construct
a dictionary between Boolean inverse $\wedge$-monoids and a class of \'etale groupoids.
This provides the setting for Theorem~\ref{thm:ONE}.
We refer to \cite{Lawson10b, Lawson12} for any proofs but we adopt the na\"{\i}ve approach and directly generalize the classical Stone duality
that exists between Boolean algebras on the one hand and Stone spaces on the other.
The space associated with a Boolean algebra under this duality is called its {\em asscociated Stone space}.
In particular, the Stone space of the Tarski algebra is the Cantor space.
Let $G$ be a groupoid.
We denote its set of identities by $G_{o}$ and the domain and range maps by $\mathbf{d}$ and $\mathbf{r}$, respectively.
It is said to be a {\em topological groupoid} if the groupoid multiplication and the maps $\mathbf{d}$, $\mathbf{r}$ together with inversion are continuous,
and it is {\em \'etale} if the maps $\mathbf{d}$ and $\mathbf{r}$ are local homeomorphisms.\footnote{We shall write simply  \'etale groupoid rather than \'etale topological groupoid.}
It is the \'etale property that is crucial for the connections between \'etale groupoids and semigroups
since it implies that the open subsets of $G$ form a monoid under multiplication of subsets \cite[Chapter 1]{Res1}.
We shall be interested in \'etale groupoids where we impose additional conditions on the space of identities.\\

\noindent
{\bf Definition.} An \'etale groupoid is called {\em Boolean} if its space of identities is a Stone space.\\

We shall describe the duality that exists between Boolean inverse $\wedge$-monoids and Hausdorff Boolean groupoids.
Let $S$ be a Boolean $\wedge$-monoid.
A {\em filter} in $S$ is a subset $A \subseteq S$ which is closed under finite meets and closed upwards.
It is said to be {\em proper} if $0 \notin A$.
A maximal proper filter is called an {\em ultrafilter}.
Ultrafilters may be characterized amongst proper filters as follows.
If $A$ is a filter and $s \in S$ we write  $s \wedge A \neq 0$ to mean $s \wedge a \neq 0$ for all $a \in A$.
The following is proved as \cite[Lemma 12.3]{Exel1}.

\begin{lemma}\label{lem:exel} Let $S$ be a Boolean inverse $\wedge$-monoid.
A proper filter $A$ is an ultrafilter if and only if $s \wedge A \neq 0$ implies that $ s \in A$.
\end{lemma}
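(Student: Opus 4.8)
The plan is to prove the two implications of this standard ultrafilter criterion separately. The only feature of $S$ that is used is that, being a $\wedge$-monoid, arbitrary binary meets exist, so that expressions like $s \wedge a$ always make sense and the identity $(s \wedge a) \wedge (s \wedge b) = s \wedge (ab)$ is available.

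First I would treat the implication that the stated closure property forces maximality. So suppose $A$ has the property, and let $B$ be a proper filter with $A \subseteq B$. Given $s \in B$, then for every $a \in A$ we have $a \in B$, hence $s \wedge a \in B$ since filters are closed under meets, and $s \wedge a \neq 0$ since $B$ is proper. Therefore $s \wedge A \neq 0$, and the hypothesis yields $s \in A$. Thus $B \subseteq A$, so $B = A$ and $A$ is an ultrafilter.

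For the converse, assume $A$ is an ultrafilter and $s \wedge A \neq 0$. I would form the filter generated by $A$ and $s$:
$$B = \{ t \in S : s \wedge a \leq t \text{ for some } a \in A \}.$$
By construction $B$ is closed upwards, and it is closed under binary meets: if $s \wedge a \leq t$ and $s \wedge b \leq u$ then $s \wedge (ab) = (s \wedge a) \wedge (s \wedge b) \leq t \wedge u$ with $ab = a \wedge b \in A$. Hence $B$ is a filter. It contains $A$ since $s \wedge a \leq a$, and it contains $s$ since $s \wedge a \leq s$ for any $a \in A$ (and $A \neq \emptyset$, as a filter of a monoid contains $1$). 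Crucially, $B$ is proper: were $0 \in B$, we would have $s \wedge a = 0$ for some $a \in A$, contradicting $s \wedge A \neq 0$. Maximality of $A$ now forces $B = A$, whence $s \in A$, as required.

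I do not anticipate any real obstacle here; the single point deserving care is the verification that the generated filter $B$ is proper, and this is exactly where the hypothesis $s \wedge A \neq 0$ enters. Note that one does not need $B$ to be the \emph{smallest} filter containing $A \cup \{s\}$ — only that it is \emph{some} proper filter containing both, which maximality then collapses onto $A$.
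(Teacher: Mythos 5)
Your argument is correct and is the standard maximality argument (the paper itself does not reprove this lemma but cites Exel, and your proof is essentially that classical one): one direction by showing any proper filter containing $A$ collapses onto $A$, the other by adjoining $s$ to $A$, checking the generated filter is proper, and invoking maximality. The only flaw is the side-claim that $ab = a \wedge b$ for $a,b \in A$: in an inverse $\wedge$-monoid the product of two non-idempotent elements is in general not their meet (this identity holds in $E(S)$, but elements of a filter in $S$ need not be idempotents). Fortunately you never need it: the correct identity is $(s \wedge a) \wedge (s \wedge b) = s \wedge (a \wedge b)$, which holds because all binary meets exist, and $a \wedge b \in A$ simply because filters are by definition closed under finite meets. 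With $ab$ replaced by $a \wedge b$ throughout, the proof is complete; there is no genuine gap.
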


A proper filter $A$ is said to be {\em prime} if $a \vee b \in A$ implies that $a \in A$ or $b \in A$.
The following is proved as \cite[Lemma 3.20]{LL}.

\begin{lemma}\label{lem:molina} Let $S$ be a Boolean inverse $\wedge$-monoid.
A proper filter is prime if and only if it is an ultrafilter.
\end{lemma}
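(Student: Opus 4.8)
The plan is to prove the two implications separately, using Lemma~\ref{lem:exel} to recast the ultrafilter condition in terms of non-vanishing meets, and using the complementation available in a Boolean inverse $\wedge$-monoid (specifically, the complements of idempotents) to manufacture the binary joins that link primeness to maximality.

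First I would show that every ultrafilter is prime. Suppose $A$ is an ultrafilter and $a \vee b \in A$, with the aim of showing $a \in A$ or $b \in A$. Arguing by contradiction, if neither lies in $A$, then by Lemma~\ref{lem:exel} (in contrapositive form) there exist $x, y \in A$ with $a \wedge x = 0$ and $b \wedge y = 0$. Since $A$ is closed under meets, $z = x \wedge y \in A$, and then $a \wedge z = 0 = b \wedge z$. Now I would invoke the distributivity of meet over join from Lemma~\ref{lem:order_properties}(3) to get $(a \vee b) \wedge z = (a \wedge z) \vee (b \wedge z) = 0$. But $a \vee b \in A$ and $z \in A$, so their meet $(a \vee b)\wedge z$ lies in $A$ and hence is nonzero since $A$ is proper --- a contradiction. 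Hence $a \in A$ or $b \in A$, so $A$ is prime.

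The harder direction, and the main obstacle, is showing that every prime filter $A$ is an ultrafilter. Here I would use Lemma~\ref{lem:exel}: it suffices to show that $s \wedge A \neq 0$ implies $s \in A$. Fix such an $s$. The key move is to produce an element of $A$ lying beneath $s$, or more precisely to exploit that $s \wedge a$ is compatible with $\overline{\mathbf{d}(s)}\,a$ (using Lemma~\ref{lem:order_properties}(1), $s \wedge a = a\,\mathbf{d}(s)$, so $a = (s\wedge a)\vee \overline{\mathbf{d}(s)}\,a$ is a genuine --- indeed orthogonal --- join for each $a \in A$). Applying primeness to this join $a \in A$, we get that either $s \wedge a \in A$, in which case $s \in A$ by upward closure and we are done, or $\overline{\mathbf{d}(s)}\,a \in A$. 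The task is then to rule out the second alternative holding for all $a$: if $\overline{\mathbf{d}(s)}\,a \in A$ we would have $\overline{\mathbf{d}(s)}\,a \leq \overline{\mathbf{d}(s)}$, hence $\overline{\mathbf{d}(s)} \in A$ by upward closure, but $\overline{\mathbf{d}(s)} \wedge \mathbf{d}(s) = 0$ while $s \wedge A \neq 0$ forces (taking $a = \overline{\mathbf{d}(s)}$, now an element of $A$) $s \wedge \overline{\mathbf{d}(s)} \neq 0$, contradicting $s \wedge \overline{\mathbf{d}(s)} = \mathbf{d}(s)\,\overline{\mathbf{d}(s)}\cdots$ --- more cleanly, $s \wedge \overline{\mathbf{d}(s)} \leq \mathbf{d}(s) \wedge \overline{\mathbf{d}(s)} = 0$ by Proposition~\ref{prop:leech}(5) applied appropriately. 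So the second alternative is impossible, and $s \in A$.

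I expect the bookkeeping around which idempotent to complement --- and checking that the displayed decomposition of $a$ is a legitimate join in the monoid (compatibility of the two pieces, which follows since both are $\leq a$) --- to be the only delicate point; everything else is a direct application of Lemma~\ref{lem:exel}, Lemma~\ref{lem:order_properties}, and the Boolean structure of $E(S)$. Once both implications are established, the equivalence is proved.
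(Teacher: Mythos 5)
Your first direction (ultrafilter $\Rightarrow$ prime) is fine: negating membership via Lemma~\ref{lem:exel}, meeting the two witnesses inside $A$, and applying Lemma~\ref{lem:order_properties}(3) to get $(a\vee b)\wedge z=(a\wedge z)\vee(b\wedge z)=0$ is a correct argument. (For what it is worth, the paper does not prove this lemma itself but cites \cite[Lemma 3.20]{LL}, so the comparison is with the standard argument.)

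The second direction has a genuine gap. Your key identity $a=(s\wedge a)\vee\overline{\mathbf{d}(s)}\,a$ rests on the claim $s\wedge a=a\,\mathbf{d}(s)$, but Lemma~\ref{lem:order_properties}(1) gives that formula only under the hypothesis $s\sim a$, and compatibility is exactly what you do not have here. The identity is false in general: if $s$ is a unit then $\overline{\mathbf{d}(s)}=0$ and your decomposition reads $a=s\wedge a$, i.e.\ $a\leq s$, which certainly need not hold even when $s\wedge a\neq 0$ (take $s$ the permutation $(1\,2)$ of $\{1,2,3\}$ in $I_{3}$ and $a$ the identity: $s\wedge a=1_{\{3\}}\neq a$). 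The subsidiary step $\overline{\mathbf{d}(s)}\,a\leq\overline{\mathbf{d}(s)}$ is also false, since left-multiplying $a$ by an idempotent gives an element below $a$, not below the idempotent. The repair is to complement the domain of $s\wedge a$ rather than of $s$: since $s\wedge a\leq a$ we have $s\wedge a=a\,\mathbf{d}(s\wedge a)$ unconditionally, so with $e=\mathbf{d}(s\wedge a)$ one gets the legitimate (orthogonal) decomposition $a=(s\wedge a)\vee a\,(\mathbf{d}(a)\bar{e})$. Primeness then gives either $s\wedge a\in A$, whence $s\in A$ by upward closure, or $a'=a\,\mathbf{d}(a)\bar{e}\in A$; but in the latter case $\mathbf{d}(s\wedge a')\leq\mathbf{d}(s\wedge a)\wedge\bar{e}=0$, so $s\wedge a'=0$, contradicting the hypothesis $s\wedge A\neq 0$. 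With that substitution (applied to any single $a\in A$) the second direction goes through; as written, however, the decomposition you apply primeness to is not an identity in $S$, so the proof does not stand.
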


Let $S$ be a Boolean inverse $\wedge$-monoid
and 
denote the set of all ultrafilters of $S$ by $\mathsf{G}(S)$.
The key feature of the set of ultrafilters of such a monoid is that they support a partially defined binary operation defined as follows.
If $A$ is an ultrafilter, define
$$
\mathbf{d}(A) = (A^{-1}A)^{\uparrow}
\mbox{ and }
\mathbf{r}(A) = (AA^{-1})^{\uparrow},
$$
both ultrafilters.
Define a partial binary operation $\cdot$ on  $\mathsf{G}(S)$ by
$$A \cdot B = (AB)^{\uparrow}$$
if $\mathbf{d}(A) = \mathbf{r}(B)$, and undefined otherwise.
Then this is well-defined and $(\mathsf{G}(S), \cdot)$ is a groupoid.
Those ultrafilters that are identities in this groupoid are called {\em idempotent ultrafilters}.
They are precisely the ultrafilters that are also inverse submonoids.
Denote the set of idempotent ultrafilters by $\mathsf{G}(S)_{o}$.
All of this is proved in \cite[Proposition 2.13]{Lawson10b} and in the paragraph that precedes it.
If $F \subseteq E(S)$ is an ultrafilter then $F^{\uparrow}$ is an idempotent ultrafilter in $S$ and every idempotent ultrafilter is of this form.
If $G$ is an idempotent ultrafilter in $S$ and $a \in S$ is such that $a^{-1}a \in G$ then $A = (aG)^{\uparrow}$ is an ultrafilter
where $\mathbf{d}(A) = G$, and every ultrafilter in $S$ is constructed in this way.
Denote by $V_{a}$ the set of all ultrafilters in $S$ that contain the element $a$.
The following is proved as \cite[Lemmas 2.5,  2.21]{Lawson10b}.

\begin{lemma}\label{lem:pele} Let $S$ be a Boolean inverse $\wedge$-monoid.
\begin{enumerate}
 
\item $V_{a} \subseteq V_{b}$ if and only if $a \leq b$.

\item $V_{a}V_{b} = V_{ab}$.

\item $V_{a} \cap V_{b} = V_{a \wedge b}$.

\item If $a \vee b$ exists then $V_{a} \cup V_{b} = V_{a \vee b}$.

\item $V_{a}$ consists only of idempotent ultrafilters if and only if $a$ is an idempotent.

\end{enumerate}
\end{lemma}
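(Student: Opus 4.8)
The plan is to prove the five statements in order, since the later ones lean on the earlier ones, and to reduce everything to elementary manipulations with filters (closed upwards, closed under finite meets) together with Lemma~\ref{lem:exel} and Lemma~\ref{lem:molina}. For (1), the forward direction is immediate: if $V_a \subseteq V_b$, take any ultrafilter containing $a$ — for instance an ultrafilter extending the principal filter $a^{\uparrow}$, which is proper since $a \neq 0$ may be assumed — and conclude $b$ lies in it, forcing $a \leq b$ by considering $a \wedge b$ via Lemma~\ref{lem:exel} (if $a \leq b$ fails, then $a \wedge \bar{b}\mathbf{d}(a)\mathbf{r}(a)$ or a similar witness is nonzero and below $a$, and an ultrafilter through it avoids $b$). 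The converse is trivial since filters are closed upwards. For (3), $A \in V_a \cap V_b$ means $a, b \in A$; since $A$ is closed under meets, $a \wedge b \in A$, so $A \in V_{a \wedge b}$; conversely if $a \wedge b \in A$ then $a, b \in A$ by upward closure. (Note $a \wedge b$ always exists here since $S$ is a $\wedge$-monoid.)

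For (4), one inclusion is again upward closure: $a, b \leq a \vee b$ give $V_a, V_b \subseteq V_{a \vee b}$ by (1). For the reverse inclusion, suppose $A \in V_{a \vee b}$, i.e.\ $a \vee b \in A$; by Lemma~\ref{lem:molina} the ultrafilter $A$ is prime, so $a \in A$ or $b \in A$, giving $A \in V_a \cup V_b$. This is the step where primality is essential, and it is the reason Lemma~\ref{lem:molina} was recalled just beforehand.

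For (2), the inclusion $V_a V_b \subseteq V_{ab}$ is the easy direction: if $A \in V_a$, $B \in V_b$ and $A \cdot B$ is defined, then $ab \in AB \subseteq A\cdot B = (AB)^{\uparrow}$, so $A \cdot B \in V_{ab}$. The reverse inclusion $V_{ab} \subseteq V_a V_b$ is the one that takes a little work: given an ultrafilter $C$ with $ab \in C$, one must produce ultrafilters $A \ni a$, $B \ni b$ with $\mathbf{d}(A) = \mathbf{r}(B)$ and $C = A \cdot B$. The natural candidates are $B = (b\,\mathbf{d}(C))^{\uparrow}$ — legitimate because $\mathbf{d}(ab) = \mathbf{d}(b)\mathbf{d}(ab) \in \mathbf{d}(C)$ shows $\mathbf{d}(b) = b^{-1}b \in \mathbf{d}(C)$, so this is an ultrafilter with domain $\mathbf{d}(C)$ by the construction recalled before the lemma — and $A = (a\, \mathbf{r}(B))^{\uparrow}$, after checking $\mathbf{d}(A) = \mathbf{r}(B)$ and $a^{-1}a = \mathbf{d}(a) \in \mathbf{r}(B)$. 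One then verifies $a \in A$, $b \in B$, and $A \cdot B = (ab\,\mathbf{d}(C))^{\uparrow} = C$, the last equality because $ab \in C$ and $C$ is an ultrafilter (use $\mathbf{d}(C) \subseteq C$ and upward closure). I expect this to be the main obstacle: the bookkeeping with domains and ranges of ultrafilters, making sure each claimed filter is genuinely an ultrafilter, and confirming the product reconstructs $C$ rather than a strictly smaller filter.

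Finally, (5): if $a$ is an idempotent then every ultrafilter containing $a$ is an idempotent ultrafilter, since $a = a^{-1}a = aa^{-1}$ forces $\mathbf{d}(A) = \mathbf{r}(A) = A$, so $A$ is an identity of the groupoid; this uses the fact (recalled before the lemma) that idempotent ultrafilters are exactly those ultrafilters that are inverse submonoids, equivalently the $F^{\uparrow}$ for $F$ an ultrafilter of $E(S)$. Conversely, if $a$ is not an idempotent, then $\mathbf{d}(a) = a^{-1}a$ is strictly below $1$-comparison is not the point — rather, if $a$ were to lie in only idempotent ultrafilters, then in particular $a$ would lie in an ultrafilter $A$ which is an inverse submonoid, forcing $a = a \cdot a = a^2$ and $a = a^{-1}$, hence $a = a a^{-1} a = a^3$, and combined with $a^2 = a$ this gives $a = a^2 = aa^{-1}$, an idempotent, a contradiction unless $a$ was idempotent to begin with; more cleanly, pick any ultrafilter through $a$ and observe it contains $a^{-1}a \ne aa^{-1}$ in the non-idempotent-range case, or else directly use part (1) and part (3) to separate. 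I would present the clean version: $a$ idempotent $\iff$ $a = \mathbf{d}(a) = \mathbf{r}(a)$ $\iff$ $V_a \subseteq \mathsf{G}(S)_o$, the last step by (1) applied to $a \leq \mathbf{d}(a)$ and its reverse together with the description of idempotent ultrafilters.
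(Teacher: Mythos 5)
The paper does not prove this lemma at all: it is quoted from \cite{Lawson10b} (Lemmas 2.5 and 2.21), so your reconstruction can only be compared with the standard arguments there, and in outline you have reproduced them correctly: upward closure and closure under meets for (1), (3); primality via Lemma~\ref{lem:molina} for (4); the construction $A=(aG)^{\uparrow}$ of ultrafilters from idempotent ultrafilters for (2); and the description of idempotent ultrafilters as $F^{\uparrow}$, $F$ an ultrafilter of $E(S)$, for (5). However, two of your local justifications are wrong and need repair. In (1), the witness ``$a \wedge \bar{b}\mathbf{d}(a)\mathbf{r}(a)$'' is not well formed, since complementation exists only for idempotents and $b$ need not be one; the witness you want is $a\,\overline{\mathbf{d}(a\wedge b)}$, which is nonzero exactly when $a \nleq b$, lies below $a$, and has zero meet with $b$, so any ultrafilter through it lies in $V_{a}\setminus V_{b}$ (you hedged, so this is minor). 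More seriously, in (2) the decisive step $(ab\,\mathbf{d}(C))^{\uparrow}=C$ is justified by ``$\mathbf{d}(C)\subseteq C$ and upward closure'', and $\mathbf{d}(C)\subseteq C$ is simply false in general: in a finite symmetric inverse monoid take $C=(x\mapsto y)^{\uparrow}$ with $x\neq y$; then the idempotent $x\mapsto x$ lies in $\mathbf{d}(C)$ but not in $C$. What you actually need is that an ultrafilter is recovered from any one of its elements together with its domain, $C=(c\,\mathbf{d}(C))^{\uparrow}$ for every $c\in C$ (this is in \cite{Lawson10b}); alternatively, show $(ab\,\mathbf{d}(C))^{\uparrow}\subseteq C$ directly, using for $c\in C$ that $ab\wedge c = ab\,\mathbf{d}(ab\wedge c)$ with $\mathbf{d}(ab\wedge c)\leq\mathbf{d}(c)$, and then appeal to maximality of the ultrafilter $(ab\,\mathbf{d}(C))^{\uparrow}$. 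With that correction your choice $B=(b\,\mathbf{d}(C))^{\uparrow}$, $A=(a\,\mathbf{r}(B))^{\uparrow}$ does work, and your checks that $\mathbf{d}(b)\in\mathbf{d}(C)$ and $\mathbf{d}(a)\in\mathbf{r}(B)$ are exactly the right bookkeeping.

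In (5), the aborted line ``forcing $a=a\cdot a$'' is a non sequitur (closure of an inverse submonoid under multiplication gives $a^{2}\in A$, not $a^{2}=a$), but the clean version you settle on is correct and is the one to keep: if every ultrafilter in $V_{a}$ has the form $F^{\uparrow}$ with $F$ an ultrafilter of $E(S)$, then each contains an idempotent below $a$, hence contains $\mathbf{d}(a)$ (or, equally well, $\phi(a)$), so $V_{a}\subseteq V_{\mathbf{d}(a)}$ and part (1) gives $a\leq\mathbf{d}(a)$; since the idempotents form an order ideal, $a$ is idempotent. Finally, note that the forward half of (1) and the existence of an ultrafilter through any nonzero element require a Zorn's lemma argument, which you should state rather than leave implicit.
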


Put $\tau = \{V_{a} \colon a \in S\}$.
Then $\tau$ is the basis for a topology on $\mathsf{G}(S)$
with respect to which it is a Hausdorff \'etale groupoid whose space of identities is a Boolean space.
In fact, the space of identities is homeomorphic to the Stone space of the Boolean algebra of idempotents of $S$.
Observe that if $S$ is countable then $\tau$ is a countable basis for the space and so as a topological space $\mathsf{G}(S)$ is second countable.
Thus from each Boolean inverse $\wedge$-monoid we may construct a Hausdorff Boolean groupoid using ultrafilters.
If $G$ is a Hausdorff Boolean groupoid, denote by $\mathsf{KB}(G)$ the set of all compact-open local bisections of $G$.
This is a Boolean inverse $\wedge$-monoid under multiplication of subsets.
Suppose that the Hausdorff Boolean groupoid $G$ is second countable with countable basis $\sigma$.
Then each compact-open local bisection is a union of elements of $\sigma$ and,
since compact, this union can be chosen to be finite.
It follows that there are only countably many compact-open local bisections and so  $\mathsf{KB}(G)$ is countable.
We may now state the theorem that provides the setting for this paper.
The first part was proved as the main theorem of \cite{Lawson10b} 
and the second follows from the fact that the compact-open local bisections of the \'etale groupoid form a basis for its topology.

\begin{theorem}[Non-commutative Stone Duality]\label{them:duality} For suitable definitions of morphisms,
the category of Boolean inverse $\wedge$-monoids is dually equivalent to the category of Hausdorff Boolean groupoids
with respect to the functors $S \mapsto \mathsf{G}(S)$ and $G \mapsto \mathsf{KB}(G)$.
Under this duality, countable Boolean inverse $\wedge$-monoids correspond to second countable Hausdorff Boolean groupoids.
\end{theorem}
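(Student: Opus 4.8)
The plan is to treat the first assertion as essentially a citation to the main theorem of \cite{Lawson10b} and to organize the argument around the two object-level correspondences, leaving the morphism-matching to the reference. On the algebraic side, Lemma~\ref{lem:pele} already does most of the work: $a \mapsto V_{a}$ is injective, order-reflecting, and preserves meets and existing binary joins, and each $V_{a}$ is a compact-open local bisection of $\mathsf{G}(S)$. So the first step is to check surjectivity onto $\mathsf{KB}(\mathsf{G}(S))$: an arbitrary compact-open local bisection is covered by basic sets and, by compactness, is a \emph{finite} union $V_{a_{1}} \cup \cdots \cup V_{a_{m}}$; since it is a local bisection the $a_{i}$ are pairwise compatible, hence $a_{1} \vee \cdots \vee a_{m}$ exists and the union equals $V_{a_{1} \vee \cdots \vee a_{m}}$ by part (4) of Lemma~\ref{lem:pele}. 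This gives $S \cong \mathsf{KB}(\mathsf{G}(S))$ as Boolean inverse $\wedge$-monoids. On the groupoid side, one recovers $G \cong \mathsf{G}(\mathsf{KB}(G))$ by sending $g \in G$ to the filter of compact-open local bisections containing it; the identity component of this map is exactly the classical Stone duality applied to the clopen algebra of the Stone space $G_{o}$, and the étale property lets one transport the identification from $G_{o}$ to all of $G$ along $\mathbf{d}$.

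Next I would record that these object-level bijections are continuous and open in the appropriate sense, so that—after choosing morphisms—they assemble into a dual equivalence with the displayed functors as its two legs. The genuinely delicate point, and the one I expect to be the main obstacle, is hidden in the words \emph{``for suitable definitions of morphisms''}: a morphism of Boolean inverse $\wedge$-monoids in the sense defined earlier does not in general dualize to an everywhere-defined continuous functor of groupoids, and one must instead pass to the correct class of partial, covering-type (relational) morphisms on the groupoid side. Rather than reprove this matching I would cite the relevant portions of \cite{Lawson10b}, since it is precisely the technical core of that paper; the role of the present statement is only to fix which functors on objects implement the duality.

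Finally, the countability refinement is short and I would isolate it as the last step. If $S$ is countable then $\tau = \{V_{a} : a \in S\}$ is a countable family and, as observed above, a basis for $\mathsf{G}(S)$, so $\mathsf{G}(S)$ is second countable. Conversely, if a Hausdorff Boolean groupoid $G$ has a countable basis $\sigma$, then every compact-open local bisection is a union of members of $\sigma$ and, by compactness, a finite such union; as there are only countably many finite subsets of $\sigma$, the monoid $\mathsf{KB}(G)$ is countable. Combined with the object-level isomorphisms $S \cong \mathsf{KB}(\mathsf{G}(S))$ and $G \cong \mathsf{G}(\mathsf{KB}(G))$, this shows the two countability conditions correspond under the duality, completing the argument. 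I expect no real difficulty here—the only subtlety is that distinct finite subsets of $\sigma$ may yield the same compact-open bisection, which only makes the count smaller.
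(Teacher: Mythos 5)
Your proposal is correct and follows essentially the same route as the paper: the duality itself is delegated to the main theorem of \cite{Lawson10b} (with the choice of morphisms deliberately sidestepped, exactly as the paper does), and the countability correspondence is established by the same observation the paper makes just before the theorem, namely that $\tau=\{V_{a}\colon a\in S\}$ is a countable basis when $S$ is countable, while conversely compactness makes every compact-open local bisection a finite union of members of a countable basis, so $\mathsf{KB}(G)$ is countable. The extra detail you supply on the object-level isomorphisms $S\cong\mathsf{KB}(\mathsf{G}(S))$ and $G\cong\mathsf{G}(\mathsf{KB}(G))$ is consistent with the cited source and does not change the argument.
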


\begin{remarks} \mbox{}
{\em 

\begin{enumerate}
\item We have deliberately sidestepped the issue of just what morphisms are needed because they play no r\^ole in this paper.
A thorough discussion of morphisms may be found in \cite{KL}.

\item It is worth observing that in the above theorem, the Hausdorffness of the Boolean groupoid is a direct consequence of the fact that the 
inverse monoid is a $\wedge$-monoid. This is discussed in \cite{LL}. But in this paper all groupoids will be Hausdorff. 
\end{enumerate}
}
\end{remarks}

Let $S$ be a Boolean inverse $\wedge$-monoid.
Denote by $\mathsf{X}(E(S))$ the Stone space of $E(S)$.
It is homeomorphic to the space of identities of  $\mathsf{G}(S)$ and we call it the {\em structure space} of $S$. 
If $e \in E(S)$, we denote by $U_{e}$ the set of all ultrafilters {\em in} $E(S)$ that contain $e$.

\section{Proof of Theorem~\ref{thm:ONE}}

In Section~4.1 and Section~4.2 below, we prove, respectively, the two cases of Theorem~\ref{thm:ONE}.
We then study two refinements of these results.

\subsection{The $0$-simplifying case}

As we have already seen, 
morphisms between Boolean inverse $\wedge$-monoids behave like morphisms between Boolean algebras in that they are determined by their kernels.
Such kernels are the $\vee$-ideals, and the monoids with only trivial such ideals are $0$-simplifying.
Thus $0$-simplifying is a natural notion of simplicity for Boolean inverse $\wedge$-monoids.
The following relation was introduced in \cite{Lenz} and will be important in handling $0$-simplifying monoids.
Let $e$ and $f$ be two non-zero idempotents in $S$.
Define $e \preceq f$ if and only if there exists a set of  elements $X = \{x_{1}, \ldots, x_{m}\}$ such that
$e = \bigvee_{i=1}^{m}  \mathbf{d}(x_{i})$
and 
$\mathbf{r}(x_{i}) \leq f$ for $1 \leq i \leq m$. 
We can write this formally as $e = \bigvee \mathbf{d}(X)$ and $\bigvee \mathbf{r}(X) \leq f$.
We say that $X$ is a {\em pencil} from $e$ to $f$.

\begin{lemma}\label{lem:preceq} \mbox{}
\begin{enumerate}

\item The relation $\preceq$ is a preorder on the set of idempotents.

\item The relation $\preceq$ is preserved by morphisms.

\item Let $I$ be a non-zero $\vee$-ideal.
If $e \in I$ and $f \preceq e$ then $f \in I$. 

\end{enumerate}
\end{lemma}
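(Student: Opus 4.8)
The plan is to verify each of the three statements directly from the definition of the preorder $\preceq$ and the defining closure properties of $\vee$-ideals, using the interaction between meets and joins established in Lemma~\ref{lem:order_properties} and Corollary~\ref{cor:QI}.

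For part (1), reflexivity of $\preceq$ is witnessed by the pencil $\{e\}$ itself, since $\mathbf{d}(e) = e = \mathbf{r}(e)$. For transitivity, suppose $e \preceq f$ via a pencil $X = \{x_1, \ldots, x_m\}$ and $f \preceq g$ via a pencil $Y = \{y_1, \ldots, y_n\}$. The natural candidate pencil from $e$ to $g$ is the collection of all products $y_j x_i$ suitably restricted: each $\mathbf{r}(x_i) \leq f = \bigvee_j \mathbf{d}(y_j)$, so by Lemma~\ref{lem:order_properties}(3) we may write $\mathbf{r}(x_i) = \bigvee_j \mathbf{r}(x_i) \wedge \mathbf{d}(y_j)$, and correspondingly decompose $x_i$ as an orthogonal join of pieces whose ranges sit below the various $\mathbf{d}(y_j)$; composing these pieces with the $y_j$ gives elements whose domains still join to $e$ (joins of domains are preserved, by Lemma~\ref{lem:order_properties}(2)) and whose ranges lie below $\bigvee_j \mathbf{r}(y_j) \leq g$. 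This is the one place requiring a small amount of care with the join bookkeeping, but it is routine given the order-theoretic machinery already in place.

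Part (2) is immediate: a morphism $\theta$ of Boolean inverse $\wedge$-monoids is a semigroup homomorphism preserving binary compatible joins, hence preserves $\mathbf{d}$, $\mathbf{r}$ and joins of idempotents; so if $X$ is a pencil from $e$ to $f$, then $\theta(X)$ is a pencil from $\theta(e)$ to $\theta(f)$, and $e \preceq f$ implies $\theta(e) \preceq \theta(f)$.

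For part (3), suppose $I$ is a non-zero $\vee$-ideal, $e \in I$, and $f \preceq e$ via a pencil $X = \{x_1, \ldots, x_m\}$ with $f = \bigvee_{i=1}^m \mathbf{d}(x_i)$ and $\mathbf{r}(x_i) \leq e$ for each $i$. Since $I$ is an ideal and $e \in I$, for each $i$ we have $x_i^{-1} e \, x_i \in I$; now $x_i^{-1} e \, x_i = x_i^{-1} \mathbf{r}(x_i) e \, x_i = x_i^{-1} \mathbf{r}(x_i) x_i = x_i^{-1} x_i = \mathbf{d}(x_i)$, using $\mathbf{r}(x_i) \leq e$. Hence each $\mathbf{d}(x_i) \in I$. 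The elements $\mathbf{d}(x_i)$ are idempotents, hence pairwise compatible, so since $I$ is $\vee$-closed their join $f = \bigvee_{i=1}^m \mathbf{d}(x_i)$ also lies in $I$. I do not expect any genuine obstacle here; the only item needing attention is the join-manipulation in the transitivity argument of part~(1), and even that is a direct application of Lemma~\ref{lem:order_properties}(3) and Lemma~\ref{lem:bar}(2).
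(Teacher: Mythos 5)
Your proof is correct and takes essentially the same route as the paper: your ``suitably restricted'' composites are literally the products $y_jx_i$ (since $y_jx_i = y_j\bigl(\mathbf{d}(y_j)x_i\bigr)$) that the paper uses as the pencil $YX$, with the same join computation $\bigvee_{i,j}\mathbf{d}(y_jx_i)=\bigvee_i\mathbf{d}(x_i)=e$. Parts (2) and (3) also match the paper's argument, the only immaterial difference being that the paper obtains $\mathbf{d}(x_i)\in I$ from $x_i=ex_i\in I$ rather than by conjugating $e$ as you do.
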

\begin{proof} (1) Since $e \leq e$ the relation $\preceq$ is reflexive.
We prove that it is transitive.
Let $X = \{x_{i} \}$ be a pencil from $e$ to $f$ and let $Y = \{y_{j} \}$ be a pencil from $f$ to $g$.
We prove that $YX$ is a pencil from $e$ to $g$.
Observe that 
$\mathbf{d}(y_{j}x_{i}) \leq \mathbf{d}(x_{i}) \leq e$
and
$\mathbf{r}(y_{j}x_{i}) \leq \mathbf{r}(y_{j}) \leq g$.
In addition
$$\bigvee_{i,j} \mathbf{d}(y_{j}x_{i}) 
=  
\bigvee_{i} x_{i}^{-1} \left(  \bigvee_{j} \mathbf{d}(y_{j})    \right) x_{i}
=
\bigvee_{i} \mathbf{d} (x_{i})
=
e.$$
This proves that $\preceq$ is a preorder.

(2) Suppose that $\theta \colon S \rightarrow T$ is a morphism and that $e \preceq f$ in $S$.
Then because morphisms preserve finite joins, it follows that $\theta (e) \preceq \theta (f)$ in $T$.

(3) By definition, there is a pencil $\{x_{i}\}$ where $\mathbf{r}(x_{i}) \leq e$ and $f = \bigvee_{i=1}^{n} \mathbf{d}(x_{i})$.
But $ex_{i} = x_{i}$ and so $x_{i} \in I$, since $I$ is an ideal,
and similarly $\mathbf{d}(x_{i}) \in I$ for each $i$.
We now use the fact that $I$ is a $\vee$-ideal and so $f \in I$, as required.
\end{proof}

The following lemma describes an idea that is used repeatedly in this paper.

\begin{lemma}\label{lem:useful} Let $S$ be a Boolean inverse $\wedge$-monoid.
Let $e$ and $f$ be idempotents such that $e \preceq f$ and suppose that $e \in F$ an ultrafilter in $E(S)$.
Then there is an element $a$ such that $\mathbf{d}(a) \in F$, $\mathbf{d}(a) \leq e$ and $\mathbf{r}(a) \leq f$.
\end{lemma}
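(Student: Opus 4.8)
The plan is to unwind the definition of $e \preceq f$ and use the primeness (ultrafilter $=$ prime filter) characterization from Lemma~\ref{lem:molina} to extract a single witnessing element. First I would invoke the hypothesis $e \preceq f$ to obtain a pencil $X = \{x_{1}, \ldots, x_{m}\}$ from $e$ to $f$, so that $e = \bigvee_{i=1}^{m} \mathbf{d}(x_{i})$ and $\mathbf{r}(x_{i}) \leq f$ for every $i$. The idempotents $\mathbf{d}(x_{1}), \ldots, \mathbf{d}(x_{m})$ are pairwise compatible (indeed all $\leq e$), so their join is an honest join in $E(S)$, and it lies in the ultrafilter $F$ by assumption.

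Next I would use that $F$, being an ultrafilter in the Boolean algebra $E(S)$, is a prime filter: from $\bigvee_{i=1}^{m} \mathbf{d}(x_{i}) = e \in F$ and an easy induction on $m$ using primeness, there is some index $i$ with $\mathbf{d}(x_{i}) \in F$. Set $a = x_{i}$. Then $\mathbf{d}(a) = \mathbf{d}(x_{i}) \in F$; moreover $\mathbf{d}(a) \leq \bigvee_{k} \mathbf{d}(x_{k}) = e$; and $\mathbf{r}(a) = \mathbf{r}(x_{i}) \leq f$ directly from the pencil condition. This $a$ is exactly the element required, so the proof is complete.

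There is really no serious obstacle here: the only point that needs a word of care is the reduction from a finite join in $F$ to a single joinand, and this is precisely the content of primeness of ultrafilters in a Boolean algebra (or, if one prefers to stay inside $S$, Lemma~\ref{lem:molina} applied to the idempotents, noting that $F^{\uparrow}$ is a prime filter of $S$ and membership of an idempotent in $F^{\uparrow}$ is the same as membership in $F$). One should also note at the outset that $m \geq 1$, since $e$ is non-zero and hence the pencil is non-empty, so the induction has a base case. Beyond that the argument is a direct substitution of definitions.
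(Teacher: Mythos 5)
Your proof is correct and follows essentially the same route as the paper: extract a pencil from $e \preceq f$, use primeness of the ultrafilter (Lemma~\ref{lem:molina}) to locate an index $i$ with $\mathbf{d}(x_{i}) \in F$, and take $a = x_{i}$. The extra remarks about non-emptiness of the pencil and passing to $F^{\uparrow}$ are fine but not needed beyond what the paper does.
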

\begin{proof} By definition, there is a pencil $\{x_{1}, \ldots, x_{m}\}$ from $e$ to $f$.
In particular, $e = \bigvee_{i=1}^{m} \mathbf{d}(x_{i})$.
But every ultrafilter is a prime filter by Lemma~\ref{lem:molina} and so $\mathbf{d}(x_{i}) \in F$ for some $i$.
The element $a = x_{i}$ therefore has the required properties.
\end{proof}

In the light of Lemma~\ref{lem:preceq}, 
we may define the equivalence relation $e \equiv f$ if and only if $e \preceq f$ and $f \preceq e$.
The following was proved as part of Lemma~7.8 of \cite{Lenz}.

\begin{lemma}\label{lem:toby} Let $S$ be a Boolean inverse $\wedge$-monoid.
Then $\equiv$ is the universal relation on the set of non-zero idempotents if and only if $S$ is $0$-simplifying.
\end{lemma}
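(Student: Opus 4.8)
The plan is to prove the two implications separately, using Lemma~\ref{lem:preceq}(3) for the forward direction and Lemma~\ref{lem:useful} together with the characterization of ultrafilters as prime filters (Lemma~\ref{lem:molina}) for the converse. Throughout, recall that $\vee$-ideals are exactly the candidate kernels by Proposition~\ref{prop:kernels}, so $0$-simplifying means precisely that the only $\vee$-ideals are $\{0\}$ and $S$.

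First I would prove that if $\equiv$ is universal on the non-zero idempotents, then $S$ is $0$-simplifying. Let $I$ be a non-zero $\vee$-ideal; pick a non-zero idempotent $e \in I$ (every non-zero element $a$ of $I$ has $\mathbf{d}(a) \le a^{-1}a$... more simply $\mathbf{d}(a) = a^{-1}a \in I$ since $I$ is an ideal and $a^{-1}a = a^{-1}\cdot a$). Now take any non-zero idempotent $f \in S$. By hypothesis $f \equiv e$, in particular $f \preceq e$, so by Lemma~\ref{lem:preceq}(3) we get $f \in I$. Hence $I$ contains every non-zero idempotent, and since every element $a \in S$ satisfies $a = a(a^{-1}a)$ with $a^{-1}a \in I$, we conclude $a \in I$; thus $I = S$. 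So $S$ has no non-trivial $\vee$-ideals.

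For the converse, suppose $S$ is $0$-simplifying and fix a non-zero idempotent $e$. I would form the set $I = \{\, a \in S : \mathbf{d}(a) \preceq e \,\} \cup \{0\}$ (equivalently, the downward-and-ideal closure generated by all idempotents $\preceq e$) and check it is a non-zero $\vee$-ideal: it is non-empty since $e \in I$; it is an order ideal and closed under left and right multiplication by arbitrary elements of $S$ because $\mathbf{d}(sa) \le \mathbf{d}(a)$ and $\mathbf{d}(as) = s^{-1}\mathbf{d}(a)s \preceq \mathbf{d}(a) \preceq e$ (pencils transport under conjugation, as in the transitivity argument of Lemma~\ref{lem:preceq}(1)); and it is $\vee$-closed because $\mathbf{d}(a \vee b) = \mathbf{d}(a) \vee \mathbf{d}(b)$ by Lemma~\ref{lem:order_properties}(2), and a join of two idempotents each $\preceq e$ is again $\preceq e$ (concatenate the two pencils). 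Since $S$ is $0$-simplifying and $I \neq \{0\}$, we must have $I = S$. In particular, given any non-zero idempotent $f$, we have $f \in I$, so $f \preceq e$. By symmetry (running the same argument with the roles of $e$ and $f$ swapped, or simply noting $e$ was arbitrary) we also get $e \preceq f$, hence $e \equiv f$; thus $\equiv$ is universal.

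The main obstacle I anticipate is verifying carefully that $I$ is genuinely a two-sided $\vee$-ideal — specifically that $\mathbf{d}(as) \preceq e$ whenever $\mathbf{d}(a) \preceq e$. This requires checking that if $\{x_i\}$ is a pencil from $\mathbf{d}(a)$ to $e$ then an appropriate family built from $\{x_i\}$ and $s$ witnesses $\mathbf{d}(as) \preceq e$; the computation $\bigvee_i \mathbf{d}(x_i s) = s^{-1}(\bigvee_i \mathbf{d}(x_i)) s = s^{-1}\mathbf{d}(a) s = \mathbf{d}(as)$ mirrors the displayed calculation in the proof of Lemma~\ref{lem:preceq}(1), so it should go through, but it is the point where one must be attentive. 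Everything else is a routine assembly of the lemmas already established.
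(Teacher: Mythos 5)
Your argument is correct. Note, though, that the paper does not prove this lemma at all: it simply cites it as part of Lemma~7.8 of the Lenz reference, so there is no in-paper proof to compare with. Your forward direction is essentially forced by the machinery already set up (a non-zero $\vee$-ideal contains a non-zero idempotent $e=a^{-1}a$, and Lemma~\ref{lem:preceq}(3) then sweeps in every non-zero idempotent, hence every element $a = a(a^{-1}a)$), and your converse --- building the $\vee$-ideal $I=\{a : \mathbf{d}(a)\preceq e\}\cup\{0\}$ and invoking $0$-simplifying --- is the natural self-contained route and does work. The only point you leave implicit is the closure of $I$ under \emph{left} multiplication: you justify it by ``$\mathbf{d}(sa)\leq \mathbf{d}(a)$'', which additionally needs the (easy) fact that $\preceq$ is compatible with the natural order in its first argument, i.e.\ if $0\neq i\leq j$ and $j\preceq e$ then $i\preceq e$; this follows by restricting a pencil $\{x_k\}$ from $j$ to $e$ to the family $\{x_k i\}$ (discarding zero terms), since $\bigvee_k \mathbf{d}(x_k i)=\bigvee_k i\,\mathbf{d}(x_k)=ij=i$ and $\mathbf{r}(x_k i)\leq \mathbf{r}(x_k)\leq e$. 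With that one-line verification added (your treatment of right multiplication via $\{x_i s\}$, and of $\vee$-closure via concatenating pencils and $\mathbf{d}(a\vee b)=\mathbf{d}(a)\vee\mathbf{d}(b)$, is already complete), the proof is sound.
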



The following was suggested by the first line in the proof of \cite[Theorem~6.11]{Matui12}.

\begin{proposition}\label{prop:blop} Let $S$ be a countable Boolean inverse $\wedge$-monoid.
If $S$ is $0$-simplifying then either $S$ is a Tarski inverse monoid or the semilattice of idempotents of $S$ is finite.
\end{proposition}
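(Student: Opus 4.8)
The plan is to reduce the statement to a purely order-theoretic fact about $E(S)$. Since $S$ is countable, its semilattice of idempotents $E(S)$ is automatically a \emph{countable} Boolean algebra, so by Tarski's theorem the only way $S$ can fail to be a Tarski inverse monoid is for $E(S)$ to have an atom. Thus it suffices to prove: if $S$ is $0$-simplifying and $E(S)$ contains an atom $e$, then $E(S)$ is finite. The key input is Lemma~\ref{lem:toby}: $0$-simplifying means $\equiv$ is the universal relation on non-zero idempotents, and in particular $1 \preceq e$. Unwinding the definition of $\preceq$, there is a pencil $\{x_{1}, \ldots, x_{m}\}$ from $1$ to $e$, i.e.\ $1 = \bigvee_{i=1}^{m} \mathbf{d}(x_{i})$ with $\mathbf{r}(x_{i}) \leq e$ for all $i$.

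Next I would exploit that $e$ is an atom. For each $i$, $\mathbf{r}(x_{i}) \leq e$ forces $\mathbf{r}(x_{i}) \in \{0,e\}$; the indices with $\mathbf{r}(x_{i}) = 0$ also have $x_{i} = 0 = \mathbf{d}(x_{i})$ and contribute nothing to the join, so we may discard them and assume $\mathbf{r}(x_{i}) = e$ for all $i$, with $m \geq 1$ since $1 \neq 0$. For such an $x_{i}$, a short conjugation argument shows that $a_{i} := \mathbf{d}(x_{i})$ is itself an atom: if $0 \neq h \leq a_{i}$ is idempotent, then $x_{i} h x_{i}^{-1}$ is a nonzero idempotent below $x_{i}\mathbf{d}(x_{i})x_{i}^{-1} = \mathbf{r}(x_{i}) = e$, hence equal to $e$, and applying $x_{i}^{-1}(-)x_{i}$ gives $h = \mathbf{d}(x_{i}) = a_{i}$. (Equivalently, $a_{i}\,\mathscr{D}\,e$ and $\mathscr{D}$ preserves atoms.) This is the one step where care is needed — the atom-transfer along pencil elements together with the bookkeeping of discarding the zero terms — but it is routine.

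Finally, $1 = a_{1} \vee \cdots \vee a_{m}$ is a finite join of atoms; deleting repetitions (distinct atoms are orthogonal, since $a_{i} \wedge a_{j} \leq a_{i}$ is $0$ or $a_{i}$, and $a_{i} \leq a_{j}$ with $a_{j}$ an atom would force $a_{i} = a_{j}$) we may assume the $a_{i}$ are pairwise distinct. Now let $f \in E(S)$ be arbitrary. By Lemma~\ref{lem:order_properties}(3), $f = f \wedge 1 = \bigvee_{i=1}^{m}(f \wedge a_{i})$, and each $f \wedge a_{i}$ lies below the atom $a_{i}$, hence lies in $\{0, a_{i}\}$. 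Therefore $f$ is the join of the subset $\{a_{i} : f \wedge a_{i} = a_{i}\}$ of $\{a_{1}, \ldots, a_{m}\}$, so $|E(S)| \leq 2^{m}$ and $E(S)$ is finite. Combined with the first paragraph, this proves the dichotomy asserted in Proposition~\ref{prop:blop}.
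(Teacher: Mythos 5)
Your proof is correct and takes essentially the same route as the paper's: reduce via Tarski's theorem to showing that an atom forces $E(S)$ to be finite, invoke Lemma~\ref{lem:toby} to produce a pencil, transfer atomicity along the pencil elements, and finish with distributivity to bound $|E(S)|$ by $2^{m}$. The only cosmetic difference is that you apply the pencil argument once, to $1 \preceq e$, and then handle an arbitrary idempotent by $f = \bigvee_{i}(f \wedge a_{i})$, whereas the paper first shows every non-zero idempotent $f \preceq e$ is a finite join of atoms and then specializes to $f = 1$.
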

\begin{proof} There are two possibilities.
Suppose first that $E(S)$ contains no atoms.
Then it is clearly not finite and so, since it is countable, it is a Tarski algebra.
Thus in what follows, we suppose that $E(S)$ contains at least one atom $e$.
We prove first that if $e$ is an atom and $e \, \mathscr{D} \, f$ then $f$ is an atom.
Let $e \stackrel{a}{\rightarrow} f$.
Suppose that $i \leq f$.
Then $ia \leq a$.
Hence $\mathbf{d}(ia) \leq e$.
Since $e$ is an atom, it follows that $\mathbf{d}(ia) = e$ or  $\mathbf{d}(ia) = 0$.
If $\mathbf{d}(ia) = 0$ then $ia = 0$ and so $i = 0$.
If  $\mathbf{d}(ia) = e$ then $ia = a$ and $i = f$.
It follows that $f$ is an atom.
Now let $f$ be any non-zero idempotent.
We are assuming that the semigroup is $0$-simple and so $f \preceq e$ by Lemma~\ref{lem:toby}.
There are therefore a finite number of {\em non-zero} elements $x_{1}, \ldots, x_{m}$ of $S$ such that
$f = \bigvee_{i=1}^{m} \mathbf{d}(x_{i})$ and $\mathbf{r}(x_{i}) \leq e$.
But $e$ is an atom and so $\mathbf{r}(x_{i}) = e$ for $i = 1, \ldots, m$.
It follows that $\mathbf{d}(x_{i})$ is an atom.
We have therefore proved that each non-zero element of $E(S)$ is the join of a finite number of atoms,
and all the atoms are $\mathscr{D}$-related to $e$.
It is an immediate consequence that any atom in $E(S)$ is $\mathscr{D}$-related to $e$.
Thus all the atoms of $E(S)$ form a single $\mathscr{D}$-class.
Since the identity is an idempotent it is the join of a finite number of atoms, say $e_{1}, \ldots, e_{m}$.
Then by distributivity, every non-zero idempotent is a join of some of these $m$ idempotents.
It follows that there are exactly $m$ atoms and the Boolean algebra of idempotents is finite with $2^{m}$ elements.
\end{proof}

The following corollary to Proposition~\ref{prop:blop} is immediate by  Lemma~\ref{lem:finite_fundamental} and Theorem~\ref{them:bongo}.

\begin{corollary} 
A fundamental $0$-simplifying countable Boolean inverse $\wedge$-monoid is either a Tarski inverse monoid or a finite symmetric inverse monoid.
\end{corollary}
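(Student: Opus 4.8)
The plan is to feed $S$ through Proposition~\ref{prop:blop} and then dispose of the remaining case using the finite structure theory already recorded in Theorem~\ref{them:bongo}.

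First I would apply Proposition~\ref{prop:blop} to $S$: since $S$ is a countable $0$-simplifying Boolean inverse $\wedge$-monoid, that proposition tells us that one of two things happens, namely either $S$ is already a Tarski inverse monoid, in which case there is nothing left to prove, or the semilattice of idempotents $E(S)$ is finite. So I may assume from now on that $E(S)$ is finite.

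In this remaining case I would invoke the hypothesis that $S$ is fundamental. By Lemma~\ref{lem:finite_fundamental}, a fundamental inverse semigroup is finite as soon as its semilattice of idempotents is finite; hence $S$ itself is finite. A finite Boolean inverse monoid is automatically a $\wedge$-monoid (as observed just before Theorem~\ref{them:bongo}), so $S$ is a finite Boolean inverse monoid that is moreover fundamental and $0$-simplifying. Theorem~\ref{them:bongo}(3) then says precisely that such a monoid is a finite symmetric inverse monoid, which completes the argument.

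There is no substantial obstacle here: all the content has already been extracted into Proposition~\ref{prop:blop} and the finite classification of Theorem~\ref{them:bongo}. The only points needing a moment's care are checking that the ``$E(S)$ finite'' branch feeds correctly into Lemma~\ref{lem:finite_fundamental} (it does, since finiteness of $E(S)$ is exactly its hypothesis) and noting that the finiteness of $S$ upgrades it to a $\wedge$-monoid so that Theorem~\ref{them:bongo} applies. One could additionally remark that the two alternatives in the conclusion are genuinely exclusive---a Tarski algebra is infinite while a finite symmetric inverse monoid has a finite idempotent semilattice---but the statement only asserts the disjunction, so this observation is not needed.
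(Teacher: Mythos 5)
Your argument is correct and is exactly the route the paper intends: the corollary is stated there as an immediate consequence of Proposition~\ref{prop:blop}, Lemma~\ref{lem:finite_fundamental} and Theorem~\ref{them:bongo}, and you have simply written out how those three results chain together. No gaps.
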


A groupoid $G$ is said to be {\em connected} if given any two identities $e$ and $f$ there is an element $g$ such that $e = g^{-1}g$ and $f = gg^{-1}$.
Every groupoid decomposes into a disjoint union of connected groupoids which also leads to a partition of the set of identities.
We now make explicit the relations that underly these results.
Identities $e$ and $f$ are {\em connected} if there is an element $g \in G$ such that $e \stackrel{g}{\rightarrow} f$.
This is an equivalence relation on the set $G_{o}$ and the equivalence classes are called {\em $G$-orbits}.
The $G$-orbit containing $e$ is denoted by $G(e)$.
A subset of $G_{o}$ is called an {\em invariant set} if it is a union of $G$-orbits.
Both $\emptyset$ and $G_{o}$ are invariant sets called the {\em trivial invariant sets}.
We say that two elements $g,h \in G$ are {\em connected} if their identities $g^{-1}g$ and $h^{-1}h$ are connected.
This relation is an equivalence relation whose equivalence classes are called {\em connected components}.
An {\em invariant subset} of $G$ is any union of connected components.
Lenz \cite{Lenz} remarks that the equivalence of (2) and (3) below is well-known in the \'etale case.

\begin{lemma}\label{lem:oinky} Let $G$ be an \'etale groupoid. 
Then the following are equivalent.
\begin{enumerate}

\item Every $G$-orbit is a dense subset of $G_{o}$.

\item Any non-empty invariant subset of $G_{o}$ is dense in $G_{o}$.

\item There are no non-trivial open invariant subsets of $G_{o}$.

\item There are no non-trivial open invariant subsets of $G$.

\end{enumerate}
\end{lemma}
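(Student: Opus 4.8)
The plan is to run the cycle $(1) \Rightarrow (2) \Rightarrow (3) \Rightarrow (1)$ and then to prove $(3) \Leftrightarrow (4)$ separately, using throughout two facts about an \'etale groupoid $G$: the maps $\mathbf{d}$ and $\mathbf{r}$ are continuous, surjective onto $G_{o}$, \emph{and} open (being local homeomorphisms); and the complement in $G_{o}$ of a union of $G$-orbits is again a union of $G$-orbits.

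The implication $(1) \Rightarrow (2)$ is immediate: a non-empty invariant subset of $G_{o}$ contains some orbit $G(e)$, which is dense by hypothesis. For $(2) \Rightarrow (3)$, if $U$ is a non-trivial open invariant subset of $G_{o}$, then $G_{o} \setminus U$ is a non-empty invariant subset, hence dense by $(2)$; but it is also closed, so $G_{o} \setminus U = G_{o}$ and $U = \emptyset$, a contradiction. For $(3) \Rightarrow (1)$ I would use the \emph{saturation} operation: for any open $W \subseteq G_{o}$ the set $[W] = \mathbf{r}(\mathbf{d}^{-1}(W))$ is a union of $G$-orbits, and it is open because $\mathbf{d}^{-1}(W)$ is open by continuity of $\mathbf{d}$ while $\mathbf{r}$ is an open map. (Passing to saturations rather than closures is the right move here: it is not transparent that the closure of a union of orbits is invariant, whereas the saturation of an open set is visibly invariant and stays open.) If some orbit $G(e)$ fails to be dense, apply this to $W = G_{o} \setminus \overline{G(e)}$, which is open and non-empty; then $[W]$ is open, invariant, and non-empty, and it is disjoint from $G(e)$ because each of its points lies in the orbit of some point outside $\overline{G(e)} \supseteq G(e)$. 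So $[W]$ is a non-trivial open invariant subset of $G_{o}$, contradicting $(3)$.

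For $(3) \Leftrightarrow (4)$ I would observe that $\mathbf{d}$ induces a bijection $V \mapsto \mathbf{d}(V)$ from the invariant subsets of $G$ onto the invariant subsets of $G_{o}$, with inverse $A \mapsto \mathbf{d}^{-1}(A)$: indeed a union of connected components of $G$ is exactly a set of the form $\mathbf{d}^{-1}(A)$ with $A$ a union of $G$-orbits, since the connected component of $g$ is $\mathbf{d}^{-1}(G(\mathbf{d}(g)))$. Under this bijection $\mathbf{d}^{-1}(A)$ is open whenever $A$ is (continuity of $\mathbf{d}$), while $A = \mathbf{d}(\mathbf{d}^{-1}(A))$ is open whenever $\mathbf{d}^{-1}(A)$ is (openness and surjectivity of $\mathbf{d}$); and the bijection matches $\emptyset$ with $\emptyset$ and $G$ with $G_{o}$. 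Hence $G$ admits a non-trivial open invariant subset if and only if $G_{o}$ does.

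The only delicate point is bookkeeping where the \'etale hypothesis is really invoked: it is used exactly to know that $\mathbf{d}$ and $\mathbf{r}$ send open sets to open sets, which is what makes saturations of open sets open in $(3) \Rightarrow (1)$ and makes $\mathbf{d}(\mathbf{d}^{-1}(A))$ open in $(3) \Leftrightarrow (4)$. Everything else reduces to routine manipulation of the partition of $G$ into connected components and of $G_{o}$ into orbits, together with the fact that $\mathbf{d}$ identifies these two partitions.
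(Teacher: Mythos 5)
Your argument is correct and complete. Note that the paper does not actually supply a proof of this lemma: it is stated bare, with only the remark that the equivalence of (2) and (3) is well known in the \'etale case (following Lenz), so there is no in-text argument to compare yours against. Your cycle $(1)\Rightarrow(2)\Rightarrow(3)\Rightarrow(1)$ plus the bijection $V \mapsto \mathbf{d}(V)$ between invariant subsets of $G$ and invariant subsets of $G_{o}$ (using that the connected component of $g$ is $\mathbf{d}^{-1}(G(\mathbf{d}(g)))$) is exactly the kind of routine verification being suppressed, and you correctly isolate the two places where the \'etale hypothesis is genuinely used: openness of $\mathbf{r}$ to make the saturation $\mathbf{r}(\mathbf{d}^{-1}(W))$ open, and openness of $\mathbf{d}$ to transfer openness from $\mathbf{d}^{-1}(A)$ back to $A = \mathbf{d}(\mathbf{d}^{-1}(A))$. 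Your choice to saturate open sets rather than take closures of orbits is a sensible way to sidestep proving that the closure of an invariant set is invariant (which also holds for \'etale groupoids, but needs a small additional argument); the only cosmetic remark is that one could shorten $(3)\Rightarrow(1)$ slightly by noting $W \subseteq \mathbf{r}(\mathbf{d}^{-1}(W))$, though your non-emptiness and disjointness checks are already valid as written.
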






We shall say that an \'etale groupoid is {\em minimal} if it satisfies any one of the equivalent conditions in Lemma~\ref{lem:oinky}. 
The following was sketched in \cite{Lawson12} and is a reformulation of a result in \cite{Lenz}.
It generalizes the theorem from classical Stone duality that states that the lattice of ideals of a Boolean algebra is isomorphic 
to the lattice of open subsets of its associated Stone space \cite[Theorem 33]{GH}.

\begin{theorem}[$\vee$-ideals and open invariant subsets]\label{thm:one1} Let $S$ be a Boolean inverse $\wedge$-monoid and let $\mathsf{G}(S)$ be its associated Hausdorff Boolean groupoid.
Then there is an order isomorphism between the partially ordered set of $\vee$-ideals in $S$ and the partially ordered set of open invariant subsets of $\mathsf{G}(S)$.
\end{theorem}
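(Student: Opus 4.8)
The plan is to exhibit the order isomorphism explicitly in both directions and check it is well-defined and inverse-preserving. In one direction, given a $\vee$-ideal $I \subseteq S$, I would send it to the subset $\mathsf{O}(I) = \bigcup_{a \in I} V_{a} \subseteq \mathsf{G}(S)$, i.e.\ the set of ultrafilters containing some element of $I$. In the other direction, given an open invariant subset $W \subseteq \mathsf{G}(S)$, I would send it to $\mathsf{I}(W) = \{a \in S \colon V_{a} \subseteq W\}$. The bulk of the work is verifying four things: (i) $\mathsf{O}(I)$ is open and invariant; (ii) $\mathsf{I}(W)$ is a $\vee$-ideal; (iii) the two assignments are mutually inverse; (iv) both are order-preserving, which is essentially immediate from the definitions once (i)--(iii) are in place since both are plainly monotone with respect to $\subseteq$.

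\textbf{Key steps.} First, for (i): openness of $\mathsf{O}(I)$ is clear since it is a union of basic open sets $V_{a}$. For invariance, I would use that $I$ is a two-sided ideal: if $A \in V_{a}$ with $a \in I$ and $B$ is connected to $A$ in $\mathsf{G}(S)$ (so $\mathbf{d}(B) = \mathbf{d}(A)$ up to the groupoid structure, or more precisely $B$ lies in the same connected component), then I want $B$ to contain some element of $I$. The cleanest route is to note that invariant subsets of $\mathsf{G}(S)$ are closed under $\mathbf{d}$, $\mathbf{r}$, and multiplication by arbitrary ultrafilters on either side, and correspondingly for $b \in S$ with $b^{-1}b \leq \mathbf{d}(a)$ or similar, $ab$ and $ba$ lie in $I$; translating $V_{a}V_{b} = V_{ab}$ from Lemma~\ref{lem:pele}, multiplication of $V_{a}$ by an arbitrary basic bisection stays inside $\mathsf{O}(I)$, which gives invariance of $\mathsf{O}(I)$ in the sense of Lemma~\ref{lem:oinky}(4). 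For (ii): if $V_{a} \subseteq W$ and $s \in S$, then $V_{sa} = V_{s}V_{a} \subseteq V_{s}W \subseteq W$ using invariance of $W$, so $sa \in \mathsf{I}(W)$, and symmetrically $as \in \mathsf{I}(W)$; and if $a, b \in \mathsf{I}(W)$ are compatible then $V_{a \vee b} = V_{a} \cup V_{b} \subseteq W$ by Lemma~\ref{lem:pele}(4), so $a \vee b \in \mathsf{I}(W)$. Hence $\mathsf{I}(W)$ is a $\vee$-ideal.

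\textbf{The mutual-inverse step.} For (iii), the containment $\mathsf{I}(\mathsf{O}(I)) \supseteq I$ is trivial. The reverse, $\mathsf{I}(\mathsf{O}(I)) \subseteq I$, is where the $\vee$-ideal and Boolean hypotheses do real work: if $V_{a} \subseteq \bigcup_{b \in I} V_{b}$, then by compactness of $V_{a}$ (it is compact-open, being a basic clopen piece of a Boolean groupoid) we get $V_{a} \subseteq V_{b_{1}} \cup \cdots \cup V_{b_{n}}$ for finitely many $b_{i} \in I$; replacing each $b_{i}$ by $a \wedge b_{i}$ (using Lemma~\ref{lem:pele}(3) and that $I$ is an ideal so $a \wedge b_{i} = a b_{i}^{-1} b_{i} \cdot(\ldots) \in I$) one arranges $b_{i} \leq a$, and then $V_{a} = V_{b_{1}} \cup \cdots \cup V_{b_{n}} = V_{b_{1} \vee \cdots \vee b_{n}}$, so $a = \bigvee b_{i}$ by Lemma~\ref{lem:pele}(1); since $I$ is $\vee$-closed and the $b_{i}$ are pairwise compatible (all below $a$), $a \in I$. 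For the other composite, $\mathsf{O}(\mathsf{I}(W)) = W$: the inclusion $\subseteq$ is clear, and $\supseteq$ follows because $W$ is open, hence a union of basic sets $V_{a}$, and each such $V_{a} \subseteq W$ means $a \in \mathsf{I}(W)$, so every point of $W$ lies in $\mathsf{O}(\mathsf{I}(W))$.

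\textbf{Expected main obstacle.} The technical heart is the compactness-plus-refinement argument in the proof that $\mathsf{I}(\mathsf{O}(I)) \subseteq I$: one must pass from an abstract inclusion of $V_a$ inside a union of $V_b$'s to an actual finite orthogonal (or at least compatible) join expression $a = \bigvee b_i$ with all $b_i \in I$, and this is precisely the place where the $\vee$-closure of $I$, the meet-semilattice structure (Lemma~\ref{lem:pele}(3)), and compactness of the $V_a$ all have to be combined carefully. The invariance-translation step for $\mathsf{O}(I)$ is also slightly delicate in that one has to be sure that ``invariant subset of $\mathsf{G}(S)$'' in the sense used in Lemma~\ref{lem:oinky} matches ideal-closure on the algebraic side; this is best handled by expressing invariance via closure under left and right multiplication by all basic bisections $V_s$ and invoking $V_s V_a V_t = V_{sat}$.
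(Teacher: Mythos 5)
Your argument is correct and is exactly the correspondence the paper intends (the paper gives no proof of Theorem~\ref{thm:one1}, deferring to \cite{Lawson12} and \cite{Lenz}): $I\mapsto\bigcup_{a\in I}V_{a}$ and $W\mapsto\{a\colon V_{a}\subseteq W\}$, with compactness of $V_{a}$, the meet structure via Lemma~\ref{lem:pele}, and $\vee$-closedness of $I$ combining to give $\mathsf{I}(\mathsf{O}(I))\subseteq I$. The only points to tighten are minor: compactness of $V_{a}$ comes from the duality itself ($V_{a}$ is a compact-open bisection, homeomorphic via $\mathbf{d}$ to $U_{\mathbf{d}(a)}$), not merely from being ``clopen''; the inclusion $a\wedge b_{i}\in I$ is simply $a\wedge b_{i}\leq b_{i}$ plus downward closure of ideals; and the invariance of $\bigcup_{a\in I}V_{a}$ is completed by observing that any $B$ in the connected component of $A\in V_{a}$ can be written as $D\cdot A\cdot K$ for ultrafilters $D,K$, hence contains an element $sat\in I$ by $V_{s}V_{a}V_{t}=V_{sat}$.
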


The following corollary is immediate by Theorem~\ref{thm:one1}.

\begin{corollary}  Let $S$ be a Boolean inverse $\wedge$-monoid. 
The groupoid $\mathsf{G}(S)$ is minimal if and only if $S$ is $0$-simplifying.
\end{corollary}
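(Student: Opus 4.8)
The plan is to derive this directly from Theorem~\ref{thm:one1}, which sets up an order isomorphism between the poset of $\vee$-ideals of $S$ and the poset of open invariant subsets of $\mathsf{G}(S)$. An order isomorphism of posets carries top elements to top elements, bottom elements to bottom elements, and — crucially here — it carries the property ``there are exactly two elements'' (the bottom and the top) on one side to the same property on the other. So the whole argument is: translate both ``$0$-simplifying'' and ``minimal'' into statements about the existence of non-trivial elements in these two posets, and then invoke the isomorphism.

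First I would recall that $S$ is $0$-simplifying means, by definition (made just after Proposition~\ref{prop:kernels}), that $S$ has no non-trivial $\vee$-closed ideals; equivalently, the poset of $\vee$-ideals of $S$ has exactly two elements, namely $\{0\}$ and $S$. Next I would recall that $\mathsf{G}(S)$ is minimal means, by our stipulation following Lemma~\ref{lem:oinky} together with condition~(4) of that lemma, that $\mathsf{G}(S)$ has no non-trivial open invariant subsets; equivalently, the poset of open invariant subsets of $\mathsf{G}(S)$ has exactly two elements, namely $\emptyset$ and $\mathsf{G}(S)$. Then the order isomorphism of Theorem~\ref{thm:one1} shows that the first poset has exactly two elements if and only if the second does, which is precisely the claimed equivalence.

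The one point that deserves a sentence of care is that the correspondence in Theorem~\ref{thm:one1} does send the two ``trivial'' objects to each other — i.e.\ $\{0\} \mapsto \emptyset$ and $S \mapsto \mathsf{G}(S)$ — so that ``non-trivial'' on one side really does match ``non-trivial'' on the other. But an order isomorphism automatically sends least element to least element and greatest element to greatest element, and $\{0\}$ is the least $\vee$-ideal while $S$ is the greatest, matching $\emptyset$ least and $\mathsf{G}(S)$ greatest among open invariant subsets; so this is immediate and no separate verification is needed. I do not expect any real obstacle here: the content is entirely in Theorem~\ref{thm:one1} (and behind it in Lenz's work and in Lemma~\ref{lem:oinky} for the equivalence of the various formulations of minimality), and this corollary is just the observation that a poset isomorphism preserves the property of having no elements strictly between bottom and top.

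\begin{proof}
By definition, $S$ is $0$-simplifying precisely when the only $\vee$-ideals of $S$ are $\{0\}$ and $S$ itself, that is, precisely when the poset of $\vee$-ideals of $S$ has exactly two elements. Likewise, by Lemma~\ref{lem:oinky}(4) and our definition of minimality, $\mathsf{G}(S)$ is minimal precisely when the only open invariant subsets of $\mathsf{G}(S)$ are $\emptyset$ and $\mathsf{G}(S)$, that is, precisely when the poset of open invariant subsets of $\mathsf{G}(S)$ has exactly two elements. By Theorem~\ref{thm:one1}, these two posets are order isomorphic. Since an order isomorphism preserves the least and greatest elements and in particular the cardinality of the poset, one poset has exactly two elements if and only if the other does. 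Hence $S$ is $0$-simplifying if and only if $\mathsf{G}(S)$ is minimal.
\end{proof}
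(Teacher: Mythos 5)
Your proposal is correct and matches the paper's approach: the paper simply declares the corollary ``immediate by Theorem~\ref{thm:one1}'', and your argument spells out exactly that deduction, translating $0$-simplifying and minimality (via Lemma~\ref{lem:oinky}) into the triviality of the two posets related by the order isomorphism. No gaps.
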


\subsection{The fundamental case}

Let $G$ be an \'etale topological groupoid.
The union of the local groups of $G$, denoted by $\mbox{Iso}(G)$, is a subgroupoid, called the {\em isotropy subgroupoid} of $G$.
We say that $G$ is {\em principal}\footnote{The significance of principal groupoids is explained in \cite[page 3]{Renault}.} if $\mbox{Iso}(G) = G_{0}$
and that it is {\em effective}\footnote{See \cite[Example~1.5]{FHS} for the explanation behind the definition of effective as well as \cite{Renault}.}
if the interior of  $\mbox{Iso}(G)$, 
denoted by  $\mbox{Iso}(G)^{\circ}$, is equal $G_{o}$.
There is a third notion to be found in \cite{Renault}: namely, that of being {\em topologically principal}.
However, in the case of Tarski inverse monoids this is equivalent to being effective.
The following lemma establishes the link betwen algebra and topology.

\begin{lemma}\label{lem:charlie} 
Let $S$ be a Boolean inverse $\wedge$-monoid, 
$\mathsf{G}(S)$ its associated groupoid
and $a \in S$.
Then 
$$a \in Z(E(S)) \Leftrightarrow V_{a} \subseteq \mbox{\rm Iso}(\mathsf{G}(S)).$$ 
\end{lemma}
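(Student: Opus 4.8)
The plan is to prove the two implications separately, translating each side through the basic dictionary between the element $a$, the ultrafilters containing it, and the conjugation action on idempotents. Recall that $a \in Z(E(S))$ means $ae = ea$ for all idempotents $e$, equivalently $aea^{-1} = a a^{-1} e$ for all $e \leq \mathbf{d}(a)$ (conjugation by $a$ acts as the identity on the idempotents below $\mathbf{d}(a)$), and that $V_a \subseteq \operatorname{Iso}(\mathsf{G}(S))$ means that every ultrafilter $A$ containing $a$ satisfies $\mathbf{d}(A) = \mathbf{r}(A)$.

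For the forward direction, suppose $a \in Z(E(S))$ and let $A \in V_a$. I would show $\mathbf{d}(A) = \mathbf{r}(A)$ by showing these two idempotent ultrafilters contain the same idempotents. Since $\mathbf{d}(A) = (A^{-1}A)^{\uparrow}$ and $\mathbf{r}(A) = (AA^{-1})^{\uparrow}$, and an idempotent $e$ lies in $\mathbf{d}(A)$ iff $a^{-1}ea \in \mathbf{d}(A)$ for $a \in A$ (more carefully: for $e \leq \mathbf{d}(a)$, $e \in \mathbf{d}(A)$ iff $aea^{-1} \in \mathbf{r}(A)$, using that $A = (a\mathbf{d}(A))^{\uparrow}$ so membership in $A$ of elements below $a$ is controlled by $\mathbf{d}(A)$), the commutativity $aea^{-1} = \mathbf{r}(a)e$ forces $\mathbf{d}(A)$ and $\mathbf{r}(A)$ to agree on idempotents below $\mathbf{r}(a) = \mathbf{d}(a) \wedge \mathbf{r}(a)$. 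Since $\mathbf{d}(a), \mathbf{r}(a) \in \mathbf{d}(A) \cap \mathbf{r}(A)$ one actually gets $\mathbf{d}(a) = \mathbf{r}(a)$ as elements (both idempotent ultrafilters contain $a^{-1}a$ and $aa^{-1}$; but $a$ central forces $a^{-1}a = aa^{-1}$ directly), and then $\mathbf{d}(A) = \mathbf{r}(A)$ follows. So $A \in \operatorname{Iso}(\mathsf{G}(S))$.

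For the converse, I argue by contraposition: suppose $a \notin Z(E(S))$, so there is an idempotent $e$ with $ae \neq ea$; replacing $e$ by $e \wedge \mathbf{d}(a)$ (or working with $a^{-1}ea$) we may assume $e \leq \mathbf{d}(a)$ and $aea^{-1} \neq \mathbf{r}(a)e =: f$. Since $aea^{-1}$ and $f$ are both idempotents below $\mathbf{r}(a)$ that are unequal, in the Boolean algebra $E(S)$ their symmetric difference is a non-zero idempotent $g$; without loss of generality $g' := g \wedge aea^{-1} \wedge \bar f$ (or the other piece) is non-zero. Then $b := g' a e$ is a non-zero element with $\mathbf{r}(b) = g'$ and $\mathbf{d}(b) = a^{-1}g'a \leq e$, and by construction $\mathbf{d}(b) \perp \mathbf{r}(b)$ — i.e., $b$ is an infinitesimal (this is where I would check $a^{-1} g' a$ is disjoint from $g'$, using $g' \leq \bar f$ and the relation between $\mathbf{d}$-side and $\mathbf{r}$-side idempotents of restrictions of $a$). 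Now extend the idempotent ultrafilter generated by $\mathbf{d}(b)$: pick any ultrafilter $F$ in $E(S)$ with $\mathbf{d}(b) \in F$, form the idempotent ultrafilter $G = F^{\uparrow}$, and then $A = (bG)^{\uparrow}$ is an ultrafilter containing $b \leq a$, hence $A \in V_a$ by Lemma~\ref{lem:pele}(1); but $\mathbf{d}(A) = G \ni \mathbf{d}(b)$ and $\mathbf{r}(A) \ni \mathbf{r}(b)$ with $\mathbf{d}(b) \perp \mathbf{r}(b)$, so $\mathbf{d}(A) \neq \mathbf{r}(A)$ and $A \notin \operatorname{Iso}(\mathsf{G}(S))$. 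Thus $V_a \not\subseteq \operatorname{Iso}(\mathsf{G}(S))$.

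The main obstacle I anticipate is the bookkeeping in the converse: carefully producing, from the single witness $e$ of non-centrality, a \emph{non-zero} idempotent $g'$ below $\mathbf{r}(a)$ such that the restriction $g'a$ is an infinitesimal (so that the resulting ultrafilter is genuinely not in the isotropy subgroupoid), and making sure the passage between idempotents below $\mathbf{d}(a)$ and below $\mathbf{r}(a)$ via conjugation by $a$ is handled correctly — here one repeatedly uses that $a^{-1}(\cdot)a$ is an order isomorphism $\mathbf{r}(a)^{\downarrow} \to \mathbf{d}(a)^{\downarrow}$ in the Boolean algebra $E(S)$, together with the identities of Lemma~\ref{lem:order_properties}. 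The forward direction is comparatively routine once one knows that membership in $A$ of an element $\leq a$ is detected by membership in $\mathbf{d}(A)$ of its domain idempotent, which is part of the standard description of ultrafilters recalled before Lemma~\ref{lem:pele}.
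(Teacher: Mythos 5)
Your forward direction is essentially the paper's own argument: centrality gives $\mathbf{d}(a)=\mathbf{r}(a)$ and that conjugation by $a$ fixes the idempotents of $\mathbf{d}(A)$ lying below $\mathbf{d}(a)$, and since membership in $A$ of elements below $a$ is detected by $\mathbf{d}(A)$, this forces $\mathbf{d}(A)=\mathbf{r}(A)$; your sketch only needs tightening. Your converse, however, takes a genuinely different route. The paper argues directly: assuming $V_{a}\subseteq\mbox{\rm Iso}(\mathsf{G}(S))$, it shows $V_{ae}=V_{ea}$ for every idempotent $e$ (if $ea\in A$ then $ea(ea)^{-1}\in A\cdot A^{-1}=A^{-1}\cdot A$, whence $ae\in A$), and concludes $ae=ea$ from Lemma~\ref{lem:pele}; this is a four-line argument. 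You instead take the contrapositive, manufacture an infinitesimal $b\leq a$ from a witness of non-centrality, and run an ultrafilter through $\mathbf{d}(b)$; the final step is fine, since $\mathbf{d}(A)=\mathbf{r}(A)$ would place the orthogonal pair $\mathbf{d}(b),\mathbf{r}(b)$ in a single proper filter. This mechanism is sound and is in fact the one the paper uses later (compare Lemma~\ref{lem:hengist} and Proposition~\ref{prop:basic}), but it costs more bookkeeping than the direct proof.

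The one step that fails as written is the reduction ``replacing $e$ by $e\wedge\mathbf{d}(a)$ we may assume $e\leq\mathbf{d}(a)$ and $aea^{-1}\neq\mathbf{r}(a)e$.'' That replacement can destroy the witness: if $a$ is itself an infinitesimal and your witness happens to be $e=\mathbf{r}(a)$ (a legitimate witness, since $ea=a\neq 0=ae$), then $e\wedge\mathbf{d}(a)=0$. The repair is a case split. If $\mathbf{d}(a)\neq\mathbf{r}(a)$ no witness is needed: one of $\mathbf{r}(a)\overline{\mathbf{d}(a)}$, $\mathbf{d}(a)\overline{\mathbf{r}(a)}$ is non-zero and the corresponding restriction of $a$ is already an infinitesimal. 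If $\mathbf{d}(a)=\mathbf{r}(a)$, the replacement is harmless (here $a$ commuting with $e\wedge\mathbf{d}(a)$ forces $ae=a(e\mathbf{d}(a))=(e\mathbf{d}(a))a=ea$), and your symmetric-difference argument then works---though the two pieces are not interchangeable ``without loss of generality'': for $g'=aea^{-1}\wedge\overline{\mathbf{r}(a)e}$ take $b=g'ae$ as you do, while for $g''=e\wedge\overline{aea^{-1}}$ take $b=ag''$; in both cases $\mathbf{d}(b)\perp\mathbf{r}(b)$. With these two repairs your proof is complete, but note how much shorter the paper's direct argument for this implication is.
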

\begin{proof}
Let $a \in Z(E(S))$.
We shall prove that every element of $V_{a}$ belongs to the isotropy groupoid.
Let $A \in V_{a}$.
We need to prove that  $A^{-1} \cdot A = A \cdot A^{-1}$.
We have that $A = (aA^{-1} \cdot A)^{\uparrow}$.
Now $A \cdot A^{-1} = (a A^{-1}A a^{-1})^{\uparrow}$.
Let $x \in  A \cdot A^{-1}$.
Then $aea^{-1} \leq x$ for some idempotent $e \in A^{-1} \cdot A$.
But by assumption, $a$ commutes with all idempotents.
Thus $aa^{-1}e \leq x$ and $aa^{-1} = a^{-1}a$.
Hence $a^{-1}ae \leq x$.
But $a^{-1}a,e \in A^{-1} \cdot A$ and so $a^{-1}ae \in A^{-1} \cdot A$.
It follows that $x \in A^{-1} \cdot A$.
We have therefore proved that $A \cdot A^{-1} \subseteq A^{-1} \cdot A$.
Now let $x \in   A^{-1} \cdot A$.
Then $e \leq x$ where $e \in A^{-1} \cdot A$ is an idempotent.
Clearly, $ea^{-1}a \leq x$ since $a^{-1}a \in A^{-1} \cdot A$.
But $ea^{-1}a = eaa^{-1} = aea^{-1}$.
It follows that $x \in A \cdot A^{-1}$.
We have therefore proved that $A \in \mbox{Iso}(\mathsf{G}(S))$.
But $A$ was arbitrary,
so we have proved that $V_{a} \subseteq \mbox{Iso}(\mathsf{G}(S))$.

Conversely, let 
$V_{a} \subseteq \mbox{Iso}(G)$.
We shall prove that $a \in Z(E(S))$.
Let $e \in E(S)$ be an arbitrary idempotent.
We claim that $V_{ea} = V_{ae}$.
Let $A \in V_{ea}$.
Then $ea \leq a$ and so $a \in A$.
It follows that $A \in V_{a}$.
By assumption, $A^{-1} \cdot A = A \cdot A^{-1}$.
Now $ea \in A$ implies that $ea(ea)^{-1} \in A \cdot A^{-1}$.
Thus by assumption, $ea(ea)^{-1} \in A^{-1} \cdot A$.
Hence $aeaa^{-1}e \in A$ and so $ae \in A$.
We have show that $A \in V_{ae}$.
The reverse inclusion follows by symmetry.
We now get that $ae = ea$ by Lemma~\ref{lem:pele}.
\end{proof}

The above lemma is the key result needed to prove the main theorem of this section.

\begin{theorem}\label{thm:one2} Let $S$ be a Boolean inverse $\wedge$-monoid.
Then its associated Boolean groupoid $\mathsf{G}(S)$ is effective
if and only if $S$ is fundamental.
\end{theorem}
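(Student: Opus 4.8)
The plan is to deduce the theorem directly from Lemma~\ref{lem:charlie}, using only two further ingredients recorded earlier: that the sets $V_{a}$ form a basis for the topology on $\mathsf{G}(S)$, and that $\mathsf{G}(S)$ is \'etale so that its space of identities $\mathsf{G}(S)_{o}$ is open in $\mathsf{G}(S)$. Recall that $S$ is fundamental means $Z(E(S)) = E(S)$, and that $\mathsf{G}(S)$ is effective means $\mbox{Iso}(\mathsf{G}(S))^{\circ} = \mathsf{G}(S)_{o}$.

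First I would prove the forward implication. Assume $\mathsf{G}(S)$ is effective. Since $E(S) \subseteq Z(E(S))$ always holds, it suffices to show $Z(E(S)) \subseteq E(S)$. Let $a \in Z(E(S))$. By Lemma~\ref{lem:charlie}, $V_{a} \subseteq \mbox{Iso}(\mathsf{G}(S))$; since $V_{a}$ is open, it is contained in the interior, so $V_{a} \subseteq \mbox{Iso}(\mathsf{G}(S))^{\circ} = \mathsf{G}(S)_{o}$. Thus $V_{a}$ consists only of idempotent ultrafilters, and by Lemma~\ref{lem:pele}(5) this forces $a$ to be an idempotent. Hence $Z(E(S)) = E(S)$, i.e. $S$ is fundamental.

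For the converse, assume $S$ is fundamental. The inclusion $\mathsf{G}(S)_{o} \subseteq \mbox{Iso}(\mathsf{G}(S))^{\circ}$ is clear, because $\mathsf{G}(S)_{o} \subseteq \mbox{Iso}(\mathsf{G}(S))$ and $\mathsf{G}(S)_{o}$ is open. For the reverse inclusion, write the open set $W = \mbox{Iso}(\mathsf{G}(S))^{\circ}$ as a union of basic open sets $V_{a}$; each of these satisfies $V_{a} \subseteq W \subseteq \mbox{Iso}(\mathsf{G}(S))$, so by Lemma~\ref{lem:charlie} the corresponding $a$ lies in $Z(E(S)) = E(S)$ and is therefore an idempotent. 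By Lemma~\ref{lem:pele}(5), each such $V_{a}$ then consists only of idempotent ultrafilters, i.e. $V_{a} \subseteq \mathsf{G}(S)_{o}$, whence $W \subseteq \mathsf{G}(S)_{o}$. This gives $\mbox{Iso}(\mathsf{G}(S))^{\circ} = \mathsf{G}(S)_{o}$, so $\mathsf{G}(S)$ is effective. I do not expect a genuine obstacle here: once Lemma~\ref{lem:charlie} is available, the only point needing care is the translation between the algebraic set $Z(E(S))$ and the interior of the isotropy subgroupoid, and that is precisely what forces the use of the basis $\{V_{a}\}$ — an open subset of $\mbox{Iso}(\mathsf{G}(S))$ is a union of $V_{a}$'s with idempotent index, and Lemma~\ref{lem:pele}(5) turns "idempotent index" back into "contained in $\mathsf{G}(S)_{o}$".
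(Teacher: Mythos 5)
Your proof is correct and follows essentially the same route as the paper: both directions rest on Lemma~\ref{lem:charlie} together with Lemma~\ref{lem:pele}(5), with the interior of the isotropy subgroupoid handled via the basic open sets $V_{a}$. Your converse merely spells out explicitly (writing $\mbox{Iso}(\mathsf{G}(S))^{\circ}$ as a union of basic $V_{a}$'s and noting $\mathsf{G}(S)_{o}$ is open) what the paper's argument leaves implicit.
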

\begin{proof} Suppose first that $\mathsf{G}(S)$ is effective.
Let $a \in Z(E(S))$.
Then by Lemma~\ref{lem:charlie}, 
$V_{a}$ is contained in the interior of the isotropy subgroupoid.
Hence $V_{a} \subseteq \mathsf{G}(S)_{o}$.
It follows that every ultrafilter containing $a$ is an idempotent ultrafilter
which by Lemma~\ref{lem:pele} implies that $a$ is an idempotent.
Thus $S$ is fundamental.
Conversely suppose that $S$ is fundamental.
Let $V_{a} \subseteq \mbox{Iso}(G)$.
Then by Lemma~\ref{lem:charlie}, we have that $a$ centralizes all idempotents.
But $S$ is fundamental and so $a$ is an idempotent.
It follows that every ultrafilter in $V_{a}$ is idempotent.
Thus the interior of the isotropy groupoid is the space of identities.
\end{proof}

\subsection{Refinement: the $0$-simple case}

Our goal is to characterize $0$-simple Boolean inverse $\wedge$-monoids amongst the $0$-simplifying ones.
The following result, proved in \cite[Proposition 3.2.10]{Lawson98}, characterizes $0$-simplicity in terms of the inverse semigroup analogue of the {\em Murray-von Neumann order}.

\begin{lemma}\label{lem:helen}
An inverse semigroup with zero is $0$-simple if and only if for any two non-zero idempotents $e$ and $f$
there exists an idempotent $i$ such that $e \, \mathscr{D} \, i \leq f$. 
\end{lemma}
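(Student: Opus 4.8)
The statement to prove is Lemma~\ref{lem:helen}: an inverse semigroup with zero is $0$-simple if and only if for any two non-zero idempotents $e$ and $f$ there exists an idempotent $i$ such that $e \, \mathscr{D} \, i \leq f$. Since the excerpt cites this as \cite[Proposition 3.2.10]{Lawson98}, I should reconstruct the standard inverse-semigroup argument.

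\textbf{Plan.} The key translation is between the two-sided ideal generated by an idempotent and the $\mathscr{D}$-order on idempotents. First I would record the basic computation: for a non-zero idempotent $f$, the principal ideal $SfS$ consists exactly of those $s \in S$ such that $\mathbf{r}(s) \, \mathscr{D} \, i$ for some $i \leq f$ — more precisely, $s \in SfS$ iff there is an idempotent $i \leq f$ with $\mathbf{d}(s) \, \mathscr{D}\, i$ (equivalently, since $\mathscr{D}$ for idempotents here means $\mathbf{d}(s)$ is connected to something below $f$). This follows because $s = s s^{-1} s$, and if $s = xfy$ then $\mathbf{d}(s) = s^{-1}s = y^{-1}f x^{-1} x f y \leq y^{-1} f y$, and $y^{-1}fy$ sits below $\mathbf{d}(fy)$-style idempotents that are $\mathscr{D}$-related to an idempotent $\leq f$; conversely if $\mathbf{d}(s) \, \mathscr{D} \, i \leq f$ via $\mathbf{d}(s) \stackrel{t}{\rightarrow} i$, then $s = s t^{-1} (t \mathbf{d}(s) t^{-1}) \cdots$ can be rewritten to pass through $f$.

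\textbf{Main steps.} For the forward direction, suppose $S$ is $0$-simple and let $e, f$ be non-zero idempotents. Then $SfS$ is a non-zero ideal, hence $SfS = S$, so in particular $e \in SfS$. By the ideal-description lemma above, there is an idempotent $i \leq f$ with $e \, \mathscr{D}\, i$, which is exactly what is wanted. For the converse, suppose the $\mathscr{D}$-below condition holds, and let $I$ be any non-zero ideal; pick a non-zero element $a \in I$, so $\mathbf{d}(a) = a^{-1}a \in I$ is a non-zero idempotent, call it $f$. To show $I = S$, take any non-zero $s \in S$; its domain idempotent $e = \mathbf{d}(s)$ is non-zero, so by hypothesis there is an idempotent $i$ with $e \, \mathscr{D} \, i \leq f$. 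Say $e \stackrel{t}{\rightarrow} i$, i.e. $t^{-1}t = e$, $tt^{-1} = i \leq f$. Then $i = i f i \in SfS \subseteq I$ (using $f \in I$), hence $e = t^{-1} i t \in I$, hence $s = s e \in I$. Since every non-zero $s$ lies in $I$ and $0 \in I$ automatically, $I = S$. (I should double-check that ideals are automatically closed under multiplication by $S$ on both sides so that $t^{-1} i t$ and $s e$ genuinely land in $I$ — this is the definition of ideal recalled in the excerpt.)

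\textbf{Expected obstacle.} The only genuinely delicate point is the ideal-description lemma: pinning down precisely that $s \in SfS$ iff $\mathbf{d}(s)$ is $\mathscr{D}$-related to some idempotent below $f$, and doing so cleanly using only $s = ss^{-1}s$, the identities $\mathbf{d}(st) = \mathbf{d}(t)$, $\mathbf{r}(st) = \mathbf{r}(s)$ for restricted products, and the fact that $\mathscr{D}$ on idempotents is generated by the arrows $e \stackrel{t}{\rightarrow} f$. One has to be a little careful that the element witnessing membership in $SfS$ can be normalized to a restricted product passing through $f$; the trick is to replace an arbitrary factorization $s = xfy$ by noting $s = \mathbf{r}(s) x f y \mathbf{d}(s)$ and then $s = (s y^{-1} f)(f)(f y \mathbf{d}(s))$ or similar, tracking domain/range idempotents so that the middle idempotent one lands below $f$ is $\mathscr{D}$-related to $\mathbf{d}(s)$. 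Everything else is routine manipulation of the natural partial order and Green's relations, requiring no distributivity or Boolean structure — this lemma is purely about inverse semigroups with zero.
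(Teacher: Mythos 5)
Your proof is correct, and there is nothing to compare it against in detail: the paper does not prove this lemma at all but simply cites \cite[Proposition 3.2.10]{Lawson98}, and your reconstruction is the standard argument (principal ideal $SfS$ generated by an idempotent in one direction, absorbing $\mathbf{d}(s)$ into a non-zero ideal via a $\mathscr{D}$-witness in the other). The one step you flag as delicate closes exactly as you suggest: from $s=xfy$ one gets $\mathbf{d}(s)\leq y^{-1}fy=\mathbf{d}(fy)$, and then $t=fy\,\mathbf{d}(s)$ satisfies $\mathbf{d}(t)=\mathbf{d}(s)$ and $\mathbf{r}(t)\leq \mathbf{r}(fy)\leq f$, giving the required idempotent $i=\mathbf{r}(t)\leq f$ with $\mathbf{d}(s)\,\mathscr{D}\,i$.
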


The following simple result is the key to this section.

\begin{lemma}\label{lem:purely_infinite} Let $S$ be a $0$-simple Tarski inverse monoid and let $e \in S$ be any non-zero idempotent.
Then we may find a pair of elements $x,y \in S$ such that $\mathbf{d}(x) = e = \mathbf{d}(y)$,
and $\mathbf{r}(x)$ and $\mathbf{r}(y)$ are orthogonal,
and $\mathbf{r}(x) \vee \mathbf{r}(y) \leq e$.
\end{lemma}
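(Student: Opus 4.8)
The plan is to exploit the fact that in a Tarski inverse monoid the semilattice of idempotents $E(S)$ is a countable atomless Boolean algebra, so every non-zero idempotent $e$ can be split. First I would write $e = e_{1} \vee e_{2}$ as an orthogonal join of two non-zero idempotents, using atomlessness: since $e$ is not an atom there is $0 \neq e_{1} < e$, and then $e_{2} = e\overline{e_{1}} = e \wedge \bar{e}_{1}$ is non-zero and orthogonal to $e_{1}$, with $e_{1} \vee e_{2} = e$. The idea is now to find elements mapping $e$ isomorphically \emph{into} $e_{1}$ and into $e_{2}$ respectively; their ranges are then automatically orthogonal (being below $e_{1}$ and $e_{2}$) and their join is below $e_{1} \vee e_{2} = e$, which is exactly what is wanted.

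The key step is to produce, for each $i \in \{1,2\}$, an element $z_{i}$ with $\mathbf{d}(z_{i}) = e$ and $\mathbf{r}(z_{i}) \leq e_{i}$. This is where $0$-simplicity enters: by Lemma~\ref{lem:helen} applied to the non-zero idempotents $e$ and $e_{i}$, there is an idempotent $j_{i}$ with $e \, \mathscr{D} \, j_{i} \leq e_{i}$; that is, there is an element $z_{i} \in S$ with $\mathbf{d}(z_{i}) = z_{i}^{-1}z_{i} = e$ and $\mathbf{r}(z_{i}) = z_{i}z_{i}^{-1} = j_{i} \leq e_{i}$. Setting $x = z_{1}$ and $y = z_{2}$, we then have $\mathbf{d}(x) = e = \mathbf{d}(y)$; $\mathbf{r}(x) = j_{1} \leq e_{1}$ and $\mathbf{r}(y) = j_{2} \leq e_{2}$, so since $e_{1} \perp e_{2}$ and the set of idempotents is an order ideal, $\mathbf{r}(x) \perp \mathbf{r}(y)$; and $\mathbf{r}(x) \vee \mathbf{r}(y) \leq e_{1} \vee e_{2} = e$. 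That completes the argument.

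The only real subtlety is making sure atomlessness is genuinely available: a Tarski inverse monoid has $E(S)$ a Tarski algebra, i.e.\ countable \emph{and atomless}, so the splitting $e = e_{1} \vee e_{2}$ into non-zero orthogonal pieces is legitimate for every non-zero $e$. (In fact $0$-simplicity alone would already rule out $E(S)$ having atoms once it is infinite, via Proposition~\ref{prop:blop} and Lemma~\ref{lem:helen}, but here we may simply quote the definition of Tarski inverse monoid.) I do not expect any obstacle beyond this; the main point is simply the combination ``split the idempotent, then use $0$-simplicity to embed $e$ into each half.''
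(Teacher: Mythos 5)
Your proposal is correct and follows essentially the same route as the paper's proof: split $e$ into two non-zero orthogonal idempotents using atomlessness and the Boolean structure, then apply Lemma~\ref{lem:helen} twice to obtain elements with domain $e$ and ranges below each piece. The extra care you take in checking that $e\overline{e_{1}} \neq 0$ and that the ranges are orthogonal is exactly what the paper leaves implicit.
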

\begin{proof} Since $S$ is atomless, there is a non-zero idempotent $f < e$.
The idempotents of $S$ form a Boolean algebra and so there is an idempotent $f'$ such that
$e = f \vee f'$ and $ff' = 0$.
By Lemma~\ref{lem:helen}, there exists an element $x$ such that
$e \stackrel{x}{\rightarrow} e_{1} \leq f$
and an element $y$ such that
$e \stackrel{y}{\rightarrow} e_{2} \leq f'$.
\end{proof}

Recalling the fact that the elements of inverse semigroups abstract partial bijections,
we might define an idempotent $e$ of an inverse semigroup to be {\em Dedekind infinite} if $e \stackrel{x}{\rightarrow} i < e$
for some element $x$.
A stronger notion is the following.
A non-zero idempotent $e$ is said to be {\em properly infinite} if  we may find a pair of elements $x,y \in S$ such that 
$e \stackrel{x}{\rightarrow} i < e$ and $e \stackrel{y}{\rightarrow} j < e$ and $i \perp j$.
An inverse monoid is said to be {\em purely infinite} if every non-zero idempotent is properly infinite.
This terminology is generalized from \cite{Matui13}.

\begin{remark} {\em The {\em polycyclic inverse monoid $P_{2}$} is the inverse monoid with zero generated by elements 
$p$ and $q$ such that $1 = p^{-1}p = q^{-1}q$ and $pp^{-1}qq^{-1} = 0$.
This inverse monoid is discussed in detail in Section~9.3 of \cite{Lawson98}.
It is, in particular, congruence-free.
Let $e$ be a properly infinite idempotent in the inverse monoid $S$.
Then there is a monoid homomorphism $P_{2} \rightarrow eSe$,
where $eSe$ is the {\em local submonoid} determined by $e$.
This homomorphism is an embedding since $P_{2}$ is congruence-free.}
\end{remark}

In the light of the above remark, we may rephrase Lemma~\ref{lem:purely_infinite} in the following terms.

\begin{corollary}\label{cor:beatrice} 
In a $0$-simple Tarski inverse monoid every non-zero idempotent is properly infinite.
In particular, each local monoid contains a copy of $P_{2}$.
\end{corollary}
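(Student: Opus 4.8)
The statement to prove is Corollary~\ref{cor:beatrice}: in a $0$-simple Tarski inverse monoid every non-zero idempotent is properly infinite, and each local monoid contains a copy of $P_2$.

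The plan is to read off both assertions directly from Lemma~\ref{lem:purely_infinite} together with the remark preceding the corollary. First I would fix a non-zero idempotent $e$ in a $0$-simple Tarski inverse monoid $S$. Lemma~\ref{lem:purely_infinite} supplies elements $x, y \in S$ with $\mathbf{d}(x) = e = \mathbf{d}(y)$, with $\mathbf{r}(x)$ and $\mathbf{r}(y)$ orthogonal, and with $\mathbf{r}(x) \vee \mathbf{r}(y) \leq e$. To match the definition of \emph{properly infinite} I must check that $\mathbf{r}(x) < e$ strictly and $\mathbf{r}(y) < e$ strictly, rather than merely $\leq e$. This is where I would do the small extra argument: since $\mathbf{r}(x) \perp \mathbf{r}(y)$ and $\mathbf{r}(y)$ is non-zero (as $y \neq 0$, because $\mathbf{d}(y) = e \neq 0$), we have $\mathbf{r}(x) \leq e$ but $\mathbf{r}(x) \neq e$, since $\mathbf{r}(x) = e$ would force $\mathbf{r}(y) = \mathbf{r}(y)\mathbf{r}(x) = 0$ (orthogonality), a contradiction; symmetrically $\mathbf{r}(y) < e$. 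Writing $i = \mathbf{r}(x)$ and $j = \mathbf{r}(y)$, we get $e \stackrel{x}{\rightarrow} i < e$ and $e \stackrel{y}{\rightarrow} j < e$ with $i \perp j$, which is exactly the definition of $e$ being properly infinite. Since $e$ was an arbitrary non-zero idempotent, $S$ is purely infinite.

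For the second assertion, I would simply invoke the remark immediately preceding the corollary: for a properly infinite idempotent $e$, the polycyclic inverse monoid $P_2$ maps into the local monoid $eSe$ via a monoid homomorphism, and this homomorphism is injective because $P_2$ is congruence-free. Hence each local monoid of a $0$-simple Tarski inverse monoid contains an isomorphic copy of $P_2$.

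I do not expect any genuine obstacle here; the corollary is essentially a restatement of Lemma~\ref{lem:purely_infinite} in the language of proper infiniteness, and the only non-cosmetic point is upgrading the two inequalities $\mathbf{r}(x), \mathbf{r}(y) \leq e$ to strict inequalities, which follows at once from orthogonality and the non-vanishing of $x$ and $y$. The mild care needed is to make sure the definition of properly infinite is matched on the nose (strict $<$, orthogonal range idempotents, both below $e$), and then to cite the preceding remark for the $P_2$-embedding rather than reproving congruence-freeness of $P_2$.
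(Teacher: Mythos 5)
Your proposal is correct and follows essentially the same route as the paper, which presents the corollary as an immediate rephrasing of Lemma~\ref{lem:purely_infinite} combined with the remark on embedding $P_{2}$ into $eSe$. Your only addition — upgrading $\mathbf{r}(x),\mathbf{r}(y) \leq e$ to strict inequalities via orthogonality and the non-vanishing of $x$ and $y$ — is a sound and worthwhile piece of bookkeeping that the paper leaves implicit.
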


This result will lead us to an exact formulation of the difference between $0$-simple and $0$-simplifying.

\begin{lemma}\label{lem:benedick} Let $S$ be a Tarski inverse monoid and let $e$ and $f$ be any two non-zero idempotents such that $e \preceq f$. 
Then we may find elements 
$x_{1}, \ldots, x_{m}$ such that
$\mathbf{r}(x_{i}) \leq f$ for $1 \leq i \leq m$ 
and 
$e = \bigvee_{i=1}^{m}  \mathbf{d}(x_{i})$
where this is an {\em orthogonal} join of idempotents.
\end{lemma}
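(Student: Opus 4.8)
The plan is to take any pencil witnessing $e\preceq f$ and replace it by one whose domain idempotents are pairwise orthogonal, the refinement being supplied by Lemma~\ref{lem:bar}(2).

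First I would unpack the hypothesis: by the definition of $\preceq$ there are elements $z_{1},\ldots,z_{k}\in S$ with $\mathbf{r}(z_{i})\le f$ for all $i$ and $e=\bigvee_{i=1}^{k}\mathbf{d}(z_{i})$. This is a finite join of idempotents, so Lemma~\ref{lem:bar}(2) applies and yields an orthogonal join $e=\bigvee_{j=1}^{m}t_{j}$ in which, for each $j$, there is an index $i_{j}$ with $t_{j}\le\mathbf{d}(z_{i_{j}})$. Each $t_{j}$ lies below an idempotent and is therefore itself an idempotent, and after discarding any zero terms we may assume the $t_{j}$ are non-zero, pairwise orthogonal idempotents with join $e$.

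The remaining step is to manufacture the required pencil from the $t_{j}$. I would set $x_{j}:=z_{i_{j}}t_{j}$ and check the two easy identities: since $t_{j}\le\mathbf{d}(z_{i_{j}})=z_{i_{j}}^{-1}z_{i_{j}}$, restricting $z_{i_{j}}$ on the right by $t_{j}$ gives $\mathbf{d}(x_{j})=t_{j}z_{i_{j}}^{-1}z_{i_{j}}t_{j}=t_{j}$, so $e=\bigvee_{j=1}^{m}\mathbf{d}(x_{j})$ is by construction an orthogonal join of idempotents; and $\mathbf{r}(x_{j})=z_{i_{j}}t_{j}z_{i_{j}}^{-1}\le z_{i_{j}}z_{i_{j}}^{-1}=\mathbf{r}(z_{i_{j}})\le f$, using that multiplication respects the natural partial order. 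Relabelling the $x_{j}$ finishes the argument.

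There is no real obstacle here: all the genuine content is in Lemma~\ref{lem:bar}(2), which turns an arbitrary finite join into an orthogonal one with each part beneath an original summand, and the only thing to verify is the elementary bookkeeping that multiplying a pencil element on the right by a subidempotent of its domain shrinks the domain to exactly that subidempotent while only shrinking the range. It is worth noting that this argument uses neither the countability or atomlessness built into ``Tarski'' nor $0$-simplicity: the statement holds verbatim for any Boolean inverse monoid.
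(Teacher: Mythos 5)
Your proof is correct and is essentially the paper's argument: both take a pencil witnessing $e\preceq f$, disjointify its domain idempotents (you via Lemma~\ref{lem:bar}(2), the paper by writing $f_{1}=e_{1}$, $f_{2}=e_{2}\overline{e_{1}}$, and so on, which is the same refinement), and then restrict each pencil element on the right by the corresponding orthogonal idempotent. Your closing observation that only the Boolean inverse monoid structure is used is also accurate.
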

\begin{proof} From the definition of $\preceq$ we may find such elements $y_{j}$ such that the following hold
$y_{1}, \ldots, y_{m}$ such that
$\mathbf{r}(y_{i}) \leq f$ for $1 \leq i \leq m$ 
and 
$e = \bigvee_{i=1}^{m}  \mathbf{d}(y_{i})$.
Put $e_{i} = \mathbf{d}(y_{i})$.
Define idempotents $f_{1}, \ldots, f_{n}$ as follows
$f_{1} = e_{1}$, $f_{2} = e_{2} \overline{e_{1}}$, \ldots, $f_{n} = e_{n} \overline{(e_{1} \vee \ldots \vee e_{n-1})}$.
These idempotents are pairwise orthogonal and their join  is $e$.
Observe that $f_{i} \leq e_{i}$.
Define $x_{i} = y_{i}f_{f}$.
Then $\mathbf{d}(x_{i}) = f_{i}$.
Clearly $\mathbf{r}(x_{i}) \leq f$.
\end{proof}

We now have the following result suggested by \cite[Proposition 4.11]{Matui13}.

\begin{theorem}\label{thm:leonato} Let $S$ be a Tarski inverse monoid.
Then the following are equivalent.
\begin{enumerate}

\item  $S$ is $0$-simple.

\item $S$ is $0$-simplifying and purely infinite.

\end{enumerate}
\end{theorem}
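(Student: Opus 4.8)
The plan is to prove the two implications separately, with the bulk of the work in showing that $(2) \Rightarrow (1)$, since the reverse implication is essentially immediate from earlier results.

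\textbf{The easy direction $(1) \Rightarrow (2)$.} Suppose $S$ is $0$-simple. Then $S$ has no non-trivial ideals at all, so in particular no non-trivial $\vee$-ideals, so $S$ is $0$-simplifying. That $S$ is purely infinite is precisely the content of Corollary~\ref{cor:beatrice}: every non-zero idempotent of a $0$-simple Tarski inverse monoid is properly infinite.

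\textbf{The hard direction $(2) \Rightarrow (1)$.} Assume $S$ is $0$-simplifying and purely infinite. By Lemma~\ref{lem:helen} it suffices to show that for any two non-zero idempotents $e$ and $f$ there is an idempotent $i$ with $e \,\mathscr{D}\, i \leq f$. Since $S$ is $0$-simplifying, Lemma~\ref{lem:toby} gives $e \equiv f$ in particular $e \preceq f$, so by Lemma~\ref{lem:benedick} we may write $e = \bigvee_{k=1}^{m} \mathbf{d}(x_k)$ as an \emph{orthogonal} join with $\mathbf{r}(x_k) \leq f$ for all $k$. The idea now is to use proper infiniteness of $f$ to ``spread out'' these finitely many pieces so that their ranges become pairwise orthogonal and sit below $f$. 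Concretely: since $f$ is properly infinite, one can iterate the defining condition to produce, for each $k$, an element $w_k$ with $\mathbf{d}(w_k) = f$ and $\mathbf{r}(w_k) = f_k$ where $f_1, \dots, f_m$ are pairwise orthogonal non-zero idempotents below $f$ (this is the standard fact that a properly infinite idempotent dominates $m$ pairwise orthogonal copies of itself, obtained by induction on $m$ using the definition of properly infinite). Then set $z_k = w_k x_k$; we have $\mathbf{d}(z_k) = \mathbf{d}(x_k)$, the ranges $\mathbf{r}(z_k) \leq f_k$ are pairwise orthogonal, and the domains $\mathbf{d}(z_k) = \mathbf{d}(x_k)$ are pairwise orthogonal with join $e$. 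Hence $a = \bigvee_{k=1}^{m} z_k$ is a well-defined element (the $z_k$ are pairwise orthogonal, being compatible via disjoint domains and disjoint ranges, using Lemma~\ref{lem:bar}(1)), with $\mathbf{d}(a) = e$ and $i := \mathbf{r}(a) = \bigvee_k f_k \leq f$. This gives $e \stackrel{a}{\rightarrow} i \leq f$, i.e. $e \,\mathscr{D}\, i \leq f$, so $S$ is $0$-simple by Lemma~\ref{lem:helen}.

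\textbf{Main obstacle.} The delicate point is the construction of the $w_k$: turning ``$f$ is properly infinite'' (which literally only gives two orthogonal proper subcopies) into ``$f$ dominates $m$ pairwise orthogonal copies of $f$'' requires a careful induction, where at each stage one applies proper infiniteness inside one of the subcopies and uses that proper infiniteness is inherited by (or at least transportable to) the subidempotents --- here one uses that $e_1 \perp e_2$ with $e_1 \stackrel{}{\mathscr{D}} f$ lets us pull proper infiniteness of $f$ back to $e_1$, combined with $\mathscr{D}$-invariance. One must also be careful that all the idempotents produced are non-zero, which is where atomlessness of $E(S)$ (part of being a Tarski inverse monoid) is used, as in the proof of Lemma~\ref{lem:purely_infinite}. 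Once the orthogonal family $\{f_k\}$ below $f$ with each $f_k \,\mathscr{D}\, f$ is in hand, the rest is routine gluing using Lemma~\ref{lem:order_properties} and Lemma~\ref{lem:bar}.
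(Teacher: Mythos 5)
Your proof is correct and follows the same overall skeleton as the paper's: reduce $0$-simplicity to Lemma~\ref{lem:helen}, use Lemma~\ref{lem:toby} and Lemma~\ref{lem:benedick} to get an orthogonal pencil $e=\bigvee_{k}\mathbf{d}(x_k)$ with $\mathbf{r}(x_k)\leq f$, then use pure infiniteness of $f$ to make the ranges pairwise orthogonal and glue. The one genuine difference is how the family of elements with domain $f$ and pairwise orthogonal ranges below $f$ is produced. You obtain it by induction, which forces you to prove the auxiliary fact that proper infiniteness transports along $\mathscr{D}$ (conjugating the witnesses by the connecting element) and to nest successive applications inside subcopies; this works, and as a by-product it isolates a reusable fact ($\mathscr{D}$-invariance of proper infiniteness). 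The paper instead uses a single application of proper infiniteness of $f$, giving $a,b$ with $\mathbf{d}(a)=\mathbf{d}(b)=f$ and $\mathbf{r}(a)\perp\mathbf{r}(b)$ below $f$, and then sets $v_i=b^{i-1}a$; these automatically have domain $f$ and pairwise orthogonal ranges, so no induction or invariance lemma is needed. Two small remarks: the pairwise orthogonality of your $z_k$ follows directly from $\mathbf{d}(z_k)\mathbf{d}(z_l)=0$ and $\mathbf{r}(z_k)\mathbf{r}(z_l)=0$ (Lemma~\ref{lem:bar}(1) is not really what is being used), and atomlessness is not needed in the direction $(2)\Rightarrow(1)$, since non-vanishing of the subcopies is automatic from $\mathbf{d}$ of a non-zero element being non-zero; atomlessness only enters in Lemma~\ref{lem:purely_infinite}, i.e.\ in the direction $(1)\Rightarrow(2)$.
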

\begin{proof} (1)$\Rightarrow$(2).
Every $0$-simple semigroup is $0$-simplifying,
and we proved in Corollary~\ref{cor:beatrice}, that in a $0$-simple Tarski monoid every non-zero idempotent is properly infinite.

(2)$\Rightarrow$(1).
Let $e$ and $f$ be any two non-zero idempotents.
From the fact that the monoid is $0$-simplifying, and  Lemma~\ref{lem:benedick}, 
we may find elements $w_{1}, \ldots, w_{n}$ such that
$e = \bigvee_{i=1}^{n} \mathbf{d}(w_{i})$ is an orthogonal join
and $\mathbf{r}(w_{i}) \leq f$.
From the fact that the monoid is purely infinite, we may find elements $a$ and $b$ such that
$\mathbf{d}(a) = f = \mathbf{d}(b)$ and $\mathbf{r}(a),\mathbf{r}(b) \leq f$
and $\mathbf{r}(a)$ and $\mathbf{r}(b)$ are orthogonal.
Thus, in particular, $a^{-1}b = 0$ and $a^{-1}a = e = b^{-1}b$.
Define the elements $v_{1}, \ldots, v_{n}$ as follows:
$v_{1} = a$, $v_{2} = ba$, $v_{3} = b^{2}a$, \ldots, $v_{n} = b^{n-1}a$.
Observe that $\mathbf{d}(v_{i}) = f$ and that the $\mathbf{r}(v_{i}) \leq f$ are pairwise orthogonal.
Consider now the elements $v_{1}w_{1}, \ldots, v_{n}w_{n}$.
It is easy to check that these elements are pairwise orthogonal.
We may therefore form the join $w = \bigvee_{i=1}^{n} v_{i} w_{i}$.
Clearly, $\mathbf{d}(w) = e$ and $\mathbf{r}(w) \leq f$.
The result now follows by Lemma~\ref{lem:helen}.
\end{proof}

The following generalizes part (3) of \cite[Proposition 4.10]{Matui13}.

\begin{lemma}\label{lem:conrade} Let $S$ be a $0$-simple Tarski inverse monoid.
Let $e$ and $f$ be idempotents such that $e \neq 1$ and $f \neq 0$.
Then there is a unit $g$ such that $geg^{-1} \leq f$.
\end{lemma}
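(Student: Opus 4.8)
The plan is to build the unit $g$ as an involution coming from an infinitesimal, using Lemma~\ref{lem:spooks}(3). First I would invoke $0$-simplicity via Lemma~\ref{lem:helen}: since $e \neq 1$, its complement $\bar e$ is a non-zero idempotent, so there is an idempotent $i$ with $\bar e \,\mathscr{D}\, i \leq f$. Fix an element $a$ with $\mathbf{d}(a) = \bar e$ and $\mathbf{r}(a) = i \leq f$. The key point to check is that $a^{2} = 0$, i.e.\ that $a$ is an infinitesimal: by Lemma~\ref{lem:spooks}(2) this amounts to $\mathbf{d}(a) \perp \mathbf{r}(a)$, that is $\bar e \cdot i = 0$. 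This need not hold for an arbitrary choice of $i$, so I would first shrink $i$. Since $S$ is atomless and $0$-simple, I expect I can arrange $i \leq f\,\overline{\bar e} = f e$ (or more carefully, replace $f$ by $f\overline{\bar e}$ if this is non-zero; if $fe = 0$ then $f \leq \bar e$ and a separate, easier argument applies — see below). Assuming $i \le fe$, we get $i \le e$, hence $\bar e\, i = 0$, so $\mathbf d(a)\perp\mathbf r(a)$ and $a^2=0$.

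Next I would apply Lemma~\ref{lem:spooks}(3): with $a^2 = 0$ and $k = \overline{\mathbf{d}(a)}\,\overline{\mathbf{r}(a)} = \overline{\bar e}\cdot\bar i = e\bar i$, the element $g = a^{-1} \vee a \vee k$ is a (non-trivial) involution in $\mathsf{U}(S)$ lying above $a$. It remains to compute $g e g^{-1}$ and show it is $\le f$. Since $g$ is an involution, $g^{-1} = g$, and conjugation by $g$ distributes over the orthogonal join, so $geg = (a^{-1}ea^{-1}) \vee (a^{-1}ea) \vee \cdots$ — but rather than expand blindly I would use that $g$ acts as a partial bijection: on the "block" $\mathbf d(a)=\bar e$ it maps via $a^{-1}\!\cdot\!$ back up to... wait, more cleanly: $g$ restricted to $\bar i$ is $a$ followed by nothing, and $g$ restricted to $i$ is $a^{-1}$; elsewhere ($k = e\bar i$) it is the identity. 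Conjugating the idempotent $e$: we have $e = i \vee e\bar i$ (orthogonal, since $i \le e$), and $g i g = \mathbf{d}(a) = \bar e$... hmm, that would give $\bar e \not\le f$ in general. So in fact I should conjugate to land inside $f$: note $gig = a^{-1}i a = \mathbf d(a)=\bar e$, which is wrong direction. The right move: I want $geg \le f$, so I should instead take the infinitesimal $a$ with $\mathbf{d}(a) \le e$ and $\mathbf{r}(a) \le f$ covering enough of $e$; but $e$ may be too big for a single such $a$. The correct approach is to use $\bar e$: since $\bar e\ne 0$, pick $a$ with $\mathbf d(a)\le e$, $\mathbf r(a)\le f$ — possible by $0$-simplicity and atomlessness when $e \neq 0$; if $e = 0$ the statement is trivial. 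Then $g = a\vee a^{-1}\vee k$ as above is an involution, and $g e g^{-1} = g\,\mathbf d(a)\,g \vee g(e\,\overline{\mathbf d(a)})g$. The first summand is $a\mathbf d(a)a^{-1} = \mathbf r(a)\le f$. The obstacle is the second summand $e\overline{\mathbf d(a)}$, on which $g$ acts as identity (as it lies in $k$-part), giving it back — which is $\not\le f$ in general.

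So the genuine strategy must be iterative: repeatedly peel off pieces of $e$. Concretely, I would prove by a finite induction that there is a unit $g$ with $geg^{-1}\le f$ by using Lemma~\ref{lem:helen} to get $e \,\mathscr D\, i\le f$ directly, fix $a$ with $\mathbf d(a)=e$, $\mathbf r(a)=i\le f$, and then apply Lemma~\ref{lem:spooks}(1): since we do \emph{not} have $\mathbf d(a)=\mathbf r(a)$, I instead want to extend $a$ to a unit. The obstruction to $a\vee(\text{something})$ being a unit is that $\mathbf d(a)=e$ and $\mathbf r(a)=i$ differ. Here $e\ne 1$ so $\bar e\ne 0$, and by $0$-simplicity $\bar e\,\mathscr D\, j$ for some $j\le \bar i$ (using atomlessness to fit $j$ below the non-zero $\bar i$ — if $\bar i = 0$ then $i=1\le f$ forces $f=1$ and the result is trivial with $g=1$; and one must also split off the overlap of $\bar e$'s image with $i$, handled as in Lemma~\ref{lem:benedick} by replacing $j$ with $j\,\bar i$ componentwise). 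Let $b$ realize $\bar e\stackrel{b}{\to} j\le\bar i$. Then $a\vee b$ has $\mathbf d(a\vee b)=e\vee\bar e=1$ and range $i\vee j$; set $g = (a\vee b)\vee \overline{i\vee j}$, which is a unit by Lemma~\ref{lem:spooks}(1) since its domain and range idempotents both equal $1$. Finally $geg^{-1} = g\,e\,g^{-1}$; as $e = \mathbf d(a)$ and $g\supseteq a$, we get $geg^{-1} = \mathbf r(a) = i\le f$, as required. \textbf{The main obstacle} I anticipate is the bookkeeping with complements — ensuring the ranges $i$, $j$ can be chosen genuinely orthogonal and fitting inside $\bar i$ (invoking atomlessness of the Tarski algebra), and cleanly handling the degenerate cases $f=1$ and $e=0$; the rest is a direct application of Lemmas~\ref{lem:helen} and \ref{lem:spooks}.
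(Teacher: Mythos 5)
Your final construction contains a genuine error at the step where you form $g=(a\vee b)\vee\overline{i\vee j}$. With $\mathbf{d}(a)=e$, $\mathbf{r}(a)=i\leq f$, $\mathbf{d}(b)=\bar{e}$, $\mathbf{r}(b)=j\leq\bar{i}$, the element $x=a\vee b$ has $\mathbf{d}(x)=e\vee\bar{e}=1$ but $\mathbf{r}(x)=i\vee j$, which may be strictly below $1$. Then $x$ and the idempotent $\overline{i\vee j}$ are \emph{not} compatible: $x\,\overline{i\vee j}$ is a non-zero restriction of $x$ carrying part of $\overline{i\vee j}$ into $i\vee j$, hence is not an idempotent, so the join you write down does not exist; and Lemma~\ref{lem:spooks}(1) does not apply to $x$ because $\mathbf{d}(x)\neq\mathbf{r}(x)$. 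The underlying problem is that your $a$ already occupies the whole of its possible domain: a unit above an element $a$ with $\mathbf{d}(a)=e$ and $\mathbf{r}(a)=i$ exists if and only if $\bar{e}\,\mathscr{D}\,\bar{i}$, whereas Lemma~\ref{lem:helen} only supplies a $j\leq\bar{i}$, possibly proper, and no padding by idempotents can close the gap. Proving $\bar{e}\,\mathscr{D}\,\bar{i}$ for a suitably chosen $i$ is a non-trivial cancellation-type statement (in the spirit of Cuntz's results for purely infinite simple algebras) that is not available in this paper, so this route cannot be completed with the tools at hand. (Your correct observation that $geg^{-1}=\mathbf{r}(a)$ whenever $g$ is a unit with $a\leq g$ and $\mathbf{d}(a)=e$ is not the issue; the issue is the existence of such a $g$.)

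The paper's proof is the idea you flirted with in your first paragraph and then abandoned: force $a$ to be an infinitesimal so that Lemma~\ref{lem:spooks}(3) hands you the unit. If $f\bar{e}\neq 0$, apply Lemma~\ref{lem:helen} to the pair $e$, $f\bar{e}$ to get $a$ with $\mathbf{d}(a)=e$ and $\mathbf{r}(a)\leq f\bar{e}$; then $\mathbf{d}(a)\perp\mathbf{r}(a)$, so $a^{2}=0$, and $g=a\vee a^{-1}\vee\overline{\mathbf{d}(a)\vee\mathbf{r}(a)}$ is a unit (here the join genuinely exists because the three domains, and likewise the three ranges, are orthogonal with join $1$), and $geg^{-1}=\mathbf{r}(a)\leq f$. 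Note that the ``leftover piece'' $e\overline{\mathbf{d}(a)}$ that worried you does not arise, since Lemma~\ref{lem:helen} lets you take $\mathbf{d}(a)$ equal to $e$, not merely below it. If $f\bar{e}=0$, so that $f\leq e$, compose two units of the first kind: $u$ with $ueu^{-1}\leq\bar{e}$ (first case applied to $e$ and $\bar{e}$) and $v$ with $v\bar{e}v^{-1}\leq f$ (first case applied to $\bar{e}$ and $f$, noting $fe=f\neq 0$); then $g=vu$ works. In particular, atomlessness plays no r\^ole here, contrary to what you anticipated; only $0$-simplicity via Lemma~\ref{lem:helen} and the Boolean structure are used.
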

\begin{proof}
Suppose first that $f \bar{e} \neq 0$.
Since $S$ is $0$-simple, there exists $a \in S$ such that $\mathbf{d}(a) = e$ and $\mathbf{r}(a) \leq f \bar{e}$.
Clearly, $\mathbf{d}(a)$ and $\mathbf{r}(a)$ are orthogonal.
Thus $a^{2} = 0$.
By Lemma~\ref{lem:spooks}, we may  define $g = a \vee a^{-1} \vee i$, a unit, where $i = 1 \overline{(\mathbf{d}(a) \vee \mathbf{r}(a))}$.
Thus $ie = 0$.
We have that $geg^{-1} \leq f$.

Suppose now that $f \bar{e} = 0$.
Then $f < e$.
By the above result, we may find a unit $u$ such that
$ueu^{-1} \leq \bar{e}$.
Similarly, we may find a unit $v$ such that $v\bar{e}v^{-1} \leq f$.
Thus $vu e (vu)^{-1} \leq f$, as required.
\end{proof}

\begin{example}{\em 
A meet semilattice $E$ with zero is said to be {\em $0$-disjunctive} if for all non-zero $e \in E$ and $0 \neq f < e$ there exists
$0 \neq f' \leq e$ such that $f \wedge f' = 0$.
Observe that Boolean algebras are automatically $0$-disjunctive.
An inverse semigroup is said to be {\em congruence-free} if it has exactly two congruences.
It is a standard theorem that an inverse semigroup with zero is congruence-free if and only if
it is fundamental, $0$-simple and its semilattice of idempotents is $0$-disjunctive \cite{Munn}.
It follows that the fundamental, $0$-simple Tarski inverse monoids are precisely the congruence-free Tarski inverse monoids.
In \cite{Lawson07b}, we developed some ideas of  Birget \cite{Birget} to construct a family of Boolean inverse $\wedge$-monoids $C_{n}$ 
where $n \geq 2$ called the {\em Cuntz inverse monoids}.\footnote{This is a different class from the one referred to by Renault \cite{Renault} which are properly polycyclic inverse monoids.
Our Cuntz inverse monoids are direct analogues of Cuntz $C^{\ast}$-algebras.}
Their groups of units are the Thompson groups $G_{n,1}$.
The group $G_{2,1}$ is often denoted by $V$.
The construction of these inverse monoids was also given in \cite{LS}.
They are $0$-simple and fundamental and so provide concrete examples of congruence-free Tarski inverse monoids.
Matui's paper \cite{Matui13} deals with second countable Hausdorff  \'etale groupoids
which are locally compact and some of his key results deal with the case where the space of identities is a Cantor set.
In the light of our results in this section, we may apply \cite[Theorem 3.10]{Matui13}, a theorem of a type pioneered by Matti Rubin \cite{Rubin},
to deduce that $0$-simplifying, fundamental Tarski inverse monoids are isomorphic if and only if their groups of units are isomorphic.
In particular,  two congruence-free Tarski inverse monoids are isomorphic if and only if their groups of units are isomorphic.
}
\end{example}

\subsection{Refinement: the principal case}

The support of a partial homeomorphism is an important tool in Matui's work \cite{Matui12,Matui13}.
Abstract support operators on Boolean algebras are considered by Fremlin \cite{Fremlin}.
We use Leech's fixed-point operator $\phi$ of Proposition~\ref{prop:leech} to define an abstract support operator $\sigma$ on any Boolean inverse $\wedge$-monoid $S$.
Define the {\em support operator} $\sigma$ by
$$\sigma (s) = \overline{\phi (s)}s^{-1}s$$
where $s \in S$.
The idempotent $\sigma (s)$ is called the {\em support of $s$}.

\begin{lemma}\label{lem:cooper} Let $S$ be a Boolean inverse $\wedge$-monoid.
Then
$$s = \phi (s) \vee s \sigma (s)$$
is an orthogonal join, and $\phi (s \sigma (s)) = 0$.
\end{lemma}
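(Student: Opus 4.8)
The plan is to unwind the definitions using the established properties of the fixed-point operator $\phi$. Write $e = \phi(s)$ and recall from Proposition~\ref{prop:leech}(5) that $e \leq s^{-1}s$, so that $\overline{e}$ makes sense as the relative complement of $e$ inside the Boolean algebra $(\mathbf{d}(s))^{\downarrow}$; then $\sigma(s) = \overline{e}\,s^{-1}s = \overline{e}$ in that local Boolean algebra, and in particular $s^{-1}s = e \vee \sigma(s)$ is an orthogonal join of idempotents. Multiplying on the left by $s$ and using that multiplication distributes over orthogonal joins (and that $s$ restricted to orthogonal idempotents below $\mathbf{d}(s)$ gives orthogonal elements, by Lemma~\ref{lem:bar}(1)), we get $s = s(s^{-1}s) = s(e \vee \sigma(s)) = se \vee s\sigma(s)$, an orthogonal join. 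It then remains to identify $se$ with $\phi(s)$: by Proposition~\ref{prop:leech}(4) we have $\phi(s) = s\phi(s) = se$, which gives the first displayed equation.

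For the second claim, $\phi(s\sigma(s)) = 0$, I would apply the fixed-point operator characterization directly. Since $\phi(s\sigma(s))$ is by (FPO1) an idempotent below $s\sigma(s)$, it suffices to show that the only idempotent below $s\sigma(s)$ is $0$; equivalently, since $\phi(t) = t \wedge 1$ by Proposition~\ref{prop:leech}(3), it suffices to show $s\sigma(s) \wedge 1 = 0$. Suppose $f$ is an idempotent with $f \leq s\sigma(s)$. Then $f \leq s$, so by (FPO2) we have $f \leq \phi(s) = e$. But also $f \leq s\sigma(s)$ gives $f = \mathbf{d}(f) = \mathbf{r}(f)\cdot f \leq \mathbf{d}(s\sigma(s)) = \sigma(s) = \overline{e}$, using Lemma~\ref{lem:order_properties}(1) or the fact that idempotents below an element lie below its domain idempotent. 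Hence $f \leq e \wedge \overline{e} = 0$, so $f = 0$ and $\phi(s\sigma(s)) = 0$.

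I do not anticipate a genuine obstacle here; the whole statement is a bookkeeping exercise in the local Boolean algebra below $\mathbf{d}(s)$ together with the four listed properties of $\phi$. The one point requiring a little care is the orthogonality of $se$ and $s\sigma(s)$: one must check $(se)(s\sigma(s))^{-1} = 0$ and $(se)^{-1}(s\sigma(s)) = 0$, but the second is $e\, s^{-1} s\, \sigma(s) = e\overline{e}s^{-1}s = 0$ (using that $e, \sigma(s) \leq s^{-1}s$ commute) and the first follows symmetrically, or more cleanly from Lemma~\ref{lem:bar}(1) applied to $se, s\sigma(s) \leq s$ with orthogonal domain idempotents $e \perp \overline{e}$. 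Everything else is immediate from the cited results.
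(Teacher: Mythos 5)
Your proof is correct and follows essentially the same route as the paper's: decompose $s^{-1}s = \phi(s) \vee \sigma(s)$ in the Boolean algebra below $\mathbf{d}(s)$, multiply on the left by $s$, and use $s\phi(s) = \phi(s)$ from Proposition~\ref{prop:leech}. The only difference is that you spell out the orthogonality and the verification that $\phi(s\sigma(s)) = 0$, which the paper leaves as routine.
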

\begin{proof} Let $s \in S$.
Then $1 = \phi (s) \vee \overline{\phi (s)}$.
Multiplying on the right by $s^{-1}s$ and observing that $\phi (s) \leq s^{-1}s$,
we get that $s^{-1}s = \phi (s) \vee \sigma (s)$.
Multiplying on the left by $s$ and observing that $s \phi (s) = \phi (s)$,
we get that $s = \phi (s) \vee s \sigma (s)$.
It is routine to check that $\phi (s) \perp s \sigma (s)$, and that $\phi (s \sigma (s)) = 0$.
\end{proof}

If $g,h \in U(S)$, define $[g,h] = ghg^{-1}h^{-1}$, the {\em commutator} of $g$ and $h$.
Part (2) below is further evidence of the interaction between the properties of the group of units of the monoid
and the properties of the monoid as a whole.

\begin{lemma}\label{lem:jerry} Let $S$ be a Boolean inverse $\wedge$-monoid.
\begin{enumerate}

\item If $a,b \in S$ are such that $\overline{\phi (a)} \,   \overline{\phi (b)} = 0$ then $ab = ba$.

\item If $g,h \in U(S)$ and $\sigma (g)\sigma (h) = 0$ then $[g,h] = 1$.

\item Let $g$ and $h$ be units. Then $\sigma (ghg^{-1}) = g \sigma (h) g^{-1}$. 

\end{enumerate}
\end{lemma}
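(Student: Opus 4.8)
The plan is to prove the three parts in order, relying on the fixed-point operator $\phi$ and its interaction with units and joins established in Lemma~\ref{lem:fpo}, together with the description $s = \phi(s) \vee s\sigma(s)$ from Lemma~\ref{lem:cooper}. For part (1), the hypothesis $\overline{\phi(a)}\,\overline{\phi(b)} = 0$ should be read as saying that $\phi(a)$ and $\phi(b)$ together cover the identity, i.e.\ $\phi(a) \vee \phi(b) = 1$. First I would note that $a \ge \phi(a)$ and $b \ge \phi(b)$, and that $\phi(a), \phi(b)$ are idempotents; the idea is to multiply out $ab$ and $ba$ using the covering relation. Concretely, I would write $a = a(\phi(a) \vee \overline{\phi(a)})$-type decompositions and exploit that on the "overlap" part both $a$ and $b$ act as idempotents below $\phi(a)$ or $\phi(b)$, while $a\phi(a) = \phi(a)$ and $b\phi(b) = \phi(b)$ by Proposition~\ref{prop:leech}(4). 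The cleanest route is probably: since $\phi(a) \vee \phi(b) = 1$, we can write (using distributivity) a partition of $1$ into $\phi(a)\phi(b)$, $\phi(a)\overline{\phi(b)}$, $\overline{\phi(a)}\phi(b)$, and check that $ab$ and $ba$ agree after multiplying by each piece on the appropriate side, using that an element bounded below $1$ that is an idempotent is central among idempotents and commutes with everything in sight.

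For part (2), I would simply observe that $\sigma(g)\sigma(h) = 0$ is a stronger hypothesis than that of part (1): from $g^{-1}g = 1$ (as $g$ is a unit) we get $\sigma(g) = \overline{\phi(g)}$, and likewise $\sigma(h) = \overline{\phi(h)}$, so $\sigma(g)\sigma(h) = 0$ says exactly $\overline{\phi(g)}\,\overline{\phi(h)} = 0$. Hence part (1) applies and gives $gh = hg$, which is equivalent to $[g,h] = ghg^{-1}h^{-1} = 1$.

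For part (3), the claim $\sigma(ghg^{-1}) = g\sigma(h)g^{-1}$ is most naturally proved by first establishing the corresponding identity for $\phi$, namely $\phi(ghg^{-1}) = g\phi(h)g^{-1}$, and then noting that conjugation by the unit $g$ is an automorphism of the Boolean inverse $\wedge$-monoid, hence commutes with complementation of idempotents and with the products defining $\sigma$. To get $\phi(ghg^{-1}) = g\phi(h)g^{-1}$: conjugation by $g$ is an order isomorphism fixing $1$, so it carries the largest idempotent below $h$ to the largest idempotent below $ghg^{-1}$; since $\phi(x) = x \wedge 1$ by Proposition~\ref{prop:leech}(3) and $\phi(x)$ is exactly that largest idempotent below $x$, we conclude. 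Then $\sigma(ghg^{-1}) = \overline{\phi(ghg^{-1})}\cdot(ghg^{-1})^{-1}(ghg^{-1}) = \overline{g\phi(h)g^{-1}}\cdot g h^{-1}h g^{-1} = g\,\overline{\phi(h)}\,g^{-1}\cdot g h^{-1}h g^{-1} = g\,\overline{\phi(h)}h^{-1}h\,g^{-1} = g\sigma(h)g^{-1}$, where I used that $x \mapsto gxg^{-1}$ sends $\bar e$ to $\overline{geg^{-1}}$ (being a Boolean algebra isomorphism on idempotents, as $g$ is a unit) and is multiplicative.

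The main obstacle is part (1): the bookkeeping of multiplying $ab$ and $ba$ against the three pieces of the partition of $1$ and verifying equality piece by piece requires care, since $a$ and $b$ are arbitrary (not units), so one must be precise about which of $a \ge \phi(a)$ restricts to an idempotent where. I expect this to come down to the observation that on $\phi(a)$ the element $a$ restricts to the idempotent $\phi(a)$ while off $\phi(a)$ we have coverage by $\phi(b)$, where $b$ restricts to the idempotent $\phi(b)$, and idempotents commute; parts (2) and (3) are then short formal consequences.
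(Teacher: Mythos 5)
Your proposal follows essentially the same route as the paper: part (1) via the decomposition coming from $\phi(a)\vee\phi(b)=1$ together with $\overline{\phi(a)}\le\phi(b)$, $\overline{\phi(b)}\le\phi(a)$ and $\phi(x)=x\phi(x)=\phi(x)x$ (the paper writes $a=\phi(a)\vee\overline{\phi(a)}\,a\,\overline{\phi(a)}$ and multiplies out to the symmetric expression $ab=\phi(a)\phi(b)\vee\overline{\phi(b)}\,b\,\overline{\phi(b)}\vee\overline{\phi(a)}\,a\,\overline{\phi(a)}$), part (2) exactly as in the paper from $\sigma(g)=\overline{\phi(g)}$ for units, and part (3) by proving $\phi(ghg^{-1})=g\phi(h)g^{-1}$ and taking complements, conjugation by a unit being an automorphism of $E(S)$. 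One caution: drop the phrase that an idempotent ``commutes with everything in sight''---idempotents are not central in an inverse monoid (that failure is precisely what fundamentality measures); the piecewise check closes up instead because $\overline{\phi(b)}\le\phi(a)$ and $\phi(a)a=\phi(a)$ give $\overline{\phi(b)}\,a=\overline{\phi(b)}$ (and dually), so only idempotents ever need to commute with each other.
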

\begin{proof} (1) From $1 =  \phi (a) \vee \overline{\phi (a)}$ and $a = 1a1$.
It quickly follows that
$$a = \overline{\phi (a)} a \overline{\phi (a)} \vee \phi (a).$$
In addition, easy calculations show that
$\overline{\phi (a)}  \leq \phi (b)$ and $\overline{\phi (b)} \leq \phi (a)$.
We calculate
$$ab = \phi (a) \phi (b) \vee \overline{\phi (b)} b \overline{\phi (b)} \vee \overline{\phi (a)} a \overline{\phi (a)}.$$
By symmetry, this is equal to $ba$.

(2) This is immediate by (1), and the fact that when $g$ is a unit $\sigma (g) = \overline{\phi (g)}$.

(3) From $\phi (h) \leq h$ we get that $g \phi (h) g^{-1} \leq ghg^{-1}$.
But  $g \phi (h) g^{-1}$ is an idempotent and so $g \phi (h) g^{-1} \leq \phi (ghg^{-1})$.
From $\phi (ghg^{-1}) \leq ghg^{-1}$ we get that $g^{-1} \phi (ghg^{-1})g \leq h$. 
But  $g^{-1} \phi (ghg^{-1})g$ is an idempotent and so $g^{-1} \phi (ghg^{-1})g \leq \phi (h)$.
It follows that $\phi (ghg^{-1}) \leq g \phi (h) g^{-1}$.
We have proved that $g \phi (h) g^{-1} = \phi (ghg^{-1})$.
The result now follows by taking complements.
\end{proof}

\begin{lemma}\label{lem:jinxy} Let $S$ be a fundamental Boolean inverse $\wedge$-monoid.
\begin{enumerate}

\item Suppose that $af = fa$ for all $f \leq \overline{\phi (a)}$.
Then $a$ is an idempotent.

\item If $a^{-1}a = aa^{-1}$ and  $af = fa$ for all $f \leq \sigma (a) $.
Then $a$ is an idempotent.

\end{enumerate}
\end{lemma}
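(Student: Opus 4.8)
The plan is to reduce both statements to the fundamental property of $S$, i.e.\ that $Z(E(S)) = E(S)$, or equivalently that $\mu$ is trivial (Theorem~\ref{them:classic}). For part (1), the hypothesis says that $a$ commutes with every idempotent $f \leq \overline{\phi(a)}$. To invoke fundamentality I need $a$ to commute with \emph{all} idempotents, so first I would use Lemma~\ref{lem:cooper} to split $a$ as the orthogonal join $a = \phi(a) \vee a\sigma(a)$ and handle the two pieces. The component $\phi(a)$ is already an idempotent, hence central, so the real content concerns $b := a\sigma(a)$; note $\mathbf{d}(b) = \sigma(a) = \overline{\phi(a)}\,a^{-1}a \leq \overline{\phi(a)}$ and similarly one checks $\mathbf{r}(b) \leq \overline{\phi(a)}$, so $b$ lives ``inside'' the corner determined by $\overline{\phi(a)}$. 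Now for an arbitrary idempotent $e \in E(S)$, write $e = e\overline{\phi(a)} \vee e\phi(a)$; the second summand is orthogonal to both $\mathbf d(b)$ and $\mathbf r(b)$ and so commutes trivially with $b$, while $e\overline{\phi(a)} \leq \overline{\phi(a)}$ is covered by the hypothesis applied to $a$ (one must check commuting with $a$ is the same as commuting with $b$ on such idempotents, using $e\overline{\phi(a)}\phi(a) = 0$). Hence $b$ commutes with every idempotent, so $b \in Z(E(S)) = E(S)$, forcing $b$ to be an idempotent; but $\phi(b) = \phi(a\sigma(a)) = 0$ by Lemma~\ref{lem:cooper}, and an idempotent $b$ satisfies $b = \phi(b)$, so $b = 0$. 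Therefore $a = \phi(a)$ is an idempotent.

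For part (2), the hypothesis is weaker in form ($af = fa$ only for $f \leq \sigma(a)$) but comes with the extra assumption $a^{-1}a = aa^{-1}$. Since $a$ is ``balanced'', Lemma~\ref{lem:cooper} still gives $a = \phi(a) \vee a\sigma(a)$, and I would again set $b = a\sigma(a)$, observing now that $\mathbf d(b) = \mathbf r(b) = \sigma(a)$ because $a^{-1}a = aa^{-1}$ implies $\sigma(a) = \overline{\phi(a)}\,a^{-1}a = \overline{\phi(a)}\,aa^{-1}$ is both the domain and range idempotent of $b$. Then $b$ is an element of the local monoid $\sigma(a) S \sigma(a)$ which, by Lemma~\ref{lem:local_monoid}, is itself fundamental, with semilattice of idempotents $\sigma(a)^{\downarrow} = \{\, f \in E(S) : f \leq \sigma(a)\,\}$. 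The hypothesis precisely says $b$ centralizes this semilattice, so $b \in Z(E(\sigma(a)S\sigma(a))) = E(\sigma(a)S\sigma(a))$, whence $b$ is an idempotent; as in part (1), $\phi(b) = 0$ combined with $b$ idempotent yields $b = 0$, and $a = \phi(a)$ is an idempotent.

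I expect the main obstacle to be the bookkeeping in part (1) that translates ``$a$ commutes with all $f \leq \overline{\phi(a)}$'' into ``$a$ commutes with all idempotents of $S$'': one has to be careful that, for a general idempotent $e$, the piece $e\phi(a)$ really is orthogonal to $a\sigma(a)$ and thus commutes with $a$ for free, and that the piece $e\overline{\phi(a)}$ lands in the region where the hypothesis applies. This is routine given Lemma~\ref{lem:order_properties} and Lemma~\ref{lem:cooper}, but it is the only step where something could go wrong. Part (2) is cleaner once one recognizes that passing to the local monoid $\sigma(a)S\sigma(a)$ and quoting Lemma~\ref{lem:local_monoid} does the work, so the proof of (2) is essentially a one-line reduction; indeed (1) could alternatively be phrased the same way after replacing $\overline{\phi(a)}$ by an idempotent dominating both $\mathbf d(a\sigma(a))$ and $\mathbf r(a\sigma(a))$.
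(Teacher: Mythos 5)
Your proof is correct, but it is organised differently from the paper's, so a comparison is worthwhile. For part (1) the paper argues directly on $a$: given an arbitrary idempotent $e$, it splits $e = e\overline{\phi(a)} \vee e\phi(a)$, notes that $a$ commutes with $e\overline{\phi(a)}$ by hypothesis and with $j = e\phi(a)$ trivially (since $j \leq \phi(a) \leq a$ gives $aj = ja = j$), and concludes by distributivity that $a \in Z(E(S)) = E(S)$. Your detour through Lemma~\ref{lem:cooper} --- passing to $b = a\sigma(a)$, checking $\mathbf{d}(b), \mathbf{r}(b) \leq \overline{\phi(a)}$, showing $b$ is central hence idempotent, and then killing it via $\phi(b)=0$ --- is valid (the translation $af = fa \Leftrightarrow bf = fb$ for $f \leq \overline{\phi(a)}$ works exactly as you say, because $\phi(a)f = 0$), but the decomposition of $a$ buys you nothing here: the same splitting of $e$ already does the job for $a$ itself. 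For part (2) the paper simply says ``immediate by (1)'', the hidden reduction being that for $f \leq \overline{\phi(a)}$ one writes $f = f a^{-1}a \vee f\overline{a^{-1}a}$: the first piece lies below $\sigma(a)$ and is covered by the hypothesis, while the second annihilates $a$ on both sides because $a\overline{a^{-1}a} = 0$ and, using $a^{-1}a = aa^{-1}$, also $\overline{a^{-1}a}\,a = \overline{aa^{-1}}\,a = 0$. Your alternative --- observing that balance forces $\mathbf{d}(b) = \mathbf{r}(b) = \sigma(a)$, so $b$ lies in the local monoid $\sigma(a)S\sigma(a)$, which is fundamental by Lemma~\ref{lem:local_monoid}, and whose idempotents $b$ centralizes --- is a genuinely different and perfectly sound route (indeed it is the same localization trick the paper itself uses in Lemma~\ref{lem:nice} and Proposition~\ref{prop:ashleigh}); its only small gloss is that ``the hypothesis precisely says $b$ centralizes this semilattice'' again needs the one-line conversion from $a$ to $b$ via $\phi(a)f = 0$, which you stated in (1) and applies verbatim.
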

\begin{proof} (1 ) Let $e$ be an arbitrary idempotent.
Since $1 = \overline{\phi (a)} \vee \phi (a)$ we have that $e = e \overline{\phi (a)} \vee e \phi (a)$.
Put $i = e \overline{\phi (a)} \leq \overline{\phi (a)}$ and $j = e \phi (a) \leq \phi (a)$.
By assumption, $ai = ia$.
Clearly $j \leq a$ and so $j = aj = ja$.
Thus $a$ commutes with both $i$ and $j$ and so $a$ commutes with $e$.
But $e$ was arbitrary and so $a$ commutes with every idempotent.
Since $S$ is fundamental, it follows that $a$ is an idempotent.

(2) This is immediate by (1).
\end{proof}

We give an explicit proof of the following because of its importance.
Recall that $U_{e}$ is the set of ultrafilters in $E(S)$ that contain the idempotent $e$.

\begin{lemma}\label{lem:dream}
Let $F \in U_{s^{-1}s}$.
\begin{enumerate}

\item $(sFs^{-1})^{\uparrow}$ is an ultrafilter in $S$.

\item If $sFs^{-1} \subseteq F$ then  $F$ is the only ultrafilter in $E(S)$ containing $sFs^{-1}$.

\end{enumerate}
\end{lemma}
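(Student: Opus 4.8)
The plan is to exploit the two functoriality-type facts already established: that $\mathbf{d}(A) = (A^{-1}A)^\uparrow$ is an ultrafilter for any ultrafilter $A$, and the characterization of idempotent ultrafilters in $E(S)$ from Lemma~\ref{lem:pele} and Lemma~\ref{lem:exel}. For part (1), I would first verify that $sFs^{-1}$ is a proper filter in the local monoid $\mathbf{r}(s)S\mathbf{r}(s)$, or more simply check directly that $(sFs^{-1})^\uparrow$ is a proper filter in $S$: it is closed upwards by construction, it is closed under finite meets because $F$ is (conjugation by $s$, i.e. the map $e \mapsto ses^{-1}$, preserves the meet structure on idempotents below $s^{-1}s$ and maps them into the idempotents below $ss^{-1}$, using part (1) of Lemma~\ref{lem:order_properties} to handle the meets), and it is proper because $0 \in sFs^{-1}$ would force $0 \in F$. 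To upgrade ``proper filter'' to ``ultrafilter'' I would use Lemma~\ref{lem:exel}: suppose $t$ is an idempotent with $t \wedge sfs^{-1} \neq 0$ for all $f \in F$; then conjugating back, $s^{-1}ts \wedge f \neq 0$ for all $f \in F$ (note $s^{-1}ts$ is an idempotent $\leq s^{-1}s$), so by maximality of $F$ we get $s^{-1}ts \in F$, whence $t = ss^{-1}tss^{-1} = s(s^{-1}ts)s^{-1} \in sFs^{-1}$ up to the range idempotent, and $t \geq tss^{-1} \in sFs^{-1}$ so $t \in (sFs^{-1})^\uparrow$. A cleaner route is to recall that $\mathbf{r}(A)$ is an ultrafilter whenever $A$ is; taking $A = (sF)^\uparrow$ (an ultrafilter because $s^{-1}s \in F$, by the construction recalled in the text) gives $\mathbf{r}(A) = (AA^{-1})^\uparrow = (sFF^{-1}s^{-1})^\uparrow = (sFs^{-1})^\uparrow$, since $F = F^{-1}$ is a filter of idempotents. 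This is the slickest argument and I expect to use it.

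For part (2), assume $sFs^{-1} \subseteq F$. By part (1), $(sFs^{-1})^\uparrow$ is an ultrafilter in $E(S)$; since $sFs^{-1} \subseteq F$ and $F$ is a proper filter containing this set, and ultrafilters are maximal proper filters, we must have $(sFs^{-1})^\uparrow = F$. Now any ultrafilter $F'$ in $E(S)$ containing $sFs^{-1}$ contains $(sFs^{-1})^\uparrow = F$, hence equals $F$ by maximality. That is the whole of part (2): it is essentially immediate once part (1) is in hand, because the containment $sFs^{-1} \subseteq F$ together with properness forces equality of the two ultrafilters.

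The main obstacle is purely in part (1): making rigorous that conjugation $e \mapsto ses^{-1}$ is a well-behaved bijection between the principal ideals $(s^{-1}s)^\downarrow$ and $(ss^{-1})^\downarrow$ in $E(S)$ that preserves and reflects meets and the zero, so that filters map to filters and ultrafilters to ultrafilters. This is exactly the content of the Munn-semigroup picture recalled before Theorem~\ref{them:classic}, and combined with the already-cited fact that $\mathbf{r}(A)$ is an ultrafilter, it dispatches part (1) without any real computation. I would present part (1) via the $A = (sF)^\uparrow$, $\mathbf{r}(A) = (sFs^{-1})^\uparrow$ identification and relegate the direct-verification argument to a parenthetical remark.
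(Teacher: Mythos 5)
Your proposal is correct, but for part (1) it takes a genuinely different route from the paper. The paper gives the explicit, self-contained argument it advertises: after checking that $(sFs^{-1})^{\uparrow}$ is a proper filter, it applies the criterion of Lemma~\ref{lem:exel} directly, first reducing an arbitrary element $a$ with $a \wedge (sFs^{-1})^{\uparrow} \neq 0$ to the idempotent $a \wedge ss^{-1} \leq ss^{-1}$, and then conjugating by $s$ so that the ultrafilter property of $F$ yields $s^{-1}es \in F$ and hence $e \in sFs^{-1}$. You instead take $A=(sF)^{\uparrow}=(sF^{\uparrow})^{\uparrow}$, an ultrafilter in $S$ because $s^{-1}s\in F$, and identify $(sFs^{-1})^{\uparrow}=(AA^{-1})^{\uparrow}=\mathbf{r}(A)$, which is an ultrafilter by the facts about the groupoid of ultrafilters quoted from \cite{Lawson10b}; this is shorter but imports exactly the machinery the paper is choosing not to rely on here, which is why the paper says it gives an explicit proof ``because of its importance''. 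Two small repairs you should make if you write this up: in your parenthetical direct-verification variant you test the Exel criterion only against idempotents $t$, and without cutting down to $t \leq ss^{-1}$ first; the paper's reduction $a \mapsto a \wedge ss^{-1}$ is precisely what makes both of these legitimate, so it cannot be omitted. And in part (2), $(sFs^{-1})^{\uparrow}$ is an ultrafilter in $S$, not literally in $E(S)$: since it contains idempotents it is an idempotent ultrafilter, hence of the form $G^{\uparrow}$ with $G=E(S)\cap (sFs^{-1})^{\uparrow}$ an ultrafilter of $E(S)$, and your maximality argument should be run with this $G$ (any ultrafilter of $E(S)$ containing $sFs^{-1}$ contains $G$, hence equals it, and $F$ is one such when $sFs^{-1}\subseteq F$); with that adjustment your part (2) is essentially the paper's ``immediate from the proof of (1)''.
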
 
\begin{proof} (1) It is easy to check that  $(sFs^{-1})^{\uparrow}$ is a proper filter.
To show that it is an ultrafilter, we shall use Lemma~\ref{lem:exel}.
Suppose that $a$ is such that $a \wedge (sFs^{-1})^{\uparrow} \neq 0$.
Then we need to show that $a \in (sFs^{-1})^{\uparrow}$.
But $a \wedge (sFs^{-1})^{\uparrow} \neq 0$ if and only if  $(a \wedge ss^{-1}) \wedge (sFs^{-1})^{\uparrow} \neq 0$. 
Thus to prove that  $(sFs^{-1})^{\uparrow}$  is an ultrafilter it is enough to prove
that if $e$ is an idempotent where $e \leq ss^{-1}$ such that $e \wedge (sFs^{-1})^{\uparrow} \neq 0$ then $e \in  (sFs^{-1})^{\uparrow}$. 
To this end, 
let $e$ be an idempotent $e \leq ss^{-1}$ such that
$e(sfs^{-1}) \neq 0$ for all $f \in F$.
Then $s^{-1}esf \neq 0$ for all $f \in F$.
But $F$ is an ultrafilter and so by Lemma~\ref{lem:exel}
we have that $s^{-1}es \in F$ giving $e \in sFs^{-1}$.

The proof of (2) follows immediately from the proof of (1).
\end{proof}

Our next result establishes that our algebraic notion of support agrees with the topological one.
On a point of notation, if $Y$ is a subset of a topological space then $\mathsf{cl}(Y)$ denotes the {\em closure} of that subset.

\begin{proposition}\label{prop:tempest} Let $S$ be a fundamental Boolean inverse $\wedge$-monoid.
For each $s$, we have that
$$U_{\sigma (s)} = \mathsf{cl}(\{ F \colon F \in U_{s^{-1}s} \mbox{ and } sFs^{-1} \nsubseteq F   \}).$$
\end{proposition}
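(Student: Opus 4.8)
The plan is to prove the two inclusions of the displayed equality separately; morally the statement says that the algebraic support $\sigma(s)$ cuts out the \emph{closure} of the set of ultrafilters ``moved'' by $s$. Write $D$ for the set $\{F\in U_{s^{-1}s}\colon sFs^{-1}\nsubseteq F\}$, and note that, since $\sigma(s)\in E(S)$, the set $U_{\sigma(s)}$ is a clopen, hence closed, subset of the structure space $\mathsf{X}(E(S))$; so for the inclusion $\mathsf{cl}(D)\subseteq U_{\sigma(s)}$ it suffices to prove $D\subseteq U_{\sigma(s)}$. This rests on a single fact: \emph{if $\phi(s)\in F$ then $sFs^{-1}\subseteq F$}. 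Indeed any idempotent $k\le s$ satisfies $sks^{-1}=k$ (from $k\le s$ one gets $sk=k=ks^{-1}$); applying this to $k=\phi(s)f$ for $f\in F$, and using $\phi(s)f\le f$, gives $\phi(s)f=s(\phi(s)f)s^{-1}\le sfs^{-1}$, whence $sfs^{-1}\in F$ as $F$ is closed upwards. Consequently, if $F\in D$ then $\phi(s)\notin F$, so $\overline{\phi(s)}\in F$, and therefore $\sigma(s)=\overline{\phi(s)}\,s^{-1}s\in F$, i.e.\ $F\in U_{\sigma(s)}$.

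For the reverse inclusion, fix $F_{0}\in U_{\sigma(s)}$. Its basic clopen neighbourhoods are the sets $U_{e}$ with $e\in F_{0}$, and replacing $e$ by $e\sigma(s)$ we may assume $0\ne e\le\sigma(s)$; it therefore suffices to show $U_{e}\cap D\ne\emptyset$ for each such $e$. Set $t=se$. Then $\mathbf d(t)=e$ (as $e\le s^{-1}s$), and $\phi(t)=0$: the identity $\phi(se)=\phi(s)e$ holds for arbitrary $s$ and idempotent $e$ by the argument of Lemma~\ref{lem:fpo}(1) (one checks $\phi(se)\le\phi(s)$ via (FPO2) and $\phi(se)\le e$ via Proposition~\ref{prop:leech}(5)), and here $\phi(s)e=0$ because $e\le\overline{\phi(s)}$. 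Next, since $e\in F$, one has $sFs^{-1}\subseteq F$ if and only if $tFt^{-1}\subseteq F$: this follows from $tft^{-1}=s(ef)s^{-1}$ and the fact that $\{f\in F\colon f\le e\}$ is cofinal in $F$, so $tFt^{-1}$ and $sFs^{-1}$ generate the same filter of $E(S)$. Hence it is enough to find $F\in U_{e}$ with $tFt^{-1}\nsubseteq F$.

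Suppose no such $F$ existed, i.e.\ $tFt^{-1}\subseteq F$ for all $F\in U_{e}$. Using Lemmas~\ref{lem:exel},~\ref{lem:molina} and~\ref{lem:dream}, so that each $(tFt^{-1})^{\uparrow}$ is an ultrafilter and hence equal to $F$, one deduces that $\mathbf r(t)=e$ (otherwise an ultrafilter containing a suitable nonzero idempotent $\le e$ would be forced to contain both $e$ and $\overline e$) and then, by a complementation argument inside the Boolean algebra $e^{\downarrow}$, that $tit^{-1}=i$ for every idempotent $i\le e$. Since now $\mathbf d(t)=\mathbf r(t)=e$, Lemma~\ref{lem:spooks}(1) shows that $g=t\vee\overline e$ is a unit; using $t\overline e=0=\overline e t$ and $tit^{-1}=i$ for $i\le e$, a short calculation gives $gjg^{-1}=j$ for every idempotent $j$, so $g\in Z(E(S))$. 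As $S$ is fundamental, $g$ is an idempotent, hence $g=1$; but then $t\le 1$ together with $\mathbf d(t)=e$ forces $t=e$, contradicting $\phi(t)=0\ne e$. Therefore $U_{e}\cap D\ne\emptyset$, and since such $U_{e}$ form a neighbourhood basis at $F_{0}$, we conclude $F_{0}\in\mathsf{cl}(D)$.

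The inclusion $\mathsf{cl}(D)\subseteq U_{\sigma(s)}$ is routine; the work is in the last step of the reverse inclusion, where one must turn the essentially topological hypothesis that $t$ fixes every ultrafilter lying over $e$ into the algebraic statement that $g=t\vee\overline e$ centralises $E(S)$, and then invoke fundamentality --- the bookkeeping relating $s$, $t=se$ and the condition $sFs^{-1}\subseteq F$ (all of which leans on Lemma~\ref{lem:dream} and the ultrafilter characterisations of Lemmas~\ref{lem:exel} and~\ref{lem:molina}) also has to be handled with care. A slicker alternative to the fundamentality step: if no such $F$ existed then the open set $V_{t}$ would lie inside $\mathrm{Iso}(\mathsf G(S))$, hence inside $\mathsf G(S)_{o}$ by Theorem~\ref{thm:one2} (effectiveness), so $t$ would be idempotent by Lemma~\ref{lem:pele}(5), again contradicting $\phi(t)=0$.
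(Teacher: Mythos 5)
Your proof is correct, and for the easy inclusion it is the same as the paper's (if $\phi(s)\in F$ then $sFs^{-1}\subseteq F$, hence $\sigma(s)\in F$ on the moved set, plus clopenness of $U_{\sigma(s)}$). For the reverse inclusion your primary argument genuinely differs from the paper's: the paper works at the groupoid level, observing that if $U_{e}$ (with $e$ below $\sigma(s)$) missed the moved set then $V_{s}\cap\mathbf{d}^{-1}(V_{e})$ would be an open subset of $\mbox{Iso}(\mathsf{G}(S))$, so by Theorem~\ref{thm:one2} (fundamental $\Leftrightarrow$ effective) it consists of idempotent ultrafilters, giving $f\in F$ with $f\leq s$ and hence $\phi(s)\in F$, a contradiction. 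You instead stay inside the monoid: pass to $t=se$ with $\phi(t)=0$, show (by classical Stone-duality arguments for $E(S)$ and Lemma~\ref{lem:dream}) that conjugation by $t$ fixes $e^{\downarrow}$ pointwise and $\mathbf{r}(t)=e$, build the unit $g=t\vee\overline{e}$ of Lemma~\ref{lem:spooks}, check $g\in Z(E(S))$, and invoke the algebraic definition of fundamental to force $g=1$, $t=e$, contradicting $\phi(t)=0$. In effect you re-prove by hand the special case of Lemma~\ref{lem:charlie}/Theorem~\ref{thm:one2} that the paper cites; your route is longer but self-contained at the level of $S$, and your closing ``slicker alternative'' ($V_{t}\subseteq\mbox{Iso}(\mathsf{G}(S))$, hence $t$ idempotent by effectiveness and Lemma~\ref{lem:pele}) is essentially the paper's proof, since $V_{se}=V_{s}\cap\mathbf{d}^{-1}(V_{e})$. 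A small bonus: you explicitly shrink $e$ to $e\sigma(s)$ so that $e\leq s^{-1}s$, a normalisation the paper uses implicitly when it asserts that every $G\in U_{e}$ contains $s^{-1}s$.
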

\begin{proof} Recall that if $F$ is an ultrafilter in a Boolean algebra then either $e \in F$ or $\bar{e} \in F$.
Let $F$ be such that $F \in U_{s^{-1}s}$
and
$sFs^{-1} \nsubseteq F$.
If $\phi (s) \in F$ then 
$s \in F^{\uparrow}$ and so $sFs^{-1}  \subseteq F$, which is a contradiction.
Thus $\overline{\phi (s)} \in F$ which, together with the fact that $s^{-1}s \in F$, gives $\sigma (s) \in F$.

To prove the reverse inclusion, 
put 
$$Y = \{ F \colon  F \in U_{s^{-1}s} \mbox{ and }  sFs^{-1} \nsubseteq F   \}.$$ 
Let $F \in U_{\sigma (s)}$.
We show that every open set containing $F$ intersects $Y$.
It is enough to restrict attention to those open sets $U_{e}$ where $e \in F$.
Suppose that $Y \cap U_{e} = \emptyset$.
Observe that $G \in U_{e}$ contains $s^{-1}s$.
Thus for every $G \in U_{e}$ we have that $sGs^{-1} \subseteq G$.
Hence the set $Z = V_{s} \cap \mathbf{d}^{-1}(V_{e})$ is an open subset of $\mbox{Iso}(\mathsf{G}(S))$.
It follows by Theorem~\ref{thm:one2} that every ultrafilter in $Z$ is idempotent.
Thus $(sF^{\uparrow})^{\uparrow}$ is an idempotent ultrafilter.
We may therefore find $f \in F$ such that $f \leq s$.
But then $f \leq \phi (s) \leq s$.
Hence $\phi (s) \in F$.
But this contradicts the fact that $\sigma (s) \in F$.
It follows that $Y \cap U_{e} \neq \emptyset$.
We have therefore proved that $U_{\sigma (s)}$ is the closure of $Y$.
\end{proof}

The next result is important in translating from topology to algebra.

\begin{lemma}\label{lem:pingo} Let $S$ be a fundamental Boolean inverse $\wedge$-monoid and let $g$ be a unit.
\begin{enumerate}

\item Let $F \subseteq E(S)$ be an ultrafilter such that $gFg^{-1} \neq F$.
Then there exists an idempotent $e \in F$ such that $e \perp geg^{-1}$.

\item  Let $F \subseteq E(S)$ be an ultrafilter such that $\sigma (g) \in F$.
Then for each $f \in F$ where $f \leq \sigma (g)$, 
there exists $0 \neq e \leq f$ such that $e \perp geg^{-1}$.

\end{enumerate}
\end{lemma}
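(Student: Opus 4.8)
The plan is to prove the two parts in sequence, with part (2) being essentially a local refinement of part (1). For part (1), the idea is to use the fact that $E(S)$ is a Boolean algebra, so an ultrafilter $F$ is prime and decides every idempotent. Since $gFg^{-1} \neq F$ and both are ultrafilters in $E(S)$, there must be an idempotent $f \in F$ with $gfg^{-1} \notin F$ (if every $gfg^{-1}$ were in $F$ we would have $gFg^{-1} \subseteq F$, and since both are ultrafilters this forces equality). As $F$ is an ultrafilter in a Boolean algebra, $gfg^{-1} \notin F$ gives $\overline{gfg^{-1}} \in F$, equivalently $g\bar{f}g^{-1} \in F$ using that $g$ is a unit and conjugation by a unit is a Boolean algebra automorphism of $E(S)$. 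Now set $e = f \wedge g\bar{f}g^{-1} \in F$. Then $geg^{-1} = gfg^{-1} \wedge \bar{f} \leq \bar{f}$, while $e \leq f$, so $e \wedge geg^{-1} \leq f \wedge \bar{f} = 0$; that is, $e \perp geg^{-1}$. This $e$ lies in $F$ as required, and it is nonzero since $F$ is a proper filter.

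For part (2), I would combine part (1) with the hypothesis $\sigma(g) \in F$ and Proposition~\ref{prop:tempest}, which identifies $U_{\sigma(g)}$ with the closure of the set $Y$ of those ultrafilters $G \in U_{g^{-1}g}$ for which $gGg^{-1} \nsubseteq G$. Fix $f \in F$ with $f \leq \sigma(g)$. Since $F \in U_{\sigma(g)} = \mathsf{cl}(Y)$ and $U_f$ is an open neighbourhood of $F$, the intersection $Y \cap U_f$ is nonempty; pick $G \in Y \cap U_f$. Then $g G g^{-1} \nsubseteq G$, so part (1) applied to $G$ yields an idempotent $e_0 \in G$ with $e_0 \perp g e_0 g^{-1}$. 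Replacing $e_0$ by $e_0 \wedge f$ (still in $G$ since $f \in G$, still nonzero, still orthogonal to its conjugate by the argument of part (1), because shrinking an idempotent that is orthogonal to its conjugate preserves that property — $e \leq f$ and $e \perp geg^{-1}$ imply $e' \leq e$ gives $e' \perp ge'g^{-1}$ since $ge'g^{-1} \leq geg^{-1}$), we obtain $0 \neq e \leq f$ with $e \perp geg^{-1}$, which is exactly the conclusion.

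The main obstacle is part (2): part (1) is a routine Boolean-algebra manipulation, but part (2) genuinely needs the bridge between the algebraic support $\sigma(g)$ and the topological description of its Stone-space shadow, i.e.\ Proposition~\ref{prop:tempest}, together with fundamentality (which is baked into that proposition via Theorem~\ref{thm:one2}). The subtle point to get right is that one cannot just apply part (1) to $F$ itself — $F$ need not satisfy $gFg^{-1} \neq F$ — so one must pass to a nearby ultrafilter $G$ in the support set $Y$ using density, extract the orthogonal idempotent there, and then check that it can be taken below the prescribed $f \in F$. Since $G$ and $F$ both lie in $U_f$, the idempotent $f$ belongs to both, so intersecting with $f$ is legitimate and keeps everything nonzero.
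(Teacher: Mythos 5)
Your overall strategy is sound, and part (2) is essentially the paper's own argument: since $F\in U_{\sigma(g)}=\mathsf{cl}(Y)$ and $U_f$ is an open neighbourhood of $F$, pick $G\in Y\cap U_f$, apply part (1) to $G$, and cut the resulting idempotent down by $f$ (legitimate because $f\in G$, the meet stays nonzero in the proper filter $G$, and shrinking an idempotent orthogonal to its conjugate preserves orthogonality). For part (1) you take a genuinely different, purely order-theoretic route from the paper: the paper separates the distinct points $F$ and $gFg^{-1}$ of the Hausdorff structure space by disjoint basic open sets $U_e$, $U_f$, then uses the ultrafilter criterion of Lemma~\ref{lem:exel} to produce $i\in F$ with $e(gig^{-1})=0$ and finishes with $j=ie$; you instead pick $f\in F$ with $gfg^{-1}\notin F$ (valid, since $gFg^{-1}$ is an ultrafilter, so $gFg^{-1}\subseteq F$ would force equality), pass to the complement, and take a meet. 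That is more elementary and avoids topology altogether.

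However, your part (1) contains one computational slip. With $e=f\wedge g\bar f g^{-1}$ one has $geg^{-1}=gfg^{-1}\wedge g^{2}\bar f g^{-2}$, \emph{not} $gfg^{-1}\wedge\bar f$, so the displayed justification of orthogonality does not go through as written. The conclusion is still correct and the repair is immediate: $e\leq g\bar f g^{-1}$ and $geg^{-1}\leq gfg^{-1}$, and $g\bar f g^{-1}\wedge gfg^{-1}=g(\bar f\wedge f)g^{-1}=0$, hence $e\wedge geg^{-1}=0$. (Your appeal to $\overline{gfg^{-1}}=g\bar f g^{-1}$ is fine, since conjugation by a unit is an automorphism of the Boolean algebra $E(S)$.) With that one-line fix the proposal is a complete proof.
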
 
\begin{proof} (1) Since the ultrafilters $F$ and $gFg^{-1}$ are distinct and the structure space is Hausdorff,
there exist non-zero idempotents $e \perp f$ such that $F \in U_{e}$ and $gFg^{-1} \in U_{f}$.
Since $e \notin gFg^{-1}$, there exists $gig^{-1} \in gFg^{-1}$, where $i \in F$, such that $e (gig^{-1}) = 0$.
Put $j = ie$.
Then $j \in F$ and $j(gjg^{-1}) = 0$.

(2) Let $f \in F$ where $f \leq \sigma (g)$.
By Proposition~\ref{prop:tempest}, 
the open set $U_{f}$ contains an element $G$ such that $gGg^{-1} \neq G$.
Thus by (1), there is an idempotent $e \in G$, which can also be chosen to satisfy $e \leq f$, such that $e(geg^{-1}) = 0$.
\end{proof}


\begin{proposition}\label{prop:two} Let $S$ be a Boolean inverse $\wedge$-monoid.
Then $\mathsf{G}(S)$ is principal if and only if 
$$U_{\phi (s)} = \{ F \colon F \in U_{s^{-1}s} \mbox{ and } sFs^{-1}  \subseteq F   \},$$
for all $s \in S$.
\end{proposition}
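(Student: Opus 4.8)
The plan is to rephrase the displayed identity as a statement about the isotropy subgroupoid of $\mathsf{G}(S)$, and then use that $\mathsf{G}(S)$ is principal exactly when $\mbox{Iso}(\mathsf{G}(S)) = \mathsf{G}(S)_{o}$. Since $\mathsf{G}(S)_{o} \subseteq \mbox{Iso}(\mathsf{G}(S))$ always and every element of $\mathsf{G}(S)$ lies in some $V_{s}$, principality is equivalent to $V_{s} \cap \mbox{Iso}(\mathsf{G}(S)) \subseteq V_{s} \cap \mathsf{G}(S)_{o}$ for every $s \in S$; the proposition will follow once each of these two intersections is identified with one of the two sets in the displayed equation.

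For the set-up, fix $s \in S$ and recall that every ultrafilter $A$ containing $s$ has the form $A = (sF)^{\uparrow}$, where $F = \mathbf{d}(A) \cap E(S)$ is an ultrafilter of $E(S)$ lying in $U_{s^{-1}s}$, and conversely every $F \in U_{s^{-1}s}$ yields such an $A$. The technical heart is the identity $\mathbf{r}(A) = (sFs^{-1})^{\uparrow}$: the inclusion $\supseteq$ is immediate since $sfs^{-1} = (sf)(sf)^{-1} \in AA^{-1}$ for $f \in F$, and equality then holds because $(sFs^{-1})^{\uparrow}$ is an ultrafilter by Lemma~\ref{lem:dream}(1) while $\mathbf{r}(A)$ is proper. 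Consequently $A \in \mbox{Iso}(\mathsf{G}(S))$, i.e. $\mathbf{d}(A) = \mathbf{r}(A)$, holds if and only if $(sFs^{-1})^{\uparrow} = F^{\uparrow}$, which (intersecting with $E(S)$) is equivalent to $sFs^{-1} \subseteq F$. Thus $A \mapsto \mathbf{d}(A) \cap E(S)$ is a bijection from $V_{s} \cap \mbox{Iso}(\mathsf{G}(S))$ onto $\{F \in U_{s^{-1}s} : sFs^{-1} \subseteq F\}$. Separately, if $A \in V_{s} \cap \mathsf{G}(S)_{o}$ then $A$ is an idempotent ultrafilter, so it contains an idempotent $e \leq s$, whence $e \leq \phi(s)$ by (FPO2) and $\phi(s) \in A \cap E(S)$; this shows $A \mapsto A \cap E(S)$ maps $V_{s} \cap \mathsf{G}(S)_{o}$ onto $U_{\phi(s)}$, and since $\phi(s) \in F$ forces $F^{\uparrow} = (sF)^{\uparrow}$, this bijection is compatible with the previous one. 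Hence $V_{s} \cap \mathsf{G}(S)_{o} \subseteq V_{s} \cap \mbox{Iso}(\mathsf{G}(S))$ corresponds exactly to the always-valid inclusion $U_{\phi(s)} \subseteq \{F \in U_{s^{-1}s} : sFs^{-1} \subseteq F\}$, and equality of these two sets of idempotent ultrafilters is equivalent to $V_{s} \cap \mbox{Iso}(\mathsf{G}(S)) = V_{s} \cap \mathsf{G}(S)_{o}$.

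With this correspondence in hand both implications are short. If $\mathsf{G}(S)$ is principal and $F \in U_{s^{-1}s}$ satisfies $sFs^{-1} \subseteq F$, then $A := (sF)^{\uparrow} \in \mbox{Iso}(\mathsf{G}(S)) = \mathsf{G}(S)_{o}$, so $A$ is idempotent and $\phi(s) \in A \cap E(S) = F$; together with the always-valid reverse inclusion this gives the displayed equality for this $s$. Conversely, suppose the displayed equality holds for all $s$, and let $A \in \mbox{Iso}(\mathsf{G}(S))$. For any $s \in A$, writing $F = \mathbf{d}(A) \cap E(S) \in U_{s^{-1}s}$, the identity $\mathbf{r}(A) = (sFs^{-1})^{\uparrow}$ together with $\mathbf{d}(A) = \mathbf{r}(A)$ gives $sFs^{-1} \subseteq F$, so $F \in U_{\phi(s)}$ by hypothesis and hence $\phi(s) \in F \subseteq \mathbf{d}(A)$; since $\mathbf{d}(A)$ is upward closed and $\phi(s) \leq s$, we get $s \in \mathbf{d}(A)$. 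As $s \in A$ was arbitrary, $A \subseteq \mathbf{d}(A)$, and maximality of $A$ forces $A = \mathbf{d}(A)$, i.e. $A \in \mathsf{G}(S)_{o}$. Therefore $\mbox{Iso}(\mathsf{G}(S)) = \mathsf{G}(S)_{o}$ and $\mathsf{G}(S)$ is principal.

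The step I expect to require the most care is the bijection set-up: verifying $\mathbf{r}((sF)^{\uparrow}) = (sFs^{-1})^{\uparrow}$ via Lemma~\ref{lem:dream}(1), and checking that the description of $V_{s} \cap \mathsf{G}(S)_{o}$ by idempotent ultrafilters is compatible with the description of $V_{s} \cap \mbox{Iso}(\mathsf{G}(S))$ by the sets $\mathbf{d}(A) \cap E(S)$, so that set equality of the two families of idempotent ultrafilters genuinely transfers to the groupoid. Everything after that is routine, using only the standard description of the ultrafilters of $\mathsf{G}(S)$, Lemma~\ref{lem:dream}, and property (FPO2) of the fixed-point operator.
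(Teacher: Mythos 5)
Your proof is correct and follows essentially the same route as the paper: both directions rest on the description of ultrafilters in $V_{s}$ as $(sF)^{\uparrow}$ with $F \in U_{s^{-1}s}$, the identity $\mathbf{r}((sF)^{\uparrow}) = (sFs^{-1})^{\uparrow}$ via Lemma~\ref{lem:dream} and maximality, and axiom (FPO2); your ``principal implies equality'' argument is the paper's verbatim. The only (cosmetic) difference is in the converse: you show directly that an isotropy ultrafilter $A$ satisfies $A \subseteq \mathbf{d}(A)$ and hence is idempotent, whereas the paper proves the equivalent injectivity of $(\mathbf{r},\mathbf{d})$ by applying the hypothesis to $a^{-1}b$ for $a \in A$, $b \in B$.
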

\begin{proof} Suppose first that 
$$U_{\phi (s)} = \{ F \colon F \in U_{s^{-1}s} \mbox{ and }  sFs^{-1}  \subseteq F   \},$$
for all $s \in S$.
Let $A,B \in \mathsf{G}(S)$ such that 
$\mathbf{d}(A) = \mathbf{d}(B)$ 
and
$\mathbf{r}(A) = \mathbf{r}(B)$.
Let $\mathbf{d}(A) = \mathbf{d}(B) = F^{\uparrow}$ where $F \subseteq E(S)$ is an ultrafilter.
Then 
$A = (aF)^{\uparrow}$ for any $a \in A$,
and
$B = (bF)^{\uparrow}$ for any $b \in B$.
By assumption, $(aFa^{-1})^{\uparrow} = (bFb^{-1})^{\uparrow}$.
It is easy to check that $a^{-1}bF (a^{-1}b)^{-1} \subseteq F$,
and that $(a^{-1}b)^{-1}a^{-1}b \in F$.
Thus, by assumption, $\phi (a^{-1}b)  \in F$.
It follows that $a^{-1}b \in F^{\uparrow}$.
Hence $aa^{-1}b \in A$ and so $b \in A$.
But then it is immediate that $A = B$.

To prove the converse, assume that $\mathsf{G}(S)$ is principal.
It is enough to prove that 
$$ \{ F \colon F \in U_{s^{-1}s} \mbox{ and } sFs^{-1}  \subseteq F   \}
\subseteq
U_{\phi (s)}.$$ 
Let $sFs^{-1} \subseteq F$ where $s^{-1}s \in F$.
Put $A = (sF)^{\uparrow}$.
Then $\mathbf{d}(A) = F^{\uparrow}$ and $\mathbf{r}(A) = F^{\uparrow}$.
It follows that $A = F^{\uparrow}$.
Thus there is an idempotent $f \in F$ such that $f \leq s$.
But then $f \leq \phi (f)$ and so $\phi (f) \in F$, as required.
\end{proof}

We shall now translate the above result into an internal condition on $S$.
We adapt the idea contained in the first paragraphs of \cite[Section 7]{Kumjian}.

\begin{lemma}\label{lem:mirren} Let $S$ be a Boolean inverse $\wedge$-monoid.
Then $\mathsf{G}(S)$ is principal if and only if for each $s \in S$ we can write
$\sigma (s) = \bigvee_{i=1}^{m} e_{i}$ where $e_{i}se_{i} = 0$ for $1 \leq i \leq e$. 
\end{lemma}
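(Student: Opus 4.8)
\textbf{Proof proposal for Lemma~\ref{lem:mirren}.}

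The plan is to exploit the characterization of principality given in Proposition~\ref{prop:two}, namely that $\mathsf{G}(S)$ is principal precisely when $U_{\phi(s)} = \{F \in U_{s^{-1}s} : sFs^{-1} \subseteq F\}$ for all $s$, and to combine it with the topological identification of the support in Proposition~\ref{prop:tempest}, which tells us that $U_{\sigma(s)}$ is the closure of the set $Y = \{F \in U_{s^{-1}s} : sFs^{-1} \nsubseteq F\}$ (this requires $S$ fundamental; note that principality already forces $S$ to be fundamental, since principal implies effective implies fundamental by Theorem~\ref{thm:one2}, so invoking Proposition~\ref{prop:tempest} is legitimate in the forward direction). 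The key translation is between the compactness/clopen structure of $\mathsf{X}(E(S))$ and finite joins of idempotents: a condition of the form ``$\sigma(s) = \bigvee_{i=1}^m e_i$ with $e_i s e_i = 0$'' is exactly the statement that the clopen set $U_{\sigma(s)}$ is covered by finitely many basic clopen sets $U_{e_i}$ on each of which $s$ ``moves everything'', and $e_i s e_i = 0$ is the algebraic shadow of ``$e_i$ and $s e_i s^{-1}$ are orthogonal on $U_{e_i}$''.

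First I would prove the forward direction. Assume $\mathsf{G}(S)$ principal, fix $s$, and consider the open invariant-in-$\mathbf{d}$ set $Y$ from Proposition~\ref{prop:tempest}; for each $F \in Y$ Lemma~\ref{lem:pingo}(1), applied with the unit-free version (or rather, arguing directly as in the proof of that lemma: $F$ and $sFs^{-1}$ restricted to $s^{-1}s$ are distinct ultrafilters, hence Hausdorff-separated), yields an idempotent $e_F \in F$ with $e_F \leq s^{-1}s$ and $e_F \perp s e_F s^{-1}$, which unwinds to $e_F s e_F = 0$. The sets $U_{e_F}$ for $F \in Y$ form an open cover of $Y$ and hence, together with $U_{\overline{\sigma(s)}}$, an open cover of the compact space $\mathsf{X}(E(S))$; but more efficiently, since $U_{\sigma(s)} = \mathsf{cl}(Y)$ is \emph{clopen} and each $U_{e_F} \subseteq U_{\sigma(s)}$ once we shrink $e_F$ to $e_F \sigma(s)$ (which still satisfies $e_F s e_F = 0$ since $e_F \sigma(s) \le e_F$), compactness of $U_{\sigma(s)}$ gives finitely many $e_1, \ldots, e_m$ with $U_{\sigma(s)} = \bigcup_{i=1}^m U_{e_i} = U_{\bigvee e_i}$, whence $\sigma(s) = \bigvee_{i=1}^m e_i$ by Lemma~\ref{lem:pele}, and $e_i s e_i = 0$ for each $i$. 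A small point to check: that $\{U_{e_F} : F \in Y\}$ actually covers the closure $\mathsf{cl}(Y) = U_{\sigma(s)}$ and not merely $Y$ — this works because each $U_{e_F}$ is open and contains $F$, so their union is an open set containing $Y$; but we need it to contain $\mathsf{cl}(Y)$, which is where I would instead argue that $U_{\sigma(s)} \setminus \bigcup_F U_{e_F}$ would be a nonempty clopen subset of $U_{\sigma(s)}$ disjoint from $Y$, contradicting $\mathsf{cl}(Y) = U_{\sigma(s)}$.

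For the converse, suppose $\sigma(s) = \bigvee_{i=1}^m e_i$ with $e_i s e_i = 0$ for all $i$. By Proposition~\ref{prop:two} it suffices to show $\{F \in U_{s^{-1}s} : sFs^{-1} \subseteq F\} \subseteq U_{\phi(s)}$, i.e. that any $F$ with $s^{-1}s \in F$ and $sFs^{-1} \subseteq F$ must contain $\phi(s)$. Suppose not: then $\overline{\phi(s)} \in F$, so $\sigma(s) = \overline{\phi(s)}\,s^{-1}s \in F$, and since $F$ is prime (Lemma~\ref{lem:molina}) some $e_i \in F$. Now $e_i s e_i = 0$ forces $s e_i s^{-1}$ and $e_i$ to be orthogonal idempotents (after passing to range/domain idempotents appropriately), so $e_i \wedge s e_i s^{-1} = 0$; but $e_i \in F$ and $s e_i s^{-1} \in sFs^{-1} \subseteq F$, contradicting that $F$ is a proper filter closed under meets. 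Hence $\phi(s) \in F$, as required. I expect the \textbf{main obstacle} to be the bookkeeping in the forward direction — precisely, ensuring the finite family of $e_i$ one extracts by compactness genuinely covers the \emph{clopen} set $U_{\sigma(s)}$ rather than only the possibly-non-closed set $Y$, and checking carefully that shrinking each $e_F$ to fit inside $\sigma(s)$ preserves the condition $e_F s e_F = 0$; both are routine once set up correctly, but they are the points where an argument could silently go wrong.
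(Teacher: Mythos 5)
Your converse direction is correct and is essentially the paper's argument verbatim: if $sFs^{-1}\subseteq F$ and $\phi(s)\notin F$ then $\sigma(s)\in F$, primeness gives some $e_i\in F$, and then $e_i, se_is^{-1}\in F$ with product $0$, a contradiction; conclude via Proposition~\ref{prop:two}. The forward direction, however, has a genuine gap exactly at the point you flagged. After extracting, for each $F\in Y=\{F\in U_{s^{-1}s}\colon sFs^{-1}\nsubseteq F\}$, an idempotent $e_F\in F$ with $e_Fse_F=0$ and $e_F\le\sigma(s)$, you must cover the \emph{clopen} set $U_{\sigma(s)}=\mathsf{cl}(Y)$ before compactness can be applied, and your proposed repair does not work: $\bigcup_F U_{e_F}$ is an arbitrary union of clopen sets, hence open but in general not closed, so $U_{\sigma(s)}\setminus\bigcup_F U_{e_F}$ is closed but \emph{not} clopen, and a nonempty closed set disjoint from the dense subset $Y$ yields no contradiction with $\mathsf{cl}(Y)=U_{\sigma(s)}$. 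The failure is not merely presentational: if principality is not re-invoked, there can exist $G\in U_{\sigma(s)}$ with $sGs^{-1}\subseteq G$, and such a $G$ contains no idempotent $e$ with $ese=0$ (for then $e, ses^{-1}\in G$ would force $0=e(ses^{-1})\in G$), so the sets $U_e$ with $ese=0$ genuinely fail to cover $\mathsf{cl}(Y)$ in the non-principal situation; your argument as written never rules this out.

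The repair is what the paper does, and it makes the whole closure discussion (and the appeal to Proposition~\ref{prop:tempest}, with its detour through fundamentality) unnecessary: use Proposition~\ref{prop:two} once more to show that under principality $U_{\sigma(s)}$ \emph{equals} $Y$. Indeed, if $\sigma(s)\in F$ then $\phi(s)\notin F$, so by Proposition~\ref{prop:two} one cannot have $sFs^{-1}\subseteq F$; hence every $F\in U_{\sigma(s)}$ lies in $Y$ and admits (by your own separation argument, or more simply by taking $e=\overline{sfs^{-1}}$ for some $f\in F$ with $sfs^{-1}\notin F$ and setting $i=efs^{-1}s$) an idempotent $i\in F$ with $i\le s^{-1}s$ and $isi=0$. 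Conversely any $F$ containing such an $e$ contains $\sigma(s)$ (your shrinking observation, or the paper's direct check), so $U_{\sigma(s)}=\bigcup_{e\in L}U_e$ where $L=\{e\le s^{-1}s\colon ese=0\}$, and now compactness of $U_{\sigma(s)}$ together with Lemma~\ref{lem:pele} gives $\sigma(s)=\bigvee_{i=1}^m e_i$ with $e_ise_i=0$. With this change your proof goes through; without it, the forward direction is not established.
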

\begin{proof} Suppose first that $\mathsf{G}(S)$ is principal.
Let $s \in S$.
Define 
$$L = \{e \leq s^{-1}s \colon 0 = ese \}.$$
We prove that
$$U_{\sigma (s)} = \bigcup_{e \in L} U_{e}.$$
Let $F \in U_{\sigma (s)}$.
Then $s^{-1}s \in F$ and so $sFs^{-1} \nsubseteq F$ by Proposition~\ref{prop:two}.
It follows that there exist $e,f \in F$ such that
$e (sfs^{-1}) = 0$.
Put $i = ef s^{-1}s $.
Then $i \in F$ and $i \leq s^{-1}s$.
Furthermore, $i (sis^{-1}) = 0$ since $i \leq e$ and $sis^{-1} \leq sfs^{-1}$.
Thus $isi = 0$.
We have shown that $i \in L$ and $F \in U_{i}$.
Thus the lefthand side is contained in the righthand side.
To prove the reverse inclusion, let $F \in U_{e}$ where $e \in L$.
Then $e \in F$ and $e \leq s^{-1}s$ and $ese = 0$.
Suppose that $\phi (s) \in F$.
Then $s \in F^{\uparrow}$ and so $0 = ese \in F^{\uparrow}$, which is a contradiction.
Thus $\overline{\phi (s)} \in F$ and so $\sigma (s) \in F$.
Having proved that 
$$U_{\sigma (s)} = \bigcup_{e \in L} U_{e}$$
we now apply compactness and Lemma~\ref{lem:pele} to deduce that
$$U_{\sigma (s)} = \bigcup_{i=1}^{m} U_{e_{i}} = U_{\bigvee_{i=1}^{m}e_{i}}$$
where $e_{1}, \ldots, e_{n} \in L$. 
Thus
$\sigma (s) = \bigvee_{i=1}^{m} e_{i}$ where $e_{i}se_{i} = 0$ for $1 \leq i \leq e$,
as required.

To prove the converse, we use Proposition~\ref{prop:two}.
Let $F \in U_{s^{-1}s}$ such that $sFs^{-1} \subseteq F$.
Suppose that $\phi (s) \notin F$.
Then $\sigma (s) \in F$.
By assumption, 
$\sigma (s) = \bigvee_{i=1}^{m} e_{i}$ where $e_{i}se_{i} = 0$ for $1 \leq i \leq e$. 
But $F$ is an ultrafilter and so a prime filter.
It follows that $e_{i} \in F$ for some $i$.
Then $se_{i}s^{-1} \in F$ and so $0 = e_{i}se_{i}s^{-1} \in F$, which is a contradiction.
Thus the result follows by Proposition~\ref{prop:two}.
\end{proof}

\begin{remark} {\em Boolean inverse $\wedge$-monoids that satisfy the above condition are the analogues of 
those inverse semigroups that act {\em relatively freely} \cite[Chapter 1, Proposition 2.13]{Renault}.}
\end{remark}

We shall reformulate the above lemma in a more striking form.

\begin{theorem}\label{thm:infinitesimal} Let $S$ be a Boolean inverse $\wedge$-monoid.
Then $\mathsf{G}(S)$ is principal if and only if for each $s \in S$ we have that $s = e \vee s_{1} \vee \ldots \vee s_{m}$
where $e$ is an idempotent, 
each $s_{i}$ is an infinitesimal for $1 \leq i \leq m$,
and $e \perp (s_{1} \vee \ldots \vee s_{m})$.
We may also choose the $s_{i}$ to be pairwise orthogonal.
\end{theorem}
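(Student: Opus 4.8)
The plan is to derive this from Lemma~\ref{lem:mirren}, which already characterizes principality of $\mathsf{G}(S)$ via the condition that $\sigma(s) = \bigvee_{i=1}^m e_i$ with $e_i s e_i = 0$ for each $i$. The bridge between that condition and the statement here is Lemma~\ref{lem:cooper}, which gives the canonical orthogonal decomposition $s = \phi(s) \vee s\sigma(s)$. So the first step is to recognize that the idempotent $e$ in the statement should be taken to be $\phi(s)$, and that the ``infinitesimal part'' should be extracted from $s\sigma(s)$. Indeed, by Lemma~\ref{lem:cooper} we have $\phi(s) \perp s\sigma(s)$, so once we write $s\sigma(s)$ as an orthogonal join of infinitesimals we are done, since restrictions of orthogonal joins are orthogonal (as used in Lemma~\ref{lem:bar}).

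Next I would establish the forward direction. Assume $\mathsf{G}(S)$ is principal. By Lemma~\ref{lem:mirren}, write $\sigma(s) = \bigvee_{i=1}^m e_i$ with $e_i s e_i = 0$. Applying Lemma~\ref{lem:bar}(2), I may assume this is an \emph{orthogonal} join, with each $e_i$ still below the original $\sigma(s)$ (so each $e_i \le s^{-1}s$ and $e_i s e_i = 0$ is preserved under passing to smaller idempotents). Set $s_i = s e_i$. Then $s_i^2 = s e_i s e_i = 0$ since $e_i s e_i = 0$ and $e_i \le s^{-1}s$ forces $s e_i = s e_i (s^{-1}s) \cdot$, so $s_i s_i = s(e_i s e_i) = 0$; hence $s_i$ is an infinitesimal (it is non-zero because $e_i \le \sigma(s) = s^{-1}s\,\overline{\phi(s)}$, so $s e_i \ne 0$; if some $e_i = 0$ simply discard it). The $s_i$ are pairwise orthogonal because the $e_i = \mathbf{d}(s_i)$ are, using Lemma~\ref{lem:bar}(1). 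Finally $\bigvee_i s_i = \bigvee_i s e_i = s\bigl(\bigvee_i e_i\bigr) = s\sigma(s)$ by distributivity, so $s = \phi(s) \vee s\sigma(s) = \phi(s) \vee s_1 \vee \cdots \vee s_m$ is of the required form with $e = \phi(s)$.

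For the converse, suppose every $s \in S$ can be written $s = e \vee s_1 \vee \cdots \vee s_m$ with $e$ idempotent, each $s_i$ an infinitesimal, and $e \perp (s_1 \vee \cdots \vee s_m)$. I want to recover the condition of Lemma~\ref{lem:mirren}. By Lemma~\ref{lem:fpo}(2), $\phi(s) = \phi(e) \vee \bigvee_i \phi(s_i) = e \vee \bigvee_i \phi(s_i)$; but an infinitesimal $s_i$ has $\phi(s_i) = s_i \wedge 1 = 0$, since if $0 \ne g \le s_i$ with $g$ idempotent then $g = g^2 \le s_i^2 = 0$. Hence $\phi(s) = e$, and therefore $\sigma(s) = \overline{\phi(s)}\,s^{-1}s = \overline{e}\,s^{-1}s$. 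Now I compute $\sigma(s)$ in terms of the $s_i$: from $s = e \vee \bigvee_i s_i$ with $e$ orthogonal to the join, $s^{-1}s = e \vee \bigvee_i \mathbf{d}(s_i)$, so $\sigma(s) = \overline{e}(e \vee \bigvee_i \mathbf{d}(s_i)) = \bigvee_i \overline{e}\,\mathbf{d}(s_i)$. Setting $f_i = \overline{e}\,\mathbf{d}(s_i) \le \mathbf{d}(s_i)$, I need $f_i s f_i = 0$. Since $f_i \le \mathbf{d}(s_i) \le s^{-1}s$ and $s f_i \le s \mathbf{d}(s_i)$, and using orthogonality ($\mathbf{d}(s_i) \perp \mathbf{d}(s_j)$ for $i \ne j$ and $\mathbf{d}(s_i) \le \overline{\mathbf{d}(e)} $... ), one checks $s f_i = s_i f_i$ and then $f_i s f_i = f_i s_i f_i \le \mathbf{d}(s_i) s_i \mathbf{d}(s_i)$; since $\mathbf{d}(s_i) = s_i^{-1} s_i \perp s_i s_i^{-1} = \mathbf{r}(s_i)$ (Lemma~\ref{lem:spooks}(2)), we get $\mathbf{r}(s_i)\mathbf{d}(s_i) = 0$, hence $s_i \mathbf{d}(s_i) = s_i$ but $\mathbf{d}(s_i) s_i \mathbf{d}(s_i) = \mathbf{d}(s_i) s_i = (s_i^{-1}s_i) s_i$, and $\mathbf{r}(s_i) \perp \mathbf{d}(s_i)$ combined with... the cleanest route is: $\mathbf{d}(s_i)\, s_i\, \mathbf{d}(s_i)$ has domain idempotent $\le \mathbf{d}(s_i)$ and range idempotent $\le \mathbf{d}(s_i) \wedge \mathbf{r}(s_i) = 0$, so it is $0$; therefore $f_i s f_i = 0$. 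Thus $\sigma(s) = \bigvee_i f_i$ witnesses the Lemma~\ref{lem:mirren} condition, and $\mathsf{G}(S)$ is principal.

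The main obstacle I anticipate is the bookkeeping in the converse direction: carefully verifying $s f_i = s_i f_i$ (which uses that the $\mathbf{d}(s_j)$ are pairwise orthogonal and all orthogonal to $e$, so multiplying $s$ on the right by $f_i \le \mathbf{d}(s_i)$ kills the $e$ and $s_j$ ($j \ne i$) components) and then $\mathbf{d}(s_i) s_i \mathbf{d}(s_i) = 0$ via the orthogonality $\mathbf{d}(s_i) \perp \mathbf{r}(s_i)$ from Lemma~\ref{lem:spooks}(2). Everything else is a direct application of Lemma~\ref{lem:cooper}, Lemma~\ref{lem:fpo}, Lemma~\ref{lem:bar}, and Lemma~\ref{lem:mirren}; the orthogonality-of-the-$s_i$ refinement at the end of the statement comes for free from the Lemma~\ref{lem:bar}(2) step in the forward direction and is preserved throughout.
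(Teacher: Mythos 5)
Your overall route is the paper's: both directions are funnelled through Lemma~\ref{lem:mirren}, with Lemma~\ref{lem:cooper} supplying $s=\phi(s)\vee s\sigma(s)$. Your forward direction is correct — the paper takes $s=\phi(s)\vee se_{1}\vee\cdots\vee se_{m}$ straight from Lemma~\ref{lem:mirren} and defers orthogonality to a closing remark via Lemma~\ref{lem:bar}, whereas you orthogonalize the $e_{i}$ first, which is legitimate since $e'\le e_{i}$ gives $e'se'=e'(e_{i}se_{i})e'=0$. Your identification $e=\phi(s)$ in the converse, via Lemma~\ref{lem:fpo}(2) and $\phi(s_{i})=0$ for an infinitesimal, is also fine and is an equally valid variant of the paper's argument with Lemma~\ref{lem:order_properties}.

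The gap is in the rest of the converse. The right-hand condition of the theorem only provides $s=e\vee s_{1}\vee\cdots\vee s_{m}$ with $e$ idempotent, the $s_{i}$ infinitesimal and $e\perp(s_{1}\vee\cdots\vee s_{m})$; it does \emph{not} give pairwise orthogonality of the $s_{i}$, yet your verification of the key identity $sf_{i}=s_{i}f_{i}$ explicitly appeals to $\mathbf{d}(s_{i})\perp\mathbf{d}(s_{j})$ for $i\neq j$ (you flag this yourself as where the work lies). As written you have only proved that the existence of an \emph{orthogonal} decomposition of this kind implies principality, which is not the stated equivalence unless you also show the weak decomposition can be upgraded. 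The repair is one line and needs no orthogonality at all: since $s_{i}\le s$, the definition of the natural partial order gives $s_{i}=s\,\mathbf{d}(s_{i})$; moreover $e\perp s_{i}$ forces $e\,\mathbf{d}(s_{i})=0$, so in fact $f_{i}=\mathbf{d}(s_{i})$ and $f_{i}sf_{i}=\mathbf{d}(s_{i})s_{i}\mathbf{d}(s_{i})=0$ exactly by your final computation (its range idempotent lies below $\mathbf{d}(s_{i})\mathbf{r}(s_{i})=0$ by Lemma~\ref{lem:spooks}(2)). This is what the paper does, putting $e_{i}=\mathbf{d}(s_{i})$ and using $s_{i}=se_{i}$. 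Alternatively, you could first pass to an orthogonal refinement of the given decomposition via Lemma~\ref{lem:bar}(2), using that infinitesimals form an order ideal and that the pieces below $e$ are idempotents, and only then run your argument — but that step must be stated, not assumed.
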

\begin{proof} Suppose first that $\mathsf{G}(S)$ is principal.
Then by Lemma~\ref{lem:mirren}, 
for each $s \in S$ we have that
$\sigma (s) = \bigvee_{i=1}^{m} e_{i}$ where $e_{i}se_{i} = 0$ for $1 \leq i \leq e$. 
By Lemma~\ref{lem:cooper}, we may write
$$s = \phi (s) \vee se_{1} \vee \ldots \vee se_{m}.$$
Now $e_{i}se_{i} = 0$  which gives $(se_{i})^{2} = 0$, and so the $se_{i}$ are infinitesimals.

We now prove the converse.
Assume that each element $s$ can be written $s = e \vee s_{1} \vee \ldots \vee s_{m}$ where $e$ is idempotent and $s_{i}$ infinitesimal. 
We deduce that $\mathsf{G}(S)$ is principal using Lemma~\ref{lem:mirren}. 
We prove first that in fact $e = \phi (s)$.
Let $f \leq s$ be any idempotent.
Then $f = f \wedge s$ and so $f = (f \wedge e) \vee (f \wedge s_{1}) \vee \ldots \vee (f \wedge s_{m})$
by Lemma~\ref{lem:order_properties}.
It follows that each $f \wedge s_{i}$ is an idempotent less than an infinitesimal and so must be 0.
Thus $f \leq e$ and so $e = \phi (s)$
It follows that $s =  \phi (s) \vee (s_{1} \vee \ldots \vee s_{m})$, an orthogonal join.
But $s = \phi (s) \vee s \sigma (s)$, an orthogonal join, by Lemma~\ref{lem:cooper}.
It follows that $s\sigma (s) = s_{1} \vee \ldots \vee s_{m}$.
Thus
$\sigma (s) = s_{1}^{-1}s_{1} \vee \ldots \vee s_{m}^{-1}s_{m} \leq s^{-1}s$
by Lemma~\ref{lem:order_properties}.
Put $e_{i} = s_{i}^{-1}s_{i}$.
Then $s_{i} = se_{i}$.
Hence $se_{i}se_{i} = 0$ and so $e_{i}se_{i} = 0$, as required.
The fact that we may choose the infinitesimals to be pairwise orthogonal follows from Lemma~\ref{lem:bar}
and the observation  that the set of infinitesimals forms an order ideal.
\end{proof}

\begin{remark}{\em The significance of Theorem~\ref{thm:infinitesimal}
may be explained as follows.
In a finite symmetric inverse monoid $I(X)$ each element can be written as a finite orthogonal join
of an idempotent and infinitesimals.
To see why, note that partial bijections of the form $x \mapsto y$, where $x,y \in X$ and $x \neq y$, are infinitesimals,
and that the partial bijections of the form $x \mapsto x$ are idempotents.
This agrees with the above result because the associated groupoid is principal being just $X \times X$.
Thus Boolean inverse $\wedge$-monoids where $\mathsf{G}(S)$ is a principal groupoid 
may be regarded as direct generalizations of finite symmetric inverse monoids.}
\end{remark}

There is no term for Boolean inverse $\wedge$-monoids that satisfy the algebraic condition of Theorem~\ref{thm:infinitesimal} so we introduce one.\\

\noindent
{\bf Definition.} 
A Boolean inverse monoid is said to be {\em basic} if each non-zero element is a finite join of infinitesimals and an idempotent.\\

\begin{lemma}\label{lem:basic} Let $S$ be a basic Boolean inverse monoid.
\begin{enumerate}

\item It is a $\wedge$-monoid.

\item It is piecewise factorizable.

\item It is fundamental.

\end{enumerate}
\end{lemma}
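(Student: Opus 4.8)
\textbf{Proof proposal for Lemma~\ref{lem:basic}.}

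The plan is to extract each of the three properties from the defining decomposition $s = e \vee s_1 \vee \ldots \vee s_m$ with $e$ idempotent and each $s_i$ an infinitesimal, using the order-theoretic machinery already established (Lemma~\ref{lem:order_properties}, Corollary~\ref{cor:QI}, Lemma~\ref{lem:spooks}) together with Theorem~\ref{thm:infinitesimal} and Theorem~\ref{thm:one2}. For part (1), the cleanest route is to show directly that $S$ possesses a fixed-point operator in the sense of Leech (conditions (FPO1), (FPO2)) and then invoke Proposition~\ref{prop:leech}(1). Given a non-zero $s$ with $s = e \vee s_1 \vee \ldots \vee s_m$ as above, I expect $e$ to be forced to equal $s \wedge 1$: any idempotent $f \leq s$ satisfies $f = (f \wedge e) \vee \bigvee_i (f \wedge s_i)$ by Lemma~\ref{lem:order_properties}, and each $f \wedge s_i$ is an idempotent below an infinitesimal, hence zero (an idempotent $g \leq s_i$ would give $g = g^2 \leq s_i^2 = 0$); so $f \leq e$. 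Defining $\phi(0) = 0$ and $\phi(s) = e$ for non-zero $s$ then satisfies (FPO1) trivially and (FPO2) by the computation just sketched; one must check $e$ is independent of the chosen decomposition, but this follows since $e$ is characterised order-theoretically as the largest idempotent below $s$. Thus $S$ is a $\wedge$-monoid.

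For part (2), once we know $S$ is a $\wedge$-monoid, Theorem~\ref{thm:infinitesimal} tells us $\mathsf{G}(S)$ is principal and, more to the point, that each $s$ decomposes as an \emph{orthogonal} join $s = \phi(s) \vee s_1 \vee \ldots \vee s_m$ with the $s_i$ pairwise orthogonal infinitesimals. I would handle each piece of this join separately: an idempotent lies below the identity $1 \in \mathsf{U}(S)$; and for an infinitesimal $s_i$, Lemma~\ref{lem:spooks}(3) produces a non-trivial involution $u_i = s_i^{-1} \vee s_i \vee \overline{s_i^{-1}s_i}\,\overline{s_is_i^{-1}}$ with $s_i \leq u_i$, and $u_i$ is a unit. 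Hence every generator of the join lies beneath a unit, so $s = \bigvee g_i e_i$ with $e_i = \mathbf{d}(\text{summand})$ and $g_i$ a unit; by definition $S$ is piecewise factorizable. (Equivalently, one could cite Lemma~\ref{lem:spooks}(4) but the involution construction is the concrete witness.)

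For part (3), the slick argument is to combine Theorem~\ref{thm:infinitesimal} with Theorem~\ref{thm:one2}: a basic monoid is a $\wedge$-monoid by part (1), so Theorem~\ref{thm:infinitesimal} applies and $\mathsf{G}(S)$ is principal; but a principal groupoid has $\mbox{Iso}(\mathsf{G}(S)) = \mathsf{G}(S)_o$, which is its own interior, so $\mathsf{G}(S)$ is effective, and Theorem~\ref{thm:one2} gives that $S$ is fundamental. Alternatively, and more self-containedly, one can argue algebraically: if $a \in Z(E(S))$, write $a = \phi(a) \vee a_1 \vee \ldots \vee a_m$ with the $a_i$ pairwise orthogonal infinitesimals; each $a_i = a e_i$ where $e_i = \mathbf{d}(a_i) \leq \sigma(a)$, and since $a$ commutes with the idempotent $e_i$ we get $a_i = a e_i = e_i a = e_i a e_i$, so $a_i = a_i^{-1}a_i \cdot a_i$... more directly, $e_i a e_i = 0$ forces $a_i = 0$; hence $a = \phi(a)$ is idempotent. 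The main obstacle I anticipate is purely bookkeeping in part (1): verifying that $e$ in the decomposition is genuinely well-defined (decomposition-independent) and that the resulting $\phi$ respects (FPO1)/(FPO2) cleanly — this is where Lemma~\ref{lem:order_properties}(3)--(4) must be deployed carefully — but there is no deep difficulty, since $e = s \wedge 1$ is the obvious candidate and the distributivity lemmas do all the work. Parts (2) and (3) are then essentially immediate given part (1) and the already-proven Theorems~\ref{thm:infinitesimal} and \ref{thm:one2}.
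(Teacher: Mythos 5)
Your proposal is correct, and parts (1) and (2) are essentially the paper's own argument: the paper likewise shows any idempotent $f \leq s$ splits as $f = (f\wedge e)\vee\bigvee_i(f\wedge s_i)$ with each $f\wedge s_i$ an idempotent under an infinitesimal, hence zero, so $e$ is the largest idempotent below $s$ and Proposition~\ref{prop:leech} applies; and part (2) is exactly ``from the definition and Lemma~\ref{lem:spooks}'', the involution of Lemma~\ref{lem:spooks}(3) being the witness above each infinitesimal and $1$ above the idempotent part (your detour through Theorem~\ref{thm:infinitesimal} to make the join orthogonal is harmless but unnecessary, since piecewise factorizability does not require orthogonality). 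Where you genuinely diverge is part (3): your primary route goes through the groupoid, using part (1) plus Theorem~\ref{thm:infinitesimal} to get that $\mathsf{G}(S)$ is principal, observing that a principal \'etale groupoid is effective because the unit space is open, and then invoking Theorem~\ref{thm:one2}; this is valid and non-circular (none of Proposition~\ref{prop:two}, Lemma~\ref{lem:mirren}, Theorem~\ref{thm:infinitesimal} or Theorem~\ref{thm:one2} depends on this lemma), but it buys fundamentality at the cost of the duality machinery, and one should note that the orthogonality clause in the statement of Theorem~\ref{thm:infinitesimal} is automatic here (the meet of an idempotent with an infinitesimal below a common element is zero) or simply that the proof of that direction never uses it. The paper instead argues directly and algebraically, exactly along the lines of your ``alternative'' sketch: if $s$ centralizes the idempotents and $s = e\vee s_1\vee\ldots\vee s_n$ is an orthogonal join, then $\mathbf{d}(s_i)s = s\mathbf{d}(s_i) = s_i$ forces $\mathbf{r}(s_i)\leq\mathbf{d}(s_i)$, contradicting $\mathbf{r}(s_i)\perp\mathbf{d}(s_i)$ unless $s_i=0$; so your backup argument ($e_i s e_i = 0$ yet equals $s e_i$ by commutation) is essentially the paper's proof, and either route is sound.
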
 
\begin{proof} (1) Let $s$ be any non-zero element.
By assumption $s = a_{1} \vee \ldots \vee a_{m} \vee e$ where $a_{1}, \ldots, a_{m}$ are infinitesimals and $e$ is an idempotent.
Let $f \leq s$ be any idempotent.
Then $f = f \wedge s$.
It follows by Lemma~\ref{lem:order_properties},
that we have 
$$f = (f \wedge a_{1}) \vee \ldots \vee (f \wedge a_{m}) \vee (f \wedge e).$$
The only idempotent less than an infinitesimal is zero.
Thus $f = f \wedge e$ and so $f \leq e$.
Thus $e$ is the largest idempotent less than or equal to $s$.
We now use Proposition~\ref{prop:leech} to deduce that $S$ is a $\wedge$-monoid.

(2)  From the definition and Lemma~\ref{lem:spooks}.

(3) Let $s$ centralize the idempotents.
We can write $s = e \vee s_{1} \vee \ldots \vee s_{n}$, an orthogonal join, where $e$ is an idempotent and the $s_{i}$ are infinitesimals.
By assumption $\mathbf{d}(s_{i})s = s\mathbf{d}(s_{i})$.
But $s\mathbf{d}(s_{i}) = s_{i}$.
Thus $\mathbf{d}(s_{i})s = s_{i}$.
But $\mathbf{r}(\mathbf{d}(s_{i})s) = \mathbf{r}(s_{i})$
and $\mathbf{r}(\mathbf{d}(s_{i})s) \leq \mathbf{d}(s_{i})$.
Thus $\mathbf{r}(s_{i}) \leq \mathbf{d}(s_{i})$.
By Lemma~\ref{lem:spooks}, however, we have that $\mathbf{r}(s_{i}) \perp \mathbf{d}(s_{i})$.
It follows that $\mathbf{r}(s_{i}) = 0$ and so $s_{i} = 0$.
We have proved that $s = e$, an idempotent, as required.
\end{proof}

We could easily deduce the following by what we proved above, but here we give a direct proof. 

\begin{proposition}\label{prop:basic} Let $S$ be a Boolean inverse monoid.
Then its associated Boolean groupoid $\mathsf{G}(S)$ is principal if and only if $S$ is basic.
\end{proposition}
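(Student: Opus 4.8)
The plan is to establish both implications directly from the ultrafilter description of $\mathsf{G}(S)$, rather than reading the equivalence off from the reformulation in Theorem~\ref{thm:infinitesimal}. Suppose first that $S$ is basic. By Lemma~\ref{lem:basic}(1) it is then a Boolean inverse $\wedge$-monoid, so $\mathsf{G}(S)$ is defined, and to see that it is principal I take $A \in \mathsf{G}(S)$ with $\mathbf{d}(A) = \mathbf{r}(A)$ and show that $A$ is an idempotent ultrafilter. Pick a non-zero $s \in A$ and write $s = e \vee s_{1} \vee \cdots \vee s_{m}$ with $e$ an idempotent and each $s_{i}$ an infinitesimal. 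Since an ultrafilter is a prime filter (Lemma~\ref{lem:molina}), iterating primeness gives either $e \in A$, whence $A \in V_{e}$ is an idempotent ultrafilter by Lemma~\ref{lem:pele}(5), or $s_{i} \in A$ for some $i$; in the latter case $s_{i}^{-1}s_{i}$ and $s_{i}s_{i}^{-1}$ both lie in $\mathbf{d}(A) = \mathbf{r}(A)$, so their meet $s_{i}^{-1}s_{i} \cdot s_{i}s_{i}^{-1}$ lies in $\mathbf{d}(A)$, but that meet is $0$ because $s_{i}$ is an infinitesimal (Lemma~\ref{lem:spooks}(2)), contradicting properness. Hence $A$ is always an identity.

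For the converse, assume $\mathsf{G}(S)$ is principal --- so $S$ is in any case a $\wedge$-monoid --- and fix $s \neq 0$. Put $e = \phi(s)$; by Lemma~\ref{lem:cooper} the expression $s = \phi(s) \vee s\sigma(s)$ is an orthogonal join, so it remains to write $s\sigma(s)$ as a finite join of infinitesimals. By Proposition~\ref{prop:two} the ultrafilters $F \subseteq E(S)$ containing $\sigma(s)$ are exactly those with $s^{-1}s \in F$ and $sFs^{-1} \nsubseteq F$; for each such $F$, since $E(S)$ is a Boolean algebra one extracts an idempotent $0 \neq i_{F} \leq \sigma(s)$ in $F$ with $i_{F} \perp s i_{F} s^{-1}$, and then $\mathbf{d}(s i_{F}) = i_{F}$ and $\mathbf{r}(s i_{F}) = s i_{F} s^{-1}$, so $(s i_{F})^{2} = 0$ by Lemma~\ref{lem:spooks}(2). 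The clopen set $U_{\sigma(s)}$ is compact and covered by the $U_{i_{F}}$, so finitely many $i_{1}, \dots, i_{m} \leq \sigma(s)$ suffice, and Lemma~\ref{lem:pele} then forces $i_{1} \vee \cdots \vee i_{m} = \sigma(s)$. By distributivity $s\sigma(s) = s i_{1} \vee \cdots \vee s i_{m}$, so $s = \phi(s) \vee s i_{1} \vee \cdots \vee s i_{m}$ is a finite join of an idempotent and infinitesimals (after dropping any zero terms): $S$ is basic. One can shorten this half by quoting the decomposition $\sigma(s) = \bigvee_{i} e_{i}$ with $e_{i} s e_{i} = 0$ from Lemma~\ref{lem:mirren} and noting $(s e_{i})^{2} = s(e_{i} s e_{i}) = 0$.

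The routine part is the first implication, which uses only primeness of ultrafilters and the characterization of infinitesimals in Lemma~\ref{lem:spooks}(2). The real work is in the converse: manufacturing the finite family $i_{1}, \dots, i_{m}$, which forces one through a compactness argument on $U_{\sigma(s)}$, and keeping straight the identities for domains and ranges together with the convention governing $sFs^{-1}$; everything else is bookkeeping with the order-theoretic lemmas (Lemma~\ref{lem:order_properties}, Lemma~\ref{lem:cooper}, Lemma~\ref{lem:pele}).
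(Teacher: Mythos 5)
Your proof is correct. The first half is essentially the paper's own argument: take an ultrafilter $A$ with $\mathbf{d}(A)=\mathbf{r}(A)$, use primeness to force an idempotent or an infinitesimal into $A$, and kill the infinitesimal case because its domain and range idempotents are orthogonal (Lemma~\ref{lem:spooks}(2)). The converse is where you genuinely diverge. The paper argues directly with ultrafilters in $S$: Hausdorffness of the structure space of $E(S)$ manufactures an infinitesimal $fae$ inside every non-idempotent ultrafilter, and then $V_{s}$ is split into its idempotent and non-idempotent parts, covered by basic sets $V_{e_{i}}$ (idempotents) and $V_{a_{j}}$ (infinitesimals, using that these form an order ideal), with compactness of $V_{s}$ finishing the job. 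You instead route the converse through the $\wedge$-machinery of the preceding subsection: the decomposition $s=\phi(s)\vee s\sigma(s)$ of Lemma~\ref{lem:cooper}, the principality criterion of Proposition~\ref{prop:two} to locate in each $F\in U_{\sigma(s)}$ an idempotent $i_{F}\leq\sigma(s)$ with $i_{F}\perp s i_{F}s^{-1}$, and compactness of $U_{\sigma(s)}$ in the Stone space of $E(S)$ --- in effect re-deriving Lemma~\ref{lem:mirren} together with the first half of the proof of Theorem~\ref{thm:infinitesimal}, which is precisely the deduction the paper alludes to when it says the result ``could easily be deduced from what we proved above'' before opting for a direct proof. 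Your route buys economy, since the heavy lifting is already done; the paper's route buys independence from the fixed-point operator, and that matters for the hypotheses: the proposition is stated for a Boolean inverse monoid, and $\phi$ and $\sigma$ exist only once $S$ is known to be a $\wedge$-monoid. In the forward direction Lemma~\ref{lem:basic}(1) supplies this, as you note, but in the converse your parenthetical ``so $S$ is in any case a $\wedge$-monoid'' is doing real work; it is defensible only because the groupoid $\mathsf{G}(S)$ is itself constructed in Section~3 for Boolean inverse $\wedge$-monoids, and that reading should be made explicit rather than left tacit. (Minor point, shared with the paper's own proof of Lemma~\ref{lem:mirren}: the identities $U_{e}\cup U_{f}=U_{e\vee f}$ and $U_{e}=U_{f}\Rightarrow e=f$ used at the compactness step are classical Stone duality for $E(S)$ rather than literally Lemma~\ref{lem:pele}, which concerns the sets $V_{a}$.)
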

\begin{proof} Suppose that $S$ is basic.
We prove that the local groups of the groupoid  $\mathsf{G}(S)$ are trivial which is equivalent to $\mathsf{G}(S)$ being principal.
Let $A$ be an ultrafilter in $S$ such that $\mathbf{d}(A) = \mathbf{r}(A)$.
Let $s \in A$.
Since $A$ is a prime filter and using the assumption that $S$ is basic there are two possibilities: either $A$ contains an infinitesimal or $A$ contains an idempotent. 
Suppose that $A$ contains an infinitesimal $a$.
Then $a^{-1}a,aa^{-1} \in \mathbf{d}(A) = \mathbf{r}(A)$ and so $(a^{-1}a)(aa^{-1}) \in \mathbf{d}(A) = \mathbf{r}(A)$.
But this is impossible since this product is zero.
It follows that $A$ contains an idempotent and so is itself idempotent.
Thus the local groups are trivial and the groupoid is principal.

Suppose that $\mathsf{G}(S)$ is principal.
We prove first that every non-idempotent ultrafilter in $S$ contains an infinitesimal.
By principality, such an ultrafilter $A$ must satisfy $\mathbf{d}(A) \neq \mathbf{r}(A)$.
Put $E^{\uparrow} = \mathbf{d}(A)$ and  $F^{\uparrow} = \mathbf{r}(A)$
where $E$ and $F$ are ultrafilters in $E(S)$.
Since the structure space of $E(S)$ is Hausdorff there are open sets $U_{e}$ and $U_{f}$ such that
$E \in U_{e}$ and $F \in U_{f}$ and $e \wedge f = 0$.
Let $a \in A$ be arbitrary.
Then $fae \in A$.
Then $\mathbf{d}(fae) \leq e$ and $\mathbf{r}(fae) \leq f$.
It follows that $fae$ is an infinitesimal.
Now let $s \in S$ be an arbitrary non-zero element.
We show that we can write it as a finite join of an idempotent and infinitesimals.
Cleraly, $V_{s} = U \cup V$ where $U$ is the set of those ultrafilters $A$ such that $\mathbf{d}(A) \neq \mathbf{r}(A)$
and $V$ is the set of those ultrafilters where $\mathbf{d}(A) = \mathbf{r}(A)$.
By construction $U \cap V = \emptyset$.
Consider first the set $V$.
If it is non-empty, then by principality it contains only idempotent ultrafilters.
It follows that $V = V_{s} \cap \mathsf{G}(S)_{o}$ and so is an open set and, consequently, $U$ is an open set.
As an open set, $V$ can be written as a union of basic open sets $V_{a_{i}}$ where $V_{a_{i}}$ contains only idempotent ultrafilters.
It follows by Lemma~\ref{lem:pele}, that $a_{i}$ must be an idempotent.
Thus if $V$ is non-empty, we may write $V = \bigcup_{i \in I} V_{e_{i}}$ for some idempotents $e_{i}$, where in particular $e_{i} \leq s$.
We now turn to the set $U$.
If it is non-empty, then by our result above, every ultrafilter in it must contain an infinitesimal.
Now infinitesimals form an order ideal thus we may write $V = \bigcup_{j \in J} V_{a_{j}}$ where the $a_{j}$ are infinitesimals,  where in particular  $a_{i } \leq s$.
It follows now by compactness that $s$ can be written as a finite join of infinitesimals and an idempotent and so $S$ is basic as claimed.
\end{proof}



\section{Proof of Theorem~\ref{thm:TWO}}

The main goal of this section is to describe the structure of the group of units of a fundamental Tarski inverse monoid and relate it to the structure of the monoid itself.

\subsection{Fundamental Boolean inverse $\wedge$-monoids}

Denote by $\mbox{Aut}(E)$ the group of automorphisms of the Boolean algebra $E$.
The following was proved by Stone himself. 
See \cite{GH}.

\begin{lemma}\label{lem:homeo} Let $E$ be a Boolean algebra with associated Stone space $\mathsf{X}(E)$.
Then the groups $\mbox{\rm Aut}(E)$ and $\mbox{\rm Homeo}(\mathsf{X}(E))$ are isomorphic.
\end{lemma}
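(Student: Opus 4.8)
The plan is to establish the isomorphism $\operatorname{Aut}(E) \cong \operatorname{Homeo}(\mathsf{X}(E))$ by exhibiting it as the action of the Stone duality functor on the hom-set $\operatorname{Hom}(E,E)$, restricted to isomorphisms. First I would recall the basic Stone duality machinery: $\mathsf{X}(E)$ is the space of ultrafilters of $E$ (equivalently, Boolean algebra homomorphisms $E \to \{0,1\}$), with basic clopen sets $U_e = \{ F : e \in F\}$ for $e \in E$, and conversely $E$ is recovered (up to isomorphism) as the Boolean algebra of clopen subsets of $\mathsf{X}(E)$ via $e \mapsto U_e$. The contravariant functor sends a Boolean algebra homomorphism $\theta \colon E \to E$ to the continuous map $\mathsf{X}(\theta) \colon \mathsf{X}(E) \to \mathsf{X}(E)$ given by $F \mapsto \theta^{-1}(F)$ (pulling back ultrafilters), and one checks this is well-defined and continuous because $\mathsf{X}(\theta)^{-1}(U_e) = U_{\theta(e)}$, which is clopen.

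Next I would set up the map explicitly. Define $\Phi \colon \operatorname{Aut}(E) \to \operatorname{Homeo}(\mathsf{X}(E))$ by $\Phi(\theta) = \mathsf{X}(\theta^{-1})$, i.e. $\Phi(\theta)(F) = \theta(F)$ — since $\theta$ is a bijection, the preimage under $\theta^{-1}$ of an ultrafilter $F$ is just its forward image $\theta(F)$, which is again an ultrafilter. Contravariance of the functor together with the choice of $\theta^{-1}$ makes $\Phi$ a group homomorphism: $\Phi(\theta_1 \theta_2) = \mathsf{X}((\theta_1\theta_2)^{-1}) = \mathsf{X}(\theta_2^{-1}\theta_1^{-1}) = \mathsf{X}(\theta_1^{-1}) \circ \mathsf{X}(\theta_2^{-1}) = \Phi(\theta_1)\Phi(\theta_2)$. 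Each $\Phi(\theta)$ is a homeomorphism because its inverse is $\Phi(\theta^{-1}) = \mathsf{X}(\theta)$, both being continuous by the clopen-preimage computation above. For injectivity of $\Phi$: if $\Phi(\theta) = \operatorname{id}$ then $\theta(F) = F$ for every ultrafilter $F$, hence $F \in U_e \Leftrightarrow F \in U_{\theta(e)}$ for all $F$, so $U_e = U_{\theta(e)}$ for all $e$; since $e \mapsto U_e$ is injective (a consequence of the fact that distinct elements of a Boolean algebra are separated by some ultrafilter), $\theta(e) = e$ for all $e$, so $\theta = \operatorname{id}$.

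For surjectivity I would take an arbitrary homeomorphism $h \colon \mathsf{X}(E) \to \mathsf{X}(E)$ and produce $\theta \in \operatorname{Aut}(E)$ with $\Phi(\theta) = h$. Since $h$ is a homeomorphism it restricts to a Boolean algebra automorphism of the clopen algebra of $\mathsf{X}(E)$ via $C \mapsto h(C)$ (equivalently $C \mapsto h^{-1}(C)$); transporting this along the isomorphism $E \xrightarrow{\sim} \{\text{clopens}\}$, $e \mapsto U_e$, gives an automorphism $\theta$ of $E$, characterized by $U_{\theta(e)} = h(U_e)$. One then verifies $\Phi(\theta) = h$ by checking the two maps agree on ultrafilters: $\Phi(\theta)(F) = \theta(F)$ and $h(F)$ both lie in exactly the same basic clopens, using $F \in U_e \Leftrightarrow \theta(F) \in U_{\theta(e)}$ and $F \in U_e \Leftrightarrow h(F) \in h(U_e) = U_{\theta(e)}$. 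The main obstacle — really the only place requiring care rather than bookkeeping — is ensuring that the correspondence between automorphisms of the clopen algebra and homeomorphisms of a Stone space is tight, i.e. that every Boolean automorphism of the clopen algebra is \emph{induced} by a (unique) self-homeomorphism; this is exactly the object-plus-morphism content of Stone duality for the special case of endomorphisms, and since the excerpt grants us Stone duality for Boolean algebras (and attributes this very lemma to Stone, citing \cite{GH}), I would invoke it at that point rather than reproving it, noting only that uniqueness there is what makes the surjectivity argument close up.
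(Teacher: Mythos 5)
Your proof is correct and is exactly the standard functorial Stone-duality argument; the paper does not prove this lemma at all but simply attributes it to Stone and cites \cite{GH}, where the argument is the same one you give (pull back or push forward ultrafilters along an automorphism, check continuity via $U_e$, and get surjectivity from the fact that $e \mapsto U_e$ identifies $E$ with the clopen algebra of $\mathsf{X}(E)$). So there is nothing to fix; your write-up just supplies the details the paper delegates to the reference.
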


We now generalize this result.
Given a Boolean algebra $E$, 
we may form the Munn semigroup $T_{E}$, a Boolean inverse monoid by Proposition~\ref{prop:munn-one}.
Associated with $E$ is its Stone space $\mathsf{X}(E)$.
Define $\mathsf{I}(\mathsf{X}(E))$ to be the inverse monoid of all partial homeomorphisms between the clopen subsets of $\mathsf{X}(E)$.
This is a Boolean inverse monoid.

\begin{proposition}\label{prop:munn-again} 
For each Boolean algebra $E$, the inverse monoids  $T_{E}$ and  $\mathsf{I}(\mathsf{X}(E))$ are isomorphic.
\end{proposition}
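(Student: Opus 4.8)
The plan is to exhibit an explicit isomorphism built from classical Stone duality. Classical Stone duality for Boolean algebras gives a contravariant equivalence $e \mapsto U_e$ between the Boolean algebra $E$ and the Boolean algebra of clopen subsets of its Stone space $\mathsf{X}(E)$; in particular the assignment $e \mapsto U_e$ is an order-isomorphism from $E$ onto the lattice of clopen sets, hence restricts to a bijection between principal order ideals $e^{\downarrow}$ in $E$ and the ``clopen subspaces'' $U_e$ of $\mathsf{X}(E)$, the latter being themselves Stone spaces whose clopen subsets are exactly the $U_f$ with $f \leq e$. An element of $T_E$ is an order-isomorphism $\alpha \colon e^{\downarrow} \to f^{\downarrow}$; I want to send it to the homeomorphism $\widehat{\alpha} \colon U_e \to U_f$ that Stone duality produces from $\alpha$, and check this is a semigroup isomorphism commuting with all the relevant structure.

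The steps I would carry out, in order. First, recall (or quote Lemma~\ref{lem:homeo} and its proof method) the construction that turns an isomorphism of Boolean algebras into a homeomorphism of Stone spaces: given $\alpha \colon e^{\downarrow} \to f^{\downarrow}$, an ultrafilter $F$ of the Boolean algebra $e^{\downarrow}$ is sent to the ultrafilter $\alpha(F)$ of $f^{\downarrow}$, and under the identification of the Stone space of $e^{\downarrow}$ with $U_e$ this gives a map $\widehat{\alpha} \colon U_e \to U_f$; continuity in both directions is immediate because $\widehat{\alpha}^{-1}(U_{\alpha(g)}) = U_g$ for $g \leq e$. Second, verify functoriality of this assignment with respect to composition of partial maps: if $\alpha \colon e^{\downarrow} \to f^{\downarrow}$ and $\beta \colon g^{\downarrow} \to h^{\downarrow}$, then the product $\beta\alpha$ in $T_E$ is $\beta$ restricted to (the image under $\alpha$ of) $\alpha^{-1}(g^{\downarrow} \cap \operatorname{im}\alpha)$ composed with $\alpha$, and one checks directly that $\widehat{\beta\alpha} = \widehat{\beta}\,\widehat{\alpha}$ as partial homeomorphisms, using that $\widehat{\alpha}(U_{e\wedge\alpha^{-1}(\cdot)}) $ matches domains correctly; this is the bookkeeping that the map $T_E \to \mathsf{I}(\mathsf{X}(E))$, $\alpha \mapsto \widehat{\alpha}$, is a homomorphism. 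Third, check it is injective: if $\widehat{\alpha} = \widehat{\beta}$ then in particular they have the same domain so $e = e'$ (clopen sets determine their defining idempotent), and agreeing as functions on $U_e$ forces $\alpha = \beta$ since the clopen subsets $U_g$, $g \leq e$, separate points and $\alpha(g)$ is recovered as the idempotent with $\widehat{\alpha}(U_g) = U_{\alpha(g)}$. Fourth, check surjectivity: given a partial homeomorphism $\psi$ between clopen subsets $Y, Z$ of $\mathsf{X}(E)$, write $Y = U_e$, $Z = U_f$ by classical duality; then $\psi$ induces a Boolean-algebra isomorphism between the clopen subsets of $U_e$ and those of $U_f$, i.e. between $e^{\downarrow}$ and $f^{\downarrow}$, namely $g \mapsto$ (the idempotent defining $\psi(U_g)$), and this is an order-isomorphism $\alpha \in T_E$ with $\widehat{\alpha} = \psi$. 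Finally, note that the isomorphism automatically carries over the inverse-monoid-with-zero structure (inverses, the zero element, idempotents $\leftrightarrow$ clopen sets, the natural partial order $\leftrightarrow$ inclusion), and since by Proposition~\ref{prop:munn-one} both sides are Boolean inverse monoids and the bijection preserves the order it preserves joins and meets as well, so it is an isomorphism of Boolean inverse monoids.

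The main obstacle I expect is purely the careful handling of \emph{partial} maps with varying domains: making sure that ``$\alpha$ restricted to $e^{\downarrow}$'' translates cleanly to ``$\widehat{\alpha}$ restricted to $U_e$'', that domains and ranges of products match on the nose, and that one is consistently using the identification of the Stone space of a principal ideal $e^{\downarrow}$ with the clopen subspace $U_e$ of $\mathsf{X}(E)$ rather than with an abstract Stone space. None of this is deep, but it is where a sloppy argument would go wrong, so I would state the domain/range identities explicitly before composing. A cleaner alternative, which I would mention, is to avoid points entirely: observe that $T_E$ is by definition the inverse monoid of order-isomorphisms between principal ideals of $E$, and that applying the contravariant Stone equivalence ideal-by-ideal sends order-isomorphisms of Boolean algebras to homeomorphisms of their Stone spaces and reverses nothing at the level of the inverse-monoid multiplication (composition goes to composition), which is exactly the statement that $T_E \cong \mathsf{I}(\mathsf{X}(E))$.
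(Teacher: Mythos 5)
Your proposal is correct and follows essentially the same route as the paper: identify each principal ideal $e^{\downarrow}$ with the clopen subspace $U_{e}$ via classical Stone duality, send an order-isomorphism $e^{\downarrow}\rightarrow f^{\downarrow}$ to the induced homeomorphism $U_{e}\rightarrow U_{f}$, and check that this bijection respects the product, the order and the Boolean structure. The paper leaves these verifications as a sketch, whereas you spell out the domain/range bookkeeping, injectivity and surjectivity explicitly; the substance is the same.
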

\begin{proof} It is a classical result that every principal ideal $e^{\downarrow}$ in $E$ has corresponding Stone space $U_{e}$.
Essentially, ultrafilters in $e^{\downarrow}$ may be enlarged to ultrafilters in $E$ that contain $e$, and conversely.
Specifically, if $F$ is an ultrafilter in $e^{\downarrow}$ then $F^{\uparrow}$ is an ultrafilter in $E$ containing $e$.
Conversely, if $A$ is an ultrafilter in $E$ containing $e$ then $A \cap e^{\downarrow}$ is an ultrafilter in $e^{\downarrow}$.
An order isomorphism between $e^{\downarrow}$ and $f^{\downarrow}$ leads to a homeomorphism between $U_{e}$ and $U_{f}$, and conversely.
Thus there is a bijection between $T_{E}$ and  $\mathsf{I}(\mathsf{X}(E))$.
This bijection preserves the groupoid product, the natural partial order, and the Boolean algebra structures.
It follows that it induces an isomorphism.
\end{proof}

\begin{remark} {\em It is probably worth being explicit about this:
the group of units of $T_{E}$ is the group of all automorphisms of the Boolean algebra $E$.}
\end{remark}

Proposition~\ref{prop:munn-again} makes the proof of the following straighfoward using properties of continuous functions.

\begin{lemma}\label{lem:fullness} Let $E$ be a Boolean algebra.
Let $e_{1}, \ldots, e_{m}$ be pairwise, non-zero, orthogonal idempotents of $E$ with join the identity.
Let $g_{1}, \ldots, g_{m} \in \mbox{\rm Aut}(E)$.
Suppose that $g_{1}e_{1}g_{1}^{-1}, \ldots, g_{m}e_{m}g_{m}^{-1}$ are also orthogonal with join the identity.
Then $g = \bigvee_{i=1}^{m} g_{i}e_{i}$ is also an element of $\mbox{\rm Aut}(E)$ 
\end{lemma}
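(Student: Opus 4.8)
The plan is to work via the isomorphism $T_{E} \cong \mathsf{I}(\mathsf{X}(E))$ of Proposition~\ref{prop:munn-again} and translate the statement into the language of partial homeomorphisms of the Stone space $\mathsf{X}(E)$. Under this isomorphism, each idempotent $e_{i}$ corresponds to a clopen subset $U_{e_{i}} \subseteq \mathsf{X}(E)$, and the hypothesis that $e_{1}, \ldots, e_{m}$ are pairwise orthogonal with join the identity says exactly that $\{U_{e_{1}}, \ldots, U_{e_{m}}\}$ is a partition of $\mathsf{X}(E)$ into clopen sets. Each $g_{i} \in \mbox{Aut}(E)$ corresponds, via Lemma~\ref{lem:homeo}, to a homeomorphism $\hat{g}_{i}$ of $\mathsf{X}(E)$; the element $g_{i}e_{i}$ of $T_{E}$ then corresponds to the partial homeomorphism $\hat{g}_{i}|_{U_{e_{i}}}$ with domain $U_{e_{i}}$ and image $\hat{g}_{i}(U_{e_{i}}) = U_{g_{i}e_{i}g_{i}^{-1}}$. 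The hypothesis that $g_{1}e_{1}g_{1}^{-1}, \ldots, g_{m}e_{m}g_{m}^{-1}$ are orthogonal with join the identity likewise says that $\{\hat{g}_{1}(U_{e_{1}}), \ldots, \hat{g}_{m}(U_{e_{m}})\}$ is a partition of $\mathsf{X}(E)$ into clopen sets.

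Next I would observe that the set-theoretic union $\hat{g} = \bigcup_{i=1}^{m} \hat{g}_{i}|_{U_{e_{i}}}$ is a well-defined bijection $\mathsf{X}(E) \to \mathsf{X}(E)$: it is defined everywhere because the $U_{e_{i}}$ cover $\mathsf{X}(E)$, it is single-valued because the $U_{e_{i}}$ are pairwise disjoint, and it is a bijection because its ``graph pieces'' have pairwise disjoint domains and pairwise disjoint images that respectively exhaust the space. This is the element $\bigvee_{i=1}^{m} g_{i}e_{i}$ in $\mathsf{I}(\mathsf{X}(E))$ — the join of the orthogonal family $\{\hat{g}_{i}|_{U_{e_{i}}}\}$, which exists precisely because the domains are orthogonal clopen sets and the images are orthogonal clopen sets (Lemma~\ref{lem:order_properties}(2) ensures the domain and range idempotents of the join are the full identity).

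The one genuine point to check is that $\hat{g}$ is a homeomorphism, i.e.\ that it and its inverse are continuous; this is the step I expect to be the (mild) obstacle, though it is entirely routine. Continuity is local: on the clopen set $U_{e_{i}}$, $\hat{g}$ agrees with the continuous map $\hat{g}_{i}$, so $\hat{g}$ is continuous on each member of a finite open cover, hence continuous. The inverse $\hat{g}^{-1}$ agrees with $\hat{g}_{i}^{-1}$ on the clopen set $\hat{g}_{i}(U_{e_{i}})$, and these sets form a finite clopen cover of $\mathsf{X}(E)$, so $\hat{g}^{-1}$ is continuous by the same pasting argument. Thus $\hat{g} \in \mbox{Homeo}(\mathsf{X}(E))$, and transporting back through the isomorphisms of Lemma~\ref{lem:homeo} and Proposition~\ref{prop:munn-again} gives $g = \bigvee_{i=1}^{m} g_{i}e_{i} \in \mbox{Aut}(E)$, as required. (Alternatively, one could argue purely algebraically inside $T_{E}$: the join exists by orthogonality of domains, and one checks $\mathbf{d}(g) = \mathbf{r}(g) = 1$ using Lemma~\ref{lem:order_properties}(2), so $g$ is a unit — but the topological route via Proposition~\ref{prop:munn-again} is the cleanest.)
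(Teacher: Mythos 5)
Your proposal is correct and follows exactly the route the paper intends: the paper gives no written proof beyond remarking that Proposition~\ref{prop:munn-again} makes the lemma straightforward ``using properties of continuous functions,'' which is precisely your argument of transporting to $\mathsf{I}(\mathsf{X}(E))$, gluing the restrictions $\hat{g}_{i}|_{U_{e_{i}}}$ over the two clopen partitions, and checking continuity of the glued map and its inverse by pasting over finite clopen covers before returning to $\mbox{\rm Aut}(E)$ via Lemma~\ref{lem:homeo}.
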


In the light of Lemma~\ref{lem:fullness}, the next definition makes sense.
A subgroup $G \leq \mbox{Aut}(E)$ is said to be {\em full} if the following condition holds.
Let $e_{1}, \ldots, e_{m}$ be pairwise, non-zero, orthogonal idempotents of $E$ with join the identity.
Let $g_{1}, \ldots, g_{m} \in G$.
Suppose that $g_{1}e_{1}g_{1}^{-1}, \ldots, g_{m}e_{m}g_{m}^{-1}$ are also orthogonal with join the identity.
Then $g = \bigvee_{i=1}^{m} g_{i}e_{i} \in G$.

Let $G$ be a group and $E$ a Boolean algebra.
An {\em action} of $G$ on $E$ is determined by a homomorphism from $G$ to the group of automorphisms of $E$.
We denote the action by $e \mapsto g \cdot e$.
The action is {\em faithful} if $g$ acts as the identity on $E$ if and only if $g$ is the identity.
Let $S$ be a Boolean inverse monoid.
Then $\mathsf{U}(S)$, the group of units of $S$, acts on the semilattice of idempotents $E(S)$ by conjugation $e \mapsto geg^{-1}$.
We call $(\mathsf{U}(S),E(S))$ the {\em natural action associated with $S$}.

\begin{lemma}\label{lem:nice} Let $S$ be a Boolean inverse monoid.
\begin{enumerate}
\item  Suppose that $g$ is a unit such that $gfg^{-1} = f$ for all $f \leq e$.
Then $ge \in Z(E(eSe))$.
\item If $S$ is fundamental then the natural action of $\mathsf{U}(S)$ on $E(S)$ is faithful.
\end{enumerate}
\end{lemma}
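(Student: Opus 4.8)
The plan is to establish (1) by a direct computation with domain and range idempotents, and then to obtain (2) as the case $e = 1$ of (1) combined with the observation that a unit which happens to be idempotent must be the identity.

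For (1), the first step is to check that $ge$ genuinely lies in the local monoid $eSe$. Since $g$ is a unit, $\mathbf{d}(ge) = (ge)^{-1}(ge) = eg^{-1}ge = e$, and applying the hypothesis with $f = e$ (permissible since $e \leq e$) gives $\mathbf{r}(ge) = geg^{-1} = e$; hence $ge$ has domain and range idempotent $e$ and so $ge \in eSe$, in fact a unit of $eSe$. It then remains to show that $ge$ commutes with every idempotent of $eSe$, and these are exactly the $f \in E(S)$ with $f \leq e$. The hypothesis $gfg^{-1} = f$ is equivalent to $gf = fg$ (multiply on the right by $g$ and use $g^{-1}g = 1$). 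Using $ef = f$ we get $(ge)f = gf$ and $f(ge) = fge$; and since $f \leq e$ we have $f\bar{e} = 0$, so $fg\bar{e} = gf\bar{e} = g(f\bar{e}) = 0$, whence $fg = fg(e \vee \bar{e}) = fge$. Therefore $(ge)f = gf = fg = fge = f(ge)$, so $ge \in Z(E(eSe))$.

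For (2), suppose $g \in \mathsf{U}(S)$ acts as the identity on $E(S)$, that is $geg^{-1} = e$ for all $e \in E(S)$; equivalently $g$ centralizes every idempotent, so $g \in Z(E(S))$ --- which is precisely the conclusion of (1) applied with $e = 1$. Since $S$ is fundamental, $Z(E(S)) = E(S)$, so $g$ is an idempotent, and being a unit it satisfies $g = g^{-1}g = 1$. Hence the action is faithful. All of this is routine; there is no real obstacle, the only step requiring a moment's attention being the identity $fg = fge$ in (1), which combines $gf = fg$ with $f\bar{e} = 0$ and the distributivity of multiplication over the orthogonal join $1 = e \vee \bar{e}$.
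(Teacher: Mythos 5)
Your proof is correct and follows essentially the same route as the paper: part (1) is the same direct computation with the commutation relations $gf=fg$ for $f\leq e$ (the paper writes it as $(ege)f = egfe = efge = f(ege)$), and part (2) is obtained, as in the paper, as an immediate consequence of (1) with $e=1$. The only cosmetic difference is your detour through $\bar{e}$ and distributivity to get $fg = fge$, which is harmless but unnecessary, since $f(ge) = fge = gfe = gf = (ge)f$ follows directly from $fg=gf$ and $fe=f$.
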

\begin{proof} {1} By assumption, $eg = ge$ and so $eg = ege \in eSe$.
Let $f \leq e$.
Then $(ege)f = egfe = efge = f(ege)$.
Thus $ge \in Z(E(eSe))$.
(2) This is immediate by part (1) above.
\end{proof}

Let an action of a group $G$ on a Boolean algebra $E$ be given.
Then a function $\phi \colon G \rightarrow E$ is called an {\em operator (with respect to this action)} if it satisfies the following axioms.
\begin{description}
\item[{\rm (O1)}]  $\phi (1) = 1$.
\item[{\rm (O2)}]  $\phi (g^{-1}) = \phi (g)$.
\item[{\rm (O3)}] $\phi (g) \phi (h) \leq \phi (gh)$.
\item[{\rm (O4)}] For each $g \in G$ and $e \in E$ we have that $\phi (g)e \leq g \cdot e$.
\item[{\rm (O5)}] The element $g$ fixes the principal order ideal $e^{\downarrow}$ pointwise if and only if $e \leq \phi (g)$.
\end{description}
A triple $(G,E,\phi)$ consisting of a full subgroup $G$ of $\mbox{Aut}(E)$ that acts faithfully on the Boolean algebra $E$
together with an operator $\phi \colon G \rightarrow E$ is called an {\em armature}\footnote{Used in the sense of sculpture where it means a framework around which the sculpture is constructed.}.
If $E$ is also a Tarski algebra and the group $G$ is countable, we say that it is a {\em Tarski armature}.

\begin{proposition}\label{prop:ashleigh} Let $S$ be a fundamental Boolean inverse $\wedge$-monoid with fixed point operator $\phi$.
Let $\phi \colon \mathsf{U}(S) \rightarrow E(S)$ also denote the restriction of the fixed-point operator to the group of units of $S$.
Then $\mathsf{A}(S) = (\mathsf{U}(S),E(S),\phi)$ is an armature.
\end{proposition}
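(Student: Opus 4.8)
The plan is to verify the five operator axioms (O1)--(O5) for the map $\phi \colon \mathsf{U}(S) \rightarrow E(S)$ obtained by restricting Leech's fixed-point operator, after first noting that the triple $(\mathsf{U}(S), E(S), \phi)$ has the structural features required of an armature. Concretely, $\mathsf{U}(S)$ is a subgroup of $\mathrm{Aut}(E(S))$ via the natural conjugation action $e \mapsto geg^{-1}$ (this action is faithful by Lemma~\ref{lem:nice}(2) since $S$ is fundamental), and $E(S)$ is a Boolean algebra because $S$ is Boolean; so the only genuine work is (a) checking that $\mathsf{U}(S)$ is \emph{full} in the sense defined just before the statement, and (b) checking the operator axioms.

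For fullness: given pairwise orthogonal non-zero idempotents $e_1, \ldots, e_m$ with join $1$ and units $g_1, \ldots, g_m$ such that $g_1 e_1 g_1^{-1}, \ldots, g_m e_m g_m^{-1}$ are also orthogonal with join $1$, the elements $g_i e_i$ are pairwise orthogonal (their domain idempotents $e_i$ are orthogonal, their range idempotents $g_i e_i g_i^{-1}$ are orthogonal), so $g = \bigvee_{i=1}^m g_i e_i$ exists in $S$; using Lemma~\ref{lem:order_properties}(2) one computes $\mathbf{d}(g) = \bigvee e_i = 1$ and $\mathbf{r}(g) = \bigvee g_i e_i g_i^{-1} = 1$, hence $g$ is a unit, giving membership in $\mathsf{U}(S)$. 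For the operator axioms: (O1) $\phi(1) = 1 \wedge 1 = 1$ by Proposition~\ref{prop:leech}(3); (O2) follows since $s \leq t \Leftrightarrow s^{-1} \leq t^{-1}$ preserves the natural order and $\phi(g) = g \wedge 1 = g^{-1} \wedge 1 = \phi(g^{-1})$ because $\phi(g)$ is a self-inverse idempotent below $g$ and below $g^{-1}$; (O3) $\phi(g)\phi(h)$ is an idempotent below $gh$ (since $\phi(g) \leq g$, $\phi(h) \leq h$), hence $\leq \phi(gh)$ by (FPO2); (O4) $\phi(g)e = e\phi(g) \leq g$ implies $\phi(g)e = g(\phi(g)e)g^{-1} \cdot (\text{something})$—more directly, $\phi(g)e \leq g$ and $\phi(g)e$ idempotent give $\phi(g)e = g(\phi(g)e)g^{-1} = g \cdot (\phi(g)e) \leq g \cdot e$; and (O5) is essentially the defining property of the fixed-point operator translated into the action language: $g$ fixes $e^{\downarrow}$ pointwise, i.e. $gfg^{-1} = f$ for all idempotents $f \leq e$, which (for $g$ a unit) is equivalent to $ge$ centralizing the idempotents below $e$; by an argument paralleling Lemma~\ref{lem:jinxy} — or directly, $gf = fg \leq g$ for each atom-like $f$, so $ef \leq$ ... — one shows this is equivalent to $e \leq \phi(g)$, the key point being that $ef = fe$ and $fg$ idempotent for all idempotents $f \le e$ forces $eg$ to be an idempotent, namely $eg = e\phi(g)$, whence $e \le \phi(g)$; the converse direction $e \leq \phi(g) \Rightarrow g$ fixes $e^{\downarrow}$ is immediate since then $eg = e$.

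The main obstacle I anticipate is axiom (O5), specifically the forward implication: showing that if a unit $g$ fixes $e^{\downarrow}$ pointwise under conjugation then $e \leq \phi(g)$. The subtlety is that "fixes $e^{\downarrow}$ pointwise" only says $gfg^{-1} = f$ for idempotents $f \le e$, which a priori is weaker than $ge = e$ — one needs fundamentality (or at least the $\wedge$-structure) to upgrade commutation-with-all-small-idempotents into $eg$ being idempotent. I would handle this by restricting to the local monoid $eSe$: by Lemma~\ref{lem:nice}(1), $ge \in Z(E(eSe))$, and since $eSe$ is fundamental by Lemma~\ref{lem:local_monoid} (local monoids of fundamental inverse semigroups are fundamental), $ge$ must be an idempotent of $eSe$; then $ge$ is an idempotent below $g$, so $ge \leq \phi(g)$, and since $ge \leq e$ with $\mathbf{d}(ge)=e$ — wait, more carefully: $ge$ idempotent and $ge \le e$ gives $ge \le e$, but we also need $e \le ge$, which holds because $g$ fixing $e^{\downarrow}$ pointwise in particular... hmm, actually $ge = ege$ and for $f \le e$ we have $f = gfg^{-1}$ so $fg = gf$, taking $f = e$-adjacent small idempotents and using that they join to $e$ one gets $eg = ge$; then $(ge)(ge) = g e g e$; since $ge$ centralizes $E(eSe)$ and is idempotent in a fundamental monoid it lies in $E(eSe)$, so $ge$ is an idempotent $\le e$ with $ge \cdot e = ge$; finally $e = \bigvee\{f \le e\} $ and each such $f = gfg^{-1} = (ge)f(ge)^{-1} \cdot$(unit stuff) $\le ge$, so $e \le ge$, giving $ge = e$, hence $e \le \phi(g)$. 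The bookkeeping here — keeping straight the difference between "$g$ fixes $e^{\downarrow}$" and "$ge = e$" — is where care is needed, but the tools (Lemmas \ref{lem:nice}, \ref{lem:local_monoid}, \ref{lem:jinxy}) are all in place. All remaining axioms are short formal consequences of Proposition~\ref{prop:leech} and the order-theoretic lemmas.
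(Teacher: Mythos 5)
Your proposal follows essentially the same route as the paper: faithfulness of the conjugation action from Lemma~\ref{lem:nice}, fullness from the existence of finite orthogonal joins in the Boolean inverse monoid $S$, axioms (O1)--(O4) as formal consequences of Proposition~\ref{prop:leech}, and the hard direction of (O5) via Lemma~\ref{lem:nice}(1) together with the fact that local monoids of fundamental inverse semigroups are fundamental (Lemma~\ref{lem:local_monoid}), so that $ge=eg=ege$ is an idempotent of $eSe$. The only place your write-up actually goes astray is the final step of that direction: your justification that $e\le ge$, namely that each $f\le e$ satisfies $f=gfg^{-1}=(ge)f(ge)^{-1}\le ge$, is circular, since $f\le ge$ amounts to $gf=f$, which is precisely what is being proved. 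The gap closes in one line from what you have already established: an idempotent equals its domain idempotent, so $ge=\mathbf{d}(ge)=eg^{-1}ge=e$, whence $e\le g$ and therefore $e\le\phi(g)$ by (FPO2). (The paper finishes the same situation slightly differently: $eg$ is an idempotent with $eg\le e$, so $e=(eg)g^{-1}\le g^{-1}$, giving $e\le\phi(g^{-1})=\phi(g)$.) With that one step repaired, your argument is correct and coincides with the paper's proof.
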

\begin{proof} By Lemma~\ref{lem:nice}, the natural action $(\mathsf{U}(S),E(S))$ is faithful.
Thus $\mathsf{U}(S)$ is isomorphic under the natural action to a subgroup of $\mbox{Aut}(E(S))$.
This is a full subgroup because $S$ has all finite non-empty compatible joins.
The proofs of (O1), (O2) and (O3) are straightfoward.
The proof of (O4) follows from the fact that $\phi (g) \leq g,g^{-1}$.
We prove (O5).
Suppose first that $e \leq \phi (g)$.
Then from the definition of $\phi (g)$ we have that $e = eg = ge$.
Thus $ge = eg$ and so $geg^{-1} = e$.
Conversely, suppose that $g$ fixes the principal order ideal $e^{\downarrow}$ pointwise. 
Then by Lemma~\ref{lem:nice}, we have that $eg$ is in the centralizer of $E(eSe)$.
Thus $eg = ege = ge$ is an idempotent by Lemma~\ref{lem:local_monoid}.
Hence $ege = eg \leq e$.
It follows that $e \leq eg^{-1} \leq g^{-1}$
from which we get that $e \leq \phi (g^{-1}) = \phi (g)$.
\end{proof}

We now prove that armatures are precisely what is needed to construct piecewise factorizable fundamental Boolean inverse $\wedge$-monoids.
The following lemma is useful.

\begin{lemma}\label{lem:wattar} Let $S$ and $T$ be two wide factorizable inverse submonoids of the inverse monoid $V$.
Suppose that $\mathsf{U}(S) = \mathsf{U}(T)$.
Then $S = T$.
 \end{lemma}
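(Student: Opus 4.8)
The plan is to show that a wide factorizable inverse submonoid of $V$ is recovered from its group of units by downward closure: precisely, that $S = \mathsf{U}(S)^{\downarrow}$, where the downward closure is taken inside $V$. Granting this for both $S$ and $T$, the hypothesis $\mathsf{U}(S) = \mathsf{U}(T)$ gives $S = \mathsf{U}(S)^{\downarrow} = \mathsf{U}(T)^{\downarrow} = T$ at once, and the lemma follows.

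First I would record that, since $S$ is wide, it contains every idempotent of $V$ and in particular the identity $1$ of $V$; hence $S$ is a submonoid of $V$ sharing its identity, and $\mathsf{U}(S) = S \cap \mathsf{U}(V)$, so the units of $S$ really are units of $V$. For the inclusion $S \subseteq \mathsf{U}(S)^{\downarrow}$, take $a \in S$; factorizability of $S$ yields a unit $g \in \mathsf{U}(S)$ with $a \leq g$, so $a \in \mathsf{U}(S)^{\downarrow}$. For the reverse inclusion, suppose $a \leq g$ with $g \in \mathsf{U}(S)$. By the definition of the natural partial order, $a = g a^{-1}a = g\,\mathbf{d}(a)$. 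The idempotent $\mathbf{d}(a) = a^{-1}a$ lies in $E(V) \subseteq S$ because $S$ is wide, and $g \in S$, so $a = g\,\mathbf{d}(a) \in S$. This proves $S = \mathsf{U}(S)^{\downarrow}$, and the same argument applied to $T$ gives $T = \mathsf{U}(T)^{\downarrow}$.

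Everything here is routine; the only point that must be used with care is that \emph{wide} forces both $1 \in S$ and $E(V) \subseteq S$, which is exactly what makes the domain idempotent $\mathbf{d}(a)$ available inside $S$ in the second inclusion. No genuine obstacle is anticipated: the lemma is in essence the observation that, among wide factorizable inverse submonoids of $V$, the assignment $S \mapsto \mathsf{U}(S)$ is injective, with inverse $G \mapsto G^{\downarrow}$, in the spirit of Lemma~\ref{lem:factorizable}.
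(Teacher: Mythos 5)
Your proof is correct and is essentially the paper's own argument: both rest on writing $s = g\,\mathbf{d}(s)$ with $g$ a (shared) unit via factorizability and using wideness to place the idempotent $\mathbf{d}(s)$ in both submonoids. Packaging this as the identity $S = \mathsf{U}(S)^{\downarrow}$ (in the spirit of Lemma~\ref{lem:factorizable}) is just a slightly more structured phrasing of the same computation.
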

\begin{proof} We prove that $S \subseteq T$.
The reverse inclusion follows by symmetry.
Let $s \in S$.
Then since $S$ is factorizable $s = ge$ for some $g \in \mathsf{U}(S)$ and $e \in E(S)$.
Since  
$\mathsf{U}(S) = \mathsf{U}(T)$
and
$E(S) = E(S)$
we have that $S = ge \in T$, as required.
\end{proof}

\begin{theorem}\label{thm:armatures-pf} There is a bijective correspondence between 
(isomorphism classes of) piecewise factorizable fundamental Boolean inverse $\wedge$-monoids
and 
(isomorphism classes of) armatures.
\end{theorem}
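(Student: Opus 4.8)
The plan is to set up mutually inverse constructions $S\mapsto \mathsf{A}(S)$ and $(G,E,\phi)\mapsto \mathsf{B}(G,E,\phi)$ and check they descend to bijections on isomorphism classes. The first direction is Proposition~\ref{prop:ashleigh}: to a piecewise factorizable fundamental Boolean inverse $\wedge$-monoid $S$ it assigns the armature $\mathsf{A}(S)=(\mathsf{U}(S),E(S),\phi)$. For the reverse direction, given an armature $(G,E,\phi)$, regard $G$ as a full subgroup of $\mbox{Aut}(E)=\mathsf{U}(T_E)$ (via its faithful action), where $T_E$ is the Munn semigroup, a Boolean inverse monoid by Proposition~\ref{prop:munn-one}. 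Form $G^{\downarrow}$ inside $T_E$; by Lemma~\ref{lem:factorizable} this is a wide factorizable inverse submonoid with group of units $G$. Define $\mathsf{B}(G,E,\phi):=(G^{\downarrow})^{\vee}$, the set of joins of finite compatible subsets of $G^{\downarrow}$.

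The first substantive task is to verify that $\mathsf{B}(G,E,\phi)$ really is a piecewise factorizable fundamental Boolean inverse $\wedge$-monoid, and here the operator $\phi$ is indispensable: it is exactly what is needed to reconstruct meets. I would show $G^{\downarrow}$ is a $\wedge$-monoid by exhibiting a fixed-point operator $\psi\colon G^{\downarrow}\to E$, $\psi(ge)=\phi(g)e$. Well-definedness (two representations $ge=g'e$ force $e\le\phi(g^{-1}g')$, and then (O3) yields $\phi(g)e=\phi(g')e$) and the axioms (FPO1), (FPO2) follow from (O5) together with the fact, itself a consequence of (O5), that $\phi(g)$ is the largest idempotent below $g$; so $G^{\downarrow}$ is an inverse $\wedge$-monoid by Proposition~\ref{prop:leech}(1), with $\psi$ extending $\phi$. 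Then $(G^{\downarrow})^{\vee}$ is a distributive inverse $\wedge$-monoid by Proposition~\ref{prop:building}; its semilattice of idempotents is $E$ (using $E=E(T_E)$, which is already closed under the joins of $T_E$), a Boolean algebra, so it is a Boolean inverse $\wedge$-monoid; it is wide in $T_E$, hence fundamental by Theorem~\ref{them:classic}(2); and its group of units is $G$, since any unit, written as an orthogonal join $\bigvee_i g_i f_i$ of restrictions of elements of $G$ via Lemma~\ref{lem:bar}(2), lies in $G$ by fullness. As $G^{\downarrow}$ is the downward closure of that group of units, $(G^{\downarrow})^{\vee}=(\mathsf{U}(\cdot)^{\downarrow})^{\vee}$, so it is piecewise factorizable by the characterization stated earlier.

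Next, the round trips. For $\mathsf{B}(\mathsf{A}(S))\cong S$: since $S$ is fundamental, Theorem~\ref{them:classic}(2) realizes $S$ as a wide inverse submonoid $\bar S$ of $T_{E(S)}$ through the Munn representation, under which a unit $g$ becomes the automorphism $e\mapsto geg^{-1}$; thus $\mathsf{U}(\bar S)$ is precisely the image $G$ of the natural action, and the downward closure of $G$ in $T_{E(S)}$ equals $\mathsf{U}(S)^{\downarrow}$ computed in $S$ (an element below a unit of $S$ is that unit times an idempotent of $S$). The embedding $\bar S\hookrightarrow T_{E(S)}$ preserves binary compatible joins: if $c=a\vee b$ in $\bar S$, then $d:=a\vee b$ exists in $T_{E(S)}$ with $d\le c$, and $\mathbf{d}(d)=\mathbf{d}(a)\vee\mathbf{d}(b)=\mathbf{d}(c)$ (joins of idempotents agreeing by wideness, via Lemma~\ref{lem:order_properties}(2)), so $d=c$. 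Hence the identity $\bar S=(\mathsf{U}(\bar S)^{\downarrow})^{\vee}$ expressing piecewise factorizability holds with the join taken in $T_{E(S)}$, giving $\bar S=(G^{\downarrow})^{\vee}=\mathsf{B}(\mathsf{A}(S))$. Conversely, for $\mathsf{A}(\mathsf{B}(G,E,\phi))\cong(G,E,\phi)$: we have already seen $E(\mathsf{B})=E$ and $\mathsf{U}(\mathsf{B})=G$, and the operator of $\mathsf{A}(\mathsf{B})$ is the fixed-point operator of $\mathsf{B}$ restricted to $G$; since $G^{\downarrow}$ is a wide $\wedge$-submonoid of $\mathsf{B}$ with the same semilattice of idempotents, this restriction agrees with $\psi|_G=\phi$, so $\mathsf{A}(\mathsf{B}(G,E,\phi))=(G,E,\phi)$ on the nose. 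Finally one checks routinely that an isomorphism of armatures (a compatible pair of a group isomorphism and a Boolean-algebra isomorphism) induces an isomorphism $T_E\cong T_{E'}$ carrying $G^{\downarrow}$ to $G'^{\downarrow}$ and hence $\mathsf{B}(G,E,\phi)$ to $\mathsf{B}(G',E',\phi')$, and that monoid isomorphisms induce armature isomorphisms, so the constructions descend to mutually inverse bijections.

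I expect the main obstacle to be the part of the first task where one must show $G^{\downarrow}$ is an inverse $\wedge$-monoid: this is the single point at which the abstract operator $\phi$ of the armature is forced to do the work of computing meets, and getting the well-definedness and the Leech axioms for $\psi$ out of (O1)--(O5) is the crux. A close second is the bookkeeping in the second task confirming that the Munn-representation embedding preserves compatible joins, since only then does piecewise factorizability literally identify $S$ with $(G^{\downarrow})^{\vee}$.
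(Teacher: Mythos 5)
Your proposal is correct and follows essentially the same route as the paper's proof: Proposition~\ref{prop:ashleigh} in one direction, and in the other the construction $S=(G^{\downarrow})^{\vee}$ inside the Munn monoid $T_{E}$, with the fixed-point operator $\psi(ge)=\phi(g)e$ checked against Leech's axioms via (O1)--(O5), and the round trips verified through the Munn representation. The only divergences are at the level of detail --- your more direct use of (O5) for (FPO1) where the paper argues via Lemma~\ref{lem:nice} and fundamentality, and your explicit checks that the Munn embedding preserves compatible joins and that fullness pins down the group of units, which the paper leaves implicit.
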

\begin{proof} In Proposition~\ref{prop:ashleigh}, we constructed an armature from a  fundamental Boolean inverse $\wedge$-monoid.
We now show how to go in the opposite direction.
Let $(G,E,\phi)$ be an armature.
We shall construct a piecewise factorizable fundamental Boolean inverse $\wedge$-monoid $S = S(G,E,\phi)$ such that $\mathsf{A}(S) = (G,E,\phi)$ in three steps.
Since the action of $G$ on $E$ is faithful, we may assume that $G$ is a subgroup of $\mathsf{U}(T_{E})$.
We shall construct $S$ as a wide inverse submonoid of $T_{E}$ so that it will automatically be fundamental by Theorem~\ref{them:classic}.

(Step~1).
Define $T = G^{\downarrow}$.
By Lemma~\ref{lem:factorizable}, 
this is a factorizable wide inverse submonoid of $T_{E}$ with group of units $G$ and semilattice of idempotents isomorphic to $E$.
By Lemma~\ref{lem:wattar}, $T$ is determined by $G$.
Elements of $T$ are products inside $T_{E}$ of the form $g\alpha$ where $g \in G$ is an automorphism of $E$
and $\alpha$ is an identity map on a principal order ideal $e^{\downarrow}$ of $E$.
A simple calculation shows that $g\alpha g^{-1}$ is the identity map on $g(e)^{\downarrow}$.
Thus conjugation of idempotents within the inverse monoid $T$ is essentially the same as the action of
elements of $G$ on $E$.
Thus we may assume that the action is given by conjugation.

(Step~2).
We prove that $T$ is a $\wedge$-monoid by constructing a fixed-point operator.
Let $a \in T$.
Then $a = ge$ where $g \in G$ and $e \in E$.
Define $\phi (a) = \phi (g)e$ where $\phi (g)$ is defined via the armature.
Observe that if $ge = hf$ in $T$ then by taking domains $e = f$.
Suppose that $ge = he$.
We prove that $\phi (g)e = \phi (h)e$.
Let $f \leq e$.
Then $gfg^{-1} = gefeg^{-1} = hefeh^{-1} = hfh^{-1}$.
Thus $h^{-1}g$ fixes the principal order ideal $e^{\downarrow}$ pointwise (under conjugation)
and by axiom (O5) this implies that $e \leq \phi (h^{-1}g)$.
By axiom (O3), we have that $\phi (h)e \leq \phi (h) \phi (h^{-1}g) \leq \phi (g)$. 
It follows that $\phi (h)e \leq \phi (g)e$.
By symmetry $\phi (h)e = \phi (g)e$.
Define $\phi (ge) = \phi (g)e$.
Thus $\phi$ is a well-defined function from $S$ to $E(S)$.
We prove that it is a fixed-point operator by checking the axioms in turn.
(FPO1) holds.
We prove that $\phi (g) \leq g$ in the inverse monoid $T$.
By axiom (O4), we have that $\phi (g) \leq g \phi (g)g^{-1}$ and $\phi (g^{-1}) \leq g^{-1} \phi (g^{-1})g$.
Thus $\phi (g)g \leq g \phi (g)$ and by axiom (O2) we also have that $g \phi (g) \leq \phi (g)g$.
We deduce that $\phi (g) g = g \phi (g)$.
If we prove that $\phi (g)g$ is an idempotent then we shall have proved that $\phi (g) = g \phi (g)$ and so that $\phi (g) \leq g$.
Let $e \leq \phi (g)$.
Then by axiom (O5), we have that $geg^{-1} = e$.
By Lemma~\ref{lem:nice}, this implies that $\phi (g)g$ is in the centralizer of $E(\phi (g)T \phi (g))$.
But $T$ is fundamental and so $\phi (g)g$ is an idempotent by  Lemma~\ref{lem:local_monoid}.
(FPO2) holds.
Let $a \in T$ and $e \in E(T)$ such that $e \leq a$.
Assume that $a = gf$.
In particular, $e \leq f$.
Thus $e = ge$.
It follows that for all $i \leq e$ we have that $i = gi$.
Thus the principal order ideal $e^{\downarrow}$ is fixed pointwise under conjugation by $g$.
By axiom (O4), we have that $e \leq \phi (g)$.
Hence $e \leq \phi (g)f = \phi (a)$.
We have therefore proved that $\phi$ is a fixed-point operator and so by Proposition~\ref{prop:leech}
it follows that $T$ is an inverse $\wedge$-monoid.

(Step~3).
Define $S = (G^{\downarrow})^{\vee}$.
Then $S$ is an  inverse submonoid of $T_{E}$
and the fact that $S$ is a distributive inverse monoid follows from  
Proposition~\ref{prop:munn-one} and Proposition~\ref{prop:building}.
It is a wide inverse submonoid of a Munn inverse monoid and so it is Boolean and fundamental.
We proved above that $G^{\downarrow}$ is an inverse $\wedge$-monoid.
Thus by Corollary~\ref{cor:QI}, we have that $S$ is an inverse $\wedge$-monoid
which is piecewise factorizable by construction.
The armature associated with $S$ is $(G,E,\phi)$ upto isomorphism. 

Let $S$ be a piecewise factorizable fundamental Boolean inverse $\wedge$-monoid.
Without loss of generality, we may assume by Theorem~\ref{them:classic} that $S$ is given as a wide
inverse submonoid of $T_{E(T)}$.
Clearly $S = (\mathsf{U}(S)^{\downarrow})^{\vee}$.
It is now clear by our construction above that $S = S(\mathsf{U}(S),E(S),\phi)$.
\end{proof}

We shall now connect armatures with certain groups of homeomorphisms.
Let $(G,E,\phi)$ be an armature.
We denote the Stone space of $E$ by $\mathsf{X}(E)$.
We may define an action of $G$ on $\mathsf{X}(E)$
by $g \cdot F = \{g \cdot f \colon f \in F\}$
where $F\subseteq E$ is an ultrafilter.
Observe that $g \cdot U_{e} = U_{g \cdot e}$.
Thus the action is by means of homeomorphisms.
We have therefore constructed an action $G \times \mathsf{X}(E) \rightarrow \mathsf{X}(E)$.

\begin{lemma}\label{lem:rain} Let $(G,E,\phi)$ be an armature.
Let $g \in G$.
Then $g$ fixes $e^{\downarrow}$ pointwise if and only if $g$ fixes $U_{e}$ pointwise.
In particular, the action of $G$ on $\mathsf{X}(E)$ is faithful.
\end{lemma}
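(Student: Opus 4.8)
The plan is to prove the two implications separately, leaning on two facts recalled just before the statement: classical Stone duality, which tells us that $e \mapsto U_{e}$ is an order-isomorphism of $E$ onto the clopen subsets of $\mathsf{X}(E)$ --- in particular it is injective, with $U_{e} \subseteq U_{f}$ if and only if $e \leq f$ --- and the observation that $g \cdot U_{e} = U_{g \cdot e}$ for all $g \in G$ and $e \in E$. I will also use that $g$ acts as a Boolean algebra automorphism of $E$, so that $g \cdot F$ is again an ultrafilter whenever $F$ is.

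For the direction ``$g$ fixes $e^{\downarrow}$ pointwise $\Rightarrow$ $g$ fixes $U_{e}$ pointwise'': given $F \in U_{e}$, I would show $g \cdot F \subseteq F$ by taking $f \in F$, passing to $f \wedge e \in F$ (this lies below $e$, hence is fixed by $g$), and using $g \cdot e = e$ to rewrite $g \cdot (f \wedge e) = (g \cdot f) \wedge e$; this puts $(g \cdot f) \wedge e$ in $F$, whence $g \cdot f \in F$ since $F$ is a filter. Then the inclusion $g \cdot F \subseteq F$ upgrades to $g \cdot F = F$ because both sides are ultrafilters. This last upgrade is the only step that needs a moment's thought, and it is nothing more than maximality of ultrafilters.

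For the converse, suppose $g$ fixes $U_{e}$ pointwise and let $f \leq e$. Since $U_{f} \subseteq U_{e}$, every point of $U_{f}$ is fixed by $g$, so applying $g$ to $U_{f}$ gives $g \cdot U_{f} = U_{f}$ as subsets of $\mathsf{X}(E)$; combining this with $g \cdot U_{f} = U_{g \cdot f}$ and the injectivity of $e \mapsto U_{e}$ yields $g \cdot f = f$, i.e.\ $g$ fixes $e^{\downarrow}$ pointwise. (The operator $\phi$ plays no explicit role in this equivalence, though axiom (O5) could be invoked to restate either side as $e \leq \phi(g)$.)

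The faithfulness assertion is then the case $e = 1$: if $g$ acts as the identity on $\mathsf{X}(E) = U_{1}$ it fixes $U_{1}$ pointwise, hence fixes $1^{\downarrow} = E$ pointwise by the equivalence just established, i.e.\ acts trivially on $E$; since the action of $G$ on $E$ is faithful by the definition of an armature, this forces $g = 1$.
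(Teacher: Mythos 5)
Your proof is correct, and the converse direction together with the faithfulness claim coincide with the paper's argument (restrict to $U_f\subseteq U_e$, use $g\cdot U_f=U_{g\cdot f}$ and injectivity of $e\mapsto U_e$; then take $e=1$ and invoke faithfulness of the action on $E$). Where you genuinely diverge is in the forward direction: the paper first applies axiom (O5) to convert ``$g$ fixes $e^{\downarrow}$ pointwise'' into $e\leq\phi(g)$, so that $\phi(g)\in F$ for every $F\in U_e$, and then uses (O4) in the form $\phi(g)f\leq g\cdot f$ to conclude $g\cdot f\in F$; you instead bypass $\phi$ entirely, arguing from the fact that $g\in\mbox{Aut}(E)$ fixes everything below $e$: for $f\in F$ you have $(g\cdot f)\wedge e=g\cdot(f\wedge e)=f\wedge e\in F$, hence $g\cdot f\in F$, giving $g\cdot F\subseteq F$ and then equality by maximality of the ultrafilter $g\cdot F$ (the paper's step ``it follows that $g\cdot F=F$'' rests on the same maximality observation, or equivalently on applying the argument to $g^{-1}$). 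Your route is slightly more elementary and shows the equivalence and faithfulness hold for any subgroup of $\mbox{Aut}(E)$, with the operator $\phi$ playing no role --- as you note, (O5) only serves to restate the fixed-set condition. The paper's route, by contrast, deliberately channels the argument through (O4) and (O5); this is what sets up the subsequent results (notably Lemma~\ref{lem:hagel} and Theorem~\ref{thm:hail}), where $U_{\overline{\phi(g)}}$ is identified with the support of $g$, so its choice is one of emphasis rather than necessity.
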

\begin{proof} Suppose first that $g$ fixes $e^{\downarrow}$ pointwise.
Then by axiom (O5), we have that $e \leq \phi (g)$.
Let $F \in U_{e}$.
Then $\phi (g) \in F$.
Let $f \in F$.
Then by (O4), $\phi (g) f \leq g \cdot f$ and so $g \cdot f \in F$.
It follows that $g \cdot F = F$.
Thus $g$ fixes $U_{e}$ pointwise.
Suppose now that $g$ fixes $U_{e}$ pointwise.
Let $f \leq e$.
Then by assumption, $g \cdot U_{f} = U_{f}$ but this equals $U_{g \cdot f}$.
It follows that $g \cdot f = f$ and so $g$ fixes $e^{\downarrow}$ pointwise.
Now suppose that $g$ fixes every ultrafilter.
Then $g$ fixes $U_{1}$ pointwise and so by the above result $g$ fixes every idempotent pointwise
which, by the assumption of faithfulness, shows that $g = 1$.
\end{proof}

\begin{lemma}\label{lem:hagel} Let $(G,E,\phi)$ be an armature.
Then
$$U_{\overline{\phi (g)}} = \mathsf{cl}(\{ F \colon F \in \mathsf{X}(E) \mbox{ and } g \cdot F \neq F   \}).$$
\end{lemma}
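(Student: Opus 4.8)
The identity to be established is the armature-level counterpart of Proposition~\ref{prop:tempest}: under the correspondence of Theorem~\ref{thm:armatures-pf} a unit $g$ satisfies $\sigma(g) = \overline{\phi(g)}\,g^{-1}g = \overline{\phi(g)}$ since $g^{-1}g = 1$, so the claim is precisely that proposition read off for units. One could deduce it along those lines, but it is cleaner to argue directly from the armature axioms, using only (O2), (O4), (O5) and Lemma~\ref{lem:rain}. Throughout I write $Y = \{F \in \mathsf{X}(E) : g \cdot F \neq F\}$, and I use that $U_{\overline{\phi(g)}}$ is clopen in $\mathsf{X}(E)$, hence closed.

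For the inclusion $\mathsf{cl}(Y) \subseteq U_{\overline{\phi(g)}}$ it suffices, by closedness of $U_{\overline{\phi(g)}}$, to prove $Y \subseteq U_{\overline{\phi(g)}}$. So let $F$ be an ultrafilter with $g \cdot F \neq F$. Since $F$ is an ultrafilter in the Boolean algebra $E$, exactly one of $\phi(g)$ and $\overline{\phi(g)}$ lies in $F$, and I rule out the first. If $\phi(g) \in F$, then for every $f \in F$ the meet $\phi(g) f$ lies in $F$, and (O4) gives $\phi(g) f \leq g \cdot f$, whence $g \cdot f \in F$; thus $g \cdot F \subseteq F$. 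But $g$ acts as an automorphism of $E$, so $g \cdot F$ is again an ultrafilter, and an ultrafilter contained in the proper filter $F$ must equal $F$ by maximality, contradicting $g \cdot F \neq F$. Hence $\overline{\phi(g)} \in F$, that is, $F \in U_{\overline{\phi(g)}}$.

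For the reverse inclusion $U_{\overline{\phi(g)}} \subseteq \mathsf{cl}(Y)$, fix $F \in U_{\overline{\phi(g)}}$. The sets $U_e$ with $e \in F$ form a neighbourhood basis of $F$, so it is enough to check that $Y \cap U_e \neq \emptyset$ for every $e \in F$. If instead $Y \cap U_e = \emptyset$ for some $e \in F$, then $g$ fixes every ultrafilter lying in $U_e$; that is, $g$ fixes $U_e$ pointwise, so by Lemma~\ref{lem:rain} the element $g$ fixes $e^{\downarrow}$ pointwise, and axiom (O5) then gives $e \leq \phi(g)$. Since $e \in F$ and $F$ is closed upwards, $\phi(g) \in F$; but $\overline{\phi(g)} \in F$ as well, and $\phi(g) \wedge \overline{\phi(g)} = 0$ contradicts properness of $F$. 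Therefore $Y$ meets every basic neighbourhood of $F$, so $F \in \mathsf{cl}(Y)$; combining the two inclusions yields the stated equality.

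I do not expect a genuine obstacle here; the argument is essentially a transcription of the proof of Proposition~\ref{prop:tempest} into the language of armatures. The only points that require a word of care are the standard Boolean-algebra fact that an ultrafilter cannot be a proper subset of a filter (used to upgrade $g \cdot F \subseteq F$ to $g \cdot F = F$) and the invocation of the correct direction of the biconditional in Lemma~\ref{lem:rain}. Conceptually, axiom (O5) is exactly what pins $\phi(g)$ down as the largest idempotent below which $g$ acts trivially, so its complement $\overline{\phi(g)}$ is forced to describe the closure of the set of points moved by $g$, which is the content of the lemma.
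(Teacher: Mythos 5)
Your argument is correct and is essentially the paper's own proof: the first inclusion rules out $\phi(g)\in F$ via the inequality $\phi(g)f\leq g\cdot f$ and ultrafilter maximality, and the second uses Lemma~\ref{lem:rain} together with (O5) to show $Y$ meets every basic neighbourhood $U_e$ of a point of $U_{\overline{\phi(g)}}$. The only differences are cosmetic: you make explicit the closedness of $U_{\overline{\phi(g)}}$ and the upgrade from $g\cdot F\subseteq F$ to equality, and you cite the inequality as (O4), which is in fact the correct label (the paper's proof writes (O5) for that step).
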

\begin{proof}
Put $Y = \{ F \in \mathsf{X}(E) \colon g \cdot F \neq F \}$.
Let $F \in Y$ and suppose that $\phi (g) \in F$.
Let $f \in F$.
Then by axiom (O5), we have that $\phi (g)f \leq g \cdot f$.
It follows that $g \cdot F = F$ which is a contradiction.
Since $F$ is an ultrafilter, it follows that $\overline{\phi (g)} \in F$.
Thus $Y \subseteq U_{\overline{\phi (g)}}$.
Let $F \in U_{\overline{\phi (g)}}$ and suppose that $F \in U_{f}$ where $f$ is any idempotent.
If $g$ fixed $U_{f}$ pointwise, 
then by Lemma~\ref{lem:rain} we would have $f \leq \phi (g)$ and so $\phi (g) \in F$ which is a contradiction.
 It follows that $U_{f} \cap Y \neq \emptyset$.
\end{proof}

The set 
$\mathsf{cl}(\{ F \colon F \in \mathsf{X}(E) \mbox{ and } g \cdot F \neq F   \})$
is nothing other than $\mbox{supp}(g)$.

\begin{theorem}\label{thm:hail} 
There is a bijective correspondence between (isomorphism classes of) armatures  $(G,E,\phi)$ and Stone subgroups of $\mbox{\rm Homeo}(\mathsf{X}(E))$.
\end{theorem}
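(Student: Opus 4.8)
The plan is to fix the Boolean algebra $E$ and construct mutually inverse passages between armatures with underlying algebra $E$ and Stone subgroups of $\operatorname{Homeo}(\mathsf{X}(E))$; since the operator turns out to be uniquely determined by the group and the algebra, this will give the asserted bijection, and it is visibly compatible with the isomorphisms induced by automorphisms of $E$, so it descends to isomorphism classes. Throughout I would identify $\operatorname{Aut}(E)$ with $\operatorname{Homeo}(\mathsf{X}(E))$ via Lemma~\ref{lem:homeo} and each $e\in E$ with the clopen set $U_e\subseteq\mathsf{X}(E)$. Under this dictionary $g\cdot U_e=U_{g\cdot e}$; a finite family of pairwise orthogonal nonzero idempotents with join the identity corresponds to a partition of $\mathsf{X}(E)$ into nonempty clopen sets; and the element $\bigvee_i g_ie_i$, computed in $T_E\cong\mathsf{I}(\mathsf{X}(E))$ via Proposition~\ref{prop:munn-again}, corresponds to the homeomorphism obtained by gluing the partial homeomorphisms $g_i\mid U_{e_i}$. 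Hence the fullness condition on a subgroup $G\le\operatorname{Aut}(E)$ (the condition stated just after Lemma~\ref{lem:fullness}) is exactly condition (SG3) for $G\le\operatorname{Homeo}(\mathsf{X}(E))$, while any subgroup of $\operatorname{Aut}(E)$ automatically acts faithfully on $E$.

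From an armature $(G,E,\phi)$ to a Stone group I would simply take the image of $G$ in $\operatorname{Homeo}(\mathsf{X}(E))$: it is full by the previous paragraph, and it satisfies (SG1) because Lemma~\ref{lem:hagel} gives $\operatorname{supp}(g)=U_{\overline{\phi(g)}}$, a clopen set; so it is a Stone subgroup.

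For the reverse passage I would start with a Stone subgroup $G\le\operatorname{Homeo}(\mathsf{X}(E))$, regard it as a full, faithfully acting subgroup of $\operatorname{Aut}(E)$, and define $\phi(g)$ to be the idempotent with $U_{\phi(g)}=\mathsf{X}(E)\setminus\operatorname{supp}(g)$; this makes sense because $\operatorname{supp}(g)$ is clopen by (SG1), and by definition of $\operatorname{supp}$ the set $\mathsf{X}(E)\setminus\operatorname{supp}(g)$ is the interior $Y_g$ of $\{F\in\mathsf{X}(E):g\cdot F=F\}$. The crux is then the chain of equivalences, for $g\in G$ and $e\in E$: $g$ fixes $e^{\downarrow}$ pointwise $\iff$ $g$ fixes $U_e$ pointwise $\iff$ $U_e\subseteq Y_g$ $\iff$ $e\le\phi(g)$. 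Granting it, (O5) holds by definition, (O1) holds since $\operatorname{supp}(1)=\emptyset$, (O2) holds since $g\cdot F=F\iff g^{-1}\cdot F=F$, (O3) holds because $g$ fixing $U_e$ and $h$ fixing $U_f$ pointwise forces $gh$ to fix $U_e\cap U_f=U_{ef}$ pointwise (so $ef\le\phi(gh)$), and (O4) holds because $F\in U_{\phi(g)}\cap U_e$ gives $F=g\cdot F\in g\cdot U_e=U_{g\cdot e}$, i.e. $\phi(g)e\le g\cdot e$; hence $(G,E,\phi)$ is an armature.

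The main obstacle I anticipate is the first of those equivalences, specifically the implication that if $g$ fixes $e^{\downarrow}$ pointwise then it fixes $U_e$ pointwise: in Lemma~\ref{lem:rain} this was deduced from the operator, but here $\phi$ is precisely what we are constructing, so it must be argued directly — if $g\cdot f=f$ for all $f\le e$ and $F\in U_e$, then for each $h\in F$ we have $eh\in F$ and $eh\le e$, hence $eh=g\cdot(eh)\in g\cdot F$, and therefore $h\in g\cdot F$ since ultrafilters are upward closed; thus $F\subseteq g\cdot F$ and so $F=g\cdot F$ (both being ultrafilters). The remaining equivalences are formal, using $g\cdot U_f=U_{g\cdot f}$ and that $U_e$ is open. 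Finally I would check the two passages are mutually inverse: going from $(G,E,\phi)$ to the Stone group and back recomputes the operator as the idempotent dual to $\mathsf{X}(E)\setminus\operatorname{supp}(g)$, which equals $\phi(g)$ by Lemma~\ref{lem:hagel}; going from a Stone group to an armature and back returns the same subgroup verbatim; and uniqueness of the operator attached to $(G,E)$ — two candidates $\phi,\phi'$ agree by substituting $e=\phi(g)$ and $e=\phi'(g)$ into (O5) — shows the correspondence involves no choices.
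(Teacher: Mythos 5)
Your proposal is correct and follows the same overall architecture as the paper's proof: from a Stone subgroup you define $\phi(g)=\overline{\mbox{supp}(g)}$ and check (O1)--(O5), and in the reverse direction you invoke Lemma~\ref{lem:hagel} to see that supports are clopen, with fullness matched to (SG3) through the identification of $\mbox{Aut}(E)$ with $\mbox{Homeo}(\mathsf{X}(E))$. The one place you genuinely diverge is the crux of the paper's argument, the verification of (O5): the paper argues topologically, taking a moved point $x$, using compactness, Hausdorffness and the clopen basis to find $f$ with $x\in f$ and $f\cap g(f)=\emptyset$, and deriving a contradiction with $g(e\cap f)=e\cap f$; you instead show directly that if $g$ fixes $e^{\downarrow}$ pointwise then $g$ fixes every ultrafilter in $U_{e}$, via the purely order-theoretic observation that $eh\in F$, $eh=g\cdot(eh)\in g\cdot F$ and upward closure give $F\subseteq g\cdot F$, hence equality by maximality. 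Your route buys a more elementary argument (no separation property of the Stone space is needed, only that ultrafilters are maximal filters closed under meets), and it in fact reproves the first half of Lemma~\ref{lem:rain} without presupposing the operator, which is exactly the circularity you rightly flagged; you are also more explicit than the paper about faithfulness, about fullness being literally (SG3) under Proposition~\ref{prop:munn-again}, and about why the correspondence is bijective (uniqueness of the operator from (O5)), points the paper leaves implicit. The paper's separation argument, for its part, stays entirely inside the topological picture in which the statement of (SG1) lives, so it needs no translation back to ultrafilters.
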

\begin{proof} Let $G$ be a subgroup of $\mbox{Homeo}(\mathsf{X}(E))$ in which the support of each element of $G$ is clopen.
By assumption, the support operator $\mbox{\rm supp}$ maps elements of $G$ to elements of the Boolean algebra $E$.
Define $\phi (g) = \overline{\mbox{\rm supp} (g)}$.
We prove that  $(G,E,\phi)$ is an armature.
Axioms (O1) and (O2) are immediate from the definitions.
Axiom (O3) follows from the fact that $\mbox{supp}(gh) \subseteq \mbox{supp}(g) \cup \mbox{supp}(h)$.
Axiom (O4) follows from the fact that $g(e) \subseteq \mbox{supp}(g) \cup e$.
We show that Axiom (O5) holds.
To do this, we shall move between the Boolean algebra $E$ and its Stone space $\mathsf{X}(E)$.
Thus the elements of $E$ are the clopen subsets $\mathsf{X}(E)$.
Suppose that for all $f \subseteq e$ we have that $g(f) = f$.
I shall prove that $\mbox{supp}(g) \subseteq \overline{e}$.
Let $x$ be a point such that $g(x) \neq x$.
Stone spaces are compact Hausdorff spaces that have a basis of clopen sets.
We may therefore find an element $f \in E$ such that $x \in f$ and $f \cap g(f) = \emptyset$.
Suppose that $x \notin \overline{e}$.
Then $x \in e$.
It follows that $x \in e \cap f$.
In particular, $e \cap f \neq 0$.
By assumption, we have that $g(e \cap f) = e \cap f$.
But this contradicts  $f \cap g(f) = \emptyset$.
It follows that $x \notin \overline{e}$.
The result now follows.
We have therefore constructed an armature.
The other direction follows by Lemma~\ref{lem:hagel}.
\end{proof}

If we combine Theorem~\ref{thm:armatures-pf} and Theorem~\ref{thm:hail} and recall that the groups considered should be countable, 
we obtain the following.

\begin{theorem}\label{thm:west} 
There is a bijective correspondence between (isomorphism classes of) piecewise factorizable fundamental Tarski inverse monoids
and Cantor groups.
\end{theorem}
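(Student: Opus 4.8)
The plan is to obtain Theorem~\ref{thm:west} by restricting and composing the two bijective correspondences already in hand: Theorem~\ref{thm:armatures-pf}, between piecewise factorizable fundamental Boolean inverse $\wedge$-monoids and armatures, and Theorem~\ref{thm:hail}, between armatures $(G,E,\phi)$ and Stone subgroups of $\mbox{Homeo}(\mathsf{X}(E))$. The substance of the proof is to check that the adjective \emph{Tarski} on the monoid side is carried by these correspondences exactly onto the adjective \emph{Cantor} on the group side.

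First I would transport the semilattice condition through Theorem~\ref{thm:armatures-pf}. If $(G,E,\phi) = \mathsf{A}(S)$ then $E(S) \cong E$, so $E(S)$ is a Tarski algebra precisely when $E$ is; and since the monoid built from an armature is $S = (G^{\downarrow})^{\vee}$, whose elements are finite compatible joins of products $ge$ with $g \in G$ and $e \in E$, countability of $G$ together with countability of $E$ forces countability of $S$, while conversely $G = \mathsf{U}(S)$ and $E \cong E(S)$ sit inside $S$. As a Tarski algebra is automatically countable, within the class of piecewise factorizable fundamental Boolean inverse $\wedge$-monoids whose idempotent semilattice is a Tarski algebra, countability of the monoid is equivalent to countability of its group of units. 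Hence Theorem~\ref{thm:armatures-pf} restricts to a bijection between (isomorphism classes of) piecewise factorizable fundamental Tarski inverse monoids and (isomorphism classes of) Tarski armatures.

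Next I would transport the condition through Theorem~\ref{thm:hail}. By Tarski's uniqueness theorem every countable atomless Boolean algebra is the Tarski algebra, whose Stone space is the Cantor space, so for a Tarski armature $(G,E,\phi)$ the space $\mathsf{X}(E)$ is the Cantor space; Lemma~\ref{lem:hagel} identifies $\mbox{supp}(g)$ with the clopen set $U_{\overline{\phi(g)}}$, so the subgroup of $\mbox{Homeo}(\mathsf{X}(E))$ attached to the armature by Theorem~\ref{thm:hail} is a countable Stone group on the Cantor space, that is, a Cantor group. Conversely a Cantor group is a countable full subgroup of $\mbox{Homeo}$ of the Cantor space with clopen supports; its algebra of clopen sets is countable and atomless, hence a Tarski algebra, so the armature that Theorem~\ref{thm:hail} associates with it is a Tarski armature. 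Thus Theorem~\ref{thm:hail} restricts to a bijection between (isomorphism classes of) Tarski armatures and Cantor groups, and composing the two restricted bijections gives the theorem.

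The main obstacle is not any isolated hard argument but the bookkeeping just indicated: one must verify that ``countable'' and ``atomless'' are faithfully reflected through both correspondences, and in particular that under Stone duality the Tarski algebra goes to the Cantor space and clopen supports go to genuine elements of the Boolean algebra $E$. All of these points are immediate from, or already contained in, Theorem~\ref{thm:armatures-pf}, Theorem~\ref{thm:hail}, Lemma~\ref{lem:hagel}, and Tarski's theorem on countable atomless Boolean algebras.
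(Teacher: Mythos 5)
Your proposal is correct and follows exactly the paper's route: the paper obtains Theorem~\ref{thm:west} by combining Theorem~\ref{thm:armatures-pf} with Theorem~\ref{thm:hail} and noting that countability must be tracked, which is precisely the composition-and-bookkeeping you carry out (your extra checks that ``Tarski'' transports to ``Cantor'' via Tarski's uniqueness theorem and Lemma~\ref{lem:hagel} are just the details the paper leaves implicit).
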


Our notion of an armature may be viewed as an abstraction and generalization of Krieger's {\em unit systems} \cite{Krieger}.
We briefly touch on a special case of our theory that enables us to connect the work of our paper with Krieger's.
See also \cite[Proposition 1.15]{Renault} and \cite{LS}.
Recall that Krieger \cite{Krieger} defines an {\em ample group} to be, amongst other things and in our terminology, a Stone group 
in which the fixed-point set of each element is clopen rather than the closure of the fixed-point set.

\begin{proposition}\label{prop:krieger} Let $S$ be a piecewise factorizable, fundamental Boolean inverse $\wedge$-monoid.
Then the following are equivalent.
\begin{enumerate}
\item $S$ is basic.
\item In the natural action of $\mathsf{U}(S)$ on $\mathsf{X}(E(S))$ the fixed-point set of each element is clopen.
\end{enumerate}
\end{proposition}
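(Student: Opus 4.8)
The plan is to relate clopenness of fixed-point sets to principality of $\mathsf{G}(S)$ and then invoke the results already established for principal groupoids. The first step is an observation valid in any fundamental Boolean inverse $\wedge$-monoid. For a unit $g$, write $\mathsf{Fix}(g) = \{F \in \mathsf{X}(E(S)) : g\cdot F = F\}$; since the action is by homeomorphisms of a Hausdorff space, $\mathsf{Fix}(g)$ is closed, so $\{F : g\cdot F \neq F\}$ is open. Applying Proposition~\ref{prop:tempest} with $s = g$, and noting $U_{g^{-1}g} = \mathsf{X}(E(S))$ and $\sigma(g) = \overline{\phi(g)}$, gives $\mathsf{cl}(\{F : g\cdot F \neq F\}) = U_{\sigma(g)}$. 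Hence $\mathsf{Fix}(g)$ is clopen if and only if $\{F : g\cdot F \neq F\} = U_{\sigma(g)}$, which on taking complements is the same as $\mathsf{Fix}(g) = U_{\phi(g)}$. Since for an ultrafilter $F$ of $E(S)$ one has $gFg^{-1} \subseteq F \iff gFg^{-1} = F \iff g\cdot F = F$, this last equality is exactly the displayed identity of Proposition~\ref{prop:two} for $s = g$. In short, condition~(2) is precisely the criterion of Proposition~\ref{prop:two} restricted to $\mathsf{U}(S)$.

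From here (1)$\Rightarrow$(2) is immediate: if $S$ is basic then $\mathsf{G}(S)$ is principal by Proposition~\ref{prop:basic}, so the identity of Proposition~\ref{prop:two} holds for every $s \in S$, in particular for every unit, whence every fixed-point set is clopen by the previous paragraph.

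The substantive direction is (2)$\Rightarrow$(1), and rather than lift the Proposition~\ref{prop:two} criterion from units to all elements I would show directly that $S$ is basic. Fix a unit $g$; by the first paragraph $\{F : g\cdot F \neq F\} = U_{\sigma(g)}$, a clopen hence compact set. For each $F \in U_{\sigma(g)}$ we have $gFg^{-1} \neq F$, so Lemma~\ref{lem:pingo}(1) produces an idempotent $e \in F$ with $e \perp geg^{-1}$; right-multiplying $e(geg^{-1}) = 0$ by $g$ yields $ege = 0$, and replacing $e$ by $e\sigma(g) \in F$ we may assume in addition that $e \leq \sigma(g)$. The sets $U_e$ arising this way cover $U_{\sigma(g)}$, so finitely many, say $U_{e_1}, \ldots, U_{e_n}$, already cover it, and Lemma~\ref{lem:pele} forces $\sigma(g) = e_1 \vee \cdots \vee e_n$ with each $e_k g e_k = 0$. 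Lemma~\ref{lem:cooper} then gives the orthogonal join $g = \phi(g) \vee g\sigma(g) = \phi(g) \vee ge_1 \vee \cdots \vee ge_n$, and $(ge_k)^2 = g(e_k g e_k) = 0$, so every nonzero $ge_k$ is an infinitesimal: each unit of $S$ is a finite join of the idempotent $\phi(g)$ and infinitesimals. For arbitrary $s \in S$, piecewise factorizability gives $s = \bigvee_{i=1}^m g_i f_i$ with $g_i$ units and $f_i = \mathbf{d}(g_i f_i)$; substituting the decomposition of each $g_i$ and multiplying through on the right by $f_i$ — using distributivity together with the fact, noted in the proof of Theorem~\ref{thm:infinitesimal}, that the infinitesimals form an order ideal — exhibits $s$ as a finite join of idempotents and infinitesimals. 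Gathering the idempotent summands into a single idempotent shows that $S$ is basic.

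The one place that needs care is the compactness argument in (2)$\Rightarrow$(1): one must ensure the witnessing idempotents lie below $\sigma(g)$ while still belonging to $F$, so that the sets $U_e$ genuinely cover $U_{\sigma(g)}$ and a finite subcover has join exactly $\sigma(g)$. Beyond this, the proof draws only on Propositions~\ref{prop:tempest}, \ref{prop:two}, \ref{prop:basic} and Lemmas~\ref{lem:pingo}, \ref{lem:pele}, \ref{lem:cooper}, and no new ingredient is required.
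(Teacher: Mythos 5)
Your proposal is correct, and while the direction (1)$\Rightarrow$(2) coincides with the paper's (both reduce to the criterion of Proposition~\ref{prop:two} restricted to units, your use of Proposition~\ref{prop:tempest} being the monoid-level twin of the Lemma~\ref{lem:hagel} the paper cites), your (2)$\Rightarrow$(1) takes a genuinely different route. The paper never touches infinitesimals directly: it first verifies the displayed identity of Proposition~\ref{prop:two} for elements of the special form $ge$ with $g$ a unit, using $\phi(ge)=\phi(g)e$ from Lemma~\ref{lem:fpo}, then lifts it to arbitrary $s=\bigvee_i g_ie_i$ via $\phi(s)=\bigvee_i\phi(s_i)$ and $U_{\phi(s)}=\bigcup_i U_{\phi(s_i)}$, and finally invokes Proposition~\ref{prop:two} (hence Proposition~\ref{prop:basic}) to conclude that $S$ is basic. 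You instead construct the basic decomposition by hand: from clopenness you get $\{F:gFg^{-1}\neq F\}=U_{\sigma(g)}$, then Lemma~\ref{lem:pingo}(1) plus compactness of $U_{\sigma(g)}$ yields $\sigma(g)=e_1\vee\cdots\vee e_n$ with $e_kge_k=0$, and Lemma~\ref{lem:cooper} turns this into $g=\phi(g)\vee ge_1\vee\cdots\vee ge_n$ with each $ge_k$ an infinitesimal — in effect re-deriving the unit case of Lemma~\ref{lem:mirren}/Theorem~\ref{thm:infinitesimal} from the hypothesis — after which piecewise factorizability and the order-ideal property of infinitesimals handle a general element (your care about forcing the witnessing idempotents below $\sigma(g)$, via $e\mapsto e\sigma(g)$, is exactly the point that needs checking, and your check is right). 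The paper's route is shorter because the compactness and decomposition work is already packaged in Lemma~\ref{lem:mirren} and Proposition~\ref{prop:basic}; yours is more self-contained and has the merit of exhibiting the explicit orthogonal decomposition of each unit over a clopen partition of its support, at the cost of duplicating arguments the paper has already established elsewhere. One cosmetic remark: the covering facts you attribute to Lemma~\ref{lem:pele} concern the sets $U_e$ in the Stone space of $E(S)$ rather than the sets $V_a$, so strictly they come from classical Stone duality, though the paper itself cites Lemma~\ref{lem:pele} in the same loose way in the proof of Lemma~\ref{lem:mirren}.
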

\begin{proof} Suppose first that $S$ is basic.
Then for each $s \in S$, we have that
$$U_{\phi (s)} = \{F \colon F \in U_{s^{-1}s} \mbox{ and } sFs^{-1} \subseteq F\}$$
by Proposition~\ref{prop:two}, 
If we restrict to $g \in \mathsf{U}(S)$, we have that
$$U_{\phi (g)} = \{F \colon F \in U_{1} \mbox{ and } gFg^{-1} = F\}.$$
This says that the fixed-point set of each element in the natural action is clopen.
We prove the converse.
Suppose that each element of $\mathsf{U}(S)$ has the property that its fixed-point set under the natural action is clopen.
Thus we are assuming that the set
$$\{F \in \mathsf{X}(S) \colon gFg^{-1} = F \}$$
is clopen for each $g \in \mathsf{U}(S)$.
It follows that the set
$$\{F \in \mathsf{X}(S) \colon gFg^{-1} \neq F \}$$
is also clopen and their union is $\mathsf{X}(S)$.
But by Lemma~\ref{lem:hagel}, we have that 
$$\{F \in \mathsf{X}(S) \colon gFg^{-1} \neq F \} = U_{\overline{\phi (g)}}.$$
Thus 
$$\{F \in \mathsf{X}(S) \colon gFg^{-1} = F \} = U_{\phi (g)}$$
for each element $g \in \mathsf{U}(S)$.

Let $s \in S$.
Now
$$U_{\phi (s)} \subseteq \{ F \colon F \in U_{s^{-1}s} \mbox{ and } sFs^{-1} \subseteq F\}$$
always holds.
We prove the reverse inclusion first in a special case.
Suppose that $s = ge$ where $g$ is a unit and $e = s^{-1}s$.
Let $F \in U_{s^{-1}s}$ be such that $sFs^{-1} \subseteq F$.
Let $f \in F$.
Then $sfs^{-1} \in F$ and so $gfg^{-1} \in F$.
It follows that $gFg^{-1} = F$.
By what we proved above, we have that $\phi (g) \in F$.
But by Lemma~\ref{lem:fpo}, $\phi (s) = \phi (g)e$ and so $\phi (s) \in F$.
Thus the reverse inclusion holds in this special case.
Let $s$ now be an aribitrary element of $S$.
Since $S$ is piecewise factorizable, we can write $s = \bigvee_{i=1}^{m} s_{i}$ where
each $s_{i} = g_{i}e_{i}$ for some unit $g_{i}$ and $e_{i} = \mathbf{d}(s_{i})$.
By Lemma~\ref{lem:fpo}, we have that $\phi (s) =  \bigvee_{i=1}^{m} \phi (s_{i})$.
By our result above
$$U_{\phi (s_{i})} = \{ F \colon F \in U_{s_{i}^{-1}s_{i}} \mbox{ and } s_{i}Fs_{i}^{-1} \subseteq F\}.$$
But by Lemma~\ref{lem:pele}, we have that
$$U_{\phi (s)} = \bigcup_{i=1}^{m} U_{\phi (s_{i})}.$$ 
Thus
$$U_{\phi (s)} = \{ F \colon F \in U_{s^{-1}s} \mbox{ and } sFs^{-1} \subseteq F\}$$
and so $S$ is basic by Proposition~\ref{prop:two}.
\end{proof}

\subsection{$0$-simplifying fundamental Tarski inverse monoids}

We shall prove that the Tarski inverse monoids studied in this section are automatically piecewise factorizable.

\begin{proposition}\label{prop:welsh} Let $S$ be a Boolean inverse $\wedge$-monoid.
Then each ultrafilter of $S$ contains a unit if and only if $S$ is piecewise factorizable.
\end{proposition}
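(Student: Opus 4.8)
The plan is to prove both implications by translating between algebra and the ultrafilter groupoid, using the characterization of piecewise factorizability as $S = (\mathsf{U}(S)^{\downarrow})^{\vee}$ from the lemma immediately following the definition. First I would handle the easy direction: suppose $S$ is piecewise factorizable and let $A$ be an ultrafilter. Pick any $a \in A$. By piecewise factorizability, $a = \bigvee_{i=1}^{m} g_{i}e_{i}$ with each $g_{i} \in \mathsf{U}(S)$ and $e_{i} = \mathbf{d}(g_{i}e_{i})$. Since $A$ is an ultrafilter it is a prime filter by Lemma~\ref{lem:molina}, applied inductively to the finite join; hence $g_{i}e_{i} \in A$ for some $i$. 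Then $g_{i} \geq g_{i}e_{i}$, and since $A$ is closed upwards, $g_{i} \in A$. Thus $A$ contains a unit.

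For the converse, suppose every ultrafilter of $S$ contains a unit. I want to show $S = (\mathsf{U}(S)^{\downarrow})^{\vee}$, i.e. that an arbitrary $s \in S$ is a finite compatible join of elements each below a unit. The natural approach is topological: consider the basic open set $V_{s} \subseteq \mathsf{G}(S)$. By hypothesis, each ultrafilter $A \in V_{s}$ contains some unit $g_{A}$, and then $A \in V_{s} \cap V_{g_{A}} = V_{s \wedge g_{A}}$ by Lemma~\ref{lem:pele}(3); moreover $s \wedge g_{A} \leq s$ and $s \wedge g_{A} \leq g_{A}$, so $s \wedge g_{A}$ lies below a unit. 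Thus $V_{s}$ is covered by the open sets $V_{s \wedge g_{A}}$ as $A$ ranges over $V_{s}$. Since $V_{s}$ is a compact-open subset of the Hausdorff Boolean groupoid $\mathsf{G}(S)$ (it is compact because it is the Stone-space picture of a single element, equivalently it corresponds to the compact-open bisection $s$ under the duality of Theorem~\ref{them:duality}), finitely many suffice: $V_{s} = \bigcup_{i=1}^{m} V_{s \wedge g_{i}}$ for units $g_{1}, \ldots, g_{m}$. Now each $s \wedge g_{i} \leq s$, so the family $\{s \wedge g_{i}\}$ is pairwise compatible and the join $t = \bigvee_{i=1}^{m}(s \wedge g_{i})$ exists with $t \leq s$. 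By Lemma~\ref{lem:pele}(4), $V_{t} = \bigcup_{i=1}^{m} V_{s \wedge g_{i}} = V_{s}$, and then by Lemma~\ref{lem:pele}(1) we get $s \leq t$, hence $s = t = \bigvee_{i=1}^{m}(s \wedge g_{i})$, a finite join of elements each below a unit. Therefore $S$ is piecewise factorizable.

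The main obstacle is the compactness step: one must be sure that $V_{s}$ is genuinely compact in the topology $\tau$ generated by the $V_{a}$. This is exactly the content of non-commutative Stone duality (Theorem~\ref{them:duality}): under $S \mapsto \mathsf{G}(S)$, the element $s$ corresponds to the compact-open local bisection $V_{s}$, so compactness is available and the finite subcover is legitimate. One should also note that the open cover $\{V_{s \wedge g_{A}}\}$ does consist of subsets of $V_{s}$ and that their union is all of $V_{s}$, which follows since each $A \in V_{s}$ lies in at least one of them. With compactness in hand the remaining manipulations are just applications of Lemma~\ref{lem:pele}, and no further difficulty arises.
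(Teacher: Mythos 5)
Your proposal is correct and follows essentially the same route as the paper: the forward direction uses primeness of ultrafilters (Lemma~\ref{lem:molina}) plus upward closure, and the converse covers $V_{s}$ by the sets $V_{s \wedge g}$ for units $g$ contained in the ultrafilters of $V_{s}$, extracts a finite subcover by compactness, and recovers $s$ as the join via Lemma~\ref{lem:pele}. Your explicit justification of the compactness of $V_{s}$ and of the final join manipulation merely spells out steps the paper leaves implicit.
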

\begin{proof} Suppose first that $S$ is piecewise factorizable.
Let $A$ be any ultrafilter and choose any $s \in A$.
By assumption we may write 
$s = \bigvee_{i=1}^{m} s_{i}$ where for each $s_{i}$ there is a unit $g_{i}$ such that $s_{i} \leq g_{i}$.
But every ultrafilter is prime by Lemma~\ref{lem:molina} and so $s_{i} \in A$ for some $i$.
Consequently $g_{i} \in A$ and so each ultrafilter contains a unit.
To prove the converse, we assume that every ultrafilter contains a unit.
Let $s \in S$ be any non-zero element.
We shall write $V_{s}$ as a union of clopen sets.
Let $A \in V_{s}$.
Then there is some unit $g \in A$.
Thus $g \wedge s \in A$.
We may therefore write
$V_{s} = \bigcup V_{s_{i}}$ where the $s_{i}$ are those elements belonging to the elements of $V_{s}$ which are  beneath units.
By compactness, we may write $V_{s} = \bigcup_{i=1}^{m} V_{s_{i}}$ for some finite union, and the result now follows by Lemma~\ref{lem:pele}.
\end{proof}

Our next result is fundamental since it enables us to construct infinitesimals with specific properties.

\begin{lemma}\label{lem:george} Let $S$ be a $0$-simplifying Tarski inverse monoid.
Let $F \subseteq E(S)$ be an ultrafilter and let $e \in F$.
Then there exists an element $a \in S$ such that
\begin{enumerate}

\item $a$ is an infinitesimal.

\item $a \in eSe$.

\item $a^{-1}a \in F$.

\end{enumerate}
\end{lemma}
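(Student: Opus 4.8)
The plan is to exploit the atomlessness of $E(S)$ to cut $e$ into two complementary non-zero idempotents, and then to use the hypothesis that $S$ is $0$-simplifying (through Lemma~\ref{lem:toby} and Lemma~\ref{lem:useful}) to transport one of these pieces inside the other; the resulting element will automatically be an infinitesimal because its domain and range idempotents are orthogonal.

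First I would note that $e \neq 0$ since $F$ is a proper filter. As $E(S)$ is a Tarski algebra, it is atomless, so there is a non-zero idempotent $f < e$. Put $f' = e\bar{f}$, the complement being taken in the Boolean algebra $E(S)$; then $f \vee f' = e$, $ff' = 0$, and $f' \neq 0$ (otherwise $e \leq f$, contradicting $f < e$). Since $F$ is an ultrafilter in the Boolean algebra $E(S)$ and hence a prime filter (Lemma~\ref{lem:molina} applied to $E(S)$), from $f \vee f' = e \in F$ we get $f \in F$ or $f' \in F$, and not both, because $ff' = 0 \notin F$. Relabelling $f$ and $f'$ if necessary, I may assume $f \in F$.

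Next, because $S$ is $0$-simplifying and $f, f'$ are non-zero idempotents, Lemma~\ref{lem:toby} gives $f \equiv f'$, and in particular $f \preceq f'$. Applying Lemma~\ref{lem:useful} to the relation $f \preceq f'$ and the ultrafilter $F$ with $f \in F$, I obtain an element $a \in S$ with $\mathbf{d}(a) \in F$, $\mathbf{d}(a) \leq f$ and $\mathbf{r}(a) \leq f'$. Now I would check the three required properties. Property~(3) is immediate, since $a^{-1}a = \mathbf{d}(a) \in F$. For~(2), from $\mathbf{d}(a) \leq f \leq e$ and $\mathbf{r}(a) \leq f' \leq e$ we get $ea = a = ae$, so $a = eae \in eSe$. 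For~(1), $\mathbf{d}(a) \leq f$ and $\mathbf{r}(a) \leq f'$ together with $ff' = 0$ force $\mathbf{d}(a) \perp \mathbf{r}(a)$, whence $a^2 = 0$ by Lemma~\ref{lem:spooks}(2); and $a \neq 0$ because $\mathbf{d}(a) \in F$ is non-zero, $F$ being proper. Thus $a$ is an infinitesimal.

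I do not anticipate a real obstacle: the argument is short once one observes that Lemma~\ref{lem:useful} already packages the extraction of the desired element from a pencil using primeness of the ultrafilter. The one point that deserves emphasis is that the Tarski (atomless) hypothesis is genuinely needed — without it $e$ could be an atom and the splitting $e = f \vee f'$ with both parts non-zero would be impossible — which is why the statement is restricted to Tarski inverse monoids rather than arbitrary $0$-simplifying Boolean inverse $\wedge$-monoids.
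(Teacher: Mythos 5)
Your proof is correct and follows essentially the same route as the paper: split $e$ into non-zero complementary pieces $f$ and $f'$ using atomlessness, use primeness of the ultrafilter to pick the piece lying in $F$, and then use $0$-simplifying (via Lemma~\ref{lem:toby}) to get $f \preceq f'$ and extract the infinitesimal from a pencil. The only cosmetic difference is that you invoke Lemma~\ref{lem:useful} to select the pencil element, whereas the paper repeats that primeness argument inline.
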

\begin{proof}
The idempotent $e$ is non-zero and so, 
since we are working in a Tarski algebra, it cannot be an atom.
Thus there exists $0 \neq f < e$.
The idempotents form a Boolean algebra, and so there exists an idempotent $f'$ such that $e = f \vee f'$ and $f \wedge f' = 0$.
Since $f \vee f' = e \in F$, and $F$ is an ultrafilter and so prime, 
we know that either $f \in F$ or $f' \in F$.
Without loss of generality, we may assume that $f \in F$.
Now $S$ is $0$-simplifying and so $f' \equiv f$ by Lemma~\ref{lem:toby}.
In particular, $f \preceq f'$.
We may therefore find elements $x_{1}, \ldots, x_{m}$ 
such that
$f = \bigvee_{i=1}^{m} \mathbf{d}(x_{i})$ 
and $\mathbf{r}(x_{i}) \leq f'$.
We use the fact that $F$ is a prime filter, to deduce that $\mathbf{d}(x_{i}) \in F$ for some $i$.
Put $a = x_{i}$.
Then $a^{-1}a \leq f$ and $aa^{-1} \leq f'$.
Hence $a^{-1}a \perp aa^{-1}$.
It follows that $a$ is an infinitesimal by Lemma~\ref{lem:spooks}.
Clearly,  $a^{-1}a,aa^{-1} \leq e$ and so $a \in eSe$.
In addition, $a^{-1}a \in F$ by construction.
\end{proof}

The above lemma tells us that infinitesimals are plentiful in $0$-simplifying Tarski inverse monoids
and so by Lemma~\ref{lem:spooks} involutions are plentiful.

\begin{lemma}\label{lem:horsa} Let $S$ be a $0$-simplifying Tarski inverse monoid. 
Let $e$ be any non-zero idempotent.
Then there exist infinitesimals $a,b \in eSe$ such that $ab$ is a restricted product and an infinitesimal.
\end{lemma}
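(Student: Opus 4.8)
The idea is to produce $b$ as an infinitesimal living inside the local monoid $\mathbf{d}(c)\,S\,\mathbf{d}(c)$ of a first infinitesimal $c$, and then to take $a$ to be a suitable restriction of $c$. The fact that makes everything fit together is that for an infinitesimal $c$ the idempotents $\mathbf{d}(c)$ and $\mathbf{r}(c)$ are orthogonal (Lemma~\ref{lem:spooks}(2)), so anything below $\mathbf{d}(c)$ is automatically orthogonal to anything below $\mathbf{r}(c)$; all of the square-zero claims will reduce to this single orthogonality.

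Concretely, since $e \neq 0$ we may pick an ultrafilter $F \subseteq E(S)$ with $e \in F$, and Lemma~\ref{lem:george} supplies an infinitesimal $c \in eSe$; then $\mathbf{d}(c), \mathbf{r}(c) \leq e$ and $\mathbf{d}(c) \perp \mathbf{r}(c)$. Since $\mathbf{d}(c) \neq 0$, a second application of Lemma~\ref{lem:george} --- still inside $S$, but now to the idempotent $\mathbf{d}(c)$ and to an ultrafilter containing it --- produces an infinitesimal $c' \in \mathbf{d}(c)\,S\,\mathbf{d}(c)$, so that $\mathbf{d}(c'), \mathbf{r}(c') \leq \mathbf{d}(c)$ and $\mathbf{d}(c') \perp \mathbf{r}(c')$. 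I would then set $b = c'$ and $a = c\,\mathbf{r}(c')$. Because $\mathbf{r}(c') \leq \mathbf{d}(c)$, multiplying $c$ on the right by the idempotent $\mathbf{r}(c')$ merely restricts its domain, giving $\mathbf{d}(a) = \mathbf{r}(c')$ and $\mathbf{r}(a) = c\,\mathbf{r}(c')\,c^{-1} \leq \mathbf{r}(c)$. Now $\mathbf{d}(a) = \mathbf{r}(c') = \mathbf{r}(b)$, so $ab$ is a restricted product, whence $\mathbf{d}(ab) = \mathbf{d}(b) = \mathbf{d}(c')$ and $\mathbf{r}(ab) = \mathbf{r}(a) \leq \mathbf{r}(c)$. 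All three square-zero facts then follow from Lemma~\ref{lem:spooks}(2) using only $\mathbf{d}(c) \perp \mathbf{r}(c)$: for $a$ since $\mathbf{d}(a) \leq \mathbf{d}(c)$ and $\mathbf{r}(a) \leq \mathbf{r}(c)$; for $b = c'$ it is already known; and for $ab$ since $\mathbf{d}(ab) = \mathbf{d}(c') \leq \mathbf{d}(c)$ and $\mathbf{r}(ab) \leq \mathbf{r}(c)$. Membership in $eSe$ is immediate (all relevant domains and ranges lie below $\mathbf{d}(c) \vee \mathbf{r}(c) \leq e$), and non-triviality of $a$, $b$, $ab$ follows from $c' \neq 0$, since $\mathbf{d}(a) = \mathbf{r}(c')$, $\mathbf{d}(b) = \mathbf{d}(c')$, $\mathbf{d}(ab) = \mathbf{d}(c')$ are all non-zero.

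\textbf{Main obstacle.} The naive approach --- split $e$ into three pairwise orthogonal non-zero idempotents and realise $a$ and $b$ as partial bijections among them --- would require those pieces to be $\mathscr{D}$-related, whereas $0$-simplifying only yields the weaker relation $\preceq$ (Lemma~\ref{lem:toby}), which need not provide honest bijections with prescribed domain \emph{and} range. Routing through the local monoid $\mathbf{d}(c)\,S\,\mathbf{d}(c)$ and taking $a$ to be a restriction of $c$ is precisely what sidesteps the need for a $\mathscr{D}$-relation between two separately chosen idempotents; this is the crux. The only mildly delicate bookkeeping is checking that $c\,\mathbf{r}(c')$ genuinely restricts the domain of $c$ to $\mathbf{r}(c')$ (valid because $\mathbf{r}(c') \leq \mathbf{d}(c)$) and that $\mathbf{d}(ab) = \mathbf{d}(b)$ and $\mathbf{r}(ab) = \mathbf{r}(a)$ because $ab$ is a restricted product.
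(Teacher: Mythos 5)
Your construction is exactly the paper's: apply Lemma~\ref{lem:george} once to get an infinitesimal $c \in eSe$, apply it again inside the local monoid $\mathbf{d}(c)S\mathbf{d}(c)$ to get $b$, and set $a = c\,\mathbf{r}(b)$, with the square-zero and restricted-product checks reducing to $\mathbf{d}(c) \perp \mathbf{r}(c)$. The argument is correct and essentially identical to the one in the paper (the paper just cites the order-ideal property of infinitesimals where you verify $a^{2}=0$ directly).
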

\begin{proof} Every non-zero idempotent is an element of some ultrafilter in $E(S)$.
Thus by Lemma~\ref{lem:george}, we may find an infinitesimal $x \in eSe$.
Similarly, we may find an infinitesimal $b \in \mathbf{d}(x)S\mathbf{d}(x)$.
Put $a = x \mathbf{r}(b)$.
The set of infinitesimals forms an order ideal, and so $a$ is an infinitesimal.
By construction, $ab$ is a restricted product, and since $\mathbf{r}(a) \perp \mathbf{d}(b)$ it is an infinitesimal. 
\end{proof}

\begin{lemma}\label{lem:hengist} In a $0$-simplifying Tarski inverse monoid,  
every non-idempotent ultrafilter contains an infinitesimal or the product of two infinitesimals.
\end{lemma}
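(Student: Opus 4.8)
The plan is to argue directly with the ultrafilter $A$, writing $\mathbf{d}(A) = E^{\uparrow}$ and $\mathbf{r}(A) = F^{\uparrow}$ with $E, F \subseteq E(S)$ ultrafilters, fixing some $s \in A$, and splitting on whether $E = F$. First I would record the routine facts that follow at once from the groupoid structure of $\mathsf{G}(S)$ recalled in Section~3: $E^{\uparrow} \cap E(S) = E$, so that every $x \in A$ has $\mathbf{d}(x), \mathbf{r}(x) \in E$ (resp.\ in $F$ via $\mathbf{r}(A)$), and in particular $\mathbf{d}(s) \in E$, $\mathbf{r}(s) \in F$, and $sds^{-1} \in F$ for every idempotent $d \in E$ with $d \leq \mathbf{d}(s)$.

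The case $E \neq F$ should be quick. Since the structure space $\mathsf{X}(E(S))$ is Hausdorff there are idempotents $e \in E$, $f \in F$ with $ef = 0$; then $fse \in A$ because $f$ lies in the idempotent ultrafilter $\mathbf{r}(A)$ and $e$ in $\mathbf{d}(A)$, while $\mathbf{d}(fse) \leq e$ and $\mathbf{r}(fse) \leq f$, so $\mathbf{d}(fse)\,\mathbf{r}(fse) = 0$ and $fse$ is an infinitesimal by Lemma~\ref{lem:spooks}(2).

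The substance is the isotropy case $E = F$. Here no member of $A$ is an infinitesimal (every $x \in A$ has $\mathbf{d}(x), \mathbf{r}(x) \in E$, so $\mathbf{d}(x)\,\mathbf{r}(x) \neq 0$), and one must instead exhibit a product of two infinitesimals inside $A$. The idea is to factor a suitable restriction of $s$ out to a region disjoint from $E$ and back. Using that $E(S)$ is atomless, pick $d \in E$ with $d < \mathbf{d}(s)$, set $q = sds^{-1} \in E$ --- note $q \neq 1$, so $\overline{q} \neq 0$ --- and $d' = d \wedge q \in E$. Because $S$ is $0$-simplifying, $\equiv$ is the universal relation on non-zero idempotents (Lemma~\ref{lem:toby}), so $d' \preceq \overline{q}$; hence Lemma~\ref{lem:useful} produces $a \in S$ with $\mathbf{d}(a) \in E$, $\mathbf{d}(a) \leq d' \leq q$, and $\mathbf{r}(a) \leq \overline{q}$, and then $\mathbf{d}(a)\,\mathbf{r}(a) \leq q\overline{q} = 0$, so $a$ is an infinitesimal. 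Put $t_{1} = s\mathbf{d}(a) \in A$; it has $\mathbf{d}(t_{1}) = \mathbf{d}(a)$ and $\mathbf{r}(t_{1}) = s\mathbf{d}(a)s^{-1} \leq sds^{-1} = q$. Put $b = t_{1}a^{-1}$, a restricted product, so that $\mathbf{d}(b) = \mathbf{r}(a) \leq \overline{q}$ and $\mathbf{r}(b) = \mathbf{r}(t_{1}) \leq q$; then $\mathbf{d}(b)\,\mathbf{r}(b) = 0$, so $b$ is an infinitesimal as well, while $ba = t_{1}a^{-1}a = t_{1}\mathbf{d}(a) = t_{1} \in A$. Thus $A$ contains $t_{1} = ba$, the product of two infinitesimals.

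The hard part is exactly the isotropy case, and within it the bookkeeping that forces the return map $b$ to be an infinitesimal: $b$ must carry the image $\mathbf{r}(a)$ of $a$ (which lies outside $E$) back under $s$ to a set still disjoint from $\mathbf{r}(a)$, and the device of cutting down to $d' = d \wedge sds^{-1}$ before moving out by $a$ is what secures $\mathbf{r}(b) \leq q$ and $\mathbf{d}(b) \leq \overline{q}$ simultaneously. Besides that, only small routine points remain: that an ultrafilter of the atomless Boolean algebra $E(S)$ is non-principal (giving the strict inequality $d < \mathbf{d}(s)$), and the identities $\mathbf{d}(xy) = \mathbf{d}(y)$, $\mathbf{r}(xy) = \mathbf{r}(x)$ for restricted products used to compute the idempotents of $t_{1}$ and $b$. (Note that the hypothesis that $A$ be non-idempotent is used only to know that the case $E=F$ is the delicate one; in the case $E\neq F$ it is automatic.)
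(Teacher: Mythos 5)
Your argument is correct, and its first half coincides with the paper's: when $\mathbf{d}(A)\neq\mathbf{r}(A)$ both proofs separate the two idempotent ultrafilters by orthogonal idempotents $e,f$ and observe that $fse\in A$ is an infinitesimal. Where you genuinely diverge is the isotropy case $\mathbf{d}(A)=\mathbf{r}(A)=F^{\uparrow}$. The paper stays at the level of ultrafilters: it invokes Lemma~\ref{lem:george} to produce an infinitesimal $a$ with $a^{-1}a\in F$, forms the auxiliary ultrafilter $B=(aF)^{\uparrow}$, notes that $\mathbf{r}(B)\neq\mathbf{d}(B)$, factors $A=(A\cdot B^{-1})\cdot B$ in the groupoid $\mathsf{G}(S)$, and feeds $A\cdot B^{-1}$ back into the first case; the product of the resulting two infinitesimals then lies in $A$. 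You instead factor a concrete element of $A$: choosing $d\in E$ strictly below $\mathbf{d}(s)$ (atomlessness), setting $q=sds^{-1}\in E$ and $d'=d\wedge q$, and using Lemma~\ref{lem:toby} together with Lemma~\ref{lem:useful} (which is essentially how Lemma~\ref{lem:george} is proved) to obtain an infinitesimal $a$ with $\mathbf{d}(a)\in E$, $\mathbf{d}(a)\leq d'\leq q$ and $\mathbf{r}(a)\leq\overline{q}$, you exhibit $t_{1}=s\mathbf{d}(a)\in A$ explicitly as $t_{1}=(t_{1}a^{-1})a$, with both factors infinitesimal by Lemma~\ref{lem:spooks}. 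The paper's route is shorter once the machinery of multiplying ultrafilters is accepted and it reuses the first case wholesale; yours avoids the groupoid product entirely in the hard case, is more element-theoretic, and in fact never uses the hypothesis that $A$ is non-idempotent, so it shows that every ultrafilter with equal domain and range contains a product of two infinitesimals. Your choice $d'=d\wedge q$ is exactly the point that makes both $a$ and the return map $t_{1}a^{-1}$ infinitesimal, and all the small verifications ($q\neq 1$ from $d<\mathbf{d}(s)$, membership of the various elements in $A$, nonvanishing because ultrafilters are proper) check out.
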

\begin{proof} There are two cases to consider.
Suppose first that  $A$ is an ultrafilter such that $A^{-1} \cdot A \neq A \cdot A^{-1}$.
Both  $A^{-1} \cdot A$ and $A \cdot A^{-1}$ are idempotent ultrafilters and distinct by assumption.
Since the groupoid $\mathsf{G}(S)$ is Hausdorff there are compact-open sets $V_{s}$ and $V_{t}$ such that
$A^{-1} \cdot A \in V_{s}$ and $A \cdot A^{-1} \in V_{t}$ where $s \wedge t = 0$.
Since  $A^{-1} \cdot A$ and $A \cdot A^{-1}$ are idempotent ultrafilters, 
we may find idempotents $e$ and $f$ such that $A^{-1} \cdot A \in V_{e}$ and $A \cdot A^{-1} \in V_{f}$  and $e \wedge f = 0$.
Let $a \in A$ and put $b = fae$.
Then $b \in A$ and $b^{2} = 0$.

Now suppose that $A$ is a non-idempotent ultrafilter such that $A^{-1} \cdot A = A \cdot A^{-1} = F^{\uparrow}$, where $F \subseteq E(S)$ is an ultrafilter.
Let $e \in F$.
By Lemma~\ref{lem:george}, there is an infinitesimal $a$ such that $a \in eSe$ and $a^{-1}a \in F$.
Put $B = (aF)^{\uparrow}$, a well-defined ultrafilter containing an infinitesimal where $\mathbf{d}(B) = F^{\uparrow}$.
Put $G = E(\mathbf{r}(B))$.
Then $aa^{-1} \in G$ but $aa^{-1}a^{-1}a = 0$.
It follows that $G \neq F$.
Clearly, $A = (AB^{-1})B$.
But $B$ contains an infinitesimal by construction and $AB^{-1}$ contains an infinitesimal by our first result above.
Thus $A$ contains a product of two infinitesimals.
\end{proof}

Since idempotent ultrafilters contain the identity,
it follows by Lemma~\ref{lem:hengist} and Lemma~\ref{lem:spooks}, 
that every ultrafilter in a $0$-simplifying Tarski inverse monoid contains a unit.
Thus the following is a direct application of Proposition~\ref{prop:welsh}.

\begin{theorem}\label{thm:dory}  
Every $0$-simplifying Tarski inverse monoid is piecewise factorizable.
\end{theorem}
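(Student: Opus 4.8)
The plan is to deduce Theorem~\ref{thm:dory} directly from Proposition~\ref{prop:welsh}, which reduces the claim to showing that every ultrafilter $A$ in a $0$-simplifying Tarski inverse monoid $S$ contains a unit. The structural work has already been done: Lemma~\ref{lem:hengist} tells us that every non-idempotent ultrafilter contains either an infinitesimal or a product of two infinitesimals, and Lemma~\ref{lem:spooks} tells us how to promote such elements to units lying above them. So the proof is essentially an assembly of pieces, and the only thing that needs checking is that each of the two cases in Lemma~\ref{lem:hengist} really does furnish a unit inside $A$ via Lemma~\ref{lem:spooks}.

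First I would dispose of the case where $A$ is idempotent: then $1 \in A$ since $A$ is an idempotent ultrafilter (it is $F^{\uparrow}$ for an ultrafilter $F \subseteq E(S)$ and contains $1$), so $A$ trivially contains a unit. Next, suppose $A$ is not idempotent. By Lemma~\ref{lem:hengist}, either $A$ contains an infinitesimal $a$, or $A$ contains a product $ab$ of two infinitesimals $a, b$; in the latter case I would note that by the same lemma's construction one may arrange $ab$ itself to be an infinitesimal (indeed Lemma~\ref{lem:horsa} gives infinitesimals whose restricted product is again an infinitesimal), but more simply it suffices to observe that whatever the element $x \in A$ provided by Lemma~\ref{lem:hengist}, applying Lemma~\ref{lem:spooks}(3) to any infinitesimal $c \in A$ produces a non-trivial involution $u = c^{-1} \vee c \vee e$ with $u \geq c$; since $c \in A$ and $A$ is closed upwards, $u \in A$, and $u$ is a unit. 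The one subtlety is that in the product-of-two-infinitesimals case we need an infinitesimal that actually lies in $A$: here I would invoke the fact that the set of infinitesimals forms an order ideal, together with primeness of $A$ (Lemma~\ref{lem:molina}), but the cleanest route is simply to unwind Lemma~\ref{lem:hengist}'s proof, which in that case writes $A = (AB^{-1})B$ with both $B$ and $AB^{-1}$ containing infinitesimals, so $A$ itself contains the product; alternatively one observes that any single infinitesimal factor appearing multiplicatively can be arranged to lie in $A$ by intersecting with appropriate elements of $A$.

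The main obstacle — really the only point requiring care — is making sure the element produced by Lemma~\ref{lem:hengist} is genuinely \emph{in} the ultrafilter and that Lemma~\ref{lem:spooks} can be applied to it to land a unit back in the ultrafilter. Since ultrafilters are closed upwards, once we have \emph{any} infinitesimal $c \in A$ the involution $u \geq c$ constructed in Lemma~\ref{lem:spooks}(3) automatically lies in $A$, so the whole argument collapses to: every non-idempotent ultrafilter contains an infinitesimal, or a product of infinitesimals one of which can be taken in $A$. This is exactly the content of Lemma~\ref{lem:hengist}. Having established that every ultrafilter of $S$ contains a unit, Proposition~\ref{prop:welsh} immediately yields that $S$ is piecewise factorizable, completing the proof.

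\begin{proof}
By Proposition~\ref{prop:welsh} it suffices to show that every ultrafilter $A$ of $S$ contains a unit. If $A$ is an idempotent ultrafilter then $1 \in A$, so $A$ contains a unit. If $A$ is not idempotent, then by Lemma~\ref{lem:hengist} there is an infinitesimal $c$ belonging to $A$ (in the case where $A$ contains a product of two infinitesimals, the proof of Lemma~\ref{lem:hengist} in fact exhibits $A$ as $(AB^{-1})B$ with $B$ containing an infinitesimal $a$ and $AB^{-1}$ containing an infinitesimal, so $A$ contains such a $c$; more directly, one may take a suitable restriction of an infinitesimal factor, using that the infinitesimals form an order ideal and that $A$ is prime by Lemma~\ref{lem:molina}). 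By Lemma~\ref{lem:spooks}(3), the element
$$u = c^{-1} \vee c \vee \overline{c^{-1}c}\,\overline{cc^{-1}}$$
is a (non-trivial) involution with $u \geq c$. Since $c \in A$ and $A$ is closed upwards, $u \in A$, and $u$ is a unit. Hence every ultrafilter of $S$ contains a unit, and so $S$ is piecewise factorizable by Proposition~\ref{prop:welsh}.
\end{proof}
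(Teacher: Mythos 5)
Your overall strategy is the paper's: reduce via Proposition~\ref{prop:welsh} to showing every ultrafilter contains a unit, handle idempotent ultrafilters via $1$, and use Lemma~\ref{lem:hengist} together with Lemma~\ref{lem:spooks} for the rest. But your treatment of the second case of Lemma~\ref{lem:hengist} has a genuine gap. You claim that when $A$ contains a product of two infinitesimals one can always arrange an infinitesimal $c$ to lie in $A$ itself. This is not just unjustified; it is impossible in exactly the case where that alternative of Lemma~\ref{lem:hengist} is needed. That case is $\mathbf{d}(A)=\mathbf{r}(A)$, and if an infinitesimal $c$ belonged to $A$ then $c^{-1}c \in \mathbf{d}(A)$ and $cc^{-1} \in \mathbf{r}(A) = \mathbf{d}(A)$ would force $c^{-1}c \cdot cc^{-1} \in \mathbf{d}(A)$, while $c^{-1}c \perp cc^{-1}$ by Lemma~\ref{lem:spooks}(2), so $0$ would lie in a proper filter --- a contradiction. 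Your appeal to $A=(AB^{-1})\cdot B$ does not help: that only says $A$ is generated (up to upward closure) by products of elements from $AB^{-1}$ and $B$, not that any infinitesimal factor itself lies in $A$; and ``restricting an infinitesimal factor'' cannot land it in $A$ for the same reason.

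The repair is short and is what the paper intends by citing Lemma~\ref{lem:spooks}: do not look for an infinitesimal inside $A$, but lift each infinitesimal factor to a unit separately. If $ab \in A$ with $a,b$ infinitesimals, Lemma~\ref{lem:spooks}(3) gives involutions $u \geq a$ and $v \geq b$; since multiplication preserves the natural partial order, $ab \leq uv$, and $uv$ is a unit, so $uv \in A$ by upward closure. Your first case (an infinitesimal in $A$) is handled correctly by your argument, and with this correction the whole proof goes through exactly as in the paper.
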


The following result shows the very close connection between the structure of a fundamental Tarski inverse monoid  and the structure of its group of units.
Recall that an action of a discrete group $G$ on a topological space $X$ is said to be {\em minimal} if the closure of every orbit is $X$.

\begin{theorem}\label{thm:regen} Let $S$ be a fundamental Tarski inverse monoid.
Then $S$ is $0$-simplifying if and only if the action of $\mathsf{U}(S)$ on $\mathsf{X}(S) = \mathsf{X}(E(S))$ is minimal.
\end{theorem}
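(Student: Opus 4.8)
The plan is to deduce the statement from the already-established equivalence ``$\mathsf{G}(S)$ is minimal $\iff$ $S$ is $0$-simplifying'' (the corollary to Theorem~\ref{thm:one1}, via Lemma~\ref{lem:oinky}) together with Theorem~\ref{thm:dory}, by matching the $\mathsf{U}(S)$-orbits on $\mathsf{X}(E(S))$ with the $\mathsf{G}(S)$-orbits on the unit space $\mathsf{G}(S)_{o}$. Recall that $\mathsf{X}(E(S))$ is identified with $\mathsf{G}(S)_{o}$ via $F \mapsto F^{\uparrow}$, that this is a homeomorphism, and that under it the natural (conjugation) action of $\mathsf{U}(S)$ on $\mathsf{X}(E(S))$ sends $F$ to $gFg^{-1}$, which is again an ultrafilter of $E(S)$ since conjugation by a unit is an automorphism of the Boolean algebra $E(S)$.

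The first step is an orbit-correspondence lemma. Fix an ultrafilter $F$ in $E(S)$. Given a unit $g$, since $g^{-1}g = 1 \in F^{\uparrow}$ the set $A = (gF)^{\uparrow}$ is an ultrafilter of $S$ with $\mathbf{d}(A) = F^{\uparrow}$, by the description of the ultrafilters of $S$ recalled before Lemma~\ref{lem:pele}. Computing $A^{-1} = (Fg^{-1})^{\uparrow}$ and $AA^{-1} \supseteq gFg^{-1}$, and using that $\mathbf{r}(A) = (AA^{-1})^{\uparrow}$ is an idempotent ultrafilter while $gFg^{-1}$ is a maximal ultrafilter of $E(S)$, one obtains $\mathbf{r}(A) = (gFg^{-1})^{\uparrow}$. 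Hence the $\mathsf{G}(S)$-orbit of $F^{\uparrow}$ always contains $\{(gFg^{-1})^{\uparrow} : g \in \mathsf{U}(S)\}$, i.e.\ under the identification the $\mathsf{U}(S)$-orbit of $F$. If moreover $S$ is piecewise factorizable, then by Proposition~\ref{prop:welsh} every ultrafilter $A$ of $S$ with $\mathbf{d}(A) = F^{\uparrow}$ contains a unit $g$; since $g \in A$ and $A \cdot \mathbf{d}(A) = A$ we get $gf \in A$ for all $f \in F$, so $(gF)^{\uparrow} \subseteq A$, and as $(gF)^{\uparrow}$ is itself an ultrafilter this forces $A = (gF)^{\uparrow}$ and hence $\mathbf{r}(A) = (gFg^{-1})^{\uparrow}$. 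Thus in the piecewise factorizable case the $\mathsf{G}(S)$-orbit of $F^{\uparrow}$ is exactly the $\mathsf{U}(S)$-orbit of $F$, so, the identification being a homeomorphism, a $\mathsf{U}(S)$-orbit is dense in $\mathsf{X}(E(S))$ if and only if the corresponding $\mathsf{G}(S)$-orbit is dense in $\mathsf{G}(S)_{o}$.

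The theorem then assembles quickly. If $S$ is $0$-simplifying, it is piecewise factorizable by Theorem~\ref{thm:dory}, so the orbit correspondence is an equality; moreover $\mathsf{G}(S)$ is minimal by the corollary to Theorem~\ref{thm:one1}, hence every $\mathsf{G}(S)$-orbit is dense by Lemma~\ref{lem:oinky}, hence every $\mathsf{U}(S)$-orbit is dense, i.e.\ the action is minimal. Conversely, if the $\mathsf{U}(S)$-action is minimal, then every $\mathsf{G}(S)$-orbit contains a dense subset (a $\mathsf{U}(S)$-orbit) and is therefore dense, so $\mathsf{G}(S)$ is minimal by Lemma~\ref{lem:oinky} and $S$ is $0$-simplifying by the corollary to Theorem~\ref{thm:one1}.

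The only real work, and the step most prone to slips, is the orbit-correspondence lemma: verifying $\mathbf{r}((gF)^{\uparrow}) = (gFg^{-1})^{\uparrow}$ and, in the piecewise factorizable case, that an ultrafilter of $S$ with domain $F^{\uparrow}$ containing a unit $g$ must equal $(gF)^{\uparrow}$. Both are routine provided one is careful with the bookkeeping among ultrafilters of $S$, idempotent ultrafilters, and ultrafilters of $E(S)$, and with the operations $(XY)^{\uparrow}$; everything else is an immediate combination of Theorem~\ref{thm:dory}, Lemma~\ref{lem:oinky}, and the corollary to Theorem~\ref{thm:one1}. Note that fundamentality of $S$ plays no role in this argument itself; it enters the surrounding theory only to guarantee that the $\mathsf{U}(S)$-action is faithful.
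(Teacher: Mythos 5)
Your argument is correct, but it takes a genuinely different route from the paper's. You reduce the theorem to the duality fact that $\mathsf{G}(S)$ is minimal if and only if $S$ is $0$-simplifying (the corollary to Theorem~\ref{thm:one1}, via Lemma~\ref{lem:oinky}) together with an orbit-matching lemma: for every ultrafilter $F \subseteq E(S)$ the $\mathsf{G}(S)$-orbit of $F^{\uparrow}$ always contains $\{(gFg^{-1})^{\uparrow} \colon g \in \mathsf{U}(S)\}$, and coincides with it when $S$ is piecewise factorizable, since then every ultrafilter $A$ with $\mathbf{d}(A)=F^{\uparrow}$ contains a unit $g$ (Proposition~\ref{prop:welsh}) and so equals $(gF)^{\uparrow}$ with $\mathbf{r}(A)=(gFg^{-1})^{\uparrow}$; Theorem~\ref{thm:dory} supplies piecewise factorizability in the forward direction, and you correctly note that the converse needs only the containment, so there is no circular appeal to Theorem~\ref{thm:dory}. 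The paper never passes to the groupoid in this proof: for the forward direction it takes a pencil from $f$ to $e$ (Lemma~\ref{lem:toby}), uses primeness of the ultrafilter and piecewise factorizability (Theorem~\ref{thm:dory}) to refine one member of the pencil into a product $g_{j}e_{j}$, and so exhibits a unit $g$ with $gFg^{-1}\in U_{e}$; for the converse it uses minimality of the action together with compactness of $U_{e}$ and Lemma~\ref{lem:pele} to assemble a pencil $\{g_{1}e_{1},\ldots,g_{m}e_{m}\}$ from $e$ to $f$ and concludes by Lemma~\ref{lem:toby}. Your route isolates a reusable observation—for piecewise factorizable monoids the $\mathsf{G}(S)$-orbits on the unit space are exactly the conjugation orbits of $\mathsf{U}(S)$ on $\mathsf{X}(E(S))$—and delegates the topology to results already proved, at the cost of resting on Theorem~\ref{thm:one1}, whose proof is only cited in the paper; the paper's direct pencil-and-compactness argument is self-contained at this point (modulo Theorem~\ref{thm:dory}) and produces the pencil explicitly. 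Your closing remark that fundamentality is not needed for the equivalence itself is consistent with the paper's proof, which likewise does not invoke it.
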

\begin{proof}
Suppose that the Tarski inverse monoid $S$ is $0$-simplifying.
Let $U_{e}$, where $e \neq 0$, be any of the basic open sets of the structure space $\mathsf{X}(S)$.
Let $F$ be any point of $\mathsf{X}(S)$.
We prove that $gFg^{-1} \in U_{e}$ for some $g \in U(S)$.
Let $f \in F$ be any element.
By Lemma~\ref{lem:toby} $e \equiv f$ since $S$ is $0$-simplifying.
Thus in particular $f \preceq e$.
There is therefore a pencil $\{x_{i}\}$ from $f$ to $e$.
Since $F$ is a prime filter, there is an $x_{i} = a$ such that $\mathbf{d}(a) \in F$  and $\mathbf{d}(a) \leq f$ and $\mathbf{r}(a) \leq e$.
But $S$ is piecewise factorizable by Theorem~\ref{thm:dory},
and so we may write
$a = \bigvee_{j=1}^{m} g_{j}e_{j}$ where the $g_{j}$ are units and the $e_{j}$ idempotents.
Now $\mathbf{d}(a) =  \bigvee_{j=1}^{m} \mathbf{d}(g_{j}e_{j})$.
It follows that $\mathbf{d}(g_{j}e_{j}) \in F$ for some $j$
where $\mathbf{d}(g_{j}e_{j}) \leq \mathbf{d}(a) \leq f$
and $\mathbf{r}(g_{j}e_{j}) \leq \mathbf{r}(a) \leq e$.
Put $g = g_{j}$ and $i = e_{j}$.
Now $i \in F$ and $gig^{-1} \leq e$.
Thus $gFg^{-1}$ contains $e$, as required.

To prove the converse, suppose that the associated action $(\mathsf{U}(S),\mathsf{X}(S))$ is minimal.
We prove that $S$ is $0$-simplifying.
Let $e$ and $f$ be non-zero idempotents of $S$.
We shall prove that $e \preceq f$ from which the result will follow by symmetry.
Let $F \in U_{e}$.
Then there exists $g \in \mathsf{U}(S)$ such that $gFg^{-1} \in U_{f}$ by minimality of the group action.
Thus there exists $i \in F$ such that $gig^{-1} = f$.
Put $j = ie$.
Then $j \in F$ and $gjg^{-1} \leq f$.
We therefore have that
$F \in U_{j} \subseteq U_{e}$ and $U_{gjg^{-1}} \subseteq U_{f}$.
We use the fact that $U_{e}$ is compact combined with Lemma~\ref{lem:pele}
to deduce that there is a finite set of units $\{g_{1}, \ldots, g_{m}\}$
and a finite set of idempotents $\{e_{1}, \ldots, e_{m}\}$ such that
$e = \bigvee_{i=1}^{m} e_{i}$ and $g_{i}e_{i}g_{i}^{-1} \leq f$.
It follows that $\{g_{1}e_{1}, \ldots, g_{m}e_{m} \}$ is a pencil from $e$ to $f$.
\end{proof}

If we combine Theorem~\ref{thm:west} with Theorem~\ref{thm:dory} and Theorem~\ref{thm:dory}, we obtain the following.

\begin{theorem}\label{thm:france}
There is a bijective correspondence between (isomorphism classes of) $0$-simplifying fundamental Tarski inverse monoids
and 
(isomorphism classes of) minimal Cantor groups.
\end{theorem}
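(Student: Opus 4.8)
The plan is to derive this result by restricting the bijection of Theorem~\ref{thm:west} to appropriate subclasses, the only real content being that this bijection is compatible with the notion of minimality. Recall that Theorem~\ref{thm:west} assigns to each piecewise factorizable fundamental Tarski inverse monoid $S$ its group of units $\mathsf{U}(S)$, viewed as a Cantor group via the natural action of $\mathsf{U}(S)$ on $\mathsf{X}(E(S))$, and that this assignment induces a bijection from isomorphism classes of such monoids onto isomorphism classes of Cantor groups.

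First I would observe, using Theorem~\ref{thm:dory}, that every $0$-simplifying Tarski inverse monoid is piecewise factorizable; hence the $0$-simplifying fundamental Tarski inverse monoids form a subclass of the class to which Theorem~\ref{thm:west} applies. It therefore remains only to identify, under that bijection, which Cantor groups correspond to the $0$-simplifying monoids.

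The identification is supplied by Theorem~\ref{thm:regen}: a fundamental Tarski inverse monoid $S$ is $0$-simplifying if and only if the action of $\mathsf{U}(S)$ on $\mathsf{X}(S) = \mathsf{X}(E(S))$ is minimal. I would then unwind the constructions behind Theorem~\ref{thm:west} to check that the Cantor-group action attached to $S$ by that theorem is exactly this natural action: by Theorem~\ref{thm:armatures-pf} and Theorem~\ref{thm:hail}, the action of the armature $\mathsf{A}(S) = (G,E,\phi)$ on the Stone space $\mathsf{X}(E)$ is $g \cdot F = \{g \cdot f \colon f \in F\}$, where $G \cong \mathsf{U}(S)$ acts on $E \cong E(S)$ by conjugation, and this is precisely the natural action. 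Consequently $S$ is $0$-simplifying precisely when the associated Cantor group is minimal. This shows both that a $0$-simplifying monoid is carried to a minimal Cantor group, and, conversely, that a minimal Cantor group $G$ corresponds (being a Cantor group) to some piecewise factorizable fundamental Tarski inverse monoid $S$ with $\mathsf{U}(S)$ isomorphic to $G$ as a group acting on the Cantor space, so that the action of $\mathsf{U}(S)$ on $\mathsf{X}(E(S))$ is minimal and $S$ is $0$-simplifying by Theorem~\ref{thm:regen}. Thus the bijection of Theorem~\ref{thm:west} restricts to the required bijection between isomorphism classes.

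The step I expect to be the main obstacle — though it is more bookkeeping than difficulty — is precisely this compatibility check: verifying that the chain of identifications (monoid $\leftrightarrow$ armature $\leftrightarrow$ subgroup of $\mbox{Homeo}(\mathsf{X}(E))$) transports the conjugation action on $E(S)$ to the homeomorphism action on $\mathsf{X}(E(S))$ in a way that matches the hypothesis of Theorem~\ref{thm:regen}. Once that is pinned down, the rest of the argument is immediate.
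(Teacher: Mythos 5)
Your proposal is correct and follows essentially the same route as the paper, which obtains Theorem~\ref{thm:france} by combining Theorem~\ref{thm:west} with Theorem~\ref{thm:dory} and Theorem~\ref{thm:regen} (the text's second citation of Theorem~\ref{thm:dory} there is evidently a slip for Theorem~\ref{thm:regen}). Your explicit check that the Cantor-group action produced via Theorem~\ref{thm:armatures-pf} and Theorem~\ref{thm:hail} coincides with the natural conjugation action used in Theorem~\ref{thm:regen} is exactly the bookkeeping the paper leaves implicit.
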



\end{document}